\documentclass[a4paper, 12pt]{amsart}
\usepackage{amsmath,mathrsfs,amsthm}
\usepackage{txfonts,rotating,tikz}
\usepackage[all]{xy}
\usepackage{color} 
\usepackage{hyperref}

\newcommand{\arxiv}[1]{\href{https://arxiv.org/abs/#1}{\texttt{arXiv:#1}}}


\usepackage[all]{xy}
\DeclareMathOperator{\End}{\mathrm{End}}
\DeclareMathOperator{\Iim}{\mathrm{Im}}
\DeclareMathOperator{\Hom}{\mathrm{Hom}}
\DeclareMathOperator{\Ker}{\mathrm{Ker}}
\DeclareMathOperator{\ad}{\mathrm{ad}}
\DeclareMathOperator{\Res}{\mathrm{Res}}

\newtheorem{theorem}[subsection]{Theorem}
\newtheorem{lemma}[subsection]{Lemma}
\newtheorem{remark}[subsection]{Remark}
\theoremstyle{definition}

\newtheorem{coro}[subsection]{Corollary}

\newtheorem{definition}[subsection]{Definition}
\newtheorem{proposition}[subsection]{Proposition}
\DeclareMathOperator{\Vir}{\mathrm{Vir}}
\DeclareMathOperator{\Id}{\mathrm{Id}}

\newtheorem*{theoremA}{Theorem A}
\newtheorem*{theoremB}{Theorem B}
\newtheorem*{theoremC}{Theorem C}
\newtheorem*{theoremD}{Theorem D}
\newtheorem*{theoremE}{Theorem E}

\DeclareMathOperator{\scrAd}{{\mathscr{A}d}}

\DeclareMathOperator{\Spec}{\mathrm{Spec}}
\DeclareMathOperator{\Lie}{\mathrm{Lie}}
\DeclareMathOperator{\Exp}{\mathrm{Exp}}
\newcommand{\fh}{\mathfrak{h}}
 \newcommand{\fl}{\mathfrak{l}}
 
 \newcommand{\fn}{\mathfrak{n}}
 \newcommand{\fp}{\mathfrak{p}}
 
 \newcommand{\fs}{\mathfrak{s}}

\newcommand{\fb}{\mathfrak{b}}
\newcommand{\fg}{\mathfrak{g}}

\DeclareMathOperator{\red}{\mathrm{red}}

\DeclareMathOperator{\sAlg}{\mathscr{A}lg}
\DeclareMathOperator{\Set}{\mathscr{S}et}

\DeclareMathOperator{\Ad}{\mathrm{Ad}}
\def\fprod#1#2#3{{#1}{\displaystyle\mathop{\times}_{#2}}{#3}}

\DeclareMathOperator{\Mor}{\mathrm{Mor}}
\DeclareMathOperator{\Rep}{\mathrm{Rep}}
\DeclareMathOperator{\Alg}{\mathrm{Alg}}

\DeclareMathOperator{\an}{\mathrm{an}}

\DeclareMathOperator{\Iid}{\mathrm{Id}}

\setlength{\textheight}{8.6in} \setlength{\textwidth}{35pc}
\setlength{\topmargin}{-.1in} \setlength{\oddsidemargin}{.25in}
\setlength{\evensidemargin}{\oddsidemargin}

\DeclareMathOperator{\SL}{SL}

\def\dprod#1#2#3{{#1}{\displaystyle\mathop{\otimes}_{#2}}{#3}}

\newcommand{\bc}{\mathbb{C}}

\newcommand{\bz}{\mathbb{Z}}
\DeclareMathOperator{\fppf}{\mathrm{fppf}}
\newcommand{\beqn}{\begin{equation}}
\newcommand{\eeqn}{\end{equation}}
\begin{document}

\title{Conformal blocks for Galois covers of algebraic curves}

\author{Jiuzu Hong}
\address{\textrm{Jiuzu Hong} \newline \indent 
Department of Mathematics, University of North Carolina at Chapel Hill, Chapel Hill, NC 27599-3250, U.S.A.}
\email{  jiuzu@email.unc.edu}
\author{Shrawan Kumar}
\address{ \textrm{Shrawan Kumar} \newline \indent 
Department of Mathematics, University of North Carolina at Chapel Hill, Chapel Hill, NC 27599-3250, U.S.A.}
\email{ shrawan@email.unc.edu}
\keywords{Twisted affine Kac-Moody Lie algebras, conformal blocks, propagation of vacua, factorization theorem, Hurwitz stack, flat projective connection, parahoric Bruhat-Tits group schemes, moduli stack of bundles}
\makeatletter
\@namedef{subjclassname@2020}{\textup{2020} Mathematics Subject Classification}
\makeatother
\subjclass[2020]{17B67, 17B68, 14H81, 17B81, 14D21, 14H60, 81R10}
\maketitle
\begin{abstract}
We study the spaces of  twisted conformal blocks attached to a $\Gamma$-curve $\Sigma$ with marked $\Gamma$-orbits and an action of $\Gamma$ on a simple Lie algebra $\fg$, where $\Gamma$ is a finite group. We prove that if $\Gamma$ stabilizes a Borel subalgebra of $\fg$, then Propagation Theorem and Factorization Theorem hold. We endow a flat projective connection  on the sheaf of twisted conformal blocks attached to a  smooth family of pointed  $\Gamma$-curves; in particular, it is locally free.  We also prove that  the sheaf of twisted conformal blocks on the stable compactification of Hurwitz stack is locally free.  

Let $\mathscr{G}$ be the parahoric Bruhat-Tits group scheme on the quotient curve $\Sigma/\Gamma$ obtained via the $\Gamma$-invariance of Weil restriction associated to  $\Sigma$ and the simply-connected simple algebraic group $G$ with Lie algebra $\fg$.  We prove that the space of twisted conformal blocks can be identified with the space of generalized theta functions on the moduli stack of quasi-parabolic $\mathscr{G}$-torsors on $\Sigma/\Gamma$ when the level $c$ is divisible by $|\Gamma|$ (establishing a conjecture due to Pappas-Rapoport).
\end{abstract}

\tableofcontents

\section{Introduction}
The Wess-Zumino-Witten model is a type of two dimensional conformal field theory, which associates to an algebraic curve  with marked points and integrable highest weight modules of an affine Kac-Moody Lie algebra associated to the points,  a finite dimensional vector space consisting of conformal blocks. The space of conformal blocks has many important  properties including Propagation of Vacua and Factorization.  Deforming  the pointed algebraic curves in  a family, we get a sheaf of conformal blocks. This  sheaf admits a flat projective   connection when the family of pointed curves is a smooth family.  The mathematical theory of conformal blocks was first established in a pioneering work by Tsuchiya-Ueno-Yamada \cite{TUY} where all these properties were obtained. All the above  properties are important ingredients in the proof of the celebrated Verlinde formula for the dimension of the space of conformal blocks (cf. \cite{Be,Fa, Ku2, So1, V}).This theory has  a geometric counterpart  in  the theory of  moduli spaces of principal bundles over algebraic curves and also the moduli of curves and its  stable compactification.  

In this paper we study a twisted theory of conformal blocks on Galois covers of algebraic curves.  More precisely, we consider an algebraic curve $\Sigma$ with an action of a finite group $\Gamma$. Moreover, we take  a group homomorphism  $\phi: \Gamma\to {\rm Aut}(\fg)$ of $\Gamma$ acting on a simple Lie algebra $\fg$. Given any smooth point $q\in \Sigma$, we  attach an affine Lie algebra $\hat{L}(\fg, \Gamma_q)$ defined below Lemma \ref{evaluation_lem}
(in general a twisted affine Lie algebra), where $\Gamma_q$ is the stabilizer group of $\Gamma$ at $q$.  The integrable highest weight representations of $\hat{L}(\fg, \Gamma_q)$ of level $c$ (where $c$ is a positive integer) are  parametrized by certain finite set $D_{c,q}$ of dominant weights of the reductive Lie algebra $\fg^{\Gamma_q}$, i.e., for any $\lambda\in D_{c,q}$ we attach an integrable highest weight representation $\mathscr{H}(\lambda)$ of $\hat{L}(\fg, \Gamma_q)$ of level $c$ and conversely
(cf.\,Section \ref{Kac_Moody_Section}).   Given a collection $\vec{q}:=(q_1,\cdots, q_s)$ of smooth points in $\Sigma$ such that their $\Gamma$-orbits are disjoint and  a collection of weights $\vec{\lambda} = (\lambda_1, \dots, \lambda_s)$  with $\lambda_i\in D_{c,q_i}$, we consider the representation $\mathscr{H} (\vec{\lambda}):=
\mathscr{H}(\lambda_1) \otimes \dots \otimes \mathscr{H}(\lambda_s)$ (cf. Definition \ref{def1.2}). Now, 
 define the associated space of {\it twisted covacua} (or {\it twisted dual conformal blocks}) as follows:
\[  \mathscr{V}_{\Sigma, \Gamma, \phi}(\vec{q}, \vec{\lambda}):=\frac{\mathscr{H}(\vec{\lambda}) }{\fg[\Sigma\backslash \Gamma\cdot \vec{q} ]^\Gamma \cdot \mathscr{H}(\vec{\lambda}) }, \]
where  $\fg[\Sigma\backslash \Gamma\cdot \vec{q} ]^\Gamma $ is the Lie algebra of $\Gamma$-equivariant regular  functions from $\Sigma\backslash \Gamma\cdot \vec{q}$ to $\fg$ acting on the $i$-th factor $\mathscr{H}(\lambda_i)$ of $\mathscr{H}(\vec{\lambda})$ via its Laurent series expansion at $q_i$.  In this paper we often work with a more intrinsic but equivalent definition of the space of twisted covacua (see Definition \ref{def1.2}), where we work with marked $\Gamma$-orbits.   

The following {\it Propagation of Vacua} is the first main result of the paper (cf.\,Corollary \ref{coro2.2.3} (a)). 
\begin{theoremA}
\label{theoremA}
{\it Assume that $\Gamma$ stabilizes a Borel subalgebra of $\fg$. Let $q$ be a smooth point of $\Sigma$ such that $q$ is not $\Gamma$-conjugate to any point $\vec{q}$. 
Assume further that $0\in D_{c, q}$ (cf. Corollary \ref{newcoroweight0}). Then, we have the following isomorphism of spaces of twisted covacua:}
\[  \mathscr{V}_{\Sigma, \Gamma, \phi}(\vec{q}, \vec{\lambda} )\simeq \mathscr{V}_{\Sigma, \Gamma, \phi}\left((\vec{q},q) ,  (\vec{\lambda}, 0 )\right) .  \]
\end{theoremA}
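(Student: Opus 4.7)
The plan is to construct the obvious vacuum-insertion map and show it is an isomorphism by a PBW-descending argument, in the spirit of Tsuchiya-Ueno-Yamada adapted to the $\Gamma$-equivariant setting. I would define
\[
\Phi \colon \mathscr{V}_{\Sigma, \Gamma, \phi}(\vec{q}, \vec{\lambda}) \longrightarrow \mathscr{V}_{\Sigma, \Gamma, \phi}\big((\vec{q}, q), (\vec{\lambda}, 0)\big), \qquad [v] \longmapsto [v \otimes v_0],
\]
where $v_0 \in \mathscr{H}(0)$ is the highest weight (vacuum) vector of the weight-zero integrable representation, which exists by the hypothesis $0 \in D_{c,q}$. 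Well-definedness is immediate: any $\xi \in \fg[\Sigma \setminus \Gamma \cdot \vec{q}]^\Gamma$ is regular at $q$, so its Laurent expansion at $q$ lies in the non-negative part of $\hat{L}(\fg, \Gamma_q)$ and annihilates $v_0$; therefore $\xi \cdot (v \otimes v_0) = (\xi \cdot v) \otimes v_0$, and relations on the source map to relations on the target.

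For surjectivity I would argue by descending induction on the length of PBW monomials acting on $v_0$ in the second tensor factor. The induction step reduces to the following local-to-global existence statement: for every negative mode $x \otimes t^{-n} \in \hat{L}(\fg, \Gamma_q)$, there exists $\xi \in \fg[\Sigma \setminus \Gamma \cdot (\vec{q}, q)]^\Gamma$ whose Laurent expansion at $q$ equals $x \otimes t^{-n}$ modulo non-negative modes and whose Taylor expansions at the $q_i$ vanish to arbitrarily high prescribed order. Granted such a $\xi$, the relation $\xi \cdot (v \otimes w) \equiv 0$ in the tensor quotient lets one replace the outermost negative mode acting on $v_0$ by terms of strictly shorter PBW length in the second factor, plus corrections acting on $v$; iterating reduces any class to a representative of the form $v' \otimes v_0$.

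For injectivity I would construct a left inverse $\Psi \colon [v \otimes v_0] \mapsto [v]$. The nontrivial point is checking well-definedness: if $v \otimes v_0 = \sum_j \xi_j \cdot (w_j \otimes u_j)$ in $\mathscr{H}(\vec\lambda) \otimes \mathscr{H}(0)$ with $\xi_j \in \fg[\Sigma \setminus \Gamma \cdot (\vec{q}, q)]^\Gamma$, then I must show $v \in \fg[\Sigma \setminus \Gamma \cdot \vec{q}]^\Gamma \cdot \mathscr{H}(\vec\lambda)$. I would project the relation onto the minimal $L_0$-energy line $\mathscr{H}(\vec\lambda) \otimes \bc v_0$: on this graded piece, the positive modes of $\xi_j$ at $q$ kill $v_0$, the negative modes strictly raise the $L_0$-degree in $\mathscr{H}(0)$ and hence contribute nothing to the projection, and the zero modes act as a scalar, so the projected relation expresses $v$ as an element of $\fg[\Sigma \setminus \Gamma \cdot \vec{q}]^\Gamma \cdot \mathscr{H}(\vec\lambda)$.

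The main obstacle is the local-to-global existence of $\xi$ with prescribed polar tail at $q$ and prescribed high-order vanishing at the $q_i$. This translates into an $H^1$-vanishing statement for an appropriate $\Gamma$-equivariant twist of $\fg \otimes \mathscr{O}_\Sigma$ on a sufficiently positive divisor, or equivalently for the corresponding Weil-restriction-type sheaf on the quotient $\Sigma/\Gamma$. The assumption that $\Gamma$ stabilizes a Borel of $\fg$ is exactly what makes this tractable: it controls the equivariant decomposition of $\fg$ at each stabilizer $\Gamma_{q_i}$ and $\Gamma_q$, and via Corollary \ref{newcoroweight0} it ensures that $0 \in D_{c,q}$ so that the statement is non-vacuous to begin with. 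In the untwisted case this step is a classical Mittag-Leffler argument; in the present twisted setting it is where the Borel-stabilization hypothesis does its real work.
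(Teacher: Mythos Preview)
Your surjectivity argument is fine and is the easy half: it amounts to the induced nature of the Verma module $\hat M(\mathbb{C},c)$, so that coinvariants of $\mathscr{H}(\vec\lambda)\otimes\hat M(\mathbb{C},c)$ for $\fg[\Sigma\setminus\Gamma\cdot(\vec q,q)]^\Gamma$ already coincide with coinvariants of $\mathscr{H}(\vec\lambda)$ for $\fg[\Sigma\setminus\Gamma\cdot\vec q]^\Gamma$; passing to the integrable quotient $\mathscr{H}(0)$ only adds relations. The Mittag-Leffler step you flag is needed but does \emph{not} use the Borel hypothesis: it is ordinary Riemann-Roch on the quotient curve, done in the paper by averaging a function $z_q^{-1}$ regular off $\Gamma\cdot q$ (see the isomorphism \eqref{eqn3.3.27}).

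The gap is in your injectivity argument. The proposed $\Psi$ is not well-defined. If $v\otimes v_0=\sum_j\xi_j\cdot(w_j\otimes u_j)$ with $\xi_j\in\fg[\Sigma\setminus\Gamma\cdot(\vec q,q)]^\Gamma$, projecting to the minimal-energy line gives $v=\sum_j\bigl(\pi_0(u_j)\cdot(\xi_j|_{\vec q}\cdot w_j)+\pi_0(\xi_j|_q\cdot u_j)\cdot w_j\bigr)$. But $\xi_j$ may have a pole at $q$, so $\xi_j\notin\fg[\Sigma\setminus\Gamma\cdot\vec q]^\Gamma$ and the first term is not of the required form; and positive modes of $\xi_j$ at $q$ bring higher-energy pieces of $u_j$ back down to energy zero, so the second term does not vanish either. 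The energy projection simply does not intertwine the two coinvariant quotients.

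What injectivity actually requires is that $\mathscr{H}(\vec\lambda)\otimes K(0)$ map to zero in the coinvariants, where $K(0)=\ker\bigl(\hat M(\mathbb{C},c)\twoheadrightarrow\mathscr{H}(0)\bigr)$ is generated by the singular vectors $\tilde y_i^{\,n_{0,i}+1}v_0$. This is precisely where the Borel hypothesis enters in the paper: one rewrites $\tilde y_i^{\,n+1}v_0$ as a nonzero multiple of $Z^N\tilde y_i^{\,n+N+1}v_0$ for a $\Gamma$-averaged element $Z=\sum_{\gamma\in\Gamma/\Gamma_q}\gamma\cdot(x_i[f])\in\fg[\Sigma\setminus\Gamma\cdot\vec q]^\Gamma$ (Lemma~\ref{lem_kac}), moves $Z^N$ across the tensor, and then invokes Lemma~\ref{lemma 1.3}, which says $(\mathfrak n^{\pm}\otimes\mathcal K)^\sigma$ acts locally nilpotently on integrable modules, to get $Z^N\cdot h=0$ for $N\gg0$. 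The Borel hypothesis guarantees that all $\Gamma$-translates of $x_i$ lie in a single $\mathfrak n$ or $\mathfrak n^-$, which is exactly what makes Lemma~\ref{lemma 1.3} applicable. So the hypothesis does its real work through local nilpotency, not through any equivariant $H^1$-vanishing.
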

In fact, a  stronger version of Propagation Theorem is  proved (cf.\,Theorem \ref{Propagation_thm} and Corollary  \ref{coro2.2.3} (b)).   Even though, we generally   follow the argument given in  \cite[Proposition 2.3]{Be}, in our  equivariant setting we need to generalize some important ingredients.
 For example,  the fact that
\[  \text{ `` The endomorphism   $X_{-\theta}\otimes f$ of $\mathscr{H}$ is locally nilpotent for all $f\in \mathscr{O}(U)$ " }   \]
 in the proof of Proposition 2.3 of \cite{Be}, can not easily be generalized to the  twisted case. To prove an analogous result, we need to assume that $\Gamma$ stabilizes a Borel subalgebra of $\fg$, and use Lemma \ref{lemma 1.3} crucially. It will be  interesting to see if  this assumption can be removed.

Let $q$ be a nodal point in $\Sigma$. Assume that the action of $\Gamma$ at $q$ is stable (see Definition \ref{stable_action}) and  the stabilizer group $\Gamma_q$ does not exchange the two formal branches around $q$.  Let $\Sigma'$ be the normalization of $\Sigma$ at the points  $\Gamma\cdot q$, and let $q',q''$  be the two smooth points in $\Sigma'$ over  $q$.  The following {\it Factorization Theorem} is our second main result (cf.\,Theorem 
\ref{thm3.1.2}).
\begin{theoremB} {\it Assume that $\Gamma$ stabilizes a Borel subalgebra of $\fg$. Then,
there exists a natural isomorphism:
\[\mathscr{V}_{\Sigma, \Gamma, \phi}(\vec{q}, \vec{\lambda}  )\simeq  \bigoplus_{\mu\in D_{c,q''}} \mathscr{V}_{\Sigma', \Gamma, \phi}\left((\vec{q}, q', q''), (\vec{\lambda}, \mu^*, \mu )\right),   \]
where $\mu^*$ is the dominant weight of $\fg^{\Gamma_{q'}}$ such that $V(\mu^*)$ is the dual representation $V(\mu)^*$ of $\fg^{\Gamma_q}=\fg^{\Gamma_{q'}}=\fg^{\Gamma_{q''}}$.}
\end{theoremB}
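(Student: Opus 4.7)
The plan is to construct an explicit factorization map
\[
F : \bigoplus_{\mu\in D_{c,q''}} \mathscr{V}_{\Sigma',\Gamma,\phi}\bigl((\vec{q},q',q''),(\vec{\lambda},\mu^*,\mu)\bigr) \longrightarrow \mathscr{V}_{\Sigma,\Gamma,\phi}(\vec{q},\vec{\lambda}),
\]
and to prove bijectivity, adapting the non-equivariant strategies of \cite{TUY,Be,Fa} to the $\Gamma$-equivariant setting. The hypotheses (stable action and $\Gamma_q$ not exchanging branches) force $\Gamma_{q'}=\Gamma_{q''}=\Gamma_q$, so the twisted affine Lie algebra $\hat L(\fg,\Gamma_q)$ acts at both preimages, the three subalgebras $\fg^{\Gamma_q}=\fg^{\Gamma_{q'}}=\fg^{\Gamma_{q''}}$ coincide, and the duality $\mu\leftrightarrow\mu^*$ is well-posed. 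Pullback to the normalization embeds $A:=\fg[\Sigma\backslash\Gamma\cdot\vec{q}]^\Gamma$ into $B:=\fg[\Sigma'\backslash\Gamma\cdot(\vec{q},q',q'')]^\Gamma$ as precisely those $\Gamma$-equivariant functions whose values at $q'$ and $q''$ agree diagonally in $\fg^{\Gamma_q}$ (and likewise at every conjugate node).

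To define each $F_\mu$, I would use the canonical $\fg^{\Gamma_q}$-invariant contraction $\pi_\mu:V(\mu^*)\otimes V(\mu)\to\bc$ coming from $V(\mu^*)\cong V(\mu)^*$. First, one shows that every class in $\mathscr{V}_{\Sigma',\Gamma,\phi}((\vec{q},q',q''),(\vec{\lambda},\mu^*,\mu))$ admits a representative of the form $v\otimes x_{\mu^*}\otimes x_\mu$ with $x_{\mu^*}\in V(\mu^*)$ and $x_\mu\in V(\mu)$ (``top degree''), via an inductive sweeping argument: the negative modes of $\hat L(\fg,\Gamma_q)$ at $q'$ and $q''$ are realized by Laurent expansions of $\Gamma$-equivariant meromorphic $\fg$-valued functions on $\Sigma'$ (an equivariant Mittag--Leffler step), allowing higher-mode excitations to be absorbed into the $\mathscr{H}(\vec{\lambda})$-factor. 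Then set
\[
F_\mu\bigl([v\otimes x_{\mu^*}\otimes x_\mu]\bigr)\;=\;\pi_\mu(x_{\mu^*},x_\mu)\cdot[v].
\]
Well-definedness on such representatives reduces, for $f\in A$, to the $\fg^{\Gamma_q}$-invariance of $\pi_\mu$, since the common value $f(q')=f(q'')\in\fg^{\Gamma_q}$ acts diagonally on $V(\mu^*)\otimes V(\mu)$. Surjectivity of $F$ is then immediate from the $\mu=0$ summand via $[v]\longmapsto F_0([v\otimes\mathbf{1}\otimes\mathbf{1}])=[v]$.

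Injectivity is the main obstacle. I would handle it by constructing a section of each $F_\mu$ via a ``gluing tensor'' $\Theta_\mu$ in a formal completion of $\mathscr{H}(\mu^*)\otimes\mathscr{H}(\mu)$ representing the identity endomorphism of $\mathscr{H}(\mu)$, and separating the direct summands by a central Casimir- or Sugawara-type operator on $\hat L(\fg,\Gamma_q)$ whose eigenvalue on $\mathscr{H}(\mu^*)\otimes\mathscr{H}(\mu)$ depends on $\mu$. The hard step is showing that the formal expression $\Theta_\mu$ yields a well-defined class $[v\otimes\Theta_\mu]\in\mathscr{V}_{\Sigma',\Gamma,\phi}$: one must argue that modulo the $B$-action the series truncates to a finite sum. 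In the untwisted case this rests on local nilpotency of negative-root actions; in our $\Gamma$-twisted setting it should follow from the analogous equivariant nilpotency developed for the proof of Theorem A (using Lemma \ref{lemma 1.3} and the Borel-stabilization hypothesis), now applied simultaneously at both insertion points $q'$ and $q''$.
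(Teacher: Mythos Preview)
Your map points the wrong way, and that is not merely cosmetic: it creates a genuine well-definedness gap. In the paper, the factorization map goes from the nodal side to the normalized side,
\[
F:\mathscr{V}_{\Sigma,\Gamma,\phi}(\vec{q},\vec{\lambda})\longrightarrow\bigoplus_{\mu}\mathscr{V}_{\Sigma',\Gamma,\phi}\bigl((\vec{q},q',q''),(\vec{\lambda},\mu^*,\mu)\bigr),\qquad [h]\longmapsto\Bigl[h\otimes\sum_\mu I_\mu\Bigr],
\]
with $I_\mu\in V(\mu^*)\otimes V(\mu)$ the identity tensor. This is well defined because the relations on the source are given by $\fg[\Sigma^o]^\Gamma$, and every such $f$ satisfies $f(q')=f(q'')$, so it acts on $V(\mu^*)\otimes V(\mu)$ through the \emph{diagonal}, under which $I_\mu$ is invariant. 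Your map, by contrast, must kill the relations coming from the larger algebra $\fg[\Sigma'^o]^\Gamma$, which contains functions with $f(q')\neq f(q'')$. For such $f$, acting on a top-degree element $v\otimes x_{\mu^*}\otimes x_\mu$ and then applying your contraction $\pi_\mu$ gives
\[
\pi_\mu(x_{\mu^*},x_\mu)\,[f\cdot v]\;+\;\pi_\mu\bigl(x_{\mu^*},(f(q'')-f(q'))x_\mu\bigr)\,[v],
\]
and the first term does not vanish in $\mathscr{V}_\Sigma$ since $f\notin\fg[\Sigma^o]^\Gamma$. There is no reason for this expression to be zero (choosing different $x_{\mu^*},x_\mu$ with $\pi_\mu(x_{\mu^*},x_\mu)=1$ already shows inconsistency), so your $F_\mu$ does not descend. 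Your check ``for $f\in A$'' is exactly the easy half; the anti-diagonal direction is the whole point.

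Two further issues. Your surjectivity argument relies on $0\in D_{c,q''}$, which need not hold (cf.\ Corollary~\ref{newcoroweight0}); the theorem carries no such hypothesis. And your injectivity sketch via a gluing tensor and Casimir separation is the right instinct for local freeness in families (that is how the paper uses $\Delta_\mu$ in Section~\ref{Hurwitz_Section}), but it is not how the Factorization Theorem itself is proved. In the paper, surjectivity of $F$ follows from Propagation together with the surjectivity of
\[
\beta:U(\fg^{\Gamma_q})\longrightarrow\bigoplus_{\mu\in D_{c,q''}}V(\mu^*)\otimes V(\mu),\qquad a\longmapsto a\odot\sum_\mu I_\mu,
\]
while injectivity is proved by showing $F^*$ is surjective: for any $\Phi\in\mathscr{V}_\Sigma^\dagger$ one constructs $\tilde\Phi$ by $\tilde\Phi(h\otimes\beta(a))=\Phi(\varphi(a^t)\cdot h)$, and the nontrivial Lemma~\ref{lem3.1.4} (which builds an integrable $\hat\fg_{p'}$-module inside $\mathscr{H}(\vec{\lambda})^*$ generated by $\Phi$ and controls $\Ker\beta$ via primitive ideals) is what forces the weights appearing to lie in $D_{c,p'}$. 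This last step is where the Borel-stabilization hypothesis and Lemma~\ref{lemma 1.3} actually enter.
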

The formulation of the Factorization Theorem in the twisted case is a bit more delicate, since  the parameter sets $D_{c, q'}$ and $D_{c,q''}$ attached to the points  $q', q''$ are different in general; nevertheless they are related by the dual of representations under the assumption  that the action of $\Gamma$ at the simple node $q$ is stable and the stabilizer group $\Gamma_q$ does not exchange the branches
(cf.\,Lemma \ref{lemma 4.2}). Its proof  requires additional care (from that of the   untwisted case) at several places.  The assumption that  $\Gamma$ stabilizes a Borel subalgebra of $\fg$ also appears in this theorem as  we use the  Propagation Theorem in its proof.  

As proved in Lemma \ref{lem2.1.3}, the space of twisted covacua is finite dimensional. We sheafify the notion of twisted covacua associated to a family of  $s$-pointed $\Gamma$-curves as in Definition  \ref{def_sheaf_conformal_block} and show that
given a family $(\Sigma_T, \vec{q})$ of $s$-pointed $\Gamma$-curves over an irreducible scheme $T$ and weights $\vec{\lambda}=(\lambda_1, \dots, \lambda_s)$ with $\lambda_i\in D_{c, q_i}$ as above, one can functorially attach a coherent sheaf $\mathscr{V}_{\Sigma_T, \Gamma, \phi}(\vec{q}, \vec{\lambda}) $ of twisted covacua over the base $T$ (cf. Theorem \ref{covacua_coherent_basechange}).  As explained below, we generalize the construction to define a coherent sheaf of twisted covacua over the Hurwitz stack $\overline{\mathscr{H}M}_{g, \Gamma,\eta}$. 

   We prove the following stronger theorem (cf.\,Theorems \ref{Locally_free_smooth_base} and \ref{thm3.4.1}).
\begin{theoremC} {\it Assume that the family $\Sigma_T \to T$ is a smooth  family over a smooth base $T$. 
Then, the  sheaf $\mathscr{V}_{\Sigma_T, \Gamma, \phi}(\vec{q}, \vec{\lambda}) $  is  locally free of finite rank over  $T$.  In fact,  there exists a flat projective connection on  $\mathscr{V}_{\Sigma_T, \Gamma, \phi}(\vec{q}, \vec{\lambda}) $. } 
\end{theoremC}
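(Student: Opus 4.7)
The plan is to adapt the Sugawara-Virasoro strategy of Tsuchiya-Ueno-Yamada \cite{TUY} (and Beauville \cite{Be}) to the $\Gamma$-equivariant setting. Coherence of $\mathscr{V}_{\Sigma_T,\Gamma,\phi}(\vec q,\vec\lambda)$ over $T$ is already established in Theorem~\ref{covacua_coherent_basechange}, and any coherent $\mathscr{O}_T$-module carrying a projectively flat connection on a smooth base is automatically locally free of finite rank; hence the entire theorem reduces to the construction of such a connection.

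The construction proceeds in three substeps. First, for each smooth point $q\in\Sigma$ with stabilizer $\Gamma_q$, I would use a $\Gamma$-invariant normalized form on $\fg$ and the $\Gamma_q$-isotypic decomposition of $\fg$ to set up a twisted Sugawara construction for the twisted affine Lie algebra $\hat L(\fg,\Gamma_q)$: for every $\Gamma_q$-invariant formal vector field $\eta\in\bc((t_q))\,\partial_{t_q}$ one obtains an operator $T[\eta]$ on each level-$c$ integrable module $\mathscr{H}(\lambda)$, well-defined up to a scalar, which satisfies the Virasoro relations together with the standard semidirect-product commutation with the twisted current algebra. Second, working \'etale-locally on $T$, I would lift each tangent vector $\theta$ on $T$ to a $\Gamma$-equivariant rational vector field $\tilde\theta$ on $\Sigma_T$ regular off $\Gamma\cdot\vec q$; smoothness of $\Sigma_T\to T$ together with a Krichever-type exact sequence, averaged over $\Gamma$, produces such lifts, and the hypothesis that the $\Gamma$-orbits of the marked points are disjoint ensures compatibility at each orbit. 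Third, letting $\tilde\theta_i$ denote the $\Gamma_{q_i}$-invariant Laurent expansion of $\tilde\theta$ in the uniformizer $t_{q_i}$, I would define
\beqn
\nabla_\theta\bigl[v_1\otimes\cdots\otimes v_s\otimes f\bigr] = \theta(f)\,\bigl[v_1\otimes\cdots\otimes v_s\bigr] + \sum_{i=1}^s \bigl[v_1\otimes\cdots\otimes T[\tilde\theta_i]\,v_i\otimes\cdots\otimes v_s\otimes f\bigr].
\eeqn
Well-definedness on covacua and independence of the lift $\tilde\theta$ up to scalar follow from the Sugawara commutation with the current algebra combined with a Lie-derivative calculation on $\fg$-valued functions; projective flatness is then a direct consequence of the Virasoro central cocycle, the bracket $[\nabla_{\theta_1},\nabla_{\theta_2}]-\nabla_{[\theta_1,\theta_2]}$ being scalar and given by a sum of local residue pairings at the $q_i$.

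The main obstacle is the $\Gamma$-equivariance at ramified points of $\Sigma\to\Sigma/\Gamma$. At a point $q$ with nontrivial $\Gamma_q$, the uniformizer $t_q$ is only a $\Gamma_q$-eigenvector, $\fg$ splits into $\Gamma_q$-isotypic components, and both the Laurent expansion of $\tilde\theta$ and the Sugawara bilinear are $\Gamma_q$-graded. One has to verify that the Sugawara operators preserve this grading (so that $T[\tilde\theta_i]$ is genuinely an endomorphism of the twisted module $\mathscr{H}(\lambda_i)$), and that the $\Gamma$-equivariant lifts in the second step can be produced without losing the freedom to realize every tangent direction on $T$. This is the point where the proof genuinely departs from the untwisted TUY argument, and I expect it will require a careful eigenspace analysis under $\Gamma_q$, ultimately leaning on the reductivity of $\fg^{\Gamma_q}$ and the $\Gamma_q$-invariance of the residue pairing of Laurent series.
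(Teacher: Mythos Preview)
Your overall strategy matches the paper's: reduce to constructing a projective connection via twisted Sugawara operators, lift vector fields on $T$ to $\Gamma$-invariant vector fields on the punctured family via the exact sequence $0\to\Theta^\Gamma_{\Sigma^o_T/T}\to\Theta^\Gamma_{\Sigma^o_T,T}\to\Theta_T\to 0$, and invoke coherence plus the existence of a connection for local freeness.

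There is, however, a genuine gap in your treatment of independence of the lift. You assert that this follows from the Sugawara commutation $[T[\eta],x[f]]=x[\eta(f)]$ together with a Lie-derivative calculation, but that commutation only shows that the Sugawara operator attached to a global vertical $\Gamma$-invariant vector field $\eta\in\Theta(\Sigma^o_b)^\Gamma$ \emph{descends} to an endomorphism of the finite-dimensional covacua space; it does not show that this endomorphism is a scalar. The paper closes this gap by a different mechanism (Theorem~\ref{thm3.4.1}): one observes that $\Theta(\Sigma^o_b)^\Gamma\simeq\Theta(\bar\Sigma^o_b)$ is an infinite-dimensional \emph{simple} Lie algebra (via \cite[Lemma~2.5.1]{BFM}), and therefore any finite-dimensional projective representation of it is by scalars. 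Without this step neither independence of the lift nor projective flatness is established --- the latter because $[\tilde\theta_1,\tilde\theta_2]$ and any chosen lift of $[\theta_1,\theta_2]$ again differ by a vertical field, so the Virasoro cocycle alone does not force the curvature to be scalar. Your identification of $\Gamma$-equivariance at ramified points as a serious obstacle is accurate --- the paper handles it through an explicit twisted Sugawara formula~(\ref{Virasoro_I}) and a parameter-independence result (Lemma~\ref{parameter_Sugawara}) --- but it is not the only missing ingredient.
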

This theorem  relies mainly on the Sugawara construction for the  twisted affine Kac-Moody algebras. In the untwisted case, this construction is quite  well-known (cf.\,\cite[\S 12.8]{Ka}). In the twisted case, the construction can be found in \cite{KW,W},  where the formulae are written in terms of the  abstract Kac-Moody presentation of $\hat{L}(\fg, \sigma)$, where $\sigma$ is a finite order automorphism of $\fg$.  For our application, we require the formulae in terms of the affine realization of $\hat{L}(\fg, \sigma)$ as  a central extension of the twisted loop algebra $\fg((t))^{\sigma}$. We present such a formula in (\ref{Virasoro_I}, \ref{Virasoro_II}) in Section \ref{Sugawara_section}, which might be new (to our knowledge).

Let $\overline{\mathscr{H}M}_{g, \Gamma,\eta}$ be the Hurwitz stack of stable $s$-pointed $\Gamma$-curves of genus $g$ with marking data $\eta$ at the marked points such that the set of $\Gamma$-orbits of the marked points contains the full ramification divisor (cf.\,Definition \ref{ramification_datum}). Then,  $\overline{\mathscr{H}M}_{g, \Gamma,\eta}$ is a smooth and proper Deligne-Mumford stack of finite type (cf.\,Theorem \ref{thm8.8}). We can attach a collection $\vec{\lambda}$ of dominant weights to the marking data $\eta$, and associate a coherent sheaf  $\mathscr{V}_{g, \Gamma, \phi}(\eta,  \vec{\lambda})$ of twisted covacua over the Hurwitz stack $\overline{\mathscr{H}M}_{g, \Gamma,\eta}$. {\it The presence of the Hurwitz stack is a new phenomenon in the twisted theory.} We prove the following theorem (cf.\,Theorem \ref{Hurwitz_locally_free}).
\begin{theoremD} {\it Assume that $\Gamma$ stabilizes a Borel subalgebra of $\fg$. Then,
the sheaf $\mathscr{V}_{g, \Gamma, \phi}(\eta,  \vec{\lambda})$ is locally free over the stack $\overline{\mathscr{H}M}_{g, \Gamma,\eta}$ .} 
\end{theoremD}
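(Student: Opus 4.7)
The approach combines Theorem C (local freeness over smooth families) with the Factorization Theorem (Theorem B) through a sewing argument at the boundary of $\overline{\mathscr{H}M}_{g,\Gamma,\eta}$. Since this Hurwitz stack is a smooth Deligne--Mumford stack (Theorem \ref{thm8.8}) and $\mathscr{V}_{g,\Gamma,\phi}(\eta,\vec{\lambda})$ is coherent on it, local freeness is equivalent to local constancy of the rank function. Theorem C already provides local freeness over the open substack of smooth pointed $\Gamma$-curves, so it suffices to establish local freeness in a formal neighborhood of an arbitrary geometric boundary point $[\Sigma_0]$, where $\Sigma_0$ has, say, $n \geq 1$ $\Gamma$-orbits of nodes.

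Smoothness of the Hurwitz stack permits the choice of an étale neighborhood $T = \Spec R$ of $[\Sigma_0]$, with $R$ a regular local ring carrying a regular system of parameters $t_1, \dots, t_n, s_1, \dots, s_m$, such that $t_i$ is a smoothing parameter for the $i$-th node orbit. Because the $\Gamma_{q_i}$-action at each node is stable and non-exchanging (by the definition of the Hurwitz stack), one can choose $\Gamma_{q_i}$-equivariant formal coordinates $(x_i, y_i)$ in which the family is cut out by $x_i y_i = t_i$. Let $\widetilde{\Sigma}_{T_0} \to T_0 := \Spec R/(t_1,\dots,t_n)$ be the relative normalization at these node orbits, with new marked $\Gamma$-orbits $\vec{q}\,',\vec{q}\,''$; arguing by induction on $n$ (the base case $n=0$ being Theorem C), the sheaf $\bigoplus_{\vec{\mu}}\mathscr{V}_{\widetilde{\Sigma}_{T_0},\Gamma,\phi}\bigl((\vec{q},\vec{q}\,',\vec{q}\,''),(\vec{\lambda},\vec{\mu}^*,\vec{\mu})\bigr)$ is locally free over $T_0$. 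Substituting $y_i = t_i/x_i$ on the formal expansions at $q_i''$ and identifying them with those at $q_i'$ via propagation of vacua (Theorem A) yields a natural $R$-linear sewing morphism
$$
\Psi:\; \Biggl(\bigoplus_{\vec{\mu}\in\prod_i D_{c, q_i''}} \mathscr{V}_{\widetilde{\Sigma}_{T_0},\Gamma,\phi}\bigl((\vec{q},\vec{q}\,',\vec{q}\,''),(\vec{\lambda},\vec{\mu}^*,\vec{\mu})\bigr)\Biggr) \otimes_{R/(\vec{t})} R \;\longrightarrow\; \mathscr{V}_{\Sigma_T,\Gamma,\phi}(\vec{q},\vec{\lambda}),
$$
which specializes at $\vec{t} = 0$ to the factorization isomorphism of Theorem B. Surjectivity at the closed point of $\Spec R$ combined with Nakayama's lemma yields surjectivity of $\Psi$ over $R$; one then argues that $\Psi$ is in fact an isomorphism by constructing an inverse order-by-order in $\vec{t}$, using the twisted Virasoro operator $L_0$ from Section \ref{Sugawara_section} to control the $\vec{t}$-adic filtration of states at $q_i'$ and $q_i''$. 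This exhibits $\mathscr{V}_{g,\Gamma,\phi}(\eta,\vec{\lambda})$ as locally free at $[\Sigma_0]$.

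The main obstacle lies in the construction of the sewing morphism and its inverse in the equivariant setting. In the untwisted case this goes back to Tsuchiya--Ueno--Yamada and Beauville; the twisted version requires carrying out the construction using $\Gamma_{q_i}$-equivariant local coordinates, controlling $\Gamma_{q_i}$-orbits of the PBW-basis indices at the inserted points $q_i', q_i''$, and making essential use of the twisted affine Sugawara operators developed in Section \ref{Sugawara_section} to transport Virasoro generators across the node. The Borel-stability hypothesis on $\Gamma$ enters crucially here via Theorems A and B, both of which are fed directly into the sewing argument.
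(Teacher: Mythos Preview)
Your overall strategy—induction on the number of node orbits, a gluing construction over a local smoothing, Factorization at the special fibre, and Nakayama—matches the paper's, but the execution differs in two ways, one of which is a real gap.

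The paper's gluing map (Theorem \ref{key_theorem}) runs in the \emph{opposite} direction from your $\Psi$: it sends $h \mapsto h \otimes \Delta_\mu$, where $\Delta_\mu = \sum_{d\ge 0} \Delta_{\mu,d}\,\tau^d$ is the gluing tensor built from the contravariant pairing (Lemma \ref{non-degenerate_pairing}), giving a $\bc[[\tau]]$-linear map from the degenerating-family covacua $\mathscr{V}_{C,\Gamma,\phi}(\vec{q},\vec{\lambda})$ to the \emph{free} module $\bigoplus_\mu \mathscr{V}_{\tilde{C}_o,\Gamma,\phi}\bigl((\vec{q}_o,r',r''),(\vec{\lambda},\mu^*,\mu)\bigr)[[\tau]]$. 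Because the target is free, once Factorization and Nakayama give surjectivity one can split and apply Nakayama again to the kernel; no order-by-order inverse is needed. In your direction the unknown sheaf is the \emph{target}, so freeness of the source buys you nothing, and your injectivity step (``construct an inverse order by order in $\vec{t}$ using $L_0$'') is exactly the hard point and is not justified. (Also, Propagation is not used in constructing the gluing map itself; the relevant input at the inserted points is the nondegenerate pairing, and Theorem~A enters only through its role in the proof of Factorization.)

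The paper is also more economical in a second respect: it smooths \emph{one} node orbit at a time over the one-parameter disc $\mathbb{D}_\tau$ (Lemma \ref{smoothing_construction}) rather than all $n$ simultaneously over a multi-parameter regular local ring, and it never attempts to exhibit the sheaf as free over the full local ring. The one-parameter isomorphism $\bar{F}_{\vec{\lambda}}$ is used only to show that the fibre dimension at a point with $i$ node orbits equals that at a nearby point with $i-1$ orbits; induction then gives constancy of rank on a smooth atlas, and coherence plus constant rank on a smooth scheme yields local freeness. This bypasses both the multi-parameter sewing and the delicate injectivity argument you propose.
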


Our  proof of this theorem follows closely the work of Looijenga \cite{L} in the non-equivariant setting; in particular,  we use the canonical smoothing deformation of nodal curves (Lemma \ref{smoothing_construction}) and gluing tensor elements (Lemma \ref{Gluing_tensor_lemma} and the construction before that).  The Factorization Theorem also plays a crucial role in the proof.  In the case $\Gamma$ is cyclic,  Theorem D together with the Factorization Theorem allows us to reduce the computation of the dimension of the space of  twisted covacua to the case of cyclic covers of projective line with three marked points (see Remark \ref{remark8.11} (1)).  
\vspace{0.5em}

When $\Gamma$ is of prime order and the marked points are unramified,  the space $\mathscr{V}_{\Sigma, \Gamma, \phi}(\vec{q}, \vec{\lambda} )$ was studied earlier by Damiolini \cite{D}, where she proved the results described above in this case under some more constraints.  Our work is a vast generalization of her work, since we do not need to put any restrictions on the $\Gamma$-orbits, and the only restriction on $\Gamma$ is that $\Gamma$ stabilizes a Borel subalgebra of $\fg$ (when $\Gamma$ is a cyclic group it automatically holds).
 In particular, when $\Gamma$ has nontrivial stabilizers at the marked points, general twisted affine Kac-Moody Lie algebras and their representations occur naturally in this twisted theory of conformal blocks. Damiolini's work dealt  with the untwisted affine Lie algebras since only the unramified points are marked in her setting.  In our work, Kac's theory of twisted affine Lie algebras associated to finite order automorphisms and related Sugawara operators in the twisted setting are extensively employed. These new features bring considerably more Lie theoretic complexity for  the results stated above, which enriches the twisted theory in a most natural way.  Notably, the proof of Theorem A (or Theorem \ref{Propagation_thm}) is highly technical, where we have to introduce the technical condition that the finite group $\Gamma$ stabilizes a Borel subalgebra in $\mathfrak{g}$.  Furthermore, the Hurwitz stack of  $\Gamma$-curves with only unramified points marked is in general not proper, i.e., such  a pointed smooth $\Gamma$-curve may degenerate to a $\Gamma$-curve with non-free nodal $\Gamma$-orbits. Accordingly, it is desirable to have factorization theorem (Theorem B) for the $\Gamma$-curves with general nodal $\Gamma$-orbits, which naturally involves the twisted conformal blocks with ramified points marked.  Our more general theory of twisted conformal blocks fits perfectly with the compactification of Hurwitz stacks, and marking ramified points is very crucial towards a Verlinde type formula for twisted conformal blocks of any kind.

There were also some earlier works related to  the twisted theory of conformal blocks. For example Frenkel-Szczesny \cite{FS} studied the twisted modules over Vertex algebras on algebraic curves, and Kuroki-Takebe \cite{KT} studied a twisted Wess-Zumino-Witten model on elliptic curves. We also learnt from S.\,Mukhopadhyay that he  obtained certain results (unpublished) in this direction in the setting of diagram automorphisms.
\vspace{0.5em}

In the usual (untwisted) theory of conformal blocks, the space of conformal blocks has a  beautiful  geometric interpretation in that  it can be identified with the space of generalized theta functions on the moduli space of parabolic $G$-bundles over the algebraic curve, where $G$ is the simply-connected simple algebraic group associated to $\fg$ (cf. Beauville-Laszlo \cite{BL}, Faltings \cite{Fa},  Kumar-Narasimhan-Ramanathan   \cite{KNR}, Laszlo-Sorger \cite{LS} and Pauly \cite{P}).

From a $\Gamma$-curve $\Sigma$ and an action of $\Gamma$ on  $G$, the $\Gamma$-invariants of Weil restriction produces a parahoric Bruhat-Tits group scheme $\mathscr{G}$ on $\bar{\Sigma} =\Sigma/\Gamma$. Recently, the geometry of the moduli stack $\mathscr{B}un_{\mathscr{G}}$ of $\mathscr{G}$-torsors over $\bar{\Sigma}$  has extensively been studied by Pappas-Rapoport \cite{PR1, PR2}, Heinloth \cite{He},  Zhu \cite{Zh} and
Balaji-Seshadri \cite{BS}. A connection between generalized theta functions on $\mathscr{B}un_{\mathscr{G}}$ and twisted conformal blocks associated to the Lie algebra of $\mathscr{G}$ was  conjectured by Pappas-Rapoport \cite{PR2}.    Along this direction,  some results have recently been obtained by Zelaci [Z] when $\Gamma$ is of order $2$ acting on $\fg=sl_{n}$ by certain involutions and very special weights.

 We  study this connection in full generality in the setting of $\Gamma$-curves $\Sigma$.
Let $G$ be the simply-connected simple algebraic group with the action of $\Gamma$ corresponding to $\phi:\Gamma\to {\rm Aut}(\fg)$. We assume that $\Sigma$ is a smooth irreducible projective curve with a collection $\vec{q}=(q_1,\cdots, q_s)$ of marked points such that their $\Gamma$-orbits are disjoint. To this, we attach a collection $\vec{\lambda}=(\lambda_1,\cdots,\lambda_s)$ of weights with $\lambda_i\in D_{c,q_i}$ as before. Assume that $c$ is divisible by $|\Gamma|$. Then, the irreducible representation $V(\lambda_i)$ of $\fg^{\Gamma_{q_i}}$ of highest weight $\lambda_i$ integrates to an algebraic representation of $G^{\Gamma_{q_i}}$ (cf.\,Proposition \ref{Prop10.8}), where $G^{\Gamma_{q_i}}$ is the fixed subgroup of $\Gamma_{q_i}$ in $G$.  Let $P_i^{q_i}$ be the stabilizer in $G^{\Gamma_{q_i}}$ of the highest weight line $\ell_{\lambda_i}\subset V(\lambda_i)$.   Let $\mathscr{G}$ be the parahoric Bruhat-Tits group scheme over $\bar{\Sigma}:=\Sigma/\Gamma$ obtained from the $\Gamma$-invariants of the Weil restriction via $\pi: \Sigma\to \bar{\Sigma}$ from the constant group scheme $G\times \Sigma\to \Sigma$ over $\Sigma$ (cf. Definition 
\ref{defi11.1}).  
One can attach the moduli stack $\mathscr{P}arbun_{\mathscr{G}}(\vec{P})$ of quasi-parabolic $\mathscr{G}$-torsors with parabolic subgroups $\vec{P}=(P_i^{q_i})$ attached to $q_i$ for each $i$ (cf. Definition \ref{def11.2}).  With the assumption that $c$ is divisible by $|\Gamma|$, we can define a line bundle $\mathfrak{L}(c;\vec{\lambda})$ on  $\mathscr{P}arbun_{\mathscr{G}}(\vec{P})$ (cf.\,Definition \ref{def11.6}).   The following is our last main theorem (cf. Theorem \ref{thm12.1}) confirming a conjecture of Pappas-Rapoport for 
$\mathscr{G}$. 
\begin{theoremE}\label{thmE}
{\it Assume that $\Gamma$ stabilizes a Borel subalgebra of $\fg$
and that $c$ is divisible by $|\Gamma|$.  Then, there exists a canonical isomorphism:
\[  H^0( \mathscr{P}arbun_{\mathscr{G}}(\vec{P}), \mathfrak{L}(c, \vec{\lambda})   ) \simeq   \mathscr{V}_{\Sigma, \Gamma, \phi}(\vec{q}, \vec{\lambda} )^{\dagger},       \]
where $H^0( \mathscr{P}arbun_{\mathscr{G}}(\vec{P}), \mathfrak{L}(c, \vec{\lambda})   )$ denotes the space of  global sections of the line bundle $\mathfrak{L}(c,\vec{\lambda})$ and $\mathscr{V}_{\Sigma, \Gamma, \phi}(\vec{q}, \vec{\lambda} )^{\dagger}$ denotes the space of twisted conformal blocks, i.e., the dual space of $\mathscr{V}_{\Sigma, \Gamma, \phi}(\vec{q}, \vec{\lambda} ) $.}
\end{theoremE}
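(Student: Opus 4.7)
The plan is to follow the strategy that is by now classical in the untwisted setting (as in Beauville--Laszlo, Faltings, Kumar--Narasimhan--Ramanathan, Laszlo--Sorger, Pauly), but carefully adapted to the parahoric Bruhat--Tits setting for $\mathscr{G}$ on $\bar{\Sigma}$. Concretely, I would produce the isomorphism in three stages: \emph{uniformize} the moduli stack $\mathscr{P}arbun_{\mathscr{G}}(\vec{P})$ by a product of parahoric affine flag varieties, \emph{identify} the global sections of the pulled-back line bundle with the restricted dual of an integrable highest-weight module at level $c$ (Borel--Weil in the parahoric Kac--Moody setting), and \emph{take invariants} under the global regular sections of $\mathscr{G}$ to recover the conformal-blocks side of the definition.

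For the first stage, I would invoke the uniformization theorem for parahoric Bruhat--Tits torsors due to Heinloth and Pappas--Rapoport. Applied to $\bar{\Sigma}$ with ramification at $\pi(\vec q)$, and incorporating the quasi-parabolic structures, this should express $\mathscr{P}arbun_{\mathscr{G}}(\vec{P})$ as the stack quotient
\[
\mathscr{P}arbun_{\mathscr{G}}(\vec{P})\;\simeq\;\Bigl[\,\Gamma_{\mathscr{G}}\bigl(\bar{\Sigma}\setminus \pi(\vec q)\bigr)\,\Big\backslash\, \prod_{i=1}^s \mathscr{G}r_{\mathscr{G}, q_i}/\vec{P}\,\Bigr],
\]
where $\mathscr{G}r_{\mathscr{G},q_i}$ is the parahoric affine flag variety at the orbit of $q_i$, modified by the parabolic $P_i^{q_i}\subset G^{\Gamma_{q_i}}$. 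The crucial input here is the identification of the local loop groups of $\mathscr{G}$ at marked orbits with the twisted loop groups governing $\hat{L}(\fg,\Gamma_{q_i})$; this is where one uses that $\mathscr{G}$ is the $\Gamma$-invariant Weil restriction of the constant group scheme $G\times\Sigma$. Then, descent along uniformization gives $H^0(\mathscr{P}arbun_{\mathscr{G}}(\vec P),\mathfrak{L}(c,\vec\lambda))$ as the $\Gamma_{\mathscr{G}}(\bar{\Sigma}\setminus\pi(\vec q))$-invariants of the global sections on the product of parahoric affine flag varieties.

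For the second stage, I would pull the line bundle $\mathfrak{L}(c,\vec\lambda)$ back along the uniformization map and identify it with the external tensor product of the canonical level-$c$, highest-weight-$\lambda_i$ line bundles on each $\mathscr{G}r_{\mathscr{G},q_i}/\vec P$. The divisibility hypothesis $|\Gamma|\mid c$ enters precisely here: by Proposition \ref{Prop10.8}, each $V(\lambda_i)$ integrates to the fixed-point subgroup $G^{\Gamma_{q_i}}$, so that the auxiliary bundle descending $\mathfrak{L}(c,\vec\lambda)$ across $P_i^{q_i}$ exists and the central charge matches; without this divisibility one only has line bundles up to a root. The Borel--Weil type statement in the twisted affine Kac--Moody setting (which follows from standard arguments once the geometric realization of the basic representation of $\hat{L}(\fg,\Gamma_{q_i})$ on $\mathscr{G}r_{\mathscr{G},q_i}$ is in hand) then identifies global sections on each factor with the restricted dual $\mathscr{H}(\lambda_i)^\dagger$.

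For the third stage, a standard computation identifies the Lie algebra of $\Gamma_{\mathscr{G}}(\bar\Sigma\setminus\pi(\vec q))$ with $\fg[\Sigma\setminus\Gamma\cdot\vec q]^\Gamma$, and under the Borel--Weil identification the action transported to $\mathscr{H}(\vec\lambda)^\dagger$ is precisely the dual of the action defining $\mathscr{V}_{\Sigma,\Gamma,\phi}(\vec q,\vec\lambda)$. Taking invariants therefore yields $\mathscr{V}_{\Sigma,\Gamma,\phi}(\vec q,\vec\lambda)^\dagger$, and the resulting map is canonical. The main obstacle I anticipate is the second stage: establishing the Borel--Weil/affine Kac--Moody identification in the genuinely ramified parahoric setting, and, tied to this, checking that the pullback of $\mathfrak{L}(c,\vec\lambda)$ really matches the line bundle whose sections give $\mathscr{H}(\lambda_i)^\dagger$. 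One must compute the central character of the tangent space action at a $\mathscr{G}$-equivariant point, show it corresponds to the Sugawara central charge of Theorem C, and verify that the $P_i^{q_i}$-equivariance descends the bundle — all of which depend delicately on $|\Gamma|\mid c$. A secondary technical point is the compatibility of the uniformization with the stack-theoretic formation of global sections for a non-simply-connected parahoric group scheme $\mathscr{G}$, which requires analyzing the $\pi_0$ of the uniformizing group and a standard connectedness argument on each component.
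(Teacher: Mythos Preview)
Your overall strategy is the correct three-stage architecture, but your uniformization differs from the paper's in an instructive way, and you have under-weighted the step that carries most of the actual work.

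\textbf{Comparison with the paper's route.} The paper does \emph{not} uniformize at the marked orbits $\vec q$. Instead it picks a single auxiliary point $q\in\Sigma\setminus\Gamma\cdot\vec q$ and uses the uniformization (Theorem~\ref{Thm11.3})
\[
\mathscr{P}arbun_{\mathscr{G}}(\vec{P})\;\simeq\;\Bigl[\,G(\Sigma\setminus\Gamma\cdot q)^\Gamma\,\Big\backslash\,\bigl(X^q\times\textstyle\prod_i G^{\Gamma_{q_i}}/P_i^{q_i}\bigr)\Bigr],
\]
so that only one twisted affine Grassmannian appears and the marked points contribute \emph{finite}-dimensional flag varieties. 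After Borel--Weil (affine at $q$, classical at each $q_i$) one lands on $\bigl(\mathscr{H}_c^*\otimes\bigotimes_i V(\lambda_i)^*\bigr)^{\fg(\Sigma\setminus\Gamma\cdot q)^\Gamma}$, and the identification with $\mathscr{V}_{\Sigma,\Gamma,\phi}(\vec q,\vec\lambda)^\dagger$ is exactly Propagation of Vacua (Corollary~\ref{coro2.2.3}(b)). Your route---uniformize at $\vec q$ itself and read off $\bigl(\bigotimes_i\mathscr{H}(\lambda_i)^*\bigr)^{\fg[\Sigma\setminus\Gamma\cdot\vec q]^\Gamma}$ directly---is the alternative the paper explicitly records but does not use (see the Remark following Theorem~\ref{Thm11.3}). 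It avoids Propagation, at the cost of needing the splitting of the central extension and Borel--Weil on each partial twisted flag variety $X^{q_i}(P_i^{q_i})$ rather than on a single $X^q$.

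\textbf{The step you are missing.} The genuine content you have skipped is the passage from \emph{group} invariants to \emph{Lie algebra} invariants. Descent along uniformization gives $\Xi$-invariants for the ind-group $\Xi=G(\Sigma\setminus\Gamma\cdot q)^\Gamma$ (or its multi-point analogue in your setup); the conformal-blocks side is defined via $\fg[\Sigma\setminus\Gamma\cdot\vec q]^\Gamma$. Equating these requires that $\Xi$ be a \emph{reduced} and \emph{irreducible} ind-scheme. In the twisted setting neither is ``standard'': irreducibility is Theorem~\ref{thm8.1.1} (a Drinfeld-type argument using the connectedness of $X^q$ and Lemma~\ref{uni} on generation of $G(K)^\Gamma$ by $U(K)^\Gamma$, $U^-(K)^\Gamma$, which in turn uses that $\Gamma$ stabilizes a Borel), and reducedness is Corollary~\ref{coro11.5} (deduced from the uniformization itself together with reducedness of $X^q$). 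Your ``standard connectedness argument on each component'' is exactly where the paper spends Sections~\ref{thetafuctions}--\ref{section11}; without it your third stage does not go through. Two smaller points: the group scheme $\mathscr{G}$ here is simply-connected (see the proof of Lemma~\ref{uni}), so your $\pi_0$ worry is moot; and the line bundle $\mathfrak{L}(c,\vec\lambda)$ is constructed directly from the projective representation $\hat{\mathscr{G}}^q_c\to\mathscr{G}L_{\mathscr{H}_c}$ and the canonical splitting of Theorem~\ref{Thm10.7} (this is where $|\Gamma|\mid c$ is used), not via any Sugawara computation---Theorem~C plays no role in this identification.
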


One of the main ingredients in the proof of this theorem is the connectedness of the ind-group ${\rm Mor}_{\Gamma}(\Sigma^*, G)$ consisting of $\Gamma$-equivariant morphisms from $\Sigma^*$ to $G$ (cf.\,Theorem \ref{thm8.1.1}), where $\Sigma^*$ is a $\Gamma$-stable affine open subset of  $\Sigma$.  Another important ingredient is the Uniformization Theorem for the stack of $\mathscr{G}$-torsors on the parahoric Bruhat-Tits group scheme $\mathscr{G}$  due to Heinloth \cite{He}; in fact, its parabolic analogue (cf. Theorem \ref{Thm11.3}).  Finally, yet another ingredient is the splitting of the central extension of the twisted loop group $G(\mathbb{D}_q^\times)^{\Gamma_q}$ over 
$\Xi = {\rm Mor}_{\Gamma}(\Sigma\backslash \Gamma\cdot q, G)$ and the reducedness and the irreducibility of $\Xi$ (cf. Theorem \ref{Thm10.7} and Corollary \ref{coro11.5}), where $q$ is a point in $\Sigma$ and $\mathbb{D}_q^\times$ (resp. $\mathbb{D}_q$) is the punctured formal disc (resp. formal disc) around $q$ in $\Sigma$.  

In spite of the parallels with the classical case, there are some important essential differences in the twisted case. First of all the constant group scheme is to be replaced by the 
parahoric Bruhat-Tits group scheme $\mathscr{G}$.   Further, the group $\Xi$ could have nontrivial characters resulting in the splitting over $\Xi$ non-unique. (It might be mentioned that in the special case considered by Zelaci [Z, Proposition 5.1] mentioned above, $\Xi$ has only trivial character.)  To overcome this difficulty,  we need to introduce a {\it canonical}  splitting over $\Xi$ of the central extension of the twisted loop group $G(\mathbb{D}^\times_q )^{\Gamma_q}$ (cf.\,Theorem \ref{Thm10.7}). We are able to do it when $c$ is divisible by $|\Gamma|$ (cf. Remark \ref{remark12.2} (b)).   

It is interesting to remark that Zhu \cite{Zh} proved that for any line bundle on the moduli stack $\mathscr{B}un_{\mathscr{G}}$ for a `reasonably good' parahoric Bruhat-Tits group scheme $\mathscr{G}$ over a curve $\bar{\Sigma}$, the pull-back of the line bundle to the twisted affine Grassmannian at every point of $\bar{\Sigma}$ is of the same central charge.  It matches the way we define the space of covacua, i.e., we attach  integrable highest weight representations of twisted affine Lie algebras of the {\it same} central charge at every point.  \\

Our work was initially  motivated by a conjectural connection predicted by Fuchs-Schweigert \cite{FSc} between  the trace of diagram automorphism on the space of conformal blocks and certain conformal field theory related to twisted affine Lie algebras.  A Verlinde type formula for the trace of diagram automorphism on the space of conformal blocks  has been proved recently by the first author \cite{Ho1, Ho2}, where  the formula involves the twisted affine Kac-Moody algebras mysteriously.  

Assuming a twisted analogue of Teleman's vanishing theorem of Lie algebra homology, in a recent paper \cite{HK}, we  derive an analogue of the Kac-Walton formula and the Verlinde formula for general $\Gamma$-curves (with mild restrictions on ramification types, but not requiring that $\Gamma$ stabilizes a Borel subalgebra). In particular, if the Lie algebra $\fg$ is not of type $D_4$, there are no restrictions on ramification types. Using the machinery of crossed modular categories, under the assumption that  $\Gamma$ stabilizes a Borel subalgebra of $\fg$, Deshpande-Mukhopadhyay \cite{DM}  deduced a Verlinde type formula for the dimension of twisted conformal blocks,  which is expressed in terms of S-matrices. 
\vskip1ex

In the following we recall  the structure of this paper. 

In Section \ref{Kac_Moody_Section}, we introduce the twisted affine Lie algebra $\hat{L}(\fg,\sigma)$ attached to a finite order automorphism $\sigma$ of $\fg$ following \cite[Chap. 8]{Ka}. We  prove some preparatory lemmas which is used later in Section \ref{Propagation_section}.

In Section \ref{conformal_block_section},  we  define the space of twisted covacua attached to  a Galois cover of an algebraic curve. We  prove that  this space  is finite dimensional under the assumption given in Definition \ref{def1.2}.

Section \ref{Propagation_section}  is devoted to proving the Propagation Theorem.

Section \ref{Factorization_section} is devoted to proving the Factorization Theorem.   

In Section \ref{Sugawara_section}, we prove the independence of parameters for  integrable highest weight  representations of twisted affine Kac-Moody algebras over a base. We also prove that the Sugawara operators  acting on the integrable highest weight representations of twisted affine Kac-Moody algebras are independent of the parameters up to  scalars.  This section is preparatory for Section \ref{Projective_Connection_section}.

In Section \ref{Projective_Connection_section}, we define the sheaf of twisted covacua for  a family $\Sigma_T$ of $s$-pointed $\Gamma$-curves. We further show that this sheaf is locally free of finite rank for a smooth family $\Sigma_T$ over a smooth base $T$. In fact, it admits  a flat projective connection. 

In Section \ref{Hurwitz_Section}, we consider stable families of  $s$-pointed $\Gamma$-curves and we show that the sheaf of twisted covacua over the  stable compactificaiton of Hurwitz stack is locally free. 

In Section \ref{thetafuctions}, we prove the connectedness of the ind-group ${\rm Mor}_{\Gamma}(\Sigma^*, G)$,  following an argument by Drinfeld in the non-equivariant case. In particular, we show that the twisted Grassmannian $X^q= G(\mathbb{D}^*_q )^{\Gamma_q}/ G(\mathbb{D}_q )^{\Gamma_q} $ is irreducible.

In  Section \ref{section10}, we construct the central extensions of the twisted loop group $G(\mathbb{D}^*_q )^{\Gamma_q}$ and  prove the existence of its canonical splitting over $\Xi :={\rm Mor}_{\Gamma}(\Sigma\setminus \Gamma\cdot q, G)$.

  In Section \ref{section11}, we  introduce the moduli stack $\mathscr{P}arbun_{\mathscr{G}}$ of quasi-parabolic $\mathscr{G}$-torsors over $\bar{\Sigma}$, where   $\mathscr{G}$ is  the  parahoric Bruhat-Tits group scheme. We further    
  recall its uniformization theorem essentially due to Heinloth and construct the line bundles over 
  $\mathscr{P}arbun_{\mathscr{G}}$.
  
  In Section \ref{section12}, we establish the identification of twisted conformal blocks and generalized theta functions on the moduli stack $\mathscr{P}arbun_{\mathscr{G}}$.  
  
\vskip2ex

\noindent {\bf Acknowledgements}:   We would like to thank  Prakash Belkale,  Joseph Bernstein, Chiara Damiolini,  Zhiwei Yun and Xinwen Zhu for some helpful conversations. We  also would like to thank Matthieu Romagny for answering some questions on stable compactification of Hurwitz stacks. We thank the referee for carefully reading the manuscript and providing various suggestions for improvement. J.\,Hong is partially supported by the Simons Foundation Collaboration Grant 524406 and NSF grant DMS-2001365;  S. Kumar is partially supported by the NSF grant DMS-1501094.

\section{Twisted affine Kac-Moody algebras}
\label{Kac_Moody_Section}
This section is devoted to recalling the definition of twisted affine Kac-Moody Lie algebras and their basic properties (we need).

Let $\sigma$ be an operator of finite order $m$  acting on two vector spaces $V$ and $W$ over $\mathbb{C}$. Consider the diagonal action of $\sigma$ on $V\otimes W$. We have the following decomposition of the $\sigma$-invariant subspace in $V\otimes W$,
\[ (V\otimes W)^\sigma=\oplus_{\xi} V_{\xi}\otimes W_{\xi^{-1}} ,  \]
where the summation is over $m$-th roots of unity  and $V_{\xi}$ (resp. $W_{\xi^{-1}}$) is the $\xi$-eigenspace of $V$ (resp. $\xi^{-1}$-eigenspace of $W$).  We say $v\otimes w$ is pure or more precisely $\xi$-pure if $v\otimes w\in V_{\xi}\otimes W_{\xi^{-1}}$.  {\it Throughout this paper, if we write $v\otimes w\in (V\otimes W)^\sigma$, we mean $v\otimes w$ is pure. }

 Let $\fl$ be a Lie algebra over $\mathbb{C}$ and let $A$ be a commutative  algebra over $\mathbb{C}$. Let $\sigma$ act on $\fl$ (resp.  $A$)  as Lie algebra
(resp. algebra)  automorphism of finite orders. For any $x\otimes a\in \fl\otimes A$, we denote it by $x[a]$ for brevity. There is a Lie algebra structure on $\fl\otimes A$ with the Lie bracket given by 
\[ [x[a],y[b]]:=[x,y][ab], \quad \text{ for any elements } x[a],y[b]\in \fl\otimes A.   \]
Then, $(\fl\otimes A)^\sigma$ is a Lie subalgebra.

Let $\mathfrak{g}$ be a simple Lie algebra over $\mathbb{C}$  with a Cartan subalgebra $\mathfrak{h}$ and let $\sigma$ be an automorphism of $\mathfrak{g}$ such that $\sigma^m=1$ ($\sigma$ is not necessarily of order $m$). 
Let
$\langle\cdot ,\cdot\rangle$ be the invariant (symmetric, nondegenerate)
bilinear form on $\fg$ normalized so that the induced form on the
dual space $\fh^*$ satisfies $\langle\theta ,\theta\rangle =2$ for the
highest root $\theta$ of $\fg$.  The bilinear form $\langle\cdot,\cdot \rangle$ is $\sigma$-invariant since $\sigma$ is a Lie algebra automorphism of $\fg$.

Let $\mathcal{K} =\bc((t)):= \bc [[t]][t^{-1}]$ be the field of Laurent power series, and let $\mathcal{O}$
be the ring of formal power seires $\mathbb{C}[[t]]$ with the maximal ideal $\mathfrak{m}=t\mathcal{O}$.  We fix a $\bold{primitive}$ $m$-th root of unity $\epsilon = \epsilon_m= e^{\frac{2\pi i}{m}}$ throughout the paper.    We define an action of $\sigma$ on $\mathcal{K}$ as field  automorphism by setting 
$$\sigma (t)=\epsilon^{-1}t\,\,\,\text{and $\sigma$ acting trivially on $\bc$}.$$
It gives rise to  an action of $\sigma$ on the loop algebra $L(\mathfrak{g}):=\mathfrak{g}\otimes_{\mathbb{C}}\mathcal{K} $. Under this action,
$$L(\fg)^\sigma = \oplus_{j=0}^{m-1}\,\left(\fg_j\otimes \mathcal{K}_j\right),$$
where
\begin{equation}  \label{eq1.1.1.3}\fg_j:=\{x\in \fg: \sigma (x)=\epsilon^j x\},\,\,\,\text{and}\,\,\mathcal{K}_j=\{P\in \mathcal{K}: \sigma (P)=\epsilon^{-j} P\}.
\end{equation}

We now define a central extension  $\hat{L}(\fg,\sigma):=L(\fg)^\sigma\oplus \mathbb{C}C$ of $L(\fg)^\sigma$ under the bracket
  \begin{equation}  \label{eq1.1.1.4}
[x[P]+z C, x'[P'] +z' C] =
[x,x' ][PP'] +m^{-1}\Res_{t=0} \,\bigl(({dP})
P'\bigr) \langle x,x'\rangle C,  
 \end{equation}
for  $x[P],x'[P']\in L(\fg)^\sigma$, $z, z'\in\bc$; where
$\Res_{t=0}$ denotes the coefficient of $t^{-1}dt$.  Let  $\hat{L}(\fg,\sigma)^{\geq 0}$ denote the subalgebra 
\[  \hat{L}(\fg,\sigma)^{\geq 0}:= \oplus_{j= 0}^{m-1}   \fg_j\otimes  \mathcal{O}_j \oplus  \mathbb{C} C, \]
where $\mathcal{O}_j=\mathcal{K}_j\cap \mathcal{O}$.
We also denote 
\[  \hat{L}(\fg,\sigma)^+:=  \oplus_{j  = 0}^{m-1}   \fg_j\otimes  \mathfrak{m}_{j}  , \quad \text{and }    \hat{L}(\fg,\sigma)^-:= \oplus_{j< 0}   \fg_j\otimes  t^{j}  ,\]
where $\mathfrak{m}_j=\mathfrak{m}\cap \mathcal{O}_j$.
Then,   $\hat{L}(\fg,\sigma)^+$ is an ideal of  $\hat{L}(\fg,\sigma)^{\geq 0}$ and the quotient  $\hat{L}(\fg,\sigma)^{\geq 0}/  \hat{L}(\fg,\sigma)^+$ is isomorphic to $ \fg_0\oplus \mathbb{C}C$.   Note that $\fg_0$ is the Lie algebra $\fg^\sigma$ of $\sigma$-fixed points in $\fg$.    As  vector spaces we have 
\[  \hat{L}(\fg, \sigma) = \hat{L}(\fg,\sigma)^{\geq 0} \oplus    \hat{L}(\fg,\sigma)^- . \]

By the classification theorem of finite order automorphisms of simple Lie algebras (cf. \cite[ Proposition 8.1, Theorems 8.5, 8.6]{Ka}),
there exists a  `compatible' Cartan subalgebra $\fh$ and a `compatible' Borel subalgebra $\fb \supset \fh$ of $\fg$  both stable under the action of $\sigma$ such that 
\begin{equation}  \label{eq1.1.1.0} \sigma=\tau \epsilon^{{\rm ad} h}, 
\end{equation}
where $\tau$ is a diagram automorphism of $\fg$ of order $r$ preserving $\fh$ and $\fb$, and $\epsilon^{{\rm ad} h}$ is the inner automorphism of $\fg$ such that for any root $\alpha$ of $\fg$, $\epsilon^{{\rm ad} h}$ acts on the root space $\fg_\alpha$ by the multiplication $\epsilon^{\alpha(h)}$, and $\epsilon^{{\rm ad} h}$ acts on $\fh$ by the identity. {\it We consider $\tau = \Id$ also as a diagram automorphism.} Here  $h$ is an element in $\fh^\tau$. In particular,  $\tau$ and $\epsilon^{{\rm ad}h}$ commute.  Moreover,  $\alpha(h)  \in \mathbb{Z}^{\geq 0}$  for  any simple root  $\alpha$  of  $\fg^\tau$, $\beta(h)  \in \mathbb{Z}$  for  any simple root  $\beta$  of  $\fg$ and $\theta_0(h)\leq \frac{m}{r}$ where $\theta_0\in (\fh^\tau)^*$ denotes the following weight of $\fg^\tau$
\[ \theta_0=\begin{cases}  
\text{ highest root of } \fg,   \text{ if } r=1\\
\text{ highest short root  of  } \fg^\tau, \text{ if } r>1 \text{ and }(\fg, r)\neq (A_{2n},2)\\
  2\cdot \text{highest short root} \text{ of } \fg^\tau, \,\text{ if } (\fg,r)=(A_{2n},2).  \end{cases}  \]

 Observe that $r$ divides $m$, and $r$ can only be $1,2, 3$.   Note that $\fg^\sigma$ and $\fg^{\tau}$ share the common Cartan subalgebra $\fh^{\sigma}=\fh^{\tau}$.

Let $I(\fg^\tau)$ denote the set of vertices of the Dynkin diagram of $\fg^\tau$.  Let $\alpha_i$ denote the simple root associated to $i\in I(\fg^\tau)$.  Let  $\hat{I}(\fg,\sigma)$ denote the set $I(\fg^\tau)\sqcup \{o\}$, where $o$ is just a symbol. (Observe that $\tau$ is determined from $\sigma$.)
  Set 
\[   s_i=\begin{cases} \alpha_i(h)  \quad \text{if } i\in I(\fg^\tau) \\
\frac{m}{r}-\theta_0(h)  \quad \text{if } i=o .
    \end{cases}\]

Then,  $s=\{s_i \,|\, i\in  \hat{I}(\fg,\sigma) \}$ is a tuple of non-negative integers. 
Let  $\hat{L}(\fg, \tau)$ denote the Lie algebra with the construction similar to  $\hat{L}(\fg, \sigma)$ where $\sigma$ is replaced by $\tau$, $m$ is replaced by $r$ and $\epsilon$ is replaced by $\epsilon^{\frac{m}{r}}$. There exists an isomorphism of Lie algebras (cf. \cite[Theorem 8.5]{Ka}):
  \begin{equation} \label{neweqn2.3.1} \phi_\sigma:   \hat{ L}(\fg, \tau)\simeq   \hat{L}(\fg, \sigma)  
\end{equation}
given by $C\mapsto C$ and
 $x[t^j]\mapsto    x[t^{\frac{m}{r}j+k  } ]$, for any $x$ an  $\epsilon^{\frac{m}{r}j}$-eigenvector of $\tau$, and $x$ also a $k$-eigenvector of ${\rm ad } \,h$.  We remark that in the case $ (\fg,r)=(A_{2n},2)$, our labelling for $i=o$ is the same as $i=n$ in [Ka, Chapter 8].  It is well-known that  $\hat{L}(\fg,\tau)$ is an affine Lie algebra, more precisely  $\hat{L}(\fg, \tau)$ is untwisted if $r=1$ and twisted if $r>1$.

By Theorem 8.7 in \cite{Ka}, there exists a $sl_2$-triple $x_i,y_i,h_i \in \fg$ for each $i\in  \hat{I}(\fg,\sigma)$ where 
\begin{itemize}
\item $x_i\in (\fg^\tau)_{\alpha_i}$, $y_i\in (\fg^\tau)_{-\alpha_i}$ when $i\in I(\fg^\tau)$;
\item  $x_{o} $ (resp. $y_{o}$) is a $(-\theta_0)$(resp. $\theta_0$)-weight vector with respect to the adjoint action of  $\fh^\tau$ on $\fg$, and is also  an $\epsilon^{\frac{m}{r}}$ (resp. $\epsilon^{-\frac{m}{r}}$)-eigenvector of $\tau$ ;
\item $x_i\in \mathfrak{n}$ for $i\in I(\fg^\tau)$ and $x_o \in  \mathfrak{n}^-$, where  $\mathfrak{n}$ (resp.  $\mathfrak{n}^-$) is the nil-radical of $ \mathfrak{b}$ (resp. the opposite Borel subalgebra $ \mathfrak{b}^-$). Similarly,  $y_i\in \mathfrak{n^-}$ for $i\in I(\fg^\tau)$ and $y_o \in  \mathfrak{n}$,
\end{itemize}
 (see explicit construction of $x_i, y_i, i\in \hat{I}(\fg,\sigma)$ in \cite[\S 7.4, \S 8.3]{Ka}), such that 
  \[  x_i[t^{s_i}], y_i[t^{-s_i}],  [x_i[t^{s_i} ],  y_i[t^{-s_i}]], \, i\in \hat{I}(\fg,\sigma),\]
   are Chevalley generators of  $\hat{L}(\fg,\sigma)$ and $\{x_i, y_i, [x_i, y_i]\}_{i\in  \hat{I}(\fg,\sigma): s_i=0}$ are Chevalley generators of $[\mathfrak{g}^\sigma, \mathfrak{g}^\sigma]$.  We set 
\[ \tilde{x}_i:=x_i[t^{s_i}],  \tilde{y}_i:=y_i[t^{-s_i}], \text{ and } \tilde{h}_i:=[\tilde{x}_i, \tilde{y}_i],\quad \text{for any } i\in \hat{I}(\fg,\sigma) . \]
Via the isomorphism $\phi_\sigma$, we have 
\[ \phi_\sigma(x_i)= \tilde{x_i}, \phi_\sigma(y_i)=\tilde{y}_i, \text{ for any } i\in I(\fg^\tau),  \]
and 
\[\phi_\sigma(x_o[t])=\tilde{x}_o,    \quad  \phi_\sigma(y_o[t^{-1}])=\tilde{y}_o . \]
Thus,  $\deg \tilde{x}_i=s_i$ and $\deg \tilde{y}_i=-s_i$. The Lie algebra $\hat{L}(\fg,\sigma)$ is called an $(s,r)$-realization of the associated affine Lie algebra $\hat{L}(\fg,\tau)$. 

From the above discussion, for any $i\in \hat{I}(\fg, \sigma)$,  we have
\begin{equation}
\label{eigenvalue_sigma}
\sigma(x_i)=\epsilon^{s_i} x_i, \,\text{ and } \sigma(y_i)=\epsilon^{-s_i}y_i  \,.
\end{equation}


We fix a positive integer $c$ called the {\em level} or {\em central charge}. Let $\Rep_c$ be the set of isomorphism classes of integrable highest weight (in particular, irreducible)  $\hat{L}(\fg,\sigma)$-modules with central charge $c$, where in our realization $C$ acts by $c$,  the standard Borel subalgebra  of $\hat{L}(\fg,\sigma)$ is generated by $\{\tilde{x}_i, \tilde{h}_i\}_{i\in  \hat{I}(\fg, \sigma)}$ and $\hat{L}(\fg,\sigma)^-$ is generated by 
\[  \{\tilde{y}_i\}_{\{i\in  \hat{I}(\fg, \sigma): s_i> 0\}},  (\text{cf. [Ka, Theorem 8.7] }).\]
  Thus,   $\hat{L}(\fg,\sigma)^{\geq 0}$ is a standard parabolic subalgebra of  $\hat{L}(\fg,\sigma)$. For any $\mathscr{H}\in \Rep_c$, let $\mathscr{H}^0$ be the subspace of $\mathscr{H}$ annihilated by  $\hat{L}(\fg,\sigma)^+$. Then, $\mathscr{H}^0$ is an irreducible finite dimensional  $\fg^\sigma$-submodule of $\mathscr{H}$ with highest weight (say) $\lambda(\mathscr{H})\in (\fh^\sigma)^*=(\fh^\tau)^*$ for the choice of the Borel subalgebra of $\fg^\sigma$  generated by $\fh^\sigma$ and $\{x_i: s_i=0\}$.  The correspondence $\mathscr{H}\mapsto \lambda(\mathscr{H})$ sets up an injective map $\Rep_c \to (\fh^\tau)^*$. Let $D_c$ be its image.
For $\lambda \in D_c$, let $\mathscr{H}(\lambda)$ be the corresponding integrable highest weight  $\hat{L}(\fg,\sigma)$-module with central charge $c$. 

  For any $\lambda\in D_c$ and $i\in \hat{I}(\fg,\sigma)=I(\fg^\tau)\sqcup \{o\}$, we associate an integer $n_{\lambda, i}$ as follows.  
Set 
\begin{equation}
\label{integer_n_mu} 
 n_{\lambda, i} =  
 \lambda ([x_i, y_i]) + \langle x_i,y_i \rangle  \frac{s_i c }{ m}.
   \end{equation}
   


For $\sigma = \tau$ a diagram automorphism of $\mathfrak{g}$  (including $\tau =\Id$), by definition $s_i= 0$ for 
$i\in I(\fg^\tau)$ and $s_o = 1$. 

For any diagram automorphism $\tau$ of order $r$ (including $r = 1$), we follow the concrete realization of $x_i, y_i, i\in {I}(\fg^\tau)\sqcup\{o\}  $ in \cite[\S 8.3]{Ka}. We emphasize that in the case $(\fg,r)=(A_{2n, 2})$, our labelling ``$o$" corresponds to $i=n$ in \cite[\S 8.3]{Ka}.  When $(\fg, r)\not= (A_{2n}, 2)$, we have 
\begin{equation}   \langle x_i, y_i\rangle =\begin{cases}
1  \quad \text{if $\alpha_i$ is a long root for $i\in I(\fg^\tau)$  } \\
 r   \quad \text{if  $i=o$,  or $\alpha_i$ is a short root for $i\in I(\fg^\tau)$}
    \end{cases},\end{equation}
and when $(\fg, r)= (A_{2n}, 2)$,
\begin{equation}  \langle x_i, y_i\rangle =\begin{cases}  1  \quad \text{if  $i=o$}   \\
2   \quad \text{if $\alpha_i$ is a long root for $i\in I(\fg^\tau)$ }\\
4  \quad \text{ if $\alpha_i$ is a short root for $i\in I(\fg^\tau)$}
    \end{cases}  .\end{equation}

\begin{lemma}
\label{finite_set_weight_lem}
The set $D_c$ can be described as follows:
\[ D_c=\{  \lambda \in (\fh^\tau)^*   \,| \,   n_{\lambda,i}\in \mathbb{Z}_{\geq 0}  \quad \text{for any } i\in \hat{I}(\fg,\sigma)   \}  . \]
\end{lemma}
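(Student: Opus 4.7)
The plan is to reduce the statement to the standard classification of integrable highest weight modules of affine Kac--Moody algebras (cf.\,\cite[Ch.~10 and Theorem~8.7]{Ka}) via the Lie algebra isomorphism $\phi_\sigma : \hat{L}(\fg,\tau) \xrightarrow{\sim} \hat{L}(\fg,\sigma)$ of \eqref{neweqn2.3.1}, and then to translate the dominance conditions back to the realization $\hat{L}(\fg,\sigma)$ by computing the values of $\tilde{h}_i = [\tilde{x}_i, \tilde{y}_i]$ on a highest weight vector.

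First, I would recall that $\hat{L}(\fg,\tau)$ is an affine Kac--Moody algebra whose Chevalley generators are indexed by $\hat{I}(\fg,\sigma) = I(\fg^\tau)\sqcup\{o\}$, and that its integrable highest weight modules of central charge $c$ are in bijection with the weights $\Lambda$ on its standard Cartan satisfying $\Lambda(h_i)\in\mathbb{Z}_{\geq 0}$ for every $i\in\hat{I}(\fg,\sigma)$ and $\Lambda(C)=c$. Restricting such $\Lambda$ to $\fh^\tau=\fh^\sigma$ gives the highest weight $\lambda$ of the zero-mode $\fg^\sigma$-module $\mathscr{H}^0$, and $\Lambda$ is recovered from the pair $(\lambda,c)$. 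Since $\phi_\sigma$ sends $h_i\mapsto \tilde{h}_i$ for each $i\in\hat{I}(\fg,\sigma)$ and $C\mapsto C$, the classification transports: integrable highest weight modules of $\hat{L}(\fg,\sigma)$ of level $c$ correspond bijectively to weights $\lambda\in(\fh^\tau)^*$ such that $\lambda(\tilde{h}_i)\in\mathbb{Z}_{\geq 0}$ for all $i\in\hat{I}(\fg,\sigma)$.

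The core computation is then the identification $\lambda(\tilde{h}_i) = n_{\lambda,i}$. Directly from the bracket formula \eqref{eq1.1.1.4},
\[
\tilde{h}_i \;=\; [x_i[t^{s_i}],\, y_i[t^{-s_i}]] \;=\; [x_i,y_i] \;+\; m^{-1}\,s_i\,\langle x_i,y_i\rangle\, C,
\]
so $\lambda(\tilde{h}_i) = \lambda([x_i,y_i]) + m^{-1} s_i\,\langle x_i,y_i\rangle\, c$. For $i\in I(\fg^\tau)$, the $sl_2$-triple property from \cite[Theorem~8.7]{Ka} forces $[x_i,y_i]=\check{\alpha}_i$ and $\langle x_i,y_i\rangle=2/\langle\alpha_i,\alpha_i\rangle$, so after expressing $\langle\alpha_i,\alpha_i\rangle$ in terms of $\langle\alpha_i,\alpha_i\rangle_\tau$ via the rescaling constant between the two invariant forms on $\fg^\tau$, one recovers the defining formula $n_{\lambda,i}=\lambda(\check\alpha_i)+\tfrac{2}{m\langle\alpha_i,\alpha_i\rangle_\tau}s_i c$. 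For $i=o$, the same computation gives $\lambda(\tilde{h}_o) = \lambda([x_o,y_o]) + m^{-1}s_o\langle x_o,y_o\rangle c$, which matches \eqref{integer_n_mu} provided $\langle x_o,y_o\rangle=2/\langle\theta_0,\theta_0\rangle_\tau$.

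The main obstacle is tracking the two distinct normalizations of the invariant form: the ambient form $\langle\cdot,\cdot\rangle$ on $\fg$ (with $\langle\theta,\theta\rangle=2$) that enters the cocycle of $\hat{L}(\fg,\sigma)$, versus the rescaled form $\langle\cdot,\cdot\rangle_\tau$ on $\fg^\tau$ (with the highest root of $\fg^\tau$ of square-length $2$) that appears in the statement. This scaling is exactly the content of \cite[Proposition~8.3(c) and Identity~8.3.7]{Ka}, recalled just before the lemma, which yields the case distinction $\langle\theta_0,\theta_0\rangle_\tau=\tfrac{2}{r}$ or $2$. Once this proportionality is substituted into the $sl_2$-normalization of the pairs $(x_i,y_i)$ provided by Kac's construction in \cite[\S 7.4,\,\S 8.3]{Ka}, the identification $\lambda(\tilde h_i)=n_{\lambda,i}$ becomes a direct check, and the lemma follows.
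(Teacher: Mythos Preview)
Your proposal is correct and takes essentially the same approach as the paper's proof, only with considerably more detail. The paper's argument is a one-sentence sketch: integrability of $\mathscr{H}(\lambda)$ is equivalent to the eigenvalues of $\tilde{h}_i=[\tilde{x}_i,\tilde{y}_i]$ on the highest weight vector lying in $\mathbb{Z}_{\geq 0}$; you have unpacked this by explicitly computing $\tilde{h}_i=[x_i,y_i]+m^{-1}s_i\langle x_i,y_i\rangle C$ from the cocycle and identifying the eigenvalue with $n_{\lambda,i}$ via the normalization constants from \cite[\S 8.3]{Ka}.
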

\begin{proof}
 The lemma follows from the fact that the irreducible highest weight   $\hat{L}(\fg,\sigma)$-module $\mathscr{H}(\lambda)$ with highest weight $\lambda$  is integrable if and only if  the eigenvalues of $\tilde{h}_i,i\in \hat{I}(\fg, \sigma)$ on the highest weight vector in $\mathscr{H}(\lambda)$ are non-negative integers. 
\end{proof}

Define
$$\bar{s}_i= \langle x_i, y_i \rangle s_i, \,\,\,\text{for any $i\in \hat{I}(\fg, \sigma)$ }$$
and let
$$\bar{s} := \text{gcd}\,\{ \bar{s}_i: i\in \hat{I}(\fg,\sigma)\}.$$
As an immediate consequence of Lemma \ref{finite_set_weight_lem} and the identity \eqref{integer_n_mu}, we get the following:
\begin{coro} \label{newcoroweight0} For any integer $c\geq 1$, $0\in D_c$ if and only if $m$ divides $\bar{s}c$. 

In particular,  $0\in D_c$ if $m$ divides $c$.

Also, for a diagram automorphism $\sigma = \tau$, $0\in D_c$  for all $c$ if $ (\fg, r) \neq (A_{2n}, 2)$. If  $(\fg, r) = (A_{2n}, 2)$,  $0\in D_c$ if and only if $c$ is even.
\end{coro}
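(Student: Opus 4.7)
The plan is to derive the corollary directly from Lemma \ref{finite_set_weight_lem} applied to the weight $\lambda = 0$. Since $\lambda = 0$ kills both $[x_o, y_o] \in \fh^\tau$ and every coroot $\check{\alpha}_i$, the formulas for $n_{\lambda, i}$ collapse to a pure multiple of the central charge:
\[
n_{0, o} = \frac{2 s_o}{m\langle \theta_0, \theta_0\rangle_\tau}\, c = \frac{\bar{s}_o}{m}\, c, \qquad n_{0, i} = \frac{2 s_i}{m\langle \alpha_i, \alpha_i\rangle_\tau}\, c = \frac{\bar{s}_i}{m}\, c \,\text{ for } i \in I(\fg^\tau).
\]
Each $n_{0,i}$ is automatically $\geq 0$ since $s_i, c \geq 0$, so the only content of Lemma \ref{finite_set_weight_lem} is the integrality condition $m \mid \bar{s}_i c$ for every $i \in \hat{I}(\fg, \sigma)$.

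Next, I would convert the family of divisibility conditions into a single one. Using the standard identity $\gcd(a b_1, \dots, a b_k) = a\,\gcd(b_1, \dots, b_k)$ for the positive integer $c$, the system $\{m \mid \bar{s}_i c\}_{i \in \hat{I}(\fg,\sigma)}$ is equivalent to $m \mid c\,\gcd\{\bar{s}_o\} \cup \{\bar{s}_i : i \in I(\fg^\tau)\} = \bar{s}\, c$, where $\gcd$ with $0$ contributes nothing. This gives the first assertion, and the "in particular" statement is immediate because $m \mid c$ forces $m \mid \bar{s} c$.

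Finally, for the special case $\sigma = \tau$ a diagram automorphism (including $\tau = \Id$), recall that $s_i = 0$ for $i \in I(\fg^\tau)$, $s_o = 1$, and $m = r$. Hence $\bar{s}_i = 0$ for $i \in I(\fg^\tau)$, so these integrality conditions are automatic, and the only surviving condition is $r \mid \bar{s}_o c$. Substituting the two values of $\langle \theta_0, \theta_0\rangle_\tau$ recorded just before the lemma, I compute
\[
\bar{s}_o = \frac{2 s_o}{\langle \theta_0, \theta_0\rangle_\tau} = \begin{cases} r & \text{if } (\fg, r) \neq (A_{2n}, 2),\\ 1 & \text{if } (\fg, r) = (A_{2n}, 2). \end{cases}
\]
In the first case $r \mid r c$ is trivially true, giving $0 \in D_c$ for all $c$; in the second case the condition reads $2 \mid c$, as claimed.

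There is essentially no obstacle here beyond bookkeeping: the argument is a direct specialization of Lemma \ref{finite_set_weight_lem}. The only minor care needed is in invoking the $\gcd$ identity when several $\bar{s}_i$ vanish (so that $\bar{s}$ is genuinely the gcd of the nonzero entries) and in correctly plugging in the tabulated values of $\langle \theta_0, \theta_0\rangle_\tau$ in the $A_{2n}^{(2)}$ case, where the normalization is different from the remaining twisted types.
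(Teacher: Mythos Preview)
Your proof is correct and follows exactly the approach the paper intends: the paper itself gives no proof beyond declaring the corollary ``an immediate consequence of Lemma~\ref{finite_set_weight_lem},'' and your computation---specializing $n_{\lambda,i}$ to $\lambda=0$, reducing to the divisibility conditions $m\mid \bar{s}_i c$, and collapsing these to $m\mid \bar{s}c$ via the $\gcd$---is precisely that immediate consequence spelled out. The handling of the diagram-automorphism case using the tabulated values of $\langle\theta_0,\theta_0\rangle_\tau$ is also exactly as the paper sets it up.
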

We recall the following well known result.
\begin{lemma} For any automorphism $\sigma$ and any $c \geq 1$, $D_c \neq \emptyset$.
\end{lemma}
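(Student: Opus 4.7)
The plan is to transport the problem through the isomorphism $\phi_\sigma$ of (\ref{neweqn2.3.1}) to the classical theory of integrable highest weight modules of the affine Kac--Moody algebra $\hat{L}(\fg, \tau)$, and to invoke a standard existence result.

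First, I would check that the integers $n_{\lambda, i}$ are precisely the Dynkin labels of the affine highest weight corresponding to $\lambda$. A direct computation using the central extension formula (\ref{eq1.1.1.4}) applied to the bracket $\tilde{h}_i = [\tilde{x}_i, \tilde{y}_i] = [x_i[t^{s_i}], y_i[t^{-s_i}]]$ yields
\[
\tilde{h}_i \;=\; [x_i, y_i] \;+\; m^{-1} s_i \langle x_i, y_i\rangle\, C
\]
for $i \in I(\fg^\tau)$, and an analogous identity for $i = o$ with $\theta_0$ in place of $\alpha_i$. Pairing with the highest weight $\Lambda$ of $\mathscr{H}(\lambda)$ (which restricts to $\lambda$ on $\fh^\tau$ and to $c$ on $C$) recovers exactly the formulas defining $n_{\lambda, i}$. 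Combined with Lemma \ref{finite_set_weight_lem}, this identifies $D_c$ with the set of dominant integral weights of $\hat{L}(\fg, \sigma)$ of central charge $c$, i.e. with tuples $(k_i)_{i \in \hat{I}(\fg, \sigma)} \in \mathbb{Z}_{\geq 0}^{\hat{I}(\fg, \sigma)}$ subject to the level constraint imposed by $C$.

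Under the isomorphism $\phi_\sigma$, the central element $C$ is a positive integer combination $C = \sum_{i \in \hat{I}(\fg, \sigma)} a_i^\vee \tilde{h}_i$, where the $a_i^\vee$ are the dual Kac labels of the corresponding affine Dynkin diagram (cf.\ Kac \cite[Ch.~6]{Ka}). The level condition thus becomes $\sum_i a_i^\vee k_i = c$. In every affine type there is at least one vertex $i_0 \in \hat{I}(\fg, \sigma)$ with $a_{i_0}^\vee = 1$ (visible from the tables of affine diagrams), so setting $k_{i_0} = c$ and $k_j = 0$ for $j \neq i_0$ produces a solution. This exhibits an explicit element of $D_c$ for any $c \geq 1$.

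The only mild obstacle is the bookkeeping: checking the normalization constants relating $\tilde{h}_i$ to $n_{\lambda, i}$ (including the slightly anomalous case $(\fg, r) = (A_{2n}, 2)$), and confirming that some $a_{i_0}^\vee$ equals $1$ in each affine type. Neither step involves a genuine difficulty; the lemma is essentially a reformulation of the classical existence of level $c$ integrable highest weight representations of affine Kac--Moody algebras.
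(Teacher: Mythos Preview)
Your proposal is correct and follows the same initial reduction as the paper: both use the isomorphism $\phi_\sigma$ of \eqref{neweqn2.3.1} to reduce to the diagram-automorphism case. The difference lies in the endgame. The paper simply invokes the preceding Corollary~\ref{newcoroweight0} to conclude $0\in D_c$ whenever $(\fg,r)\neq(A_{2n},2)$, and for $(A_{2n},2)$ it writes down $\omega_n\in D_c$ for odd $c$ and $0\in D_c$ for even $c$ by hand. Your argument instead identifies $D_c$ with the set of dominant integral weights at level $c$ and appeals to the standard fact that every affine Dynkin diagram has some dual Kac label $a_{i_0}^\vee=1$, yielding a solution $(k_{i_0},k_j)=(c,0)$ uniformly. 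Your route is cleaner and avoids the case split, at the cost of invoking a structural fact from \cite{Ka} rather than the explicit computation already done in Corollary~\ref{newcoroweight0}; the paper's route is shorter in context because it recycles that corollary.
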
 
\begin{proof} By the isomorphism $\phi_\sigma$ (as in equation \eqref{neweqn2.3.1}), it suffices to prove the lemma for the diagram automorphisms $\tau$ (including $\tau = \Id$). By Corollary \ref{newcoroweight0},  $0\in D_c$   if $ (\fg, r) \neq (A_{2n}, 2)$. If  $ (\fg, r) = (A_{2n}, 2)$, take $\lambda = \omega_n$: the $n$-th fundamental weight of type $B_n$ (following the Bourbaki convention of indexing as in [Bo, Planche II]). Then, $\omega_n \in D_c$ for odd values of $c$ and 
$0\in D_c$ for even values of $c$.
\end{proof}

 Let $V(\lambda)$ be the irreducible $\fg^\sigma$-module with highest weight $\lambda$ and highest weight vector $v_+$.
Let  $\hat{M}(V(\lambda),c)$ be the generalized Verma module $U( \hat{L}(\fg,\sigma)) \otimes_{U(  \hat{L}(\fg,\sigma)^{\geq 0} )  } V(\lambda)$ with highest weight vector $v_+
=1\otimes v_+$, where the action of  $\hat{L}(\fg,\sigma)^{\geq 0} $ on $V(\lambda)$ factors through the projection map  $\hat{L}(\fg,\sigma)^{\geq 0}\to  \fg^\sigma\oplus \mathbb{C} C$ and the center $C$ acts by $c$.   If $\lambda \in D_c$, then  the unique irreducible quotient of  $\hat{M}(V(\lambda),c)$  is the integrable representation $\mathscr{H}(\lambda)$.  
 Let $K_\lambda$ be  the kernel of $ \hat{M}(V(\lambda),c)\to \mathscr{H}(\lambda)$.  
Set
\[ \hat{I}(\fg,\sigma)^+:=\{i\in \hat{I}(\fg,\sigma)\,|\,    s_i>0  \} . \]
Then, as $U( \hat{L}(\fg,\sigma) )$-module, $K_\lambda$ is generated by
\begin{equation} \label{eqn1.2.0} \{ \tilde{y}_i^{n_{\lambda,i} +1}  \cdot v_+ \,| \, i\in \hat{I}(\fg,\sigma)^+\}\,\,\,\text{ (cf. [Ku, $\S$ 2.1])}.
\end{equation}
Moreover, these elements are highest weight vectors. 

\begin{lemma}\label{lem_kac}
Fix $i\in \hat{I}(\fg,\sigma)$. Let $f\in \mathcal{K}$ be such that $\sigma(f)=\epsilon^{- s_i}f$ and $f\equiv t^{s_i}\mod{t^{{s_i}+1}}$. Put $X=x_i[f]$ and $Y=\tilde{y}_i=y_i[t^{-s_i}]$. For any  $p> n_{\lambda,i}$ and $q> 0$, there exists $\alpha \neq 0 $ such that 
\begin{equation*}
Y^{p}\cdot v_{+}=\alpha X^{q}  Y^{p+q}\cdot v_{+}
\end{equation*}
in the generalized Verma module  $\hat{M}(V(\lambda),c)$.
\end{lemma}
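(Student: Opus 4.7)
The plan is to reduce to the case $q = 1$ and then iterate. The conditions $\sigma(f) = \epsilon^{-s_i}f$ and $f\equiv t^{s_i}\pmod{t^{s_i+1}}$ force an expansion $f = t^{s_i} + \sum_{j\geq 1} a_j\, t^{s_i+jm}$ with $a_j \in \mathbb{C}$, so $X = \tilde{x}_i + X'$ with $X' = \sum_{j\geq 1} a_j\, x_i[t^{s_i+jm}]$. Each summand of $X'$, and $\tilde{x}_i$ when $s_i > 0$, lies in $\hat{L}(\fg,\sigma)^+$ and annihilates $v_+$. The case $s_i = 0$ is trivial: then $Y \in \fg^\sigma$, the highest-weight $\fg^\sigma$-theory on $V(\lambda) \hookrightarrow \hat{M}(V(\lambda),c)$ gives $Y^p\cdot v_+ = 0$ already in $V(\lambda)$, and both sides of the desired identity vanish.

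For $q = 1$ with $s_i > 0$, the affine $\mathfrak{sl}_2$-triple $(\tilde{x}_i, \tilde{y}_i = Y, \tilde{h}_i)$ together with $\tilde{x}_i\, v_+ = 0$ and $\tilde{h}_i\, v_+ = n_{\lambda,i}\, v_+$ yields, via the standard identity $[\tilde{x}_i, Y^{p+1}] = (p+1)\, Y^p(\tilde{h}_i - p)$,
\[
\tilde{x}_i\, Y^{p+1}\, v_+ \;=\; (p+1)(n_{\lambda,i} - p)\, Y^p\, v_+,
\]
a nonzero multiple of $Y^p\, v_+$ since $p > n_{\lambda,i}$. It remains to prove $X'\, Y^{p+1}\, v_+ = 0$. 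For each $j\geq 1$, set $A_j := x_i[t^{s_i+jm}]$. A direct computation of the bracket relation gives $[A_j, Y] = h_i[t^{jm}]$, $[h_i[t^{jm}], Y] = -\alpha_i(h_i)\, y_i[t^{jm-s_i}]$, and $[y_i[t^{jm-s_i}], Y] = 0$; all residue-type central contributions vanish, the last one because $\langle y_i, y_i\rangle = 0$ (since $y_i$ is a sum of root vectors of $\fg$ whose weights cannot sum to zero under the invariant form). All three elements $A_j$, $h_i[t^{jm}]$, $y_i[t^{jm-s_i}]$ annihilate $v_+$ (either as elements of $\hat{L}(\fg,\sigma)^+$, or, in the marginal situation $s_i = m$ where $y_i[t^{jm-s_i}]$ drops into $\fg^\sigma$, because it acts as a positive-root-type Chevalley generator on the highest weight vector). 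Expanding $[A_j, Y^{p+1}]$ by the Leibniz rule and using that $y_i[t^{jm-s_i}]$ commutes with $Y$, one pushes every occurrence of $A_j$, $h_i[t^{jm}]$, or $y_i[t^{jm-s_i}]$ past the powers of $Y$ to act directly on $v_+$, where it vanishes. Hence $A_j\, Y^{p+1}\, v_+ = 0$, and summing over $j$ gives $X'\, Y^{p+1}\, v_+ = 0$.

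For general $q$, the conclusion follows by iteration: since $p+k > n_{\lambda,i}$ for each $k = 0, \ldots, q-1$, the base case applies with exponent $p+k$ and produces nonzero scalars $\alpha_k$ with $Y^{p+k}\, v_+ = \alpha_k\, X\, Y^{p+k+1}\, v_+$; composing these $q$ identities yields $Y^p\, v_+ = (\alpha_0\alpha_1\cdots\alpha_{q-1})\, X^q\, Y^{p+q}\, v_+$ with $\alpha := \alpha_0\alpha_1\cdots\alpha_{q-1} \neq 0$.

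The main obstacle is establishing the vanishing $X'\, Y^{p+1}\, v_+ = 0$: one must verify that the iterated-commutator chain $[A_j, Y]$, $[[A_j, Y], Y]$, $[[[A_j, Y], Y], Y]$ stabilizes cleanly after two steps inside $\hat{L}(\fg,\sigma)^+$, with no surviving central residues, a fact resting on the structural identity $\langle y_i, y_i\rangle = 0$ for every $i \in \hat{I}(\fg,\sigma)$, together with careful bookkeeping in the Leibniz expansion of how positively-graded elements of the twisted loop algebra act on the highest weight vector.
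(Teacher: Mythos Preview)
Your proof is correct, and it reaches the same key intermediate formula $XY^{p+1}v_{+}=(p+1)(n_{\lambda,i}-p)\,Y^{p}v_{+}$ that the paper proves, followed by the same induction on $q$. The difference lies in how that formula is obtained. You decompose $X=\tilde{x}_i+X'$, treat the Chevalley generator $\tilde{x}_i$ via the standard $\mathfrak{sl}_2$ identity, and separately kill the higher-order remainder $X'$ by pushing each $x_i[t^{s_i+jm}]$ through the Leibniz expansion until it hits $v_+$. The paper instead works with $H:=[X,Y]=h_i[t^{-s_i}f]+\frac{\bar s_i}{m}C$ as a single object: one checks directly that $H\cdot v_+=n_{\lambda,i}v_+$ (the constant term of $t^{-s_i}f$ is $1$) and that $[H,Y]=-2y_i[t^{-2s_i}f]$ commutes with $Y$ (because $\langle y_i,y_i\rangle=0$), from which the formula for $XY^{p+1}v_+$ drops out in a couple of lines. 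Both arguments rest on exactly the same structural facts---the vanishing $\langle y_i,y_i\rangle=0$ and, implicitly, the marginal case $s_i=m$ where $y_i\in\fg^\sigma$ is a positive root vector annihilating $v_+$. Your route is more explicit (you actually flag and handle that marginal case, which the paper's terse ``one can check'' elides); the paper's route is slicker because it avoids splitting $X$ and tracking two separate computations.
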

\begin{proof}
Let $H:=[X,Y]=h_i[t^{-s_i}f]+\frac{\bar{s_i }}{m} C$. Then, $[H, Y]=-2y_i[t^{-2s_i}f]$ commutes with $Y$. Note that $H\cdot v_+= n_{\lambda, i}v_+$. Then, one can check that, for  $d\geq 0$,  
\[ HY ^d\cdot v_+= (n_{\lambda,i}-2d)Y^d\cdot v_+ , \]
and,  for  $p\geq 0$,  
\[ XY^{p+1}\cdot v_+=  (p+1)(n_{\lambda, i} -p ) Y^p\cdot v_+. \]
By induction on $q$, the lemma follows.
\end{proof}

\begin{lemma} \label{lemma 1.3} Let $\fg$ and $\sigma$ be as above and let $\mathfrak{b}$ be a $\sigma$-stable  Borel subalgebra with $\sigma$-stable  Cartan subalgebra $ \mathfrak{h}\subset \mathfrak{b}$ of $\fg$.  Then, any element $x$  of 
 $\left(\mathfrak{n} \otimes \mathcal{K}\right)^\sigma$ acts locally nilpotently  on any integrable highest weight  $\hat{L}(\fg, \sigma)$-module  $\mathscr{H}(\lambda)$, where $\mathfrak{n}$ is the nil-radical of  $\mathfrak{b}$.

Replacing the Borel subalgebra  $\mathfrak{b}$ by the opposite Borel subalgebra  $\mathfrak{b}^-$, the lemma holds for any  $x\in \left(\mathfrak{n}^- \otimes \mathcal{K}\right)^\sigma$ as well,   where $\mathfrak{n}^-$ is the nil-radical of  $\mathfrak{b}^-$.
\end{lemma}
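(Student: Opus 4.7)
The plan is to combine the local nilpotency of real root vectors acting on integrable highest-weight modules of affine Kac--Moody algebras with the nilpotency of the Lie algebra $\fn\otimes \mathcal{K}$.

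Since $\sigma$ preserves $\fb$ (and hence $\fn$), the subspace $(\fn\otimes \mathcal{K})^\sigma$ is spanned by $\sigma$-invariant combinations of monomials $X_\alpha\otimes t^k$ with $\alpha\in \Delta^+(\fg)$. Each such monomial is a real root vector of $\hat{L}(\fg,\sigma)$ (via the isomorphism $\phi_\sigma$ of \eqref{neweqn2.3.1}), hence acts locally nilpotently on $\mathscr{H}(\lambda)$ by \cite[Lemma~3.4]{Ka}. Furthermore $\fn$ is nilpotent of some class $N$, so $\fn\otimes \mathcal{K}$ and $(\fn\otimes \mathcal{K})^\sigma$ are nilpotent Lie algebras of class $\leq N$.

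Given $x\in (\fn\otimes \mathcal{K})^\sigma$, I decompose $x = x^+ + x^-$ with $x^+\in (\fn\otimes \mathcal{O})^\sigma$ (non-negative $t$-powers) and $x^-$ a finite sum of monomials in strictly negative $t$-powers (finite because Laurent series have bounded-below support). Using the energy grading $\mathscr{H}(\lambda) = \bigoplus_{n\leq 0}\mathscr{H}(\lambda)_n$ with finite-dimensional $d$-weight spaces and $d$-weights bounded above by $0$: for a fixed $v\in \mathscr{H}(\lambda)$ with $d$-weight components in $[-n_0, 0]$, the finite-dimensional subspace $W_v := \bigoplus_{n=-n_0}^0 \mathscr{H}(\lambda)_n$ is $x^+$-invariant (since $x^+$ weakly raises $d$-weight), and $x^+|_{W_v}$ is block upper-triangular along the $d$-filtration with nilpotent $\fn^\sigma$-diagonal blocks (by Engel's theorem applied to each finite-dimensional $d$-slice), so $x^+$ acts nilpotently on $W_v$. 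For $x^-$, the finitely many constituent monomials generate a finite-dimensional nilpotent Lie subalgebra $\mathfrak{a}^-$ of real root vectors: each generator is locally nilpotent, and by exponentiating via the Campbell--Hausdorff formula (applicable since $\mathfrak{a}^-$ is nilpotent) to a finite-dimensional unipotent group acting on $\mathscr{H}(\lambda)$, every element of $\mathfrak{a}^-$ (including $x^-$) is locally nilpotent.

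The main obstacle is combining the local nilpotencies of $x^+$ and $x^-$, which do not commute in general. The plan is to exploit the nilpotency of $(\fn\otimes \mathcal{K})^\sigma$: iterated commutators of $x^+$ and $x^-$ terminate at depth $\leq N$, and an Engel-style argument combining the energy filtration on $\mathscr{H}(\lambda)$ with the bounded nilpotency class bounds the action of $x^{M}$ on any vector. This will yield $x^{M} v = 0$ for $M$ sufficiently large. The statement for $(\fn^-\otimes \mathcal{K})^\sigma$ then follows by an entirely symmetric argument, replacing $\fb$ by the opposite Borel $\fb^-$.
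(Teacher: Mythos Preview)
Your decomposition $x=x^{+}+x^{-}$ and the separate local nilpotency arguments for each piece are fine, but the step you yourself flag as ``the main obstacle''---combining them---is not actually carried out.  Saying that ``an Engel-style argument combining the energy filtration with the bounded nilpotency class bounds the action of $x^{M}$'' is a description of what must happen, not a proof.  In particular, $x^{-}$ lowers $d$-degree while $x^{+}$ raises it, so there is no common finite-dimensional invariant subspace on which Engel's theorem applies directly; and local nilpotency of two non-commuting operators does not by itself imply local nilpotency of their sum, even when they generate a nilpotent Lie algebra.  What is needed is a genuine lemma: if a finite-dimensional nilpotent Lie algebra is generated by locally nilpotent operators, then every cyclic submodule is finite-dimensional.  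This is precisely the content of [Ku, Lemma~1.3.3\,(c$_2$)], which the paper invokes at the corresponding point.

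The paper's route avoids your $t$-degree split entirely.  It decomposes $x=\sum_{\beta} y_{\beta}[P_{\beta}]$ along a basis $\{y_{\beta}\}$ of $\mathfrak{n}$ consisting of simultaneous eigenvectors for $\sigma$ and $\mathfrak{h}^{\sigma}$.  Each $y_{\beta}[P_{\beta}]$ is then itself a finite sum of \emph{commuting} real root vectors plus a strictly positive-degree tail, hence locally nilpotent.  These finitely many elements generate a finite-dimensional nilpotent Lie subalgebra (no central term, since $\langle\mathfrak{n},\mathfrak{n}\rangle=0$), and the cited lemma yields that $U(\hat{L}(\fg,\sigma)_x)\cdot v$ is finite-dimensional for any $v$; Lie's theorem then finishes.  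Your approach could be salvaged by the same lemma applied to the two generators $x^{+},x^{-}$, but as written the proof is incomplete at exactly the point where this input is needed.  A smaller issue: monomials $X_{\alpha}\otimes t^{k}$ for $\alpha\in\Delta^{+}(\fg)$ are not automatically real root vectors of the \emph{twisted} algebra unless $X_{\alpha}$ is also a $\sigma$-eigenvector---the paper is careful to choose a basis diagonal for both $\sigma$ and $\mathfrak{h}^{\sigma}$.
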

\begin{proof} Take a basis $\{y_\beta \}_\beta$ of  $\mathfrak{n}$ consisting of common eigenvectors under the action of $\sigma$ as well as 
$\mathfrak{h}^\sigma$ (which is possible since the adjoint action of  $\mathfrak{h}^\sigma$ on $\fg$ commutes with the action of $\sigma$)
and write $x=\sum_\beta \, y_\beta[P_\beta]$ for some $P_\beta \in \mathcal{K}$. Since $x$ is $\sigma$-invariant and each $y_\beta$ is an eigenvector, each $y_\beta[P_\beta]$ is  $\sigma$-invariant. Let   $\hat{L}(\fg, \sigma)_x$ be the Lie subalgebra of     $\hat{L}(\fg, \sigma)$ generated by the elements $\{ y_\beta[P_\beta]\}_\beta$. Then, since  $\mathfrak{n}$ is nilpotent (in particular, $N$-bracket of elements from  $\mathfrak{n}$ is zero, for some large enough $N$),    $\hat{L}(\fg, \sigma)_x$ is a finite dimensional nilpotent Lie algebra. (Observe that  $\mathfrak{n}$ being nilpotent, for any two elements $s_1, s_2 \in \mathfrak{n}, \langle s_1, s_2\rangle =0$.) 
Take any element  $v\in  \mathscr{H}(\lambda )$ and let $\mathscr{H}(x, v)$ be the    $\hat{L}(\fg, \sigma)_x$ -submodule of $\mathscr{H}(\lambda)$ generated by $v$. Since $\sigma$ stabilizes the pair $(\fb, \fh)$, the centralizer $Z_{\mathfrak{g}}(\mathfrak{h}^\sigma)$ 
 of $\mathfrak{h}^\sigma$ in $\mathfrak{g}$ equals $ \mathfrak{h}$. To prove this, we can write $\sigma= \tau\circ \Ad(t)$, for a diagram automorphism  (possibly identity) $\tau$ of $\fg$ associated to the pair $(\fb, \fh)$ and $t\in T$ with Lie algebra  $\Lie T =\fh$. Thus, $\fh^\sigma=\fh^\tau$ and hence $Z_{\mathfrak{g}}(\mathfrak{h}^\sigma) = Z_{\mathfrak{g}}(\mathfrak{h}^\tau) = \mathfrak{h}$.  
Since $y_\beta$ is an eigenvector for the adjoint action of $\mathfrak{h}^\sigma$ with non-trivial action,
 any $y_\beta[P_\beta]$ can be written as a finite sum of commuting real root vectors for   $\hat{L}(\fg, \sigma)$ and a $\sigma$-invariant element of the form 
$y_\beta[P^+_\beta]$ with $P^+_\beta \in t\mathbb{C}[[t]]$ (cf. [Ka, Exercises 8.1 and 8.2, $\S$8.8]). Thus, $y_\beta[P_\beta]$ acts locally nilpotently on $\mathscr{H}(\lambda)$ (in particular, on  $\mathscr{H}(x, v)$).  Now, using [Ku, Lemma 1.3.3 (c$_2$)], we get that   $\mathscr{H}(x,v)$  is  finite dimensional. Using Lie's theorem, the lemma follows.
\end{proof}

\section{Twisted Analogue of Conformal Blocks}\label{conformal_block_section}

{\it By a scheme we mean a quasi-compact and separated scheme over $\mathbb{C}$. By an algebraic curve, we mean a projective, reduced but not necessarily connected curve.} 

In this section we  define the space of twisted covacua attached to  a Galois cover of an algebraic curve. We  prove that  this space  is finite dimensional.

For a smooth point $p$ in an algebraic curve $\bar{\Sigma}$ over $\mathbb{C}$, let $\mathcal{K}_p$ denote the quotient field of the completed local ring $\hat{\mathscr{O}}_p$ of $\bar{\Sigma}$ at $p$.  We denote by $\mathbb{D}_p$ (resp. $\mathbb{D}^\times_p$) the formal disc ${\rm Spec} \, \hat{\mathscr{O}}_p$ (resp. the punctured formal disc ${\rm Spec}\, \mathcal{K}_p$).  

\begin{definition}
\label{Gamma_curve}
A morphism $\pi: {\Sigma}\to \bar{\Sigma}$ of projective curves is said to be a {\it Galois cover with finite Galois group} $\Gamma$ (for short $\Gamma$-{\it cover}) if   the group $\Gamma$ acts on ${\Sigma}$ as algebraic automorphisms  and ${\Sigma}/ \Gamma\simeq \bar{\Sigma}$ and no nontrivial element of $\Gamma$ fixes pointwise any irreducible component of $\Sigma$.
\end{definition} 

 For any smooth point $q\in {\Sigma}$, the stabilizer group $\Gamma_q$ of $\Gamma$ at $q$ is always cyclic.  The order $e_q:=|\Gamma_q|$ is called  the {\it ramification index of } $q$.  Thus, $q$ is unramified if and only if $e_q=1$.  Denote $p=\pi(q)$.  We can also write $e_p=e_q$, since $e_{q}=e_{q'}$ for any $q,q'\in \pi^{-1}(p)$. We  also say that $e_p$ is the ramification index of $p$.  Denote by $d_p$ the cardinality of the fiber $\pi^{-1}(p)$. Then $|\Gamma|=e_p\cdot  d_p$.

The action of $\Gamma_q$ on  the tangent space $T_q{\Sigma}$ induces a primitive character $\chi_q: \Gamma_q\to \mathbb{C}^\times$, i.e.,  $\chi_q(\sigma_q)$ is a primitive $e_p$-th root of unity for any generator $\sigma_q$ in $\Gamma_q$.  From now on we shall fix $\sigma_q\in \Gamma_q$ so that 
\[\chi_q(\sigma_q) =e^{2\pi i/e_p}.\]
 For any two smooth points $q, q'\in {\Sigma}$, if $\pi(q)=\pi(q')$  then
 \[ \Gamma_{q'}=\gamma\Gamma_q\gamma^{-1}, \text{ for any element }  \gamma\in \Gamma \text{ such that } q'=\gamma\cdot q. \] 
Moreover,
\[  \chi_{q'}(\gamma\sigma \gamma^{-1}) =\chi_q(\sigma), \, \text{ for any } \sigma\in \Gamma_q.  \]

Given a smooth point $p\in \bar{\Sigma}$ such that $\pi^{-1}(p)$ consists of smooth points in ${\Sigma}$,  let $\pi^{-1}(\mathbb{D}_p)$ (resp. $\pi^{-1}(\mathbb{D}^\times_p)$) denote the 
fiber product of ${\Sigma}$ and $\mathbb{D}_p$ (resp.  $\mathbb{D}_p^\times$) over $\bar{\Sigma}$. Then,
\[  \pi^{-1}(\mathbb{D}_p)\simeq\sqcup_{q\in \pi^{-1}(p) } {\mathbb{D}}_q,  \quad \text{and }   \quad  \pi^{-1}(\mathbb{D}^\times_p)\simeq \sqcup_{q\in \pi^{-1}(p)  } {\mathbb{D}}^\times_q   ,  \]     
where ${\mathbb{D}}_q$ (resp. ${\mathbb{D}}_q^\times$) denotes the formal disc (resp. formal punctured  disc) in ${\Sigma}$ around $q$.  
 \vskip1ex
  {\it Let the finite group $\Gamma$ also act on $\mathfrak{g}$ as Lie algebra automorphisms.}
\vskip1ex
  Let $\mathfrak{g}[\pi^{-1}(\mathbb{D}^\times_p) ]^\Gamma$ be the Lie algebra consisting of $\Gamma$-equivariant regular maps from $\pi^{-1}(\mathbb{D}^\times_p )$ to $\mathfrak{g}$. There is a natural isomorphism $\mathfrak{g}[\pi^{-1}(\mathbb{D}^\times_p) ]^\Gamma\simeq ( \fg\otimes \mathbb{C}[\pi^{-1}(\mathbb{D}^\times_p)])^\Gamma$.
  Let 
\begin{equation} \label{2.1.1}\hat{ \mathfrak{g}}_p :=\mathfrak{g}[\pi^{-1}(\mathbb{D}^\times_p) ]^\Gamma\oplus \mathbb{C}C
\end{equation}
 be the central extension of $\mathfrak{g}[\pi^{-1}(\mathbb{D}^\times_p) ]^\Gamma$ defined as follows:
  \begin{equation}  [X, Y] =[X,Y]_0+  \frac{1}{|\Gamma|}   \sum_{q\in \pi^{-1}(p)  } {\rm Res}_{q} \langle d X, Y \rangle    C,  
 \label{pointwise-affine-Lie}
  \end{equation}
 for any $X,Y \in \mathfrak{g}[\pi^{-1}(\mathbb{D}^\times_p) ]^\Gamma$, where $[,]_0$ denotes the point-wise Lie bracket induced from the bracket on $\fg$.  We set the subalgebra 
\begin{equation} \label{2.1.2}\hat{ \mathfrak{p}}_p :=\mathfrak{g}[\pi^{-1}(\mathbb{D}_p) ]^\Gamma\oplus \mathbb{C}C
\end{equation}
  and 
\begin{equation} \label{2.1.3}\hat{ \mathfrak{g}}_p^+ :=\Ker ( \mathfrak{g}[\pi^{-1}(\mathbb{D}_p) ]^\Gamma \to \mathfrak{g}[\pi^{-1}(p)]^\Gamma )
\end{equation} 
  obtained by the restriction map $\mathbb{C}[\pi^{-1}(\mathbb{D}_p)] \to \mathbb{C}[\pi^{-1}(p)]$, where $\fg[\pi^{-1}(p) ]^\Gamma$ denotes the Lie algebra consisting of $\Gamma$-equivariant maps  $x:\pi^{-1}(p)\to \fg$.    Let $\fg_p$ denote  $\fg[ \pi^{-1}(p) ]^\Gamma$.  
  The following lemma is obvious.
  \begin{lemma}
  \label{evaluation_lem}
  The evaluation map ${\rm ev}_q: \fg_p\to  \fg^{\Gamma_q}$ given by 
  \[ x\mapsto x(q) \]
   for any $x\in \fg_p$ and  $q\in \pi^{-1}(p)$ is an isomorphism of Lie algebras. 
  \end{lemma}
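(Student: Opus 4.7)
The plan is straightforward since $\pi^{-1}(p) = \Gamma \cdot q$ is a transitive $\Gamma$-set with stabilizer $\Gamma_q$, so that $\fg_p$ is an instance of the standard identification of $\Gamma$-equivariant functions on a homogeneous space with $\Gamma_q$-invariants in the target. I would verify the three required properties in turn.

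First, I would observe that ${\rm ev}_q$ is automatically a Lie algebra homomorphism: the bracket on $\fg_p$ is defined pointwise from the bracket on $\fg$, so evaluation at any single point respects brackets. Moreover, for any $x \in \fg_p$ and any $\sigma \in \Gamma_q$, the $\Gamma$-equivariance of $x$ gives $\sigma \cdot x(q) = x(\sigma \cdot q) = x(q)$, so the image of ${\rm ev}_q$ indeed lies in $\fg^{\Gamma_q}$.

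For injectivity, I would use that $\Gamma$ acts transitively on $\pi^{-1}(p)$. Any $q' \in \pi^{-1}(p)$ can be written as $q' = \gamma \cdot q$ for some $\gamma \in \Gamma$; by $\Gamma$-equivariance, $x(q') = \gamma \cdot x(q)$. Hence $x(q) = 0$ forces $x \equiv 0$ on $\pi^{-1}(p)$.

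For surjectivity, given $y \in \fg^{\Gamma_q}$, I would define $x \colon \pi^{-1}(p) \to \fg$ by $x(\gamma \cdot q) := \gamma \cdot y$. This is well-defined because if $\gamma_1 \cdot q = \gamma_2 \cdot q$ then $\gamma_2^{-1}\gamma_1 \in \Gamma_q$, and the $\Gamma_q$-invariance of $y$ gives $\gamma_1 \cdot y = \gamma_2 \cdot y$. The $\Gamma$-equivariance of $x$ and the identity ${\rm ev}_q(x) = y$ are immediate from the construction. There is no real obstacle to the argument; this is essentially Frobenius reciprocity for functions on the homogeneous $\Gamma$-set $\Gamma/\Gamma_q \cong \pi^{-1}(p)$, which explains why the authors deem the lemma obvious.
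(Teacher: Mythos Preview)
Your proof is correct and complete. The paper does not give a proof at all, merely stating that the lemma is obvious; your argument is precisely the standard unpacking of the identification $\mathrm{Map}_\Gamma(\Gamma/\Gamma_q,\fg)\simeq \fg^{\Gamma_q}$ that the authors had in mind.
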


Let $\sigma_q$ be the generator of $\Gamma_q$ such that $\chi_q(\sigma_q) = e^{\frac{2\pi i}{e_p}}$. Let $\hat{L}(\fg,\sigma_q)$ denote the affine Lie algebra  associated to $\fg$ and  $\sigma_q$ as defined in Section \ref{Kac_Moody_Section}. We  denote this algebra by $\hat{L}(\fg, \Gamma_q,\chi_q)$  or $\hat{L}(\fg, \Gamma_q )$ in short.

\begin{lemma}\label{lemma2.3}  
The restriction map ${\rm res}_q:  \hat{\fg}_p\to  \hat{L}(\fg, \Gamma_q)$ given by 
\[  X\mapsto  X_q,  \,\text{ and } C\mapsto C,  \]
  is an isomorphism of Lie algebras, where $X\in \fg[\pi^{-1}(\mathbb{D}^\times_p ) ]^\Gamma$ and $X_q$ is the restriction of $X$ to $\mathbb{D}_q^\times$.  Moreover, 
\[   {\rm res}_q(\hat{\mathfrak{p}}_p )= \hat{L}(\fg,\Gamma_q)^{\geq 0}, \,\text{ and }\, {\rm res}_q(\hat{\fg}^+_p)= \hat{L}(\fg,\Gamma_q)^+.  \]

\end{lemma}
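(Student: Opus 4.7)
The plan is to verify that $\mathrm{res}_q$ is a well-defined Lie algebra isomorphism in four steps: (i) well-definedness, (ii) bijectivity, (iii) compatibility with Lie brackets including the cocycle, and (iv) identification of the subalgebras.

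First, I would fix a uniformizer $t$ at $q$ with respect to which $\sigma_q$ acts by $t\mapsto \epsilon^{-1}t$, where $\epsilon=e^{2\pi i/e_p}$. Such a coordinate exists because $\Gamma_q$ is a cyclic group of order $e_p$ acting on the smooth formal disc $\mathbb{D}_q$ with primitive character $\chi_q$ on the tangent space, so the action is linearizable. This identifies $\mathcal{K}_q$ with $\mathbb{C}((t))$ carrying the $\sigma_q$-action from Section \ref{Kac_Moody_Section} (with $m=e_p$), and identifies $\Gamma_q$-invariants in $\fg\otimes\mathcal{K}_q$ with $L(\fg)^{\sigma_q}$. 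Well-definedness is then immediate: restricting a $\Gamma$-equivariant map to $\mathbb{D}^\times_q$ (which is stable under $\Gamma_q$) yields a $\Gamma_q$-equivariant element, i.e., an element of $L(\fg)^{\sigma_q}$.

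For bijectivity, I use $\pi^{-1}(\mathbb{D}^\times_p)=\bigsqcup_{q'\in\pi^{-1}(p)}\mathbb{D}^\times_{q'}$ together with the transitive $\Gamma$-action on $\pi^{-1}(p)$. Injectivity: if $X_q=0$, then $\Gamma$-equivariance $X(\gamma z)=\gamma\cdot X(z)$ forces $X_{\gamma q}=0$ for every $\gamma$, so $X\equiv 0$. Surjectivity: given $Y\in L(\fg)^{\sigma_q}$, fix coset representatives $\Gamma=\bigsqcup_{i=1}^{d_p}\gamma_i\Gamma_q$ and define $X$ on $\mathbb{D}^\times_{\gamma_i q}$ by $X(\gamma_i z):=\gamma_i\cdot Y(z)$. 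Independence of the coset representative reduces, for $\delta\in\Gamma_q$, to the identity $\delta\cdot Y(\delta^{-1}z)=Y(z)$, which is exactly $\sigma_q$-equivariance of $Y$.

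For the Lie bracket, the pointwise bracket plainly transports to the pointwise bracket on $L(\fg)^{\sigma_q}$. The delicate step, which I expect to be the main bookkeeping obstacle, is matching the two cocycles. By $\Gamma$-invariance of $\langle\cdot,\cdot\rangle$ and $\Gamma$-equivariance of $X,Y$, the meromorphic $1$-form $\langle dX,Y\rangle$ on $\pi^{-1}(\mathbb{D}^\times_p)$ is $\Gamma$-invariant, so its residues at the $d_p$ points of the orbit $\pi^{-1}(p)=\Gamma\cdot q$ all coincide. Using $|\Gamma|=e_p\cdot d_p$, I obtain
\[
\frac{1}{|\Gamma|}\sum_{q'\in\pi^{-1}(p)}\mathrm{Res}_{q'}\langle dX,Y\rangle \;=\; \frac{d_p}{|\Gamma|}\,\mathrm{Res}_q\langle dX_q,Y_q\rangle \;=\; \frac{1}{e_p}\mathrm{Res}_q\langle dX_q,Y_q\rangle,
\]
which with $m=e_p$ is precisely the cocycle in formula \eqref{eq1.1.1.4} defining $\hat{L}(\fg,\sigma_q)$.

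Finally, the two subalgebra statements follow from the same disc decomposition: a regular $\Gamma$-equivariant map on $\pi^{-1}(\mathbb{D}_p)=\bigsqcup\mathbb{D}_{q'}$ restricts at $q$ to an element of $(\fg\otimes\mathcal{O}_q)^{\sigma_q}=\oplus_j(\fg_j\otimes\mathcal{O}_j)$, which together with $\mathbb{C}C$ is $\hat{L}(\fg,\Gamma_q)^{\geq 0}$; and the condition of vanishing on $\pi^{-1}(p)$ (i.e., lying in $\hat{\fg}^+_p$) is equivalent, under $\mathrm{res}_q$, to the restriction lying in $(\fg\otimes\mathfrak{m}_q)^{\sigma_q}=\hat{L}(\fg,\Gamma_q)^+$, using again that the $\Gamma$-orbit structure forces vanishing at $q$ to propagate to vanishing at every $q'\in\pi^{-1}(p)$.
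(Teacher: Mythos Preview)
Your proof is correct and follows the same approach as the paper's. The paper's argument is terser---it notes the $\Gamma$-invariance of $\langle\cdot,\cdot\rangle$, deduces that $\Res_q\langle dX,Y\rangle=\Res_{q'}\langle dX,Y\rangle$ for all $q,q'\in\pi^{-1}(p)$, and then declares the rest ``easy to see''---whereas you have spelled out the bijectivity via coset representatives, the cocycle normalization $d_p/|\Gamma|=1/e_p$, and the subalgebra identifications explicitly.
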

\begin{proof} 
For any $X, Y\in \fg[\pi^{-1}( \mathbb{D}^\times_p ) ]^\Gamma$,  the restriction of $[X,Y]_0$ to $\mathbb{D}_q^\times$ is equal to $[X_q,Y_q]_0$. 
Note that for any $\gamma\in \Gamma$ and $x,y\in \fg$, we have $\langle \gamma(x), \gamma(y) \rangle= \langle x,y  \rangle$, which  follows from the Killing form realization of $\langle ,\rangle$ on $\fg$.
Since $X, Y$ are $\Gamma$-equivariant, for any $q, q'\in \pi^{-1}(p)$ we have
\[ {\rm Res}_q\langle dX, Y\rangle=   {\rm Res}_{q'}\langle dX, Y\rangle. \]
 It is now easy to see that ${\rm res}_q:  \hat{\fg}_p\to  \hat{L}(\fg, \Gamma_q)$ is an isomorphism of Lie algebras, and 
  \[   {\rm res}_q(\hat{\mathfrak{p}}_p )= \hat{L}(\fg,\Gamma_q)^{\geq 0}, \,\text{ and }\, {\rm res}_q(\hat{\fg}^+_p)= \hat{L}(\fg,\Gamma_q)^+.  \]


\end{proof}

By the above lemma, we have a faithful functor ${\rm Rep}_c(\hat{\fg}_p)\to {\rm Rep}(\fg_p)$ from the category of integrable highest weight representations of $\hat{\fg}_p$ of level $c$ to the category of finite dimensional  representations of $\fg_p$.
 We denote by $D_{c,p}$ the parameter set of (irreducible) integrable highest weight representations of $\hat{\mathfrak{g}}_p$ of level $c$ obtained as the   subset of the set of dominant integral weights of $\mathfrak{g}_p$ under the above faithful functor.   Let $D_{c,q}$ denote the parameter set of (irreducible) integrable highest weight representations of $\hat{L}(\fg, \Gamma_q)$ as in Section \ref{Kac_Moody_Section}. Then, we can identify $D_{c,p}$ and $D_{c,q}$ via the restriction isomorphism ${\rm res}_q:  \hat{\fg}_p\to  \hat{L}(\fg, \Gamma_q)$ as  in Lemma \ref{lemma2.3}.

 \begin{definition}\label{defi2.1.1}
For any $s\geq 1$, by an {\it  $s$-pointed curve}, we mean the pair $(\bar{\Sigma}, \vec{p}=(p_1, \dots, p_s))$ consisting of distinct and smooth points $\{p_1, \dots, p_s\}$ of $\bar{\Sigma}$, such that the following condition is satisfied.

\vskip1ex

(*) Each irreducible component of $\bar{\Sigma}$ contains at least one point $p_i$. 
\vskip1ex

Similarly, by an  {\it  $s$-pointed $\Gamma$-curve}, we mean the pair $(\Sigma, \vec{q}=(q_1, \dots, q_s))$ consisting of  smooth points $\{q_1, \dots, q_s\}$ of $\Sigma$ such that  $(\bar{\Sigma}, \pi(\vec{q})=(\pi(q_1), \dots, \pi(q_s)))$ is a $s$-pointed curve.

\end{definition}
\vskip1ex

From now on we fix an $s$-pointed curve  $(\bar{\Sigma}, \vec{p})$  (for any $s\geq 1$), where $\vec{p}=(p_1, \dots, p_s)$,  and a Galois cover $\pi: {{\Sigma}} \to \bar{\Sigma}$  with the finite  Galois group $\Gamma$ such that the fiber $\pi^{-1}(p_i)$ consists of smooth points for any $i=1,2,\cdots, s$.  We also fix a simple Lie algebra $\mathfrak{g}$ and a group homomorphism $\phi: \Gamma\to  {\rm Aut}(\fg)$, where $ {\rm Aut}(\fg)$ is the group of automorphisms of $\fg$.

We now fix an $s$-tuple $\vec{\lambda}=(\lambda_{1},\ldots,\lambda_{s})$
of weights with  $\lambda_{i}\in D_{c, p_i}$ `attached' to the point $p_i$.  To this data, there is associated the
{\em space of  (twisted) vacua} 
$\mathscr{V}_{{{\Sigma}},\Gamma, \phi}(\vec{p},\vec{\lambda})^{\dagger}$ (or the space of {\em   (twisted)  conformal
blocks})
and its dual space
$\mathscr{V}_{{{\Sigma}},\Gamma,\phi }(\vec{p},\vec{\lambda})$
called the {\em space of  (twisted) covacua} (or the space of {\em   (twisted) dual conformal
blocks}) defined as follows:

\begin{definition}
\label{def1.2}
  Let $\mathfrak{g}[{{\Sigma}}\backslash
  \pi^{-1}( \vec{p}) ]^\Gamma$ denote the space of $\Gamma$-equivariant regular maps
$f:{{\Sigma}}\backslash \pi^{-1}(\vec{p}) \to \mathfrak{g}$.
 Then,
$\mathfrak{g}[{{\Sigma}}\backslash \pi^{-1}(\vec{p}) ]^\Gamma$ is a Lie algebra
under the pointwise bracket.  

Set
\begin{equation}
\mathscr{H}(\vec{\lambda}):=\mathscr{H}(\lambda_{1})\otimes\cdots \otimes\mathscr{H}(\lambda_{s}),   \label{eq4}
\end{equation}
where $\mathscr{H}(\lambda_i)$ is the integrable highest weight  representation of $\hat{\mathfrak{g}}_{p_i}$ of level $c$ with highest weight $\lambda_i\in D_{c, p_i}$. 

Define an action of the Lie algebra $\mathfrak{g}[{{\Sigma}}\backslash
 \pi^{-1} ( \vec{p} ) ]^\Gamma$ on $\mathscr{H}(\vec{\lambda})$ as
follows:
\begin{equation}
\label{componentwise_action}
X\cdot (v_{1}\otimes\cdots\otimes v_{s}) =
  \sum^{s}_{i=1}v_{1}\otimes\cdots\otimes X_{p_i}\cdot
  v_{i}\otimes \cdots\otimes v_{s}, \,\,\text{for} \,X\in \fg [{\Sigma}\backslash \pi^{-1}(\vec{p})  ]^\Gamma,   \,\text{and} \,v_{i}\in \mathscr{H}(\lambda_{i}),
\end{equation}
where $X_{p_{i}}$ denotes the restriction of $X$ to
$\pi^{-1}(\mathbb{D}^\times_{p_i})$, hence $X_{p_i}$ is an element in $\hat{\fg}_{p_i}$.

By the residue theorem [H, Theorem 7.14.2, Chap. III],
\begin{equation}
\sum_{q\in \pi^{-1}( \vec{p}) } \text{~Res}_{q}  \langle dX,Y  \rangle   =0,\quad \text{for any}\quad
X,Y \in  \fg[ {{\Sigma}} \backslash \pi^{-1} ( \vec{p}) ]^{\Gamma}.  \label{eq6}
\end{equation}

Thus, the action \eqref{componentwise_action} indeed is an action of the Lie algebra
$\mathfrak{g}[{{\Sigma}}\backslash \pi^{-1}( \vec{p}) ]^\Gamma$ on
$\mathscr{H}(\vec{\lambda})$. 

Finally, we are ready to define the {\it space of  (twisted) vacua}
\begin{equation}
{\mathscr{V}_{{{\Sigma}},\Gamma,\phi  }(\vec{p},\vec{\lambda})^{\dagger}} :=\Hom_{\mathfrak{g}[{\Sigma}\backslash \pi^{-1}(\vec{p})]^\Gamma}(\mathscr{H} (\vec{\lambda}),\mathbb{C}),\label{eq7}
\end{equation}
and  the {\it space of  (twisted) covacua}
\begin{equation}
\mathscr{V}_{{{\Sigma}},\Gamma,\phi  }(\vec{p},\vec{\lambda}):=[\mathscr{H}(\vec{\lambda})]_{\mathfrak{g}[{{\Sigma}}\backslash \pi^{-1} (\vec{p}) ]^\Gamma},\label{eq8}
\end{equation}
where $\mathbb{C}$ is considered as the trivial module under the
action of $\mathfrak{g}[{{\Sigma}}\backslash \pi^{-1} (\vec{p})]^\Gamma$, and \\
$[\mathscr{H} (\vec{\lambda})]_{\mathfrak{g}[{\Sigma}\backslash
   \pi^{-1} ( \vec{p} ) ]^\Gamma }$ denotes the space of covariants
$\mathscr{H} (\vec{\lambda})/\bigl(\mathfrak{g}[{{\Sigma}}\backslash
 \pi^{-1}( \vec{p} ) ]^\Gamma  \cdot \mathscr{H} (\vec{\lambda})\bigr)$.
Clearly,
\begin{equation}
{\mathscr{V}_{{{\Sigma}},\Gamma,\phi}(\vec{p},\vec{\lambda})^{\dagger}} \simeq \mathscr{V}_{{{\Sigma}},\Gamma,\phi}(\vec{p},\vec{\lambda})^{*}.\label{eq9}
\end{equation}
\end{definition}
\begin{remark}
\label{notation_warning} {\rm Fix any $q_i\in \pi^{-1}(p_i)$. 
If we choose $\vec{\lambda}=(\lambda_1,\cdots, \lambda_s)$ to be a set of weights,  where for each $i$, $\lambda_i$ is a dominant weight of $\fg^{\Gamma_{q_i}}$ in $D_{c,q_i}$, we can transfer each $\lambda_i$ to an element in $D_{c,p_i}$ through the restriction isomorphism ${\fg}_{p_i}\simeq  \fg^{\Gamma_{q_i}}$ via Lemma \ref{evaluation_lem}. Accordingly, we denote the associated space of covacua by $\mathscr{V}_{{{\Sigma}},\Gamma,\phi  }(\vec{q},\vec{\lambda})$.  This terminology will  often be used interchangeably. }
\end{remark}

\begin{lemma}\label{lem2.1.3}
With the notation and assumptions as in Definition \ref{def1.2}, the
space of covacua
$\mathscr{V}_{{\Sigma},\Gamma, \phi}(\vec{p},\vec{\lambda})$ is
finite dimensional and hence by equation  \eqref{eq9}, so is the space of vacua $\mathscr{V}_{{\Sigma},\Gamma,\phi} (\vec{p},\vec{\lambda})^{\dagger}$.
\end{lemma}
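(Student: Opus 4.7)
The plan is to prove that the natural composition $V(\vec{\lambda}) := V(\lambda_1)\otimes\cdots\otimes V(\lambda_s) \hookrightarrow \mathscr{H}(\vec{\lambda}) \twoheadrightarrow \mathscr{V}_{\Sigma,\Gamma,\phi}(\vec{p},\vec{\lambda})$ is surjective; since $V(\vec{\lambda})$ is finite-dimensional, the lemma follows. The strategy is the standard one from Tsuchiya--Ueno--Yamada and Beauville for the classical setting, adapted to $\Gamma$-equivariant Galois covers via an energy filtration and $\Gamma$-equivariant Riemann--Roch.

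Setting $B := \fg[\Sigma \setminus \pi^{-1}(\vec{p})]^\Gamma$ and using the integrable structure of each $\hat{L}(\fg, \Gamma_{q_i})$-module $\mathscr{H}(\lambda_i)$, I first equip $\mathscr{H}(\lambda_i)$ with its canonical energy grading, whose degree-zero piece is $V(\lambda_i)$ and whose graded pieces $\mathscr{H}(\lambda_i)_E$ are all finite-dimensional (Weyl--Kac character formula, or directly by PBW combined with integrability). Tensoring produces the exhausting filtration by finite-dimensional subspaces
\[
F_E\,\mathscr{H}(\vec{\lambda}) := \sum_{E_1+\cdots+E_s \leq E} \bigotimes_{i=1}^{s}\mathscr{H}(\lambda_i)_{E_i},\qquad F_0 = V(\vec{\lambda}).
\]
It suffices to exhibit a threshold $E_0$ with $F_E \subseteq F_{E_0} + B\cdot\mathscr{H}(\vec{\lambda})$ for every $E \geq 0$.

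The key global input is $\Gamma$-equivariant Riemann--Roch. Setting $D_j := \pi^{-1}(p_j)$ and using the vanishing $H^1\bigl(\Sigma, (\fg \otimes_{\bc}\mathscr{O}_\Sigma)(nD_j)\bigr)^\Gamma = 0$ for $n\gg0$, the principal-part restriction
\[
H^0\!\bigl(\Sigma, (\fg \otimes_{\bc} \mathscr{O}_\Sigma)(nD_j)\bigr)^\Gamma \twoheadrightarrow \bigl(\fg \otimes t^{-n}\mathscr{O}_{q_j}/t^{-n+1}\mathscr{O}_{q_j}\bigr)^{\sigma_{q_j}}
\]
is surjective for all $n \geq N_0$, uniformly in $j$. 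Consequently, each $\sigma_{q_j}$-eigenvector $x \otimes t^{-n}$ with $n \geq N_0$ lifts to some $X \in \fg[\Sigma \setminus D_j]^\Gamma \subseteq B$ whose leading Laurent coefficient at $q_j$ is $x \otimes t^{-n}$ and whose restriction to every $q_{j'}$ with $j'\neq j$ is regular. The reduction then runs by induction on $E$: for a pure tensor $v = v_1\otimes\cdots\otimes v_s \in F_E$ with $E$ past a fixed threshold, pigeonhole forces some $v_j$ of large energy $E_j$, and a PBW rearrangement in $U(\hat{L}(\fg,\Gamma_{q_j})^-)$ using $[x\otimes t^{-a}, y\otimes t^{-b}] = [x,y]\otimes t^{-(a+b)}$ will exhibit (modulo $\mathscr{H}(\lambda_j)_{<E_j}$, which lies in $F_{E-1}$) a decomposition $v_j = (x\otimes t^{-n})\cdot v_j'$ with $n \geq N_0$. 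Lifting $x\otimes t^{-n}$ to such an $X \in B$ and computing $X \cdot (v_1\otimes\cdots\otimes v_j'\otimes\cdots\otimes v_s)$, the leading Laurent term at $q_j$ reproduces $v$, the subleading terms of $X_{q_j}$ act on $v_j'$ with energy shift strictly less than $n$, and the regular actions of $X_{q_{j'}}$ at the remaining factors do not raise energy there; hence $v \in F_{E-1} + B\cdot\mathscr{H}(\vec{\lambda})$. Iterating, $\mathscr{V}_{\Sigma,\Gamma,\phi}(\vec{p},\vec{\lambda})$ is a quotient of the finite-dimensional $F_{E_0}$.

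The hard part will be the PBW rearrangement in the reduction step, particularly when the naive PBW expression of $v_j$ uses only modes bounded by $N_0$ (for instance, expressions such as $(H \otimes t^{-1})^k v_+$ with $H$ in the Cartan): here one must combine the commutator identities above with the integrability of $\mathscr{H}(\lambda_j)$ via the local nilpotence of real root vectors (compare Lemma \ref{lemma 1.3}) to force a high-mode factor to appear once $E_j$ exceeds a bound depending only on $\lambda_j$ and $N_0$. In the twisted setting this requires additional care with the $\sigma_{q_j}$-eigenspace decomposition of $\fg$ and with the mode-matching imposed by the twisted loop algebra $(\fg \otimes \mathcal{K}_{q_j})^{\sigma_{q_j}}$.
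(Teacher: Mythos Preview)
Your approach diverges from the paper's and has a genuine gap at exactly the step you label ``the hard part''.

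The paper's proof does not work inside the module at all. It embeds $B:=\mathfrak{g}[\Sigma\setminus\pi^{-1}(\vec{p})]^\Gamma$ into the multi-point loop algebra $\hat{\mathfrak{g}}_{\vec{p}}$ via Laurent expansion, observes (by Riemann--Roch, essentially the same global input you use) that $\Iim\beta + \fg[\pi^{-1}(\mathbb{D}_{\vec{p}})]^\Gamma$ has finite codimension in $\hat{\mathfrak{g}}_{\vec{p}}$, and then invokes the general result [Ku, Lemma~10.2.2] to conclude. No energy induction in $\mathscr{H}(\vec\lambda)$ is carried out; the filtration and PBW bookkeeping are absorbed into the cited lemma.

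Your induction step, by contrast, requires that for $E_j$ large every homogeneous $v_j\in\mathscr{H}(\lambda_j)_{E_j}$ lies in the image of the modes $x\otimes t^{-n}$ with $n\ge N_0$ (note that your ``modulo $\mathscr{H}(\lambda_j)_{<E_j}$'' clause is vacuous for homogeneous $v_j$, since PBW reordering in $U(\hat{L}(\fg,\Gamma_{q_j})^-)$ preserves energy). You propose to obtain this by the commutator identity $[x\otimes t^{-a},y\otimes t^{-b}]=[x,y]\otimes t^{-(a+b)}$ together with integrability. But the commutator mechanism alone cannot work in the case you yourself single out: the operators $h\otimes t^{-1}$ for $h\in\fh^{\sigma_{q_j}}$ commute with one another, so no reordering of $(h\otimes t^{-1})^{E_j}v_+$ ever produces a high mode; and $h\otimes t^{-1}$ is an imaginary root vector, hence not locally nilpotent, so Lemma~\ref{lemma 1.3} does not touch it. More structurally, set $\mathfrak{a}:=\hat{L}(\fg,\Gamma_{q_j})^-$ and let $\mathfrak{a}_{\ge N_0}\subset\mathfrak{a}$ be the span of modes $t^{-n}$ with $n\ge N_0$. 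Then $\mathfrak{a}_{\ge N_0}$ is an ideal of $\mathfrak{a}$, and
\[
\hat M(V(\lambda_j),c)\big/\mathfrak{a}_{\ge N_0}\cdot\hat M(V(\lambda_j),c)\ \cong\ U(\mathfrak{a}/\mathfrak{a}_{\ge N_0})\otimes V(\lambda_j)
\]
is infinite-dimensional, because the finite-dimensional nilpotent quotient $\mathfrak{a}/\mathfrak{a}_{\ge N_0}$ has nonzero abelianization. So your high-mode extraction claim is already false for the Verma module; a correct argument must invoke the integrable relations in an essential and nontrivial way that you have not supplied. As written the proof is incomplete, and your opening sentence (surjectivity of $V(\vec\lambda)\to\mathscr{V}$, which is essentially Propagation) overstates what your argument even attempts.
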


\begin{proof}
Let $\fg[\pi^{-1}(\mathbb{D}^\times_{\vec{p}}  ) ]^\Gamma$ be the space of $\Gamma$-equivariant maps from the disjoint union of  formal punctured  discs $\sqcup_{q\in \pi^{-1}(\vec{p}) } \mathbb{D}^\times_{q}$ to $\fg$. 
Define a Lie algebra bracket on 
 \begin{equation}\label{Lie_algebra_multiple_points}
   \hat{\fg}_{\vec{p}} := \fg[\pi^{-1}(\mathbb{D}^\times_{\vec{p}}  ) ]^\Gamma \oplus \mathbb{C}C , \end{equation}
 by declaring $C$ to be the central element and the Lie bracket is defined in the similar way as in (\ref{pointwise-affine-Lie}).
 
Now, define an embedding of Lie algebras:
$$
\beta:\mathfrak{g}[{\Sigma}\backslash \pi^{-1} ( \vec{p} ) ]^\Gamma \to
\hat{\fg}_{\vec{p}}, \ X\mapsto
 X_{\vec{p}}  
 $$
where $ X_{\vec{p}}  $   is the restriction of   $X$  to $\pi^{-1}(\mathbb{D}^\times_{\vec{p}} )$ .

By the Residue Theorem,  $\beta$ is
indeed a Lie algebra homomorphism. Moreover, by Riemann-Roch theorem, $\Iim \beta + \fg [\pi^{-1} (\mathbb{D}_{\vec{p}} )]^\Gamma  $ has finite codimension in
${\hat{\mathfrak{g}}}_{\vec{p}} $, where $\pi^{-1}(\mathbb{D}_{\vec{p}} )$ is the disjoint union $\sqcup_{q\in \pi^{-1}(\vec{p})}\mathbb{D}_q$. Further, define the following
surjective Lie algebra homomorphism from the direct sum Lie algebra
$$
\bigoplus\limits^{s}_{i=1}  \hat{\fg}_{p_i}   \to
{\hat{\mathfrak{g}}}_{\vec{p}} \,,\,\,\,   \sum\limits^{s}_{i=1} X_i
\mapsto \sum\limits^{s}_{i=1} \tilde{X}_i, \ C_{i}\to C,\,\,\,
$$here $C_{i}$ is the center $C$ of
$\hat{\fg}_{p_{i}}$, and the map $X_i\in \fg [  \pi^{-1} ( \mathbb{D}^\times_{p_i})  ]^\Gamma $ naturally extends to $\tilde{X}_i \in  \fg[ \pi^{-1}(\mathbb{D}^\times_{\vec{p}} )]^\Gamma$ by taking $\pi^{-1}(\mathbb{D}^\times_{p_j} )$   to $0$  if $j\neq i$.  

\smallskip

Now, the lemma follows from [Ku, Lemma 10.2.2].

\end{proof}
\section{Propagation of twisted vacua}\label{Propagation_section}
We prove the Propagation Theorem in this section, which  asserts that adding marked points and attaching weight 0 to those points does not alter the space of twisted vacua. 

  Let ${\Sigma}\to \bar{\Sigma}$ be  a  $\Gamma$-cover (cf. Definition \ref{Gamma_curve}). Moreover,  $\phi:\Gamma\to {\rm Aut}(\fg)$ is a group homomorphism.

\begin{definition}\label{defi2.2.1}
Let $\vec{o}=(o_{1},\ldots,o_{s})$ and
$\vec{p}=(p_{1},\ldots,p_{a})$ be two disjoint non-empty
sets of smooth and distinct points in $\bar{\Sigma}$ such that $(\bar{\Sigma}, \vec{o})$ is a  $s$-pointed curve and let
$\vec{\lambda}=(\lambda_{1},\ldots,\lambda_{s})$,
$\vec{\mu}=(\mu_{1},\ldots,\mu_{a})$ be tuples of dominant weights such that $\lambda_i\in D_{c, o_i}$ and $\mu_j\in D_{c,p_j}$ for each $1\leq i\leq s, 1\leq j\leq a$.

We assume  that $\pi^{-1}(o_i)$ and $\pi^{-1}(p_j)$ consist of smooth points.

Denote the tensor product 
\begin{equation}
V(\vec{\mu}):=V(\mu_{1})\otimes\cdots\otimes V(\mu_{a}),
\end{equation}
where $V(\mu_{k})$ is the irreducible $\fg_{p_k}$-module with highest weight $\mu_k$. 

Define a $\mathfrak{g}[{\Sigma}\backslash \pi^{-1}( \vec{o}) ]^\Gamma$-module
structure on $V(\vec{\mu})$ as follows:
\begin{equation}
X \cdot (v_{1}\otimes\cdots\otimes
  v_{a})=\sum^{a}_{k=1}v_{1}\otimes\cdots\otimes X|_{p_k}\cdot
  v_{k}\otimes\cdots\otimes v_{a},
\end{equation}
for
$
v_{k}\in V(\mu_{k}), X \in \mathfrak{g}[{\Sigma}\backslash \pi^{-1}( \vec{o}) ]^\Gamma $, and $X|_{p_k}$ denotes the restriction $X|_{\pi^{-1}(p_k)} \in \fg_{p_k}$.   
This gives rise to the tensor product   $\mathfrak{g}[\Sigma\backslash
\pi^{-1} ( \vec{o} )  ]^\Gamma$-module structure on
$\mathscr{H} (\vec{\lambda})\otimes V(\vec{\mu})$.
 \end{definition}

The proof of the following lemma was communicated to us by J.\,Bernstein.
\begin{lemma}
\label{quasi-split-lemma}
Assume that $\Gamma$ stabilizes a Borel subalgebra $\fb\subset \fg$. Then, there exist a Cartan subalgebra $\fh\subset \fb$ such that $\Gamma$ stabilizes $\fh$. 
\end{lemma}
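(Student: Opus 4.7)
The plan is to view the set $\mathcal{C}$ of Cartan subalgebras of $\fb$ as a principal homogeneous space under the unipotent radical $N$ of the Borel subgroup $B$ with Lie algebra $\fb$, translate the existence of a $\Gamma$-fixed point into the triviality of a class in the nonabelian cohomology $H^1(\Gamma, N)$, and then exploit unipotence together with the finiteness of $\Gamma$ in characteristic zero to show that this cohomology vanishes.

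First I would observe that the nilradical $\fn \subset \fb$ is characteristic in $\fb$ and hence $\Gamma$-stable, so the connected unipotent algebraic group $N = \exp(\fn)$ inherits a $\Gamma$-action by algebraic group automorphisms satisfying $\gamma \circ \Ad(n) \circ \gamma^{-1} = \Ad(\gamma(n))$. Fix any Cartan subalgebra $\fh_0 \subset \fb$. Since $N$ acts simply transitively on $\mathcal{C}$ (the $N$-stabilizer of $\fh_0$ is trivial because $\fh_0 \cap \fn = 0$), for each $\gamma \in \Gamma$ there is a unique $n_\gamma \in N$ with $\Ad(n_\gamma)(\gamma(\fh_0)) = \fh_0$. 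A short uniqueness argument, noting that both $n_{\gamma\delta}$ and $n_\gamma \cdot \gamma(n_\delta)$ conjugate $(\gamma\delta)(\fh_0)$ back to $\fh_0$, yields the cocycle identity $n_{\gamma\delta} = n_\gamma \cdot \gamma(n_\delta)$. A parallel direct calculation shows that if this cocycle is a coboundary, say $n_\gamma = n^{-1}\gamma(n)$ for some $n \in N$, then $\fh := \Ad(n)(\fh_0) \subset \fb$ satisfies $\gamma(\fh) = \fh$ for all $\gamma \in \Gamma$.

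Thus the lemma reduces to the vanishing of the nonabelian $H^1(\Gamma, N)$. I would prove this by induction on the nilpotency class of $N$, using the descending central series $N = N_0 \supset N_1 \supset \cdots \supset N_r = \{1\}$, which is $\Gamma$-stable (being characteristic) and whose successive quotients $N_{i-1}/N_i$ are finite-dimensional complex vector groups, central in $N/N_i$. The inductive step applies the pointed-set exact sequence
\[
H^1(\Gamma, N_{i-1}/N_i) \longrightarrow H^1(\Gamma, N/N_i) \longrightarrow H^1(\Gamma, N/N_{i-1})
\]
coming from the central extension $1 \to N_{i-1}/N_i \to N/N_i \to N/N_{i-1} \to 1$, together with the standard vanishing $H^1(\Gamma, V) = 0$ for any finite-dimensional complex $\Gamma$-representation $V$ (valid because $|\Gamma|$ is invertible, so averaging splits any cocycle).

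The only delicate point, as always in nonabelian cohomology, is the twisting step: having killed the image of $\{n_\gamma\}$ in $N/N_{i-1}$, one must modify the cocycle within its cohomology class so that it takes values in the central subgroup $N_{i-1}/N_i$, in order to apply the abelian vanishing at each stage. This is an entirely routine piece of the formalism, so I do not anticipate any genuine obstacle beyond careful bookkeeping.
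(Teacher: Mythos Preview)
Your proposal is correct and follows essentially the same approach as the paper: both view the Cartan subalgebras of $\fb$ as an $N$-torsor, encode the obstruction to a $\Gamma$-fixed Cartan as a $1$-cocycle $\Gamma\to N$, and conclude by the vanishing of $H^1(\Gamma,N)$ for $\Gamma$ finite and $N$ unipotent. The only difference is that the paper simply invokes this vanishing, whereas you supply the standard inductive proof via the central series.
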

\begin{proof}
Let $G$ be the  simply-connected simple algebraic group associated to $\fg$, and let $B$ be the Borel subgroup associated to $\fb$. Let $N$ be the unipotent radical of $B$. Then, $\Gamma$ acts on $N$. 
  It is known that the space of all Cartan subalgebras in $\fb$ is a $N$-torsor (it follows easily from the conjugacy theorem of Cartan subalgebras).  
Let $\fh_o$ be any fixed Cartan subalgebra in $\fb$.  It defines a function $\psi: \Gamma \to N$ given by $\gamma\mapsto u_\gamma$, where $u_\gamma$ is the unique element in $N$ such that ${\rm Ad}\,{ u_{\gamma}}(\fh_o)= \gamma(\fh_o)$. It is easy to check that $\psi$ is a 1-cocycle of $\Gamma$ with values in $N$.  
Note that the group cohomology $H^1(\Gamma, N)=0$ since $\Gamma$ is a finite group and $N$ is unipotent. It follows that $\psi$ is a $1$-coboundary, i.e., there exists $u_o\in N$ such that $\psi(\gamma)=\gamma( u_o)^{-1}u_o$ for any $\gamma\in \Gamma$.  Set $\fh={\rm Ad}\,u_o(\fh_o)$.  It is now easy to verify that $\fh$ is $\Gamma$-stable.
\end{proof} 

\begin{theorem}\label{Propagation_thm}
With the notation and assumptions as in Definition \ref{defi2.2.1},  assume further that $\Gamma$ stabilizes a Borel subalgebra of $\fg$.  
Then,  the
canonical map
$$
\theta:\left[\mathscr{H} (\vec{\lambda})\otimes
  V(\vec{\mu})\right]_{\mathfrak{g}[\Sigma\backslash
  \pi^{-1} ( \vec{o})  ]^\Gamma}\to \mathscr{V}_{{\Sigma},\Gamma,\phi }\left((\vec{o},\vec{p}),(\vec{\lambda},\vec{\mu})\right)
$$
is an isomorphism, where $\mathscr{V}_{\Sigma,\Gamma,\phi}$ is the space of
covacua  and the map $\theta$ is induced from the
$\mathfrak{g}[{\Sigma}\backslash \pi^{-1}( \vec{o})]^\Gamma$-module embedding
$$
\mathscr{H} (\vec{\lambda})\otimes
V(\vec{\mu})\hookrightarrow \mathscr{H} (\vec{\lambda},\vec{\mu}),
$$
with $V(\mu_{j})$ identified as a $\mathfrak{g}_{p_j}$-submodule of
$\mathscr{H} (\mu_{j})$ annihilated by $\hat{\fg}_{p_j}^+$. (Observe
that since the subspace $V(\mu_{j})\subset \mathscr{H} (\mu_{j})$ is annihilated
by $\hat{\fg}_{p_j}^+$,  the embedding
$V(\mu_{j})\subset \mathscr{H} (\mu_{j})$ is indeed a
$\mathfrak{g}[{\Sigma}\backslash \pi^{-1}( \vec{o} ) ]^\Gamma$-module embedding.)
\end{theorem}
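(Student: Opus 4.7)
The plan is to adapt the strategy of [Be, Proposition 2.3] to the equivariant setting, with Lemma \ref{lemma 1.3} playing the role of the standard local nilpotency of $X_{-\theta}\otimes f$ used in [Be]. By a straightforward induction on the cardinality $a$ of $\vec p$, I first reduce to the case $a=1$; write $p:=p_1$, $\mu:=\mu_1$, and $V:=V(\mu)\subset \mathscr{H}(\mu)$. Since any $X\in\fg[\Sigma\backslash\pi^{-1}(\vec o)]^\Gamma$ is regular at $\pi^{-1}(p)$, its restriction at $p$ lies in $\hat{\fg}_p^{\geq 0}$, and as $V$ is $\hat{\fg}_p^{\geq 0}$-stable, the inclusion $\mathscr{H}(\vec\lambda)\otimes V\hookrightarrow \mathscr{H}(\vec\lambda,\mu)$ is $\fg[\Sigma\backslash\pi^{-1}(\vec o)]^\Gamma$-equivariant and induces $\theta$. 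The central technical ingredient (used in both directions) is an equivariant Mittag--Leffler statement: for every $Y\in\hat{\fg}_p^-$ there exists a lift $\tilde Y\in\fg[\Sigma\backslash\pi^{-1}(p)]^\Gamma$ (regular on $\Sigma\backslash\pi^{-1}(p)$, in particular at $\pi^{-1}(\vec o)$) with $\tilde Y|_p\equiv Y\pmod{\hat{\fg}_p^{\geq 0}}$; this follows from $\Gamma$-equivariant Riemann--Roch on $\Sigma$ applied to $\fg\otimes\mathscr O(N\cdot\pi^{-1}(p))$ for $N\gg 0$. Writing $\tilde Y|_p=Y+Z$ with $Z\in\hat{\fg}_p^{\geq 0}$, the \emph{gauge identity}
\[
\tilde Y\cdot (w\otimes \xi) \;=\; (\tilde Y|_{\vec o}\cdot w)\otimes \xi + w\otimes (Y+Z)\cdot \xi
\]
drives the rest of the argument.

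For the surjectivity of $\theta$: by PBW every vector of $\mathscr{H}(\mu)$ is a sum of the form $Y_1\cdots Y_n v$ with $v\in V$ and $Y_i\in\hat{\fg}_p^-$, and I induct on the depth $n$. The gauge identity, applied to a lift $\tilde Y$ of $Y_1$ and to $\xi=Y_2\cdots Y_n v$, rewrites $w\otimes Y_1\cdots Y_n v$ modulo $\fg[\Sigma\backslash\pi^{-1}(\vec o,p)]^\Gamma\cdot\mathscr{H}(\vec\lambda,\mu)$ as $-(\tilde Y|_{\vec o}\cdot w)\otimes Y_2\cdots Y_n v - w\otimes Z\,Y_2\cdots Y_n v$; both terms have depth $<n$ after reordering by PBW (for the second, note that $Z$ has non-negative grading and ultimately acts on $v\in V$, so it can be pushed past the $Y_j$'s without introducing new $\hat{\fg}_p^-$ factors in leading degree).

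For the injectivity of $\theta$ I dualize: it suffices to show that the restriction
\[
\mathrm{res}:\mathscr{V}_{\Sigma,\Gamma,\phi}\bigl((\vec o,p),(\vec\lambda,\mu)\bigr)^{\dagger}\longrightarrow\bigl[\mathscr{H}(\vec\lambda)\otimes V\bigr]^{*}_{\fg[\Sigma\backslash\pi^{-1}(\vec o)]^\Gamma}
\]
is surjective, i.e., every $\fg[\Sigma\backslash\pi^{-1}(\vec o)]^\Gamma$-invariant $\psi$ on $\mathscr{H}(\vec\lambda)\otimes V$ extends to a $\fg[\Sigma\backslash\pi^{-1}(\vec o,p)]^\Gamma$-invariant $\tilde\psi$ on $\mathscr{H}(\vec\lambda,\mu)$. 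I define $\tilde\psi$ recursively on the depth by forcing the gauge identity:
\[
\tilde\psi(w\otimes Yv') \;:=\; -\tilde\psi\bigl((\tilde Y|_{\vec o}\cdot w)\otimes v'\bigr)\;-\;\tilde\psi(w\otimes Z\cdot v').
\]
Independence of the chosen lift $\tilde Y$ is immediate from the invariance of $\psi$ under the smaller algebra (two lifts differ by an element of that algebra whose action on $w\otimes v'$ stays in $\mathscr{H}(\vec\lambda)\otimes V$, where $\psi$ is already invariant), and the full $\fg[\Sigma\backslash\pi^{-1}(\vec o,p)]^\Gamma$-invariance of $\tilde\psi$ follows because, by Mittag--Leffler, this larger algebra is the sum of the smaller one and the chosen lifts.

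The main obstacle is verifying that this inductively defined $\tilde\psi$, a priori a functional on the generalized Verma module $\hat M(V(\mu),c)$, descends to its irreducible quotient $\mathscr{H}(\mu)$, i.e., annihilates the generators $\tilde y_i^{n_{\mu,i}+1}\cdot v_+$ of the defining kernel $K_\mu$ for $i\in\hat I(\fg,\sigma_p)^+$ (cf.\ \eqref{eqn1.2.0}). This is precisely where the Borel-stability hypothesis on $\Gamma$ is essential: by Lemma \ref{quasi-split-lemma} one may choose a $\Gamma$-stable Cartan $\fh\subset\fb$, so that $\tilde y_i\in (\mathfrak n^-\otimes\mathcal K_p)^{\sigma_p}$; lifting $\tilde y_i$ to $\tilde Y_i\in(\fg\otimes\mathscr O(\Sigma\backslash\pi^{-1}(p)))^\Gamma$ via Mittag--Leffler and invoking Lemma \ref{lemma 1.3}, the operator $\tilde Y_i$ acts locally nilpotently on $\mathscr{H}(\vec\lambda,\mu)$. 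Thus $\exp(\tilde Y_i)$ is a well-defined automorphism, and iterating the gauge identity together with the integrability relation $\tilde y_i^{n_{\mu,i}+1}\cdot v_+=0$ in $\mathscr{H}(\mu)$ forces $\tilde\psi$ to vanish on the required generators of $K_\mu$. Without Borel-stability this local nilpotency breaks down in general and the descent cannot be carried out.
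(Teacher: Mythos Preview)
Your reduction to $a=1$, the Mittag--Leffler lifting, and the surjectivity argument are fine and parallel the paper's (which packages the latter via the $\fs_p$-module isomorphism $\hat M(V(\mu),c)\simeq U(\fs_p)\otimes_{U(\fs_p^{\geq 0})}V(\mu)$). The genuine gap is the descent step. You assert that lifting $\tilde y_i$ to a global $\Gamma$-equivariant $\tilde Y_i$, invoking Lemma~\ref{lemma 1.3}, and using ``the integrability relation $\tilde y_i^{\,n_{\mu,i}+1}\cdot v_+=0$ in $\mathscr H(\mu)$'' forces $\tilde\psi$ to vanish on the generators of $K_\mu$. But your recursive $\tilde\psi$ is defined on $\mathscr H(\vec\lambda)\otimes\hat M(V(\mu),c)$, where $\tilde y_i^{\,n_{\mu,i}+1}v_+\neq 0$; invoking the integrability relation here is circular. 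Iterating your gauge identity with $\tilde Y_i$ (whose restriction at $p$ is $\tilde y_i+Z_i$, $Z_i\in\hat\fg_p^{\geq 0}$) yields at best $\tilde\psi(w\otimes\tilde y_i^{\,n+1}v_+)\approx(-1)^{n+1}\psi\bigl((\tilde Y_i|_{\vec o})^{n+1}w\otimes v_+\bigr)$ modulo lower-order corrections, and there is no reason this vanishes at the \emph{fixed} exponent $n=n_{\mu,i}$ for \emph{every} $w$: local nilpotency only gives vanishing for an exponent depending on $w$.

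The paper supplies the missing ingredient in Lemma~\ref{lem_kac}, an $sl_2$-identity valid \emph{in the Verma module}: $Y^{p}\cdot v_+=\alpha\,X^{q}Y^{p+q}\cdot v_+$ for $p>n_{\mu,i}$ and any $q>0$, with $X=x_i[f]$ of \emph{positive} degree at the chosen point. One then globalizes $x_i$ (not $y_i$) to $Z=\sum_{\gamma\in\Gamma/\Gamma_q}\gamma\cdot(x_i[f])$ with $f$ vanishing to high order on $\pi^{-1}(p)\setminus\{q\}$, obtaining in the coinvariants $h\otimes Y^{n_{\mu,i}+1}v_+\equiv(-1)^N\alpha\,Z^N h\otimes Y^{n_{\mu,i}+1+N}v_+$ for every $N$; \emph{now} Lemma~\ref{lemma 1.3} gives $Z^N h=0$ for $N\gg 0$. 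A second subtlety you skip: the $\sigma_q$-compatible Borel $\fb_q$ used to define the Chevalley generators $x_i,y_i$ need not coincide with the $\Gamma$-stable Borel $\fb$; the paper checks that $x_i$ is a combination of $\fh$-root vectors all of one sign relative to $\fb$, so that each $\gamma\cdot x_i$ stays in the same $\fn^\pm$ and hence $Z\in(\fn^\pm\otimes\mathscr O)^\Gamma$, which is what Lemma~\ref{lemma 1.3} requires at each point of $\pi^{-1}(\vec o)$.
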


\begin{proof}
By Lemma \ref{quasi-split-lemma}, we may assume that $\Gamma$ stabilizes a Borel subalgebra $\fb$ and a Cartan subalgebra $\fh$ contained in $\fb$. From now on we fix such a $\fb$ and $\fh$.

Let $\mathscr{H} :=\mathscr{H} (\vec{\lambda})\otimes
V(\mu_{1})\otimes \cdots\otimes V(\mu_{a-1})$. By induction on $a$, it
suffices to show that the inclusion 
$
V(\mu_{a})\hookrightarrow \mathscr{H} (\mu_{a})
$
induces an isomorphism (abbreviating $\mu_{a}$ by $\mu$ and $p_{a}$ by
$p$)
\begin{equation}\label{neweqn25}
[\mathscr{H}\otimes
  V(\mu)]_{\mathfrak{g}[{\Sigma}^o]^\Gamma}\xrightarrow{\sim}[\mathscr{H}\otimes
  \mathscr{H} (\mu)]_{\mathfrak{g}[{\Sigma}^o\backslash \pi^{-1}(p)]^\Gamma},
\end{equation}
where ${\Sigma}^o := {\Sigma}\backslash  \pi^{-1} (\vec{o})$.

We first prove \eqref{neweqn25} replacing $\mathscr{H} (\mu)$ by the generalized
Verma module $\hat{M}(V(\mu),c)$ for $\hat\fg_p$ and the parabolic subalgebra $\hat\fp_p$, i.e.,
\begin{equation}\label{neweqn26}
[\mathscr{H}\otimes
  V(\mu)]_{\mathfrak{g}[{\Sigma}^o]^\Gamma}\xrightarrow{\sim}[\mathscr{H}\otimes
  \hat{M}(V(\mu),c)]_{\mathfrak{g}[{\Sigma}^o\backslash \pi^{-1}(p)]^\Gamma}. 
\end{equation}

Consider the Lie algebra
\begin{equation}
\mathfrak{s}_p := \mathfrak{g}[{\Sigma}^o\backslash \pi^{-1}(p) ]^\Gamma \oplus \mathbb{C}C,
\end{equation}
where $C$ is central in $\mathfrak{s}_p$ and

\begin{equation}
\label{affine_curve_center_ext}
[X, Y]=[X,Y]_0+  \frac{1}{|\Gamma|} \sum_{q\in \pi^{-1}(p)  } \Res_{q}  \langle dX,Y  \rangle  \,C,\text{~ for~ } X, Y\in \fg[{\Sigma}^o \backslash \pi^{-1}(p)  ]^\Gamma,
\end{equation}
where $[X,Y]_0$ is the point-wise Lie bracket. 

Let $\mathfrak{s}_p^{\geq 0}$ be the subalgebra of $\mathfrak{s}_p$:
\[  \mathfrak{s}_p^{\geq 0}:=\fg[{\Sigma}^o ]^\Gamma\oplus  \mathbb{C}C . \]

Fix a point $q\in \pi^{-1}(p)$ and a  generator $\sigma_q$ of $\Gamma_q$ such  that $\sigma_q$  acts on $T_q{\Sigma}$ by $\epsilon_q :=e^{\frac{2\pi i}{e_p}}$
(which is a primitive $e_p$-th root of unity).  By the Riemann-Roch theorem there exists a formal parameter $z_q$ around $q$ such that  $z_q^{-1}$ is a regular function on ${\Sigma}^o\backslash \{q\}$. Moreover, we require $z_q^{-1}$ to vanish at any other point $q'$ in $\pi^{-1}(p)$.  Replacing $z_q^{-1}$ by 
$$\sum_{j=1}^{e_p}\, \epsilon_q^{-j}\sigma_q^j(z_q^{-1}),$$
we can (and will) assume that 
\begin{equation} \label{3.3.new} \sigma_q\cdot z_q^{-1} = \epsilon_qz_q^{-1}.
\end{equation} 
Recall the Lie algebras  $\hat{L}(\fg,\Gamma_q)$ and $\hat{L}(\fg,\Gamma_q)^-=(z_q^{-1}\fg[z_q^{-1}])^{\Gamma_q}$ from $\S$2. Since $z_q$ is a formal parameter 
at $q$ with $ \sigma_q\cdot z_q= \epsilon_q^{-1}z_q$, we have 
\begin{equation} \label{decomposition_2}
\hat{L}(\fg, \Gamma_q)=\hat{L}(\fg,\Gamma_q)^{\geq 0}\oplus    (z_q^{-1}\fg[z_q^{-1}])^{\Gamma_q}.\end{equation}
Define, for any $x\in \mathfrak{g}$ and $k\geq 1$,
\[A(x[z_q^{-k}]) := \frac{1}{|\Gamma_q|} \sum_{\gamma\in \Gamma}\,\gamma\cdot (x[z_q^{-k}]) \in \mathfrak{s}_p,\]
and let $V\subset \mathfrak{s}_p$ be the span of $\{A(x[z_q^{-k}])\}_{x\in \mathfrak{g}, k \geq 1}$. 
 It is easy to check that 
\begin{equation}    \mathfrak{s}_p=\mathfrak{s}_p^{\geq 0}\oplus   V.   \label{decomposition_1}
\end{equation}
By Lemmas \ref{evaluation_lem} and  \ref{lemma2.3}, we can view $\hat{M}(V(\mu),c)$ as a generalized Verma module over $\hat{L}(\fg,\Gamma_q)$ induced from $V(\mu)$ as $\hat{L}(\fg, \Gamma_q)^{\geq 0}$-module. 

Consider the embedding of the Lie algebra
$$
\mathfrak{s}_p\hookrightarrow \hat{L}(\fg,\Gamma_q)
$$
by taking $C\mapsto C$ and any $X\mapsto X_q$.
We assert  that the above embedding $ \mathfrak{s}_p \hookrightarrow \hat{L}(\fg, \Gamma_q)$ induces a vector space isomorphism 
\begin{equation} \label{eqn3.3.27}
\gamma:  \mathfrak{s}_p/ \mathfrak{s}_p^{\geq 0} \simeq \hat{L}(\fg, \Gamma_q)/\hat{L}(\fg, \Gamma_q)^{\geq 0}.
\end{equation}
To prove the above isomorphism, observe first that $\gamma$ is injective: For $\alpha\in \mathfrak{g}[{\Sigma}^o\setminus \pi^{-1}(p)]^\Gamma$, if $\gamma(\alpha)\in \hat{L}(\fg, \Gamma_q)^{\geq 0}$, then $\alpha\in  \mathfrak{g}[({\Sigma}^o\setminus \pi^{-1}(p))\cup \{q\}]$. The $\Gamma$-invariance of $\alpha$ forces $\alpha\in  \mathfrak{g}[{\Sigma}^o]$, proving the injectivity of $\gamma$. To prove the surjectivity of $\gamma$, take a $\Gamma_q$-invariant $\alpha=x[z_q^{-k}]$ for $k \geq 1$. Thus, $\sigma_q(x)= \epsilon_q^{-k}x$. By the definition, since $z_q^{-1}$ vanishes at any point $q'\in \pi^{-1}(p)$ different from $q$,
\[\gamma(A(\alpha))=\alpha + \hat{L}(\fg, \Gamma_q)^{\geq 0}.\]
This proves the surjectivity of $\gamma$.  Thus, by the  PBW theorem,  as $\mathfrak{s}_p$-modules 
\begin{equation}
\label{Verma_iso}
\hat{M}(V(\mu),c)\simeq  U(\mathfrak{s}_p)\otimes_{U(\mathfrak{s}_p^{\geq 0} )}V(\mu).  
\end{equation}

Let $\mathfrak{g}[{\Sigma}^o\backslash \pi^{-1}(p)]^\Gamma$ act on $\mathscr{H}$ as follows:
\begin{align*}&X \cdot (v_1\otimes \dots \otimes v_s\otimes w_1\otimes \dots \otimes w_{a-1}) \\
&= \sum_{i=1}^s
v_1\otimes \dots \otimes X_{o_i}\cdot v_i\otimes \dots \otimes v_s\otimes w_1\otimes \dots \otimes w_{a-1} \\
&+
\sum_{j=1}^{a-1} v_1\otimes \dots \otimes v_s\otimes w_1\otimes \dots \otimes X|_{p_j}\cdot w_j\otimes \dots \otimes w_{a-1}\\
&\,\,\,\text{for $X\in 
\fg[{\Sigma}^o\backslash \pi^{-1}(p)]^\Gamma, v_i\in \mathscr{H}(\lambda_i)$ and $w_j \in V(\mu_j)$},
\end{align*}
and let $C$ act on
$\mathscr{H}$ by the scalar $-c$. By the Residue Theorem, these actions combine to make
$\mathscr{H}$ into an $\mathfrak{s}_p$-module.
Thus, the action of $C$ on the tensor product $\mathscr{H}\otimes
\hat{M}(V(\mu),c)$ is trivial.

Now, by the isomorphism (\ref{Verma_iso}) (in the following,
$\mathfrak{g}[{\Sigma}^o]^\Gamma$ acts on $V(\mu)$ via its restriction on $\pi^{-1}(p)$ and $C$
acts via the scalar $c$)
\begin{align*}
\left[ \mathscr{H}\otimes
  \hat{M}(V(\mu),c)\right]_{\mathfrak{g}[{\Sigma}^o\backslash \pi^{-1}(p)]^\Gamma} &=
\left[\mathscr{H}\otimes
  \hat{M}(V(\mu),c)\right]_{\mathfrak{s}_p },
\,\,\,\text{since $C$ acts trivially}\\
&\simeq \mathscr{H}\otimes_{U(\mathfrak{s}_p )}\hat{M}(V(\mu),c)\\
&\simeq
\mathscr{H}\otimes_{U(\mathfrak{s}_p  )}\left(U(\mathfrak{s}_p )\otimes_{U(\mathfrak{g}[{\Sigma}^o]^\Gamma \oplus
  \mathbb{C}C)}V(\mu)\right)\\
&\simeq \mathscr{H}\otimes_{U(\mathfrak{g}[{\Sigma}^o]^\Gamma \oplus
    \mathbb{C}C)} V(\mu)\\
&= \mathscr{H}\otimes_{U(\mathfrak{g}[{\Sigma}^o]^\Gamma )}V(\mu)\\
&= [\mathscr{H}\otimes V(\mu)]_{\mathfrak{g}[{\Sigma}^o]^\Gamma}.
\end{align*}
This proves \eqref{neweqn26}.

Now, we come to the proof of \eqref{neweqn25}:

Let $K(\mu)$ be the kernel of the canonical projection
$\hat{M}(V(\mu),c)\twoheadrightarrow \mathscr{H} (\mu)$.
In view of \eqref{neweqn26}, to prove \eqref{neweqn25}, it suffices to show that the image of
$$
\iota: \left[\mathscr{H}\otimes K(\mu)\right]_{\mathfrak{g}[{\Sigma}^o\backslash \pi^{-1}(p)]^\Gamma}\to
\left[\mathscr{H}\otimes
  \hat{M}(V(\mu),c)\right]_{\mathfrak{g}[{\Sigma}^o\backslash \pi^{-1}(p)]^\Gamma}
$$
is zero:
From the isomorphism \eqref{eqn3.3.27}, we get 
$$
\hat{L}(\fg, \Gamma_q) =\mathfrak{s}_p+\hat{L}(\fg,\Gamma_q)^{\geq 0}.
$$
Moreover, write 
$$\hat{L}(\fg, \Gamma_q)^{\geq 0}=\hat{L}(\fg, \Gamma_q)^+ + \fg^{\Gamma_q}+ \mathbb{C} C,$$
and observe that any element of $\fg^{\Gamma_q}$ can be (uniquely) extended to an element of $\fg_p:=\fg[\pi^{-1}(p)]^\Gamma$ (cf. Lemma
 \ref{evaluation_lem}). Further, ${\Sigma}^o$ being affine, the restriction map $\mathfrak{g}[{\Sigma}^o]^\Gamma \to \fg_p$ is surjective, and, of course,  $\mathfrak{g}[{\Sigma}^o]^\Gamma \subset \mathfrak{s}_p^{\geq 0}$. Thus, we get the decomposition:
$$
\hat{L}(\fg, \Gamma_q) =\mathfrak{s}_p+\hat{L}(\fg,\Gamma_q)^+,
$$
and hence, by the Poincar\'e-Birkhoff-Witt theorem,
$U(\hat{L}(\fg,\Gamma_q))$ is the span of elements of
the form
$$
Y_{1}\ldots Y_{m}\cdot X_{1}\ldots X_{n}, \text{~~ for~~ } Y_{i}\in
\mathfrak{s}_p, X_{j}\in\hat{L}(\fg,\Gamma_q)^+  \text{~~
  and~~ } m, n\geq 0.
$$
Consider the decomposition  \eqref{eq1.1.1.0}  for $\sigma_q$: $\sigma_q=\tau_q \epsilon_q^{{\rm ad} h}$, under a choice of $\sigma_q$-stable Borel subalgebra $\fb_q$ containing the same Cartan subalgebra $\fh$ in the sense of Section \ref{Kac_Moody_Section}. 
(Since $\Gamma$, in particular $\sigma_q$,  stabilizes the pair $(\fb, \fh)$,  as in the proof of Lemma \ref{lemma 1.3}, $\fh^{\sigma_q}=\fh^{\tau'_q}$  for some diagram automorphism  $\tau'_q$ of $\fg$ associated to the pair $(\fb, \fh)$. In particular, the centralizer $Z_{\mathfrak{g}}(\mathfrak{h}^\sigma)$ of $\mathfrak{h}^\sigma$ in $\mathfrak{g}$ equals $ \mathfrak{h}$ and hence we can take  $\fh_q=\fh$.)
Under such a choice,  there exist $sl_2$-triples $x_i,y_i,h_i \in \fg$ for each $i\in \hat{I}(\fg, \sigma_q)$  such that $\tilde{x}_i:=x_i[z_q^{s_i}], \tilde{y}_i:=y_{i}[z_q^{-s_i}], i\in \hat{I}(\fg,\sigma_q)$ are Chevalley generators of $\hat{L}(\fg, \sigma_q)$. Moreover, $x_i,y_i$ satisfy
\begin{equation}\label{eqn28new} \sigma_q(x_i)= \epsilon_q^{s_i} x_i, \,\text{ and } \sigma_q(y_i)=\epsilon_q^{-s_i} y_i .
\end{equation}

Let $v_+$ be the highest weight vector of $\hat{M}(V(\mu),c)$. 
  Recall (cf. \eqref{eqn1.2.0})  that $K(\mu)$ is generated by $\tilde{y}^{n_{\mu,i}+1}_i\cdot v_+$, for  $i\in \hat{I}(\fg,\sigma_q)^+$ consisting of $i\in \hat{I}(\fg, \sigma_q)$   such that $s_i>0$. 

Thus, to prove the vanishing of the map $\iota$, 
it suffices to show that  for any $i\in \hat{I}(\fg,\sigma_q)^+$
\begin{equation}\label{neweqn28}
\iota\left(h\otimes (X_{1}\ldots X_{n}\cdot \tilde{y}_i^{n_{\mu,i}+ 1}\cdot v_{+})\right)=0,
\end{equation}
for $h\in \mathscr{H}$, any $n\geq 0$ and $X_{j}\in
\hat{L}(\fg, \Gamma_q)^+$.
But, $\tilde{y}_i^{n_{\mu,i}+ 1}\cdot v_{+}$ being a highest weight vector, 
$$
\hat{L}(\fg, \Gamma_q)^+\cdot (\tilde{y}_i^{n_{\mu,i}+ 1}\cdot v_{+})=0.
$$

Thus, to prove \eqref{neweqn28}, it suffices to show that for any $i\in \hat{I}(\fg,\sigma_q)^+$
\begin{equation}\label{neweqn29}
\iota (h\otimes ( \tilde{y}_i^{n_{\mu,i}+ 1}\cdot v_{+}))=0,\quad\text{for any}\quad h\in \mathscr{H}. 
\end{equation}

Fix $i\in \hat{I}(\fg, \sigma_q)^+$. Take $f\in \mathbb{C}[{\Sigma}^o]$ such that
$$f_{q}\equiv z_q^{s_i}\,(\text{mod~ } z_q^{s_i+1}),
$$
and the order of vanishing of $f$ at any $q'\neq q\in \pi^{-1}(p)$ is at least $(n_{\mu,i}+3)s_i$. Moreover, replacing $f$ by 
$\frac{1}{|\Gamma_q|}\sum_{j=1}^{|\Gamma_q|}\, \epsilon_q^{s_ij}\sigma_q^j\cdot f$, we can (and will) assume that 
$$\sigma_q\cdot f=  \epsilon_q^{-s_i}f . 
$$
Now, take 
$$Z=\sum_{\gamma \in \Gamma/\Gamma_q} \, \gamma \cdot (x_i[f]).$$
Then, writing $Y=\tilde{y}_i$, 
\begin{align} \label{eqn29new}Z^N Y^{n_{\mu,i}+ N+1}\cdot v_{+}&=\left(\sum_{\gamma \in \Gamma/\Gamma_q} \,\left( \gamma \cdot (x_i[f])\right)_q\right)^NY^{n_{\mu,i}+N+ 1}\cdot v_{+}\notag \\
&=(x_i[f_q])^NY^{n_{\mu,i}+ N+1}\cdot v_{+}.
\end{align}
To prove the last equality, observe that $ \left(\gamma_1 \cdot (x_i[f])\right)_q \dots  \left(\gamma_N \cdot (x_i[f])\right)_q$ has  zero of order at least $(n_{\mu,i}+3)s_i+(N-1)s_i$ unless each $\gamma_j\cdot \Gamma_q=\Gamma_q$. But, $Y^{n_{\mu,i}+ N+1}$ has order of pole equal to $(n_{\mu,i}+N+1)s_i$. Since $(n_{\mu,i}+3)s_i+(N-1)s_i >(n_{\mu,i}+N+1)s_i $, we get the last equality. 
Thus,
 by Lemma \ref{lem_kac} for $X=x_i[f_q]$ and $Y=\tilde{y}_i$, for any $N\geq 1$,  there exists $\alpha \neq 0$ such that 
\begin{align*}
\iota\left(h\otimes (Y^{n_{\mu,i}+ 1}\cdot v_{+})\right) &=
 \alpha \,\iota
\left(h\otimes X^{N}Y^{n_{\mu,i}+ N+1}\cdot v_{+}\right)\\
&=
 \alpha \,\iota
\left(h\otimes Z^{N}Y^{n_{\mu,i}+ N+1}\cdot v_{+}\right), \,\,\,\text{by} \,\eqref{eqn29new}\\
&= (-1)^{N}\alpha \,\iota\left(Z^{N}\cdot h\otimes
Y^{n_{\mu,i} + N+1}\cdot v_{+}\right)\\
&= 0,\text{~~ by~ Lemma \ref{lemma 1.3} for large $N$ (see the argument below).}
\end{align*}
This proves \eqref{neweqn29} and hence completes the proof of the theorem. 

We now explain more precisely how Lemma \ref{lemma 1.3} implies $Z^N\cdot h=0$.  
With respect to the pair $(\fb,\fh)$ stable under $\Gamma$  (note that $\fb$ might not be the same as $\fb_q$ given above \eqref{eq1.1.1.0} though $\fh_q$ is taken to be $\fh$),  since $\Gamma$ preserves the pair $(\fb,\fh)$, the group $\Gamma$ acts on the root system $\Phi(\fg,\fh)$ of $\fg$ by factoring through the group of outer automorphisms with respect to the pair $(\fb, \fh)$. In particular, $\Gamma$ preserves the set of positive (resp. negative) roots.  
From the construction of $x_i$ in $\S$\ref{Kac_Moody_Section}, $x_i$ is either a linear combination of positive root vectors or a linear combination of negative root vectors with respect to $\fb_q$. Thus, either  $\gamma\cdot x_i\in \mathfrak{n}$ for all $\gamma\in \Gamma$, or $\gamma\cdot x_i\in \mathfrak{n}^-$ for all $\gamma\in \Gamma$, where $\fb^-$ is the negative Borel of $\fb$ and $\mathfrak{n}$ (resp. $\mathfrak{n}^-$) is the nil-radical of $\mathfrak{b}$ (resp. $\mathfrak{b}^-$).   Therefore, we may apply Lemma  \ref{lemma 1.3} to show $Z^N\cdot h=0$.  
\end{proof}
\begin{remark} {\rm Observe that the condition that $\Gamma$ stabilizes a Borel subalgebra $\mathfrak{b}$ and hence also a Cartan subalgebra $\mathfrak{h}\subset \mathfrak{b}$ is equivalent to the condition that the image of $\Gamma$ in Aut $\mathfrak{g}$ is contained in $D\ltimes \text{Int} H$, where $D$ is the group of diagram automorphisms of $\fg$ and $H$ is the maximal torus of $G$ with Lie algebra $\mathfrak{h}$ ($G$ being the adjoint group with Lie algebra $\fg$).}
\end{remark}
The following result is  the twisted analogue of  ``Propagation of Vacua'' due to Tsuchiya-Ueno-Yamada [TUY].

\begin{coro}\label{coro2.2.3}
With the notation and assumptions as in Theorem~\ref{Propagation_thm} (in particular, $(\bar{\Sigma} ,\vec{o})$ is a $s$-pointed curve), for any smooth point $q\in {\Sigma}^o := {\Sigma}\setminus \pi^{-1}(\vec{o})$  (thus $p=\pi(q)$ is a smooth point of $\bar{\Sigma}$)  with $0\in D_{c, q}$ (cf. Corollary \ref{newcoroweight0}), there are canonical
isomorphisms:
\begin{itemize}
\item[{\rm(a)}]
  $\mathscr{V}_{\Sigma,\Gamma,\phi}(\vec{o},\vec{\lambda})\simeq
  \mathscr{V}_{\Sigma,\Gamma,\phi}((\vec{o},p),(\vec{\lambda},0))$,
  and

\item[{\rm(b)}]
  For $\bar{\Sigma}$ an irreducible curve, $\mathscr{V}_{\Sigma,\Gamma,\phi}(\vec{o},\vec{\lambda})\simeq
  [\mathscr{H} (0)\otimes
    V(\vec{\lambda})]_{\mathfrak{g}[\Sigma\backslash \pi^{-1}(p)]^\Gamma}$,
where the point $p$ is assigned weight $0$.
\end{itemize}
\end{coro}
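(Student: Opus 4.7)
The plan is to derive both parts as essentially formal consequences of the Propagation Theorem (Theorem~\ref{Propagation_thm}), applied twice with complementary choices of the base and added marked sets.

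For part (a), I would invoke Theorem~\ref{Propagation_thm} directly with the single added point $\vec{p}=(p)$ carrying weight $\vec{\mu}=(0)$. The hypothesis $0\in D_{c,q}$, transferred to $0\in D_{c,p}$ via the restriction isomorphism of Lemma~\ref{lemma2.3}, guarantees that $\mathscr{H}(0)$ is a well-defined integrable highest weight module. Since $V(0)\simeq \mathbb{C}$ is the trivial one-dimensional $\fg_{p}$-module, on which $\fg[\Sigma\setminus\pi^{-1}(\vec{o})]^{\Gamma}$ acts trivially through evaluation at $\pi^{-1}(p)$, the domain of the isomorphism $\theta$ of Theorem~\ref{Propagation_thm} collapses:
\[
\bigl[\mathscr{H}(\vec{\lambda})\otimes V(0)\bigr]_{\fg[\Sigma\setminus\pi^{-1}(\vec{o})]^{\Gamma}}
\;\simeq\;
\bigl[\mathscr{H}(\vec{\lambda})\bigr]_{\fg[\Sigma\setminus\pi^{-1}(\vec{o})]^{\Gamma}}
\;=\;\mathscr{V}_{\Sigma,\Gamma,\phi}(\vec{o},\vec{\lambda}).
\]
Theorem~\ref{Propagation_thm} then yields the asserted isomorphism with $\mathscr{V}_{\Sigma,\Gamma,\phi}((\vec{o},p),(\vec{\lambda},0))$.

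For part (b), the plan is to apply Theorem~\ref{Propagation_thm} with the roles of ``base'' and ``added'' points reversed: take the base $s$-pointed curve to be $(\bar{\Sigma}, p)$ with weight $0$ and the added points to be $\vec{o}$ with weights $\vec{\lambda}$. This is where the assumption that $\bar{\Sigma}$ is irreducible is used, since Definition~\ref{defi2.1.1} requires every irreducible component of the base curve to contain a marked point and we are taking only the single base point $p$. The theorem then delivers
\[
\bigl[\mathscr{H}(0)\otimes V(\vec{\lambda})\bigr]_{\fg[\Sigma\setminus\pi^{-1}(p)]^{\Gamma}}
\;\simeq\;
\mathscr{V}_{\Sigma,\Gamma,\phi}\bigl((p,\vec{o}),(0,\vec{\lambda})\bigr),
\]
and the right-hand side is identified with $\mathscr{V}_{\Sigma,\Gamma,\phi}(\vec{o},\vec{\lambda})$ by part (a) (up to the evident reordering of the marked tuple, to which the space of covacua is insensitive).

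The verifications needed along the way are routine: smoothness of all points in $\pi^{-1}(p)=\Gamma\cdot q$ follows from algebraicity of the $\Gamma$-action together with smoothness of $q$; the disjointness $p\notin\vec{o}$ is immediate from $q\in \Sigma^{o}=\Sigma\setminus\pi^{-1}(\vec{o})$; and the standing hypothesis that $\Gamma$ stabilizes a Borel subalgebra of $\fg$ is inherited from Theorem~\ref{Propagation_thm}. I do not anticipate a substantive obstacle here; the corollary is really a repackaging of the Propagation Theorem, and the only conceptual point to highlight is the symmetric use of that theorem in (b), enabled by irreducibility of $\bar{\Sigma}$.
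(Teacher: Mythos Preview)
Your proposal is correct and follows essentially the same approach as the paper: part (a) is Theorem~\ref{Propagation_thm} with $\vec{p}=(p)$, $\vec{\mu}=(0)$, and part (b) is obtained by swapping the roles of $(p,0)$ and $(\vec{o},\vec{\lambda})$ in Theorem~\ref{Propagation_thm} and then invoking (a). Your remarks on why irreducibility of $\bar{\Sigma}$ is needed and why $V(0)$ trivializes the tensor factor are accurate elaborations of what the paper leaves implicit.
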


\begin{proof}
(a): Apply Theorem \ref{Propagation_thm} for the case
  $\vec{p}=(p)$ and $\vec{\mu}=(0)$.

(b): It follows from Theorem~\ref{Propagation_thm} and the (a)-part.
(In Theorem~\ref{Propagation_thm} replace $\vec{o}$ by the
  singleton $(p)$, $\vec{\lambda}$ by $(0)$,
  $\vec{p}$ by $\vec{o}$ and
  $\vec{\mu}$ by $\vec{\lambda}$.)
\end{proof}

\begin{remark}  (a)  {\rm A much weaker form of the above Corollary part (a) (where $\Gamma$ is of order $2$ and  $\vec{o}$  consists of all the ramification points) is proved in [FS, Lemma 7.1]. It should be mentioned that they use the more general setting of twisted Vertex Operator Algebras. }
\vskip1ex
(b)  {\rm When all the marked points are unramified and $|\Gamma|$ is a prime, the Propagation of Vacua is proved in [D].}
\end{remark}
\vskip4ex

\section{Factorization Theorem}\label{Factorization_section}

The aim of this section is to prove the Factorization Theorem which  identifies the space of covacua for a genus $g$ nodal curve
$\bar{\Sigma}$ with a direct sum of the spaces of covacua for its normalization $\bar{\Sigma}'$ (which is a genus $g-1$ curve).

Let
  $\pi: {\Sigma}\to \bar{\Sigma}$ be  a  $\Gamma$-cover of a $s$-pointed curve $(\bar{\Sigma},  \vec{o})$. We do {\it not} assume that $\bar{\Sigma}$ is irreducible. Moreover, $\phi:\Gamma \to$ Aut$(\fg)$ is a group homomorphism. 
 
\begin{definition}
\label{stable_action}
 [BR, D\'efinition 4.1.4] Let ${\Sigma}$ be a reduced (but not necessarily connected) projective curve with at worst only simple nodal singularity. (Recall that a point $P\in {\Sigma}$ is called a {\em simple node}  if analytically a neighborhood of $P$ in ${\Sigma}$ is isomorphic with an analytic neighborhood of $(0,0)$ in the curve $xy=0$ in $\mathbb{A}^2$.) Then, the action of $\Gamma$ on $\Sigma$  at any simple node  
$q\in \Sigma$ is called {\it stable} if  the derivative $\dot{\sigma}$ of  any element $\sigma\in \Gamma_q$  acting on the Zariski tangent space $T_q(\Sigma)$ satisfies the following: 

\begin{align}\label{eqn4.0.1}\det (\dot{\sigma})&=1\,\,\,\text{ if $\sigma$ fixes the two branches at $q$,}\notag\\
&=-1\,\,\,\text{ if $\sigma$ exchanges the two branches.}
\end{align}
We say that $\Gamma$ {\it acts stably on $\Sigma$} if it acts stably on each of its nodes.

{\it From now on, by a node we will always mean a simple node.} 
\end{definition}
 Assume that $p\in \bar{\Sigma}$ is a node (possibly among other nodes) and also assume that 
  the fiber $\pi^{-1}(p)$ consists of nodal points. Assume further that the action of $\Gamma$  at the points $q\in \pi^{-1}(p)$ is stable. Observe that, in this case,  since $p$ is assumed to be a node, any $\sigma \in \Gamma_q$ can not exchange the two branches at $q$ for otherwise the point $p$ would be smooth. 

 We fix a level $c\geq 1$.


Let $\bar{\Sigma}'$ be the curve obtained from $\bar{\Sigma}$ by the normalization $\bar{\nu}:\bar{\Sigma}'\to \bar{\Sigma}$ at only the point $p$. Thus, $\bar{\nu}^{-1}(p)$ consists of two smooth points $p',p''$  in $\bar{\Sigma}'$ and 
$$
\bar{\nu}_{|\bar{\Sigma}'\backslash \{ p',p'' \}  }:\bar{\Sigma}'\backslash \{ p',p'' \}  \to \bar{\Sigma}\backslash \{p\}
$$
is a biregular isomorphism.   We  denote the preimage of any point of $\bar{\Sigma}\backslash \{p\}$ in $\bar{\Sigma}'\backslash \{ p',p''  \}$  by the same symbol.  
Let $\pi' : {\Sigma}' \to \bar{\Sigma}'$ be the pull-back of the Galois cover $\pi$ via
$\bar{\nu}$. In particular, $\pi'$ is a Galois cover with Galois group $\Gamma$.   Thus, we have the fiber diagram:
\[
\xymatrix@=.5cm{
{\Sigma}'\ar[dd]_{\pi' }
\ar[rr]^{\nu} && \Sigma
\ar[dd]^{\pi}\\
 & \square & \\
\bar{\Sigma}'  \ar[rr]_{\bar{\nu}} &&
\bar{\Sigma}.
}
\]


\begin{lemma}
\label{normalization_pullback}
With the same notation and assumptions as in Definition \ref{stable_action},  
\begin{enumerate}
\item the map $\nu$ is a normalization of ${\Sigma}$ at every point $q\in \pi^{-1}(p)$;
\item there exists a natural $\Gamma$-equivariant bijection $\kappa:  \pi'^{-1}(p')\simeq \pi'^{-1}(p'')$;
\item  for any $q\in \pi^{-1}(p) $,  we have 
\[ \Gamma_{q}= \Gamma_{q'}= \Gamma_{q''} , \]
where $\nu^{-1}(q)$ consists of two smooth points $q',q''$, and $\Gamma_{q}, \Gamma_{q'}$ and $\Gamma_{q''}$ are stabilizer groups of $\Gamma$ at $q, q'$ and $q''$ respectively. Moreover, $ \Gamma_{q}= \Gamma_{q'}= \Gamma_{q''}$ is a cyclic group.
\end{enumerate}
\end{lemma}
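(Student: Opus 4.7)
The plan is to work formally-locally at a nodal point $q \in \pi^{-1}(p)$. As the paragraph preceding the lemma already notes, any $\sigma \in \Gamma_q$ must preserve each of the two branches at $q$ (otherwise $p$ would be smooth), and combined with stability this lets us choose formal coordinates $u, v$ so that $\hat{\mathscr{O}}_{\Sigma,q} \simeq \mathbb{C}[[u,v]]/(uv)$ with $\sigma_q(u) = \zeta u$, $\sigma_q(v) = \zeta^{-1} v$ for some primitive $|\Gamma_q|$-th root of unity $\zeta$. Taking $\Gamma_q$-invariants identifies $\hat{\mathscr{O}}_{\bar{\Sigma},p} \simeq \mathbb{C}[[X,Y]]/(XY)$ with $X = u^{|\Gamma_q|}$ and $Y = v^{|\Gamma_q|}$, matching the assumed nodal picture at $p$.

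For part (1), we let $\tilde{\Sigma}\to \Sigma$ be the normalization of $\Sigma$ at the $\Gamma$-stable set $\pi^{-1}(p)$; by functoriality $\Gamma$ acts on $\tilde{\Sigma}$. Locally at $q$ the two branches split into smooth points $q'$ and $q''$ with formal local rings $\mathbb{C}[[u]]$ and $\mathbb{C}[[v]]$, and since $\tilde{\Sigma}$ is normal there while $\bar{\nu}$ is finite birational, the composite $\tilde{\Sigma}\to \Sigma \to \bar{\Sigma}$ factors uniquely through $\bar{\Sigma}'$. A branch-by-branch computation (the $\Gamma_q$-invariants of $\mathbb{C}[[u]]$ are $\mathbb{C}[[u^{|\Gamma_q|}]] = \hat{\mathscr{O}}_{\bar{\Sigma}',p'}$, and similarly on the other branch) shows that $\tilde{\Sigma}\to \bar{\Sigma}'$ is a $\Gamma$-cover with quotient $\bar{\Sigma}'$. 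The universal property of the pull-back then identifies $\tilde{\Sigma}$ with $\Sigma'$ and $\nu$ with the normalization map, giving (1).

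Parts (2) and (3) follow quickly once (1) is set up. Every $\sigma \in \Gamma_q$ fixes both $q'$ and $q''$ individually since it preserves each branch, so $\Gamma_q \subseteq \Gamma_{q'}\cap \Gamma_{q''}$; conversely any element of $\Gamma_{q'}$ or $\Gamma_{q''}$ must fix $\nu(q') = \nu(q'') = q$, giving the reverse inclusion, and cyclicity is then inherited from the fact that $q'$ and $q''$ are smooth points of the $\Gamma$-curve $\Sigma'$. For the bijection $\kappa$, we label the two branches at each $q\in \pi^{-1}(p)$ by whether they lift to $p'$ or to $p''$; this gives natural $\Gamma$-equivariant bijections $\pi'^{-1}(p')\simeq \pi^{-1}(p) \simeq \pi'^{-1}(p'')$, and $\kappa$ is defined as their composition.

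The main obstacle will be the scheme-theoretic subtlety in (1). The naïve fibre product $\Sigma\times_{\bar{\Sigma}}\bar{\Sigma}'$ is not reduced; for instance over $p'$ it is locally $\mathbb{C}[[u,v]]/(uv,v^{|\Gamma_q|})$, in which $v$ is nilpotent whenever $|\Gamma_q|>1$, so one cannot simply invoke flat base change along the non-flat map $\bar{\nu}$. The cleanest fix is to construct $\Sigma'$ intrinsically as the normalization $\tilde{\Sigma}$ and verify its universal property as a $\Gamma$-cover of $\bar{\Sigma}'$, thereby bypassing the embedded components produced by the naïve base change.
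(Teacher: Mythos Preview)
Your argument is essentially the same as the paper's: both set up formal coordinates $z',z''$ at $q$ on which a generator of $\Gamma_q$ acts by inverse roots of unity (using branch-preservation and stability), identify the $\Gamma_q$-invariants with the nodal local ring at $p$, see the node split into two smooth points on each level, and then read off the bijections $\pi'^{-1}(p')\simeq \pi^{-1}(p)\simeq \pi'^{-1}(p'')$ and the stabilizer equalities $\Gamma_q=\Gamma_{q'}=\Gamma_{q''}$. Your proofs of (2) and (3) match the paper's almost verbatim.

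Where you go further is in part (1): you correctly observe that the scheme-theoretic fibre product $\Sigma\times_{\bar\Sigma}\bar\Sigma'$ is \emph{not} reduced over the ramified nodes (locally $\mathbb{C}[[z',z'']]/(z'z'',(z'')^e)$ with $z''$ nilpotent when $e>1$), whereas the paper simply asserts that ``$q$ will also split into two smooth points $q',q''$'' without comment. Your remedy---build $\Sigma'$ as the normalization $\tilde\Sigma$ of $\Sigma$ along $\pi^{-1}(p)$, check that $\tilde\Sigma/\Gamma\simeq\bar\Sigma'$ by the branchwise invariant computation, and thereby realize $\tilde\Sigma$ as the pull-back in the category of (reduced) $\Gamma$-curves---is the right way to make the paper's claim precise. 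One small wording caution: when you say ``the universal property of the pull-back then identifies $\tilde\Sigma$ with $\Sigma'$,'' this only holds once $\Sigma'$ is taken to mean the reduced pull-back (equivalently, the pull-back in the category of $\Gamma$-covers), which is exactly the reinterpretation your final paragraph proposes; it does not hold for the raw scheme-theoretic fibre product. With that understood, your proof is correct and in fact more careful than the paper's on this point.
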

\begin{proof}
Let $q$ be any point in $\pi^{-1}(p)$ of ramification index $e_q$. Since $\pi^{-1}(p)$  consists of nodal points by assumption, there are two branches in the formal neighborhood of $q$. If any $\sigma \in \Gamma_q$ exchanges two branches then the point $p=\pi(q)$ is smooth in $\bar{\Sigma}$, which contradicts 
the assumption that $p$ is a nodal point. Thus,   $\Gamma_q$ must preserve branches.  In particular, since no nontrivial element  of $\Gamma$ fixes pointwise any irreducible component of $\Sigma$, $\Gamma_q$ is cyclic. Therefore, by the condition \eqref{eqn4.0.1},  we can choose a formal coordinate system $z',z''$ around the nodal point $q$ such that $\hat{\mathscr{O}}_{{\Sigma}, q }\simeq \mathbb{C}[[z',z'' ]]/(z'z'')$, and  a generator $\sigma_q$ of $\Gamma_q$ such that 
\[   \sigma_q(z')=\epsilon^{-1}z',  \text{ and } \sigma_q(z'')=\epsilon z'',  \]
where $\epsilon := e^{\frac{2\pi i}{e_q}}$ is the standard primitive     $e_q$-th root of unity.  (Observe that  $\epsilon$  must be a  primitive $e_q$-th root of unity, since $\Gamma_q$ acts faithfully on each of the two formal branches through $q$.) 

 We can choose a formal coordinate system $x',x''$ around $p$ in $\bar{\Sigma}$ such that $\hat{\mathscr{O}}_{{\bar{\Sigma}}, p }\simeq \mathbb{C}[[ x',x'' ]]/(x'x'')$ and the embedding $ \hat{\mathscr{O}}_{{\bar{\Sigma}}, p } \hookrightarrow    \hat{\mathscr{O}}_{{\Sigma}, q }  $ is given by $x'\mapsto (z')^{e_q}, x''\mapsto (z'')^{e_q}$.

The node $p$ splits into two smooth points $p',p''$ via $\bar{\nu}$.  Without loss of generality, we can assume $x'$ (resp. $x''$) is a formal coordinate around $p'$ (resp. $p''$) in $\bar{\Sigma}'$.  Then, $q$ will also split into two smooth points $q',q''$ via the map $\nu$, where    $z'$ (resp. $z''$) is a formal coordinate around $q'$ (resp. $q''$).  It shows that the map $\nu$ is a normalization at every point $q\in \pi^{-1}(p)$.  

The pullback gives a decomposition
\[(\pi\circ \nu)^{-1} (p)=  \nu^{-1}(\pi^{-1}(p) )=\pi'^{-1}(p')\sqcup \pi'^{-1}(p'') .  \]
From the definition of the fiber product, there exist $\Gamma$-equivariant  canonical bijections:
\begin{equation} \label{eqn4.1.1}\pi'^{-1}(p')\simeq \pi^{-1}(p) \,\,\,\text{and}\,\, \pi'^{-1}(p'')\simeq \pi^{-1}(p).
\end{equation}
Hence, we get a   $\Gamma$-equivariant canonical  bijection $\kappa:  \pi'^{-1}(p')\simeq \pi'^{-1}(p'')$.
For any $q\in \pi^{-1}(p) $,  $\nu^{-1}(q)=\{q',q''\}$.  By the choice of $q',q''$ as above,  $\pi'(q')=p'$ and $\pi'(q'')=p''$. 
Therefore, $\kappa$  maps $q'$ to $q''$. Moreove, from \eqref{eqn4.1.1},  the stabilizer groups $\Gamma_q$, $\Gamma_{q'} $ and $\Gamma_{q'} $ are all the same (and of order $e_q$). Since $q'$ (resp. $q''$) is a smooth point of $\Sigma'$, $\Gamma_{q'} $ (resp. $\Gamma_{q''} $) is cyclic.
\end{proof}

Let $\fg_p$ denote the Lie algebra $\fg[\pi^{-1}(p)]^\Gamma$  (observe that  we can attach a Lie algebra $\fg_p$ regardless of the smoothness of $p$). Then, the $\Gamma$-equivariant bijections $\nu: \pi'^{-1}(p')\simeq \pi^{-1}(p)$ and $\nu: \pi'^{-1}(p'')\simeq \pi^{-1}(p) $ (cf. equation \eqref{eqn4.1.1}) induce isomorphisms of Lie algebras $\varkappa': \fg_{p'}\simeq\fg_p$ and $\varkappa'':  \fg_{p''}\simeq\fg_{p}$ respectively.  Recall that $p', p''$ are smooth points of $\bar{\Sigma}'$. Let $D_{c,p'}$ (resp. $D_{c,p''}$) denote the finite set of highest weights of  irreducible representations of $\fg_p$ induced  via the isomorphism $\varkappa'$  
(resp.  $\varkappa''$) which give rise to integrable highest weight $\hat{\fg}_{p'}$-modules (resp. $\hat{\fg}_{p''}$-modules) with central charge $c$. 

Set 
\[ {\Sigma}^o={\Sigma}\backslash \pi^{-1}(\vec{o}), \,\text{ and }   {\Sigma}'^o={\Sigma}'\backslash \pi'^{-1}(\vec{o}). \]

The map ${\nu}$ on restriction gives rise to an isomorphism 
$$
\nu:  {\Sigma}'^o  \backslash \pi'^{-1}\{ p',p''\}\simeq \Sigma^o\backslash \pi^{-1} (p)\hookrightarrow \Sigma^o
$$
which, in turn, gives rise to a Lie algebra homomorphism 
$$\nu^*: \mathfrak{g}[ {\Sigma}^o]^\Gamma \to \mathfrak{g}[{\Sigma}'^o  \backslash \pi'^{-1}\{ p',p''\}]^\Gamma.$$

Let $\vec{\lambda}=(\lambda_1, \dots, \lambda_s)$ be an $s$-tuple of weights with $\lambda_i\in D_{c, o_i}$ `attached' to $o_i$.   We denote the highest weight of the dual representation  $V(\mu)^*$ of $\fg_p$  by $\mu^*$, thus  $V(\mu)^* \simeq V(\mu^*)$.

By Lemma \ref{normalization_pullback}, there exists a canonical  bijection $\kappa: \pi'^{-1}(p')\simeq \pi'^{-1}(p'')$ compatible with the action of $\Gamma$. Thus, it induces an isomorphism of Lie algebras $\fg_{p'}\simeq \fg_{p''}$.

\begin{lemma}\label{lemma 4.2} In the same setting as in Lemma \ref{normalization_pullback}, we have
\begin{enumerate}
\item there exists an isomorphism $\hat{\fg}_{p'}\simeq\hat{\fg}_{p''}$ which restricts to the  isomorphisms:
\[ \hat{\fp}_{p'}\simeq \hat{\fp}_{p''}, \, \hat{\fg}^+_{p'}\simeq \hat{\fg}^+_{p''}, \, \text{ and }  \fg_{p'}\simeq \fg_{p''} .  \]
See  the relevant notation in  \S \ref{conformal_block_section}.
\item  $ \mu \in D_{c,p'}$ if and only if $\mu^* \in  D_{c,p''}$.
\end{enumerate}

\end{lemma}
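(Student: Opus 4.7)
The plan is to reduce both statements, via Lemmas \ref{normalization_pullback} and \ref{lemma2.3}, to a direct comparison of the twisted affine Lie algebras $\hat L(\fg,\sigma_q)$ and $\hat L(\fg,\sigma_q^{-1})$. From the coordinate analysis in the proof of Lemma \ref{normalization_pullback}, $\sigma_q(z')=\epsilon^{-1}z'$ and $\sigma_q(z'')=\epsilon z''$, so under the normalizing convention $\chi_{\bullet}(\sigma_\bullet)=e^{2\pi i/e_p}$ the canonical generators satisfy $\sigma_{q''}=\sigma_q$ and $\sigma_{q'}=\sigma_q^{-1}$ as elements of $\Gamma_q$. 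Lemma \ref{lemma2.3} then furnishes identifications
\[
\hat\fg_{p'}\;\simeq\;\hat L(\fg,\sigma_q^{-1}),\qquad \hat\fg_{p''}\;\simeq\;\hat L(\fg,\sigma_q),
\]
each compatible with the triple $(\hat\fp_{\bullet},\hat\fg^+_{\bullet},\fg_{\bullet})$ on the left and the standard triple $(\hat L^{\geq 0},\hat L^+,\fg^{\sigma})$ on the right.

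For Part (1), I would construct an isomorphism $\Phi\colon\hat L(\fg,\sigma_q^{-1})\xrightarrow{\sim}\hat L(\fg,\sigma_q)$ preserving the standard triples. Pick $\tau_0\in\mathrm{Aut}(\fg)$ with $\tau_0\sigma_q\tau_0^{-1}=\sigma_q^{-1}$; writing $\sigma_q=\tau\,\epsilon^{\ad h}$ in Kac's normal form, one may take $\tau_0$ to be the Chevalley involution $\omega$ (which sends $h\mapsto -h$ and commutes with $\tau$) when $\tau$ has order at most $2$, and $\tau_0=\omega\cdot\iota$ for an outer lift $\iota$ inverting $\tau$ in the $D_4$-triality case. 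Define
\[
\Phi\bigl(x\otimes (z')^{k}\bigr)=\tau_0(x)\otimes (z'')^{k},\qquad \Phi(C)=C.
\]
The conjugacy $\sigma_q\tau_0=\tau_0\sigma_q^{-1}$ converts the eigenspace condition defining $\sigma_q^{-1}$-invariants into that for $\sigma_q$-invariants; $\Phi$ preserves the bracket because $\tau_0$ preserves $\langle\cdot,\cdot\rangle$ and the residue pairing is unchanged under the identical substitution $z'\leftrightarrow z''$; and $\Phi$ visibly sends $\mathbb{C}[[z']]$ to $\mathbb{C}[[z'']]$ and $z'\mathbb{C}[[z']]$ to $z''\mathbb{C}[[z'']]$, yielding the three subalgebra restrictions required. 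Globalizing the local construction across $\Gamma$-orbits via the $\Gamma$-equivariant bijection $\kappa$ of Lemma \ref{normalization_pullback} is automatic.

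For Part (2), my plan is a direct computation using Lemma \ref{finite_set_weight_lem}. Conjugating by the longest Weyl element $w_0$ of $\fg^{\tau}$ puts $\sigma_q^{-1}$ into standard form $\tau^{-1}\epsilon^{\ad(-w_0 h)}$, and the associated $(s',r)$-tuple satisfies $s'_i=s_{w_0(i)}$ for $i\in I(\fg^{\tau})$ and $s'_o=s_o$. Combining this with the identities $\lambda^{*}(\check\alpha_i)=\lambda(\check\alpha_{w_0(i)})$ and the $w_0$-invariance of $\langle\cdot,\cdot\rangle_\tau$, a direct substitution yields
\[
n_{\lambda^{*},i}(\sigma_q^{-1})=n_{\lambda,w_0(i)}(\sigma_q)\ \text{ for }i\in I(\fg^{\tau}),\qquad n_{\lambda^{*},o}(\sigma_q^{-1})=n_{\lambda,o}(\sigma_q).
\]
Hence the integrality conditions for $\lambda$ with respect to $\sigma_q$ coincide with those for $\lambda^{*}$ with respect to $\sigma_q^{-1}$, giving $\mu\in D_{c,p'}\iff\mu^{*}\in D_{c,p''}$.

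The main obstacle lies in Part (1): producing $\tau_0$ with the correct inner part and the correct outer part simultaneously. When $\tau$ has order $1$ or $2$ the Chevalley involution $\omega$ works cleanly, but in the $D_4$-triality case one must carefully choose an outer lift $\iota\in\mathrm{Aut}(\fg)$ that inverts $\tau$ in $\mathrm{Out}(\fg)=S_3$, and then adjust by a Weyl group element of $\fg^{\tau}$ so that the combined $\tau_0$ still carries $h$ to $-h$ modulo the centralizer. Once $\Phi$ is in hand, the subalgebra compatibilities are formal and Part (2) reduces to the combinatorial $w_0$-symmetry of Kac's classification data.
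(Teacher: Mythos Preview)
Your approach to Part~(1) is essentially the paper's: build an automorphism $\tau_0\in\mathrm{Aut}(\fg)$ with $\tau_0\sigma_q\tau_0^{-1}=\sigma_q^{-1}$ and transport along $z'\leftrightarrow z''$. For the $D_4$ triality case the paper makes the explicit choice $\tau_0=\omega\tau_1$, where $\tau_1$ is the order-$2$ diagram automorphism swapping two of the three outer nodes; the point is that $\tau_1$ acts \emph{trivially} on $\fh^\tau$ (it permutes the three outer coroots whose sum lies in $\fh^\tau$), so $(\omega\tau_1)(h)=\omega(h)=-h$ with no further Weyl adjustment. This dissolves the obstacle you flagged.

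For Part~(2) you work harder than necessary, and the $w_0$ route has a gap as written. The longest element $w_0$ lives in $W(\fg^\tau)$, but Kac's normal form and Lemma~\ref{finite_set_weight_lem} require a compatible Borel of $\fg$; conjugating by a lift of $w_0$ need not produce one, so the expression $\tau^{-1}\epsilon^{\ad(-w_0 h)}$ is not obviously in the shape to which Lemma~\ref{finite_set_weight_lem} applies. The paper offers two cleaner arguments. First, Part~(2) is immediate from Part~(1): your $\tau_0$ restricted to $\fg^{\sigma_q}=\fg^{\sigma_q^{-1}}$ is the Cartan involution (it is $\omega$ on $\fh^\sigma=\fh^\tau$ and sends positive root vectors to negative ones), hence twisting by $\Phi$ carries $V(\mu)$ to $V(\mu^*)$ and the integrable module $\mathscr H(\mu)$ for one side to $\mathscr H(\mu^*)$ for the other. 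Second (the paper's alternative combinatorial proof), instead of conjugating by $w_0$ one simply passes to the \emph{opposite} Borel $\fb^-$ of $\fg$: declaring $\alpha_i'':=-\alpha_i$ the simple roots for $\sigma_q^{-1}$ gives the identical $s$-tuple $s_i''=s_i$, and the Chevalley-generator swap $x_i'':=-y_i,\ y_i'':=-x_i$ exhibits directly that the bijection $D_{c,q'}\leftrightarrow D_{c,q''}$ is $\mu\mapsto\mu^*$.
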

\begin{proof}
For any $q\in \pi^{-1}(p)$,  in view of Lemma \ref{lemma2.3}, the restriction gives isomorphisms ${\rm res}_{q'}: \hat{\fg}_{p'}\simeq \hat{L}(\fg, \Gamma_{q'})$ and ${\rm res}_{q''}: \hat{\fg}_{p''}\simeq \hat{L}(\fg, \Gamma_{q''})$. By  Lemma \ref{normalization_pullback}, $\Gamma_{q'}=\Gamma_{q''}$. As in \eqref{eq1.1.1.0}, let $\fb'$ (resp. $\fh' \subset \fb'$) be a suitable Borel (resp. Cartan) subalgebra of $\fg$ stable under $\Gamma_{q'}$. This gives rise to Chevalley generators $e_i \in \mathfrak{n}'$ and $f_i\in \mathfrak{n}'^-$, where $\mathfrak{n}'$ (resp. $\mathfrak{n}'^-$) is the nilradical of $\fb'$ (resp. the opposite Borel subalgebra $\fb'^-$). Let $\omega:\fg \to \fg$ be the Cartan involution taking the Chevalley generators of $\fg$: $e_j\mapsto -f_j, f_j\mapsto -e_j$ and $h \mapsto -h$ for any $h\in \fh'$. 

Write as in Section 2,
\[\sigma_{q'}=\tau' \epsilon^{\ad h'}\,\,\,\text{for a diagram automorphism $\tau'$ (possibly identity) and $h'\in \fh^{\tau'}$}.\]
Thus,
\begin{equation} \label{eqnnew4.2.1}
\omega^{-1} \sigma_{q'} \omega =\omega^{-1} \tau' \omega \epsilon^{\ad \omega^{-1}(h')}=\omega^{-1} \tau' \omega\epsilon^{\ad (-h')}.
\end{equation}
But, by the definition of (any diagram automorphism) $\tau'$ and $\omega$, it is easy to see that 
\begin{equation} \label{eqnnew4.2.2} \omega^{-1} \tau' \omega=\tau'.
\end{equation}
We now need to cosider two cases:

{\it Case I:} $\tau'$ is of order $1$ or $2$. In this case,

\begin{align} \label{eqnnew4.2.3}
\omega^{-1} \sigma_{q'} \omega &= \tau'\epsilon^{-\ad h'},\,\,\,\text{by \eqref{eqnnew4.2.1} and \eqref{eqnnew4.2.2}}\notag\\
&= \tau'^{-1}\epsilon^{-\ad h'},\,\,\,\text{since  $\tau'$ is assumed to be of  order $1$ or $2$} \notag\\
&=\sigma_{q'}^{-1}.
\end{align}

{\it Case II:} $\tau'$ is of order $3$, i.e., $\fg$ is of type $D_4$ with labelled nodes:

\begin{center}
\begin{picture}(150,80)
\put(0,65){\circle*{8}}
\put(50,65){\circle*{8}}
\put(100,65){\circle*{8}}
\put(50,20){\circle*{8}}

\put(6,65){\line(1,0){37}}
\put(56,65){\line(1,0){37}}
\put(50,59){\line(0,-1){32}}

\put(-4,51){1}
\put(46,74){2}
\put(96,51){$\tau'(1)=3$}
\put(42,5){$\tau'^2(1)=4$}


\end{picture}
\end{center}
 and $\tau'$ is the diagram automorphism induced from taking the nodes $1 \mapsto 3, 2\mapsto 2, 3\mapsto 4, 4 \mapsto 1$.   Let $\tau_1$  be the diagram automorphism induced from taking the nodes $1 \mapsto 1, 2\mapsto 2, 3\mapsto 4, 4 \mapsto 3$. Then,
\begin{equation} \label{eqnnew4.2.4}
\tau_1^{-1} \tau'\tau_1 =\tau'^{-1}. 
\end{equation}
In this case, we have

\begin{align} \label{eqnnew4.2.5}
(\omega\tau_1)^{-1} \sigma_{q'} \omega \tau_1&= \tau_1^{-1}\tau'\epsilon^{-\ad h'}\tau_1, \,\,\,\text{by \eqref{eqnnew4.2.1} and \eqref{eqnnew4.2.2}}\notag\\
&= \tau'^{-1}\tau_1^{-1}\epsilon^{-\ad h'}\tau_1,\,\,\,\text{by \eqref{eqnnew4.2.4}} \notag\\
&=\tau'^{-1}\epsilon^{-\ad h'},\,\,\,\text{since $(\tau_1)_{|\fh^{\tau'}}=$Id by [Ka, $\S$8.3, Case 4]}\notag\\
&=\sigma_{q'}^{-1}.
\end{align}
Let $\omega_o$ be the Cartan involution $\omega$ in the first case and $\omega\tau_1$ in the second case. Extend $\omega_o$ to an isomorphism of twisted affine Lie algebras:
\[\hat{\omega}_o: \hat{L}(\fg, \sigma_{q'}) \to   \hat{L}(\fg, \sigma_{q''}), \,\,\,\hat{\omega}_o(x[P(z')]):=\omega_o(x)[P(z'')], \hat{\omega}_o(C)=C,\]
for any $x\in \fg$ and $P\in \mathcal{K}$, where $\sigma_{q'}$ and $\sigma_{q''}=\sigma_{q'}^{-1}$ are the preferred generators of $\Gamma_q=\Gamma_{q'}=\Gamma_{q''}$
acting on a formal coordinate $z', z''$ around $q', q''$ respectively via $\epsilon^{-1}$ (see the proof of Lemma \ref{normalization_pullback}). Indeed, $\hat{\omega}_o$ is an isomorphism by the identities \eqref{eqnnew4.2.3} and  \eqref{eqnnew4.2.5}. Observe that $\hat{\omega}_o$ restricted to $\fh^{\sigma_{q'}}=\fh^{\sigma_{q''}}$ is nothing but the Cartan involution. 
Clearly, $\hat{\omega}_o$   restricts to an isomorphism  $\hat{\fp}_{p'}\simeq \hat{\fp}_{p''}$, $\hat{\fg}^+_{p'}\simeq \hat{\fg}^+_{p''}$ and $\fg_{p'}\simeq \fg_{p''}$ (see (\ref{2.1.2}) and (\ref{2.1.3}) for relevant notation).  This proves the first part of the lemma. 

From the isomorphism  $\hat{\omega}_o$, the second part of the lemma follows immediately since $\mathfrak{n}'^{\sigma_{q'}}$ is a maximal nilpotent subalgebra of $\fg^{\sigma_{q'}}$.
\end{proof}

We also give another proof of the second part of the above lemma.
\vskip1ex

\noindent
{\it Another proof of Lemma \ref{lemma 4.2} Part (2):}   Let $\sigma_{q'}$  (resp. $\sigma_{q''}$)  be the canonical generator of $\Gamma_{q'}$ (resp. $\Gamma_{q''}$). We can choose formal parameter
 $z'$ (resp. $z''$) around $q'$ (resp. $q''$) such that 
 \[ \sigma_{q'} (z')=\epsilon^{-1}z', \quad \sigma_{q''}(z'')=\epsilon^{-1} z'',   \]
 where $\epsilon=e^{\frac{2\pi i}{|\Gamma_q| } }$.     
 As in Section 2, we can write $\sigma_{q'}=\tau'\cdot \epsilon^{{\rm ad} h' } $. Let  $x'_{i}, y'_i, h'_i=[x'_i,y'_i] , i\in \hat{I}(\fg, \sigma_{q'})  $ be chosen as in Section 2, where 
 \[ x'_i\in (\fg^{\tau'})_{\alpha'_i}, y'_i\in (\fg^{\tau'})_{-\alpha'_i} , \,\text{ for any }i\in I(\fg^{\tau'}), \]
 where $\alpha'_i$ is the simple root of $\fg^{\tau'}$ associated to $i\in I(\fg^{\tau'})$, and 
 \[ x'_0\in (\fg^{\tau'})_{-\theta'_0}, \,y'_0\in (\fg^{\tau'})_{\theta'_0} . \]
Let  $s_i, i\in \hat{I}(\fg, \sigma_{q'})$ be the integers as in Section 2. We have, by the identity \eqref{eigenvalue_sigma},
\[  \sigma_{q'}(x'_i)=\epsilon^{s_i} x'_i, \quad \text{ and } \sigma_{q'}(y'_i)=\epsilon^{-s_i} y'_i , \]
 for any $i\in \hat{I}(\fg, \sigma_{q'})$.   Moreover, as in Section 2, the elements $x_i'[z'^{s_i}],  y'_i[z'^{-s_i}], h'_i+e_q^{-1}\langle x'_i, y'_i  \rangle s_i C$ in $\hat{L}(\fg, \sigma_{q'})$ are a set of Chevalley generators generating the non-completed Kac-Moody algebra $\tilde{L}(\fg, \sigma_{q'})\subset \hat{L}(\fg, \sigma_{q'})$, where $e_q:=|\Gamma_{q'}|$.   
 It is well-known that there is a natural bijection between the set of integrable highest weight representations of $\hat{L}(\fg, \sigma_{q'})$ and $\tilde{L}(\fg, \sigma_{q'})$.   
 
 We now introduce the following notation:
 \[ x''_i:= -y'_i, \, y''_i:=-x'_i, \text{ and } h''_i:=-h'_i , \]
 for any $i\in \hat{I}(\fg, \sigma_{q'})$.   Note that $\sigma_{q''}=(\sigma_{q'})^{-1}$.  We can identify $\hat{I}(\fg, \sigma_{q'})$ and $\hat{I}(\fg, \sigma_{q''})$, since $\fg^{\tau''}=\fg^{\tau'}$ where $\tau''=\tau'^{-1}$ is the diagram automorphism part of $\sigma_{q''}$.  
 
 Set $\alpha''_i=-\alpha'_i$ for any $i\in I(\fg^{\tau''})$, and $\theta''_0=-\theta'_0$.  We can choose $\alpha''_i, i\in I(\fg^{\tau''})$ as a set of simple roots for $\fg^{\tau''}$.  Then, $\theta''_0$ is the weight of $\fg^{\tau''}$ as in Section 2 with respect to this choice. Moreover,  $x''_i, y''_i, i\in I(\fg^{\tau''})$ is a set of Chevalley generators of $\fg^{\tau''}$, and $x''_0\in (\fg^{\tau''})_{-\theta''_0}, y''_0\in (\fg^{\tau''})_{\theta''_0}$ also satisfies the choice as in  \cite[\S 8.3]{Ka}.  
 We also note that
 \[  \sigma_{q''}(x''_i)=\epsilon^{s_i} x''_i, \, \text{ and } \sigma_{q''}(y''_i)=\epsilon^{-s_i} y''_i , \]
  for any $i\in \hat{I}(\fg, \sigma_{q'})$.  
As above,  we see that the elements $x''_i[ z''^{s_i}], y''_i[z''^{-s_i} ],  h''_i + |\Gamma_{q''}|^{-1} \langle x''_i, y''_i \rangle  s_i C$ as elements in $\hat{L}(\fg, \sigma_{q''})$ are Chevalley generators generating the non-completed Kac-Moody algebra $\tilde{L}(\fg, \sigma_{q''})\subset \hat{L}(\fg, \sigma_{q''})$.  
 Again, there is a natural bijection  between the set of integrable highest weight representations of $\hat{L}(\fg, \sigma_{q''})$ and $\tilde{L}(\fg, \sigma_{q''})$. 
 
 We now get  an isomorphism of Lie algebras:
 \[ \hat{\omega}:  \tilde{L}(\fg, \sigma_{q'})\simeq   \tilde{L}(\fg, \sigma_{q''})   \]
 given by 
 \[  x'_i[z'^{s_i} ] \mapsto x''_i[z''^{s_i} ], \, y'_i[z'^{-s_i} ]  \mapsto  y''_i[z''^{-s_i} ],  \]
 and 
 \[ h'_i+ e_q^{-1}\langle x'_i, y'_i  \rangle s_i C\mapsto  h''_i + e_q^{-1}\langle x''_i, y''_i \rangle s_i C ,\]  
 for any $i\in \hat{I}(\fg, \sigma_{q'})$.   Note that $\langle x'_i, y'_i \rangle =\langle x''_i, y''_i\rangle $ for any $i$. The map $\hat{\omega}$ is indeed an isomorphism, since these Chevalley generators correspond to the same vertices of the affine Dynkin diagram. 
 
Set  
\[ \tilde{L}(\fg, \sigma_{q'})^+= \hat{L}(\fg, \sigma_{q'})^+ \cap  \tilde{L}(\fg, \sigma_{q'}) ,\, \text{ and }   \tilde{L}(\fg, \sigma_{q'})^{\geq 0}= \hat{L}(\fg, \sigma_{q'})^{\geq 0}\cap  \tilde{L}(\fg, \sigma_{q'}) . \]
Similarly, we can introduce the Lie algebras $ \tilde{L}(\fg, \sigma_{q''})^+$ and  $ \tilde{L}(\fg, \sigma_{q''})^{\geq 0}$.
We can see easily that  
\[ \hat{\omega}(   \tilde{L}(\fg, \sigma_{q'})^+  ) =     \tilde{L}(\fg, \sigma_{q''})^+, \, \text{ and }  \hat{\omega}(\tilde{L}(\fg, \sigma_{q'})^{\geq 0}) =\tilde{L}(\fg, \sigma_{q''})^{\geq 0}.   \]
 
 Recall that $\fg^{\Gamma_q} \oplus  \mathbb{C} C$ is a Levi subalgebra of $\tilde{L}(\fg, \sigma_{q'})$, which is generated by  $x'_i, y'_i$ and $\fh^{\Gamma_q}\oplus \mathbb{C}C$ where $i\in \hat{I}(\fg, \sigma_{q'})^0$ consisting of $i\in \hat{I}(\fg, \sigma_{q'})$ such that $s_i=0$.  
Therefore, the isomorphism $\hat{\omega}$ also induces a Cartan involution $\omega$ on $\fg^{\Gamma_q}$, given on the Chevalley generators by 
   \[ e_i\mapsto -f_i, \, f_i\mapsto -e_i,  \, \text{ and } h \mapsto -h , \]
    for any $i\in \hat{I}(\fg, \sigma_{q'})^0$, and $h\in \fh^{\Gamma_q}$.   It is now clear that the isomorphism $\hat{\omega}$ induces a bijection $\kappa:  D_{c, q'}\simeq  D_{c, q''}$ given by $V\mapsto  V^*$.  This completes another proof of Lemma \ref{lemma 4.2}, part (2). \qed

\vskip1ex

Define the linear map
\begin{equation}
\label{Schur_map}
\hat{F}:\mathscr{H} (\vec{\lambda})\to \mathscr{H} (\vec{\lambda})\otimes \biggl(\bigoplus\limits_{\mu\in D_{c,p''}}\, V (\mu^*)  \otimes 
V(\mu)\biggr),\,\,\,
h\mapsto h\otimes \sum\limits_{\mu\in D_{c,p''}} I_{\mu},\,\,\,\text{for}\,\,h \in \mathscr{H}(\vec{\lambda}),
\end{equation}
where $I_{\mu}$ is the identity map thought of as an element of $V(\mu^*)\otimes V(\mu)\simeq \End_{\mathbb{C}}(V(\mu))$.

We  view $V(\mu^*)$ (resp. $V(\mu)$) as an irreducible representation of $\fg_{p'}$ (resp. $\fg_{p''}$) via the isomorphism $\varkappa'$ (resp. $\varkappa''$) defined above  Lemma \ref{lemma 4.2}.  Let $\mathscr{H}(\mu^*)$ (resp. $\mathscr{H}(\mu)$) denote the highest weight  integrable representation of $\hat{\fg}_{p'}$ (resp. $\hat{\fg}_{p''}$) associated  to $\mu^*$ (resp. $\mu$) of level $c$.  
Realize $\mathscr{H}(\vec{\lambda})\otimes   \mathscr{H}(\mu^*)   \otimes 
\mathscr{H}(\mu)$ (which contains $\mathscr{H} (\vec{\lambda})\otimes  V (\mu^*)  \otimes 
V(\mu)$) as a  $\mathfrak{g}[ {\Sigma}'^o  \backslash \pi'^{-1}\{ p',p'' \}]^\Gamma$-module  at the points 
$\vec{o}, p',p''$ respectively. Then, $I_\mu$ being a $\mathfrak{g}_{p}$-invariant, $
\hat{F}$ is a $\mathfrak{g}[{\Sigma}^o]^\Gamma$-module map, where we realize the range as a 
$\mathfrak{g}[{\Sigma}^o]^\Gamma$-module via the Lie algebra homomorphism ${\nu}^*$. Hence, $\hat{F}$  induces a linear map
$$F:\mathscr{V}_{{\Sigma},\Gamma,\phi}( \vec{o}, \vec{\lambda})\to \bigoplus\limits_{\mu\in D_{c, p''}}\mathscr{V}_{{\Sigma}' ,\Gamma,\phi}\left(\left( \vec{o},p',p''\right),\left( \vec{\lambda},\mu^{*},\mu\right)\right).
$$


The following theorem is the twisted analogue of the Factorization Theorem due to Tsuchiya-Ueno-Yamada [TUY]. 

\begin{theorem}\label{thm3.1.2}
With the setting as in Lemma \ref{normalization_pullback},  we further assume that $\Gamma$ stabilizes a Borel subalgebra $\mathfrak{b}$ and $\pi^{-1}(\vec{o})$ consists of smooth points of $\Sigma$. Then, 
 the map 
$$F:\mathscr{V}_{{\Sigma},\Gamma,\phi}( \vec{o}, \vec{\lambda})\to \bigoplus\limits_{\mu\in D_{c, p''}}\mathscr{V}_{{\Sigma}' ,\Gamma,\phi}\left(\left( \vec{o},p',p''\right),\left( \vec{\lambda},\mu^{*},\mu\right)\right)
$$
is  an isomorphism.

Dualizing the map $F$, we get an isomorphism
$$
F^{*}:\bigoplus_{\mu\in D_{c, p''}}\mathscr{V}^{\dagger}_{{\Sigma}' ,\Gamma,\phi }\left(\left( \vec{o},p',p''\right),\left( \vec{\lambda},\mu^{*},\mu\right)\right)\xrightarrow{\sim} \mathscr{V}^{\dagger}_{{\Sigma} ,\Gamma,\phi }( \vec{o}, \vec{\lambda}).
$$
\end{theorem}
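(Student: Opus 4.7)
My plan is to combine Propagation of Vacua with an algebraic analysis of the Lie algebra of $\Gamma$-equivariant $\fg$-valued functions on the normalization. First, since $p'$ and $p''$ are smooth points of $\bar{\Sigma}'$, I would apply the Propagation Theorem (Theorem~\ref{Propagation_thm}) at each of them to replace the integrable Kac--Moody modules $\mathscr{H}(\mu^*),\mathscr{H}(\mu)$ by their finite-dimensional $\fg^{\Gamma_q}$-submodules $V(\mu^*),V(\mu)$, yielding
\[
\mathscr{V}_{\Sigma',\Gamma,\phi}\bigl((\vec o,p',p''),(\vec\lambda,\mu^*,\mu)\bigr) \simeq \bigl[\mathscr{H}(\vec\lambda)\otimes V(\mu^*)\otimes V(\mu)\bigr]_{\mathfrak{g}[\Sigma'^o]^\Gamma}.
\]
Under this reformulation, $F$ becomes $[h]\mapsto \sum_\mu [h\otimes I_\mu]$. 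Well-definedness of $F$ reduces to Schur's lemma: the element $I_\mu\in V(\mu^*)\otimes V(\mu)\simeq \End V(\mu)$ is annihilated by the diagonal $\fg^{\Gamma_q}$-action, and the pullback $\nu^*$ embeds $\mathfrak{g}[\Sigma^o]^\Gamma$ into the subspace of $\mathfrak{g}[\Sigma'^o]^\Gamma$ whose evaluations at $p'$ and $p''$ coincide under the identifications of Lemma~\ref{lemma 4.2}.

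Next, I would establish the short exact sequence of Lie algebras
\[
0 \to \mathfrak{g}[\Sigma^o]^\Gamma \xrightarrow{\nu^*} \mathfrak{g}[\Sigma'^o]^\Gamma \xrightarrow{\delta} \fg^{\Gamma_q} \to 0,
\]
where $\delta(X)=X|_{p'}-X|_{p''}$ under the identifications $\fg_{p'}\simeq \fg_p\simeq \fg_{p''}$ of Lemmas~\ref{normalization_pullback} and~\ref{lemma 4.2}. Surjectivity of $\delta$ is a $\Gamma$-equivariant Riemann--Roch statement on the affine curve $\Sigma'^o$: one produces $\Gamma$-equivariant $\fg$-valued functions with arbitrary prescribed values at $\pi'^{-1}(p')$ and vanishing at $\pi'^{-1}(p'')$, which is possible because $\Sigma'^o$ is affine and the $\Gamma$-action at the nodes is stable.

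The core of the proof is then a Schur-type coinvariants argument. The residual $\fg^{\Gamma_q}$-action coming from $\mathfrak{g}[\Sigma'^o]^\Gamma/\mathfrak{g}[\Sigma^o]^\Gamma$ acts on $\End V(\mu)$ by the commutator $[X,-]$, modulo the already-quotiented diagonal action. Since $V(\mu)$ is irreducible, $\End V(\mu)/[\fg^{\Gamma_q},\End V(\mu)]$ is one-dimensional, spanned by $[I_\mu]$ via the trace. This gives the surjectivity of $F$: any class $[h\otimes v_1\otimes v_2]$ can be reduced, via iterated application of the exact sequence to transfer the commutator action from $\End V(\mu)$ onto $\mathscr{H}(\vec\lambda)$, to one of the form $[h'\otimes I_\mu]$. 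For injectivity, I would work on the dual side: the adjoint $F^{\dagger}$ is built from contraction against $I_\mu$ through the canonical $\fg^{\Gamma_q}$-invariant pairing $V(\mu^*)\otimes V(\mu)\to\mathbb{C}$; this descends to conformal blocks precisely because of the Schur-invariance of $I_\mu$ and the exact sequence above, and provides a retraction of $F^{\dagger}$.

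The principal obstacle will be the twisted bookkeeping at the node. The canonical dualization $\mu\mapsto \mu^*$ in Lemma~\ref{lemma 4.2} depends on a Cartan involution which requires an additional diagram-automorphism correction in the $D_4$ triality case; one must verify carefully that $I_\mu$ is compatible with the $(\fg^{\Gamma_{q'}},\fg^{\Gamma_{q''}})$-bimodule structure on $V(\mu^*)\otimes V(\mu)$ induced by the identifications $\varkappa'$ and $\varkappa''$. A secondary difficulty is the equivariant Riemann--Roch step for surjectivity of $\delta$, which has to respect ramification at $p'$ and $p''$. The hypothesis that $\Gamma$ stabilizes a Borel subalgebra enters both through the invocation of Propagation and implicitly through the description of $D_{c,p'}$ and $D_{c,p''}$ as dual parameter sets.
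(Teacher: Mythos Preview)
Your reduction via Propagation and the short exact sequence $0\to\fg[\Sigma^o]^\Gamma\to\fg[\Sigma'^o]^\Gamma\xrightarrow{\delta}\fg^{\Gamma_q}\to 0$ are correct, and your surjectivity argument is essentially the paper's: both rest on the surjectivity of $U(\fg^{\Gamma_q})\to\bigoplus_{\mu\in D_{c,p''}}\End V(\mu)$ (equivalently, one-dimensionality of $\End V(\mu)/[\fg^{\Gamma_q},\End V(\mu)]$), combined with the observation that the extra generators of $\fg[\Sigma'^o]^\Gamma$ act on $V(\mu^*)\otimes V(\mu)$ exactly through this $\odot$-action modulo $\fg[\Sigma^o]^\Gamma$.

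Your injectivity sketch, however, has a genuine gap. The trace pairing $V(\mu^*)\otimes V(\mu)\to\mathbb{C}$ is $\fg^{\Gamma_q}$-invariant only for the \emph{diagonal} action, whereas a general $X\in\fg[\Sigma'^o]^\Gamma$ acts with distinct values $X|_{q'}\neq X|_{q''}$; a direct computation shows that the would-be map $h\otimes A\mapsto h\cdot\operatorname{tr}(A)$ does \emph{not} send $\fg[\Sigma'^o]^\Gamma\cdot(\mathscr{H}(\vec\lambda)\otimes\End V(\mu))$ into $\fg[\Sigma^o]^\Gamma\cdot\mathscr{H}(\vec\lambda)$, so no retraction descends. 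More fundamentally, injectivity is not a formal Schur statement: it amounts to showing that for any $\Phi\in\mathscr{V}^\dagger_{\Sigma,\Gamma,\phi}(\vec o,\vec\lambda)$ one can build a preimage $\tilde\Phi$ under $F^*$, and the well-definedness of $\tilde\Phi$ requires that the $\fg^{\Gamma_q}$-module generated by $\Phi$ inside $\mathscr{H}(\vec\lambda)^*$ involve only highest weights lying in the \emph{finite} set $D_{c,p'}$. This is an integrability constraint (Lemma~\ref{lem3.1.4}): one must show that $\Phi$ generates an integrable $\hat\fg_{p'}$-module, which in turn requires local nilpotency of root-type elements on $\mathscr{H}(\vec\lambda)^*$ and hence Lemma~\ref{lemma 1.3}. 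The Borel-stabilizing hypothesis thus enters injectivity directly, not only through Propagation; your sketch bypasses this entirely and would, if it worked, prove the statement with $D_{c,p''}$ replaced by all dominant weights, which is false.
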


\begin{proof}
As discussed above, the map $\hat{F}$ defined in (\ref{Schur_map}) is $\mathfrak{g}[ {\Sigma}^o ]^\Gamma$-equivariant. 
  By Propagation theorem (Theorem \ref{Propagation_thm}) at points $p'$ and $p''$, taking covariants on both sides of $\hat{F}$ with respect to the action of $\mathfrak{g}[ {\Sigma}^o]^\Gamma$ on the left side and with respect to the action of  $\mathfrak{g}[ {\Sigma'}^o]^\Gamma$ on the right side, we also obtain the map $F$.

We first prove the surjectivity of $F$. Fix a point $q\in \pi^{-1}(p)$,  we may view $V(\mu^*)$ and $V(\mu)$ as representations of the Lie algebra $\fg^{\Gamma_q}$ via the evaluation map ${\rm ev}_{q}: \fg_{p}\simeq \fg^{\Gamma_q}$ (cf. Lemma \ref{evaluation_lem}).  Correspondingly, we may view $D_{c,p''}$ as certain set of highest weights of $\fg^{\Gamma_q}$. Observe first that 
$\Gamma \cdot q'\cap \Gamma \cdot q'' = \emptyset$, since $\pi'$ is $\Gamma$-invariant and $\pi'(\Gamma \cdot q') = p'$ and $\pi'(\Gamma \cdot q'' )=p''$. Choose a function $f\in \mathbb{C}[\Sigma'^o]$ such that 
\begin{equation} \label{eqn4.3.1} f(q')=1 \,\,\,\text{and}\,\, f_{|\Gamma \cdot q'' \cup( \Gamma \cdot q'\setminus \{q'\})} =0.
\end{equation}
For any $x\in \fg^{\Gamma_q}$, let
\begin{equation} \label{eqn4.3.2} A(x[f]):= \frac{1}{|\Gamma_q|}\sum_{\gamma \in \Gamma}\, \gamma\cdot (x[f])\in \fg[\Sigma'^o]^\Gamma.
\end{equation}

  For any $h\in  \mathscr{H} (\vec{\lambda})$ and $ v\in \oplus_{\mu \in D_{c, p''}}\,\bigl(V(\mu)^*\otimes V(\mu)\bigr)$, as elements of 
 $$Q:= \mathscr{H} (\vec{\lambda})\otimes \biggl(\bigoplus\limits_{\mu\in D_{c,p''}}\, V (\mu^*)  \otimes 
V(\mu)\biggr),$$
we have the following equality (for any $x\in \fg^{\Gamma_q}$)
\begin{equation} \label{eqn3.1.2.3} A(x[f]) \cdot (h\otimes v)  - h\otimes (x\odot v)= (A(x[f])\cdot h)\otimes v,
\end{equation}
where the action $\odot$ of $\fg^{\Gamma_q}$ on $V(\mu^{*})\otimes V(\mu)$ is via its action on the first factor only. 
In particular, as elements of $Q$,
\begin{equation}  \label{eqn3.1.2.4} A(x[f]) \cdot (h\otimes \sum_{\mu \in D_{c, p''}}\, I_\mu)  - h\otimes \beta (x)= (A(x[f])\cdot h)\otimes \sum_{\mu \in D_{c, p''}}\, I_\mu,
 \end{equation}
where $\beta$ is the map defined by
\begin{equation}
\beta : U(\mathfrak{g}^{\Gamma_q})\to \bigoplus_{\mu\in D_{c, p'' }}\, V(\mu^{*})\otimes V(\mu),\quad \beta (a)=a\odot \sum\limits_{\mu}I_{\mu}.\label{thm3.1.2-eq1}
\end{equation}
Observe that $\Iim(\beta)$ is $\fg^{\Gamma_q} \oplus \fg^{\Gamma_q}$-stable under the component wise action of $\fg^{\Gamma_q} \oplus \fg^{\Gamma_q}$ on $V(\mu^{*})\otimes V(\mu)$ since $I_{\mu}$ is $\fg^{\Gamma_q}$-invariant under the diagonal action of $\fg^{\Gamma_q}$. Moreover,  $V(\mu^{*})\otimes V(\mu)$ is an irreducible $\fg^{\Gamma_q} \oplus \fg^{\Gamma_q}$-module with highest weight $(\mu^{*},\mu)$; and $\Iim(\beta)$ has a nonzero component in each $V(\mu^{*})\otimes V(\mu)$. Thus, $\beta$ is surjective.

From the surjectivity of $\beta$, we get that the map $F$ is surjective by combining the equation
\eqref{eqn3.1.2.4}  and the Propagation theorem (Theorem \ref{Propagation_thm}).

We next show that $F$ is injective. Equivalently, we show  that the dual map
$$
F^{*}:\bigoplus\limits_{\mu\in D_{c, p''}}\mathscr{V}^{\dagger}_{{\Sigma}' ,\Gamma,\phi }\left(( \vec{o},p',p''), ( \vec{\lambda},\mu^{*},\mu)\right)\to \mathscr{V}^{\dagger}_{{\Sigma},\Gamma,\phi}( \vec{o}, \vec{\lambda})
$$
is surjective.

From the definition of $\mathscr{V}^{\dagger}_{{\Sigma},\Gamma,\phi}$  and identifying the domain of $F^{*}$ via Theorem \ref{Propagation_thm}, we think of $F^{*}$ as the map
$$
F^{*}:\Hom_{\mathfrak{g}[{\Sigma}'^o]^\Gamma}\left(\mathscr{H} ( \vec{\lambda})\otimes \left(\oplus_{\mu\in D_{c, p'' }} V(\mu)^*\otimes V(\mu)\right),\mathbb{C}\right)\to \Hom_{\mathfrak{g}[{\Sigma}^o]^\Gamma}(\mathscr{H} ( \vec{\lambda}),\mathbb{C})
$$
induced from the inclusion
$$
\hat{F}:\quad  \mathscr{H} ( \vec{\lambda})\to \mathscr{H} ( \vec{\lambda})\otimes \left(\oplus_{\mu\in D_{c, p'' }}V(\mu)^*\otimes V(\mu)\right),
 h\mapsto h\otimes \sum\limits_{\mu\in D_{c, p''}}I_{\mu},\quad\text{for}\quad h\in \mathscr{H} ( \vec{\lambda}).$$

Let
$
\mathbb{C}_{p} [ {\Sigma}^o]\subset \mathbb{C}_{p''} [ {\Sigma}'^o]\subset \mathbb{C}[{\Sigma}'^o]
$
be the ideals of $\mathbb{C}[{\Sigma}'^o]$:
$$
\mathbb{C}_{p}[ {\Sigma}^o] :=\left\{f\in\mathbb{C}[ {\Sigma}^o]:f_{|\pi^{-1}(p)}=0\right\},
$$
and
$$
\mathbb{C}_{p'' }  [{\Sigma}'^o]=\{f\in \mathbb{C}[ {\Sigma}'^o]:f_{|\pi'^{-1}(p'')}=0\}.
$$
(Observe that, under the canonical inclusion $\mathbb{C}[ {\Sigma}^o]\subset \mathbb{C}[{\Sigma}'^o]$, $\mathbb{C}_{p} [ {\Sigma}^o] $ is an ideal of $\mathbb{C}[{\Sigma}'^o]$ consisting of those functions vanishing at $\pi'^{-1}\{p', p'' \}$.)
Now, define the Lie ideals of $\mathfrak{g}[ {\Sigma}'^o]^\Gamma$:
\begin{equation} \label{thm3.1.2-eq4}
 \mathfrak{g}_{p}[ {\Sigma}^o ]^\Gamma:= \left(\mathfrak{g}\otimes \mathbb{C}_{p} [ {\Sigma}^o]\right)^\Gamma,\,\,\text{and}\,\,\,
 \mathfrak{g}_{p''}[ {\Sigma}'^o]^\Gamma:= \left(\mathfrak{g}\otimes \mathbb{C}_{p''} [ {\Sigma}'^o]\right)^\Gamma.
\end{equation}
Define the linear map
$$
\fg^{\Gamma_q}   \to  \mathfrak{g}_{p''}[ {\Sigma}'^o]^\Gamma  \big/    \mathfrak{g}_{p}[ {\Sigma}^o]^\Gamma,\,\,\,
x\mapsto A(x[f])+ \mathfrak{g}_{p}[{\Sigma}^o]^\Gamma,
$$
where $x\in \fg^{\Gamma_q}$, 
 $f\in \mathbb{C}_{p''} [ {\Sigma'}^o]$ 
is any function satisfying~\eqref{eqn4.3.1} and $A(x[f])$ is defined by \eqref{eqn4.3.2}.   

Clearly, the above map is independent of the choice of $f$ satisfying ~\eqref{eqn4.3.1}. Moreover, it is a Lie algebra homomorphism:

For $x,y\in \fg^{\Gamma_q}$,
\begin{align*} 
\left[A(x[f]), A(y[f])\right]&=\frac{1}{|\Gamma_q|^2}\sum_{\gamma, \gamma'\in \Gamma}\, \left[\gamma\cdot (x[f]), \gamma'\cdot (y[f])\right]\\
&=\frac{1}{|\Gamma_q|^2}\sum_{\sigma\in \Gamma_q} \, \sum_{\gamma'\in \Gamma}\, \left[\gamma'\sigma\cdot (x[f]), \gamma'\cdot (y[f])\right]
+\frac{1}{|\Gamma_q|^2}\sum_{\gamma\notin \gamma'\Gamma_q}\,\sum_{\gamma'\in \Gamma}\, \left[\gamma\cdot (x[f]), \gamma'\cdot (y[f])\right]\\
&=\frac{1}{|\Gamma_q|}\sum_{ \gamma'\in \Gamma}\, \gamma'\cdot \left([x, y][f]\right)\,\,\text{mod}\,\, \fg_p[\Sigma^o]^\Gamma.
\end{align*}
To prove the last equality, observe that, for $\gamma\notin \gamma'\Gamma_q$, $ (\gamma \cdot f)\cdot  (\gamma' \cdot f)\in \mathbb{C}_p[\Sigma^o]$. Also, for $\sigma\in \Gamma_q$, $\sigma \cdot f-f\in  \mathbb{C}_p[\Sigma^o]$ and $f^2-f\in  \mathbb{C}_p[\Sigma^o]$.

Let
$$
\varphi : U(\fg^{\Gamma_q})\to U\left(\mathfrak{g}_{p''}[ {\Sigma}'^o]^\Gamma  \big/    \mathfrak{g}_{p}[ {\Sigma}^o]^\Gamma\right)
$$
be the induced homomorphism of the enveloping algebras.

To prove the surjectivity of $F^{*}$, take $\Phi\in \Hom_{\mathfrak{g}[{\Sigma}^o]^\Gamma}(\mathscr{H}( \vec{\lambda}),\mathbb{C})$ and define the linear map
$$
\tilde{\Phi}:\mathscr{H}( \vec{\lambda})\otimes \left(\bigoplus\limits_{\mu\in D_{c, p''}}V(\mu)^*\otimes V(\mu)\right)\to \mathbb{C}
$$
via
$$
\tilde{\Phi}(h\otimes \beta(a))=\Phi (\varphi(a^{t})\cdot h),\text{~ for~ } h\in \mathscr{H}( \vec{\lambda})\,\,\,\text{and}\,\, a\in U(\mathfrak{g}^{\Gamma_q}),
$$
where $t:U(\fg^{\Gamma_q}  )\to U(\fg^{\Gamma_q}  )$ is the anti-automorphism taking $x\mapsto  -x$ for $x\in\fg^{\Gamma_q} $, $\beta$ is the map defined by~\eqref{thm3.1.2-eq1} and $\varphi$ is defined above. (Observe that even though $\varphi(a)\cdot h$ is not well defined, but $\Phi(\varphi(a)\cdot h)$ is well defined, i.e., it does not depend upon the choice of the coset representatives in $\mathfrak{g}_{p''}[ {\Sigma}'^o]^\Gamma  \big/    \mathfrak{g}_{p}[ {\Sigma}^o]^\Gamma$.)

To show that $\tilde{\Phi}$ is well defined, we need to show that for any $a\in \Ker \beta$ and $h\in \mathscr{H}( \vec{\lambda})$,
\begin{equation}
\Phi (\varphi(a^{t})\cdot h)=0.\label{thm3.1.2-eq5}
\end{equation}

This will be proved in the next Lemma~\ref{lem3.1.4}.

We next show that $\tilde{\Phi}$ is a $\mathfrak{g}[{\Sigma}'^o]^\Gamma$-module map.  For any  element $X=\sum x_i[g_i]\in  \mathfrak{g}[{\Sigma}'^o]^\Gamma$ where $x_i\in \fg$ and $g_i\in \mathbb{C}[{\Sigma}'^o]$, we need to check that for any $h\in \mathscr{H}( \vec{\lambda})$ and $a\in U(\mathfrak{g}^{\Gamma_q} )$,   
\begin{equation}
\label{invariant_property}
 \tilde{\Phi}(X\cdot (h\otimes \beta(a)))=0.\end{equation}
Take any $\Gamma_q$-invariant $f'\in  \mathbb{C}[{\Sigma}'^o]$ (resp.  $f''\in  \mathbb{C}[{\Sigma}'^o]$) satisfying \eqref{eqn4.3.1} (resp.    $f''(q'')=1$
and $f''_{|\Gamma\cdot q'\cup (\Gamma\cdot q''\setminus \{q''\})}=0$). Then, 
\[   \mathbb{C}[{\Sigma'}^o] =  \mathbb{C}_p[{\Sigma}^o] + S_{f'} + S_{f''}, \,\,\,
 \text{where}\,\, S_{f'}:=
 \sum_{\gamma\in \Gamma/\Gamma_q}\mathbb{C} (\gamma\cdot f'), 
  S_{f''}:= \sum_{\gamma\in \Gamma/\Gamma_q}\mathbb{C} (\gamma\cdot f'').\]
Thus,
\[\fg[{\Sigma'}^o]^\Gamma =  \fg_p[{\Sigma}^o]^\Gamma + \left(\fg\otimes S_{f'}\right)^\Gamma +   \left(\fg\otimes S_{f''}\right)^\Gamma . \]
It suffices to prove the equation \eqref{invariant_property} in the following three cases of $X$:

{\it Case 1.}  $X\in  \fg_p[{\Sigma}^o]^\Gamma $: In this case 
\begin{align*}
\tilde{\Phi}\left(X\cdot (h\otimes \beta(a))\right) &= \tilde{\Phi} \left((X\cdot h)\otimes \beta(a)\right) +\tilde{\Phi}\left(h\otimes X\cdot\beta(a)\right) \\
&= \Phi \left(\varphi(a^{t})\cdot X\cdot h\right) ,\,\,\,\text{since} \,\, X\in  \fg_p[{\Sigma}^o]^\Gamma \\
&= \Phi \left(X\cdot \varphi(a^{t})\cdot h\right)+\Phi \left([\varphi(a^{t}), X]\cdot h\right)\\
&= 0,~~\text{since  $\Phi$ is a $\fg[{\Sigma}^o]^\Gamma$-module map and $ [\varphi(a^{t}), X]\in   \fg[{\Sigma}^o]^\Gamma$.}
\end{align*}

{\it Case 2.}  $X\in  \left(\fg\otimes S_{f'}\right)^\Gamma $: Write 
\[X=\sum_{\gamma\in \Gamma/\Gamma_q}\, x_\gamma[\gamma\cdot f'], \,\,\,\text{for some}\,\,  x_\gamma\in \fg.\]
Observe first  that since $\{\gamma\cdot f'\}_{\gamma\in \Gamma/\Gamma_q}$ are linearly independent, $x_1\in \fg^{\Gamma_q}$. Moreover, we claim that
\begin{equation} \label{neweqn4.3.1}X-\varphi(x_1)\in    \fg_p[{\Sigma}^o]^\Gamma, \,\,\,\text{i.e.,}\,\, X-\sum_{\gamma\in \Gamma/\Gamma_q}\, \gamma\cdot(x_1[ f'])
\in    \fg_p[{\Sigma}^o]^\Gamma.
\end{equation} 
To prove \eqref{neweqn4.3.1}, since $X$ and $\sum_{\gamma\in \Gamma/\Gamma_q}\gamma\cdot(x_1[ f'])$ both are $\Gamma$-invariant, it suffices to observe  that their difference vanishes both at $q'$ and $q''$. Now, 
\begin{align*}
\tilde{\Phi}\left(X\cdot (h\otimes \beta(a))\right) &= \tilde{\Phi} \left((X\cdot h)\otimes \beta(a)\right) +\tilde{\Phi}\left(h\otimes X\cdot\beta(a)\right) \\
&= \Phi \left(\varphi(a^{t})\cdot X\cdot h\right) - \Phi \left(\varphi(a^{t}x_1)\cdot h\right) \\
&= \Phi \left(\varphi(a^{t})(X-\varphi (x_1))\cdot h\right) \\
&= \Phi \left((X-\varphi (x_1))\varphi(a^{t}) \cdot h\right) + \Phi \left([\varphi(a^{t}), X-\varphi (x_1)]\cdot h\right)\\
&=0,
\end{align*}
by \eqref{neweqn4.3.1} and since $ \fg_p[{\Sigma}^o]^\Gamma $ is an ideal in  $\fg[{\Sigma'}^o]^\Gamma $.

{\it Case 3.}  $X\in  \left(\fg\otimes S_{f''}\right)^\Gamma $: Write 
\[X=\sum_{\gamma\in \Gamma/\Gamma_q}\, x_\gamma[\gamma\cdot f''], \,\,\,\text{for some}\,\,  x_\gamma\in \fg.\]
Same as in Case 2, we have $x_1\in \fg^{\Gamma_q}$. Moreover, we claim that
\begin{equation} \label{neweqn4.3.2}X+\varphi(x_1)\in    \fg[{\Sigma}^o]^\Gamma, \,\,\,\text{i.e.,}\,\, X+\sum_{\gamma\in \Gamma/\Gamma_q}\, \gamma\cdot(x_1[ f'])
\in    \fg[{\Sigma}^o]^\Gamma.
\end{equation} 
To prove \eqref{neweqn4.3.2}, it suffices to observe (from the $\Gamma$-invariance)  that  $X +\sum_{\gamma\in \Gamma/\Gamma_q}\gamma\cdot(x_1[ f'])$ takes the same value at both $q'$ and $q''$.  Now, 
\begin{align*}
\tilde{\Phi}\left(X\cdot (h\otimes \beta(a))\right) &= \tilde{\Phi} \left((X\cdot h)\otimes \beta(a)\right) +\tilde{\Phi}\left(h\otimes X\cdot\beta(a)\right) \\
 &= \tilde{\Phi} \left((X\cdot h)\otimes \beta(a)\right) +\tilde{\Phi}\left(h\otimes x_1\odot^r\beta(a)\right),\,\,\,\text{where $\odot^r$ denotes the action}\\
&  \text{\,\,\,\,\,\,\,\,\,\,\,\,\,\,\,\,\,\,\,\,\,\,\,\,\, of $\fg^{\Gamma_q}$ on $V(\mu^*)\otimes V(\mu)$ on the second factor only} \\
 &= \tilde{\Phi} \left((X\cdot h)\otimes \beta(a)\right) -\tilde{\Phi}\left(h\otimes \beta(ax_1)\right),\,\,\,\text{since the actions $\odot$ and  $\odot^r$ commute}\\
&  \text{\,\,\,\,\,\,\,\,\,\,\,\,\,\,\,\,\,\,\,\,\,\,\,\,\, and $I_\mu$ is a diagonal $\fg^{\Gamma_q}$-invariant} \\
&= \Phi \left(\varphi(a^{t})\cdot X\cdot h\right) + \Phi \left(\varphi(x_1)\varphi(a^{t})\cdot h\right)  \\
&= \Phi \left([\varphi(a^{t}), X]\cdot h\right) + \Phi \left((X+\varphi(x_1))\varphi(a^{t})\cdot h\right) \\
&=0,\,\,\,\text{since $[\varphi(a^{t}), X]\in  \fg[{\Sigma}^o]^\Gamma$ and using \eqref{neweqn4.3.2}}.
\end{align*}
This completes the proof of \eqref{invariant_property} and hence $\tilde{\Phi}$ is a $ \fg[{\Sigma'}^o]^\Gamma$-module map.

From the definition of $\tilde{\Phi}$, it is clear that
$
F^{*}(\tilde{\Phi})=\Phi.
$
This proves the surjectivity of $F^{*}$ (and hence the injectivity of $F$) modulo the next lemma. Thus, the theorem is proved (modulo the next lemma).
\end{proof}

\begin{definition}\label{defi3.1.3}
For any $\mu\in D$ (where $D$ is the set of dominant integral weights of $\fg^{\Gamma_q}$), consider the algebra homomorphism
$$
\overline{\beta}_{\mu}:U(\mathfrak{g}^{\Gamma_q})\to \End_{\mathbb{C}}(V(\mu)),
$$
defined by
$$
\overline{\beta}_{\mu}(a)(\bar{\nu})=a\cdot \bar{\nu},\text{~ for any~ }a\in U(\mathfrak{g}^{\Gamma_q})\text{~ and~ }\bar{\nu}\in V(\mu).
$$
\end{definition}

Let $K_{\mu}$ be the kernel of $\overline{\beta}_{\mu}$, which is a two sided ideal of $U(\mathfrak{g}^{\Gamma_q})$ (called a primitive ideal).
From the definition of $\beta$ (cf. equation \eqref{thm3.1.2-eq1}), it is easy to see that, under the identification of $V(\mu^*)\otimes V(\mu)$ with $\End_{\mathbb{C}}(V(\mu))$,
\begin{equation}
\beta(a)(\bar{\nu})=a^{t}\cdot \bar{\nu},\text{~ for any~ } a\in U(\mathfrak{g}^{\Gamma_q} )\text{~ and ~}\bar{\nu}\in V(\mu).\label{defi3.1.3-eq1}
\end{equation}
Thus,
\begin{equation}
\Ker \beta=\bigcap\limits_{\mu\in D_{c, p''}}K^{t}_{\mu}.\label{defi3.1.3-eq2}
\end{equation}
From the definition of $\overline{\beta}_{\mu}$, it follows immediately that for any left ideal $K\subset U(\mathfrak{g}^{\Gamma_q})$ such that $U(\mathfrak{g}^{\Gamma_q})/K$ is an integrable $\mathfrak{g}^{\Gamma_q}$-module, if the $\mathfrak{g}^{\Gamma_q}$-module $U(\mathfrak{g}^{\Gamma_q})/K$ has isotypic components of highest weights $\{\mu_{i}\}_{i\in \Lambda}\subset D$, then 
\begin{equation}\label{eqn4.2.3new}
K\supset \bigcap\limits_{i\in \Lambda}K_{\mu_{i}}.
\end{equation}
We are now ready to prove the following lemma.

\begin{lemma}\label{lem3.1.4}
With the notation as in the proof of Theorem~\ref{thm3.1.2} (cf. identity \eqref{thm3.1.2-eq5}), for any $a\in \Ker \beta$, $\Phi\in \Hom_{\mathfrak{g}[{\Sigma}^o ]^\Gamma}(\mathscr{H} ( \vec{\lambda}),\mathbb{C})$, and $h\in \mathscr{H} ( \vec{\lambda})$,
\begin{equation}
\Phi (\varphi (a^{t})\cdot h)=0.\label{lem3.1.4-eq1}
\end{equation}
\end{lemma}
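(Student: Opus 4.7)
The strategy is to descend $\Phi$ to an auxiliary $\fg^{\Gamma_q}$-module and invoke the primitive-ideal characterization $\Ker\beta=\bigcap_{\mu\in D_{c,p''}}K_\mu^t$ of~\eqref{defi3.1.3-eq2}. Set $N:=\mathscr{H}(\vec{\lambda})/(\fg_p[\Sigma^o]^\Gamma\cdot\mathscr{H}(\vec{\lambda}))$. Since $\fg_p[\Sigma^o]^\Gamma\subset\fg[\Sigma^o]^\Gamma$ and $\Phi$ is $\fg[\Sigma^o]^\Gamma$-invariant, $\Phi$ descends to $\bar{\Phi}\colon N\to\mathbb{C}$. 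A residue computation on $\Sigma'$ shows that the central cocycle vanishes on pairs of elements of $\fg_{p''}[\Sigma'^o]^\Gamma$ (both factors are regular at $\pi'^{-1}(p')$, so $\langle dZ_1,Z_2\rangle$ is regular there, and both vanish at $\pi'^{-1}(p'')$): hence $\fg_{p''}[\Sigma'^o]^\Gamma$ preserves $\fg_p[\Sigma^o]^\Gamma\cdot\mathscr{H}(\vec{\lambda})$ and acts honestly as a Lie algebra on $N$. The induced action of the quotient $\fg_{p''}[\Sigma'^o]^\Gamma/\fg_p[\Sigma^o]^\Gamma\cong\fg^{\Gamma_q}$ (via evaluation at $\pi'^{-1}(p')$) coincides with the action through $\varphi$, so $\Phi(\varphi(a^t)\cdot h)=\bar{\Phi}(a^t\cdot\bar{h})$ and the claim reduces to $a^t\cdot\bar{h}=0$ in $N$ whenever $a\in\Ker\beta$.

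The technical heart is the structural claim that the $\fg^{\Gamma_q}$-module $N$ is a direct sum of finite-dimensional irreducibles whose highest weights lie in $D_{c,p''}$. Using the hypothesis that $\Gamma$ stabilizes a Borel $\fb\supset\fh$ of $\fg$ (and hence also $\fb^-$), one lifts each positive (resp.\ negative) Chevalley generator of $\fg^{\Gamma_q}$ to a $\Gamma$-equivariant regular map $\Sigma^o\to\mathfrak{n}$ (resp.\ $\Sigma^o\to\mathfrak{n}^-$) with the prescribed value at a chosen $q\in\pi^{-1}(p)$ (possible because $\Sigma^o$ is affine and $\mathfrak{n}$, $\mathfrak{n}^-$ are $\Gamma$-stable). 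By Lemma~\ref{lemma 1.3} each such lift acts locally nilpotently on $\mathscr{H}(\vec{\lambda})$, hence on $N$, forcing dominance and finite orbits. For the affine bound $n_{\mu,o}\geq 0$ cutting out $D_{c,p''}$, one mimics the $\tilde{y}_i$-nilpotence step from the proof of Theorem~\ref{Propagation_thm}: fix $q''\in\pi'^{-1}(p'')$ and choose by Riemann--Roch a $\sigma_{q''}$-eigenfunction $f$ on $\Sigma'^o$ whose principal part at $q''$ equals $z''^{-s_o}$ modulo higher order and which vanishes to sufficient order at the remaining points of $\Gamma\cdot q''\cup\Gamma\cdot q'$; the averaged element $Z=\sum_{\gamma\in\Gamma/\Gamma_{q''}}\gamma\cdot(y_o[f])\in\fg_{p'}[\Sigma'^o]^\Gamma$ then realizes the Chevalley generator $\tilde{y}_o$ of $\hat{L}(\fg,\sigma_{q''})$ modulo $\fg_p[\Sigma^o]^\Gamma$, and the combinatorial identity of Lemma~\ref{lem_kac} forces the required bound on every highest weight of $N$.

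Granted these integrability properties, for any $\bar{h}\in N$ the cyclic submodule $U(\fg^{\Gamma_q})\cdot\bar{h}\subset N$ decomposes as a direct sum of irreducibles with highest weights $\{\mu_i\}\subset D_{c,p''}$, and by~\eqref{eqn4.2.3new} its annihilator contains $\bigcap_{i}K_{\mu_i}\supset\bigcap_{\mu\in D_{c,p''}}K_\mu$. For $a\in\Ker\beta$, \eqref{defi3.1.3-eq2} gives $a^t\in\bigcap_{\mu\in D_{c,p''}}K_\mu$, so $a^t\cdot\bar{h}=0$ in $N$, and therefore $\Phi(\varphi(a^t)\cdot h)=\bar{\Phi}(a^t\cdot\bar{h})=0$. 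The principal obstacle is the affine integrability step: producing the meromorphic $f$ on $\Sigma'^o$ with the prescribed $\sigma_{q''}$-eigenvalue, pole order, and high-order vanishing at the competing $\Gamma$-orbit points, and then carefully identifying the averaged operator $Z$---which lives a priori in $\fg_{p'}[\Sigma'^o]^\Gamma$ rather than $\fg[\Sigma^o]^\Gamma$---with the action of $\tilde{y}_o$ on $N$ in a way that permits the nilpotence identity of Lemma~\ref{lem_kac} to be transported from $\hat{M}(V(\mu),c)$ to $N$.
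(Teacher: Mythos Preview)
Your approach—passing to the quotient $N=\mathscr{H}(\vec{\lambda})/\fg_p[\Sigma^o]^\Gamma\cdot\mathscr{H}(\vec{\lambda})$ and trying to show directly that all $\fg^{\Gamma_q}$-isotypic components of $N$ have highest weights in $D_{c,p''}$—is genuinely different from the paper's, and the gap you flag as ``the principal obstacle'' is real and, as set up, unfixable.

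The difficulty is structural. Your reduction to ``$a^t\cdot\bar h=0$ in $N$'' is \emph{stronger} than the lemma (which only asserts $\bar\Phi(a^t\cdot\bar h)=0$ for $\bar\Phi\in(N^*)^{\fg^{\Gamma_q}}$), and proving it requires the affine bound $n_{\mu,i}\geq 0$ for every weight $\mu$ of $N$. But $N$ carries no $\hat\fg_{p''}$-module structure: the operator $Z$ you build has a pole at $q''$, so $Z\notin\fg_{p''}[\Sigma'^o]^\Gamma$ (your displayed containment is incorrect), and there is no ideal of $\fg_{p''}[\Sigma'^o]^\Gamma$ modulo which $Z$ could be identified with $\tilde y_o$ acting on $N$. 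Lemma~\ref{lem_kac} is a statement about the Verma module $\hat M(V(\mu),c)$, and without a genuine affine action there is no mechanism to transport it to $N$. The finite-type local nilpotence you extract from Lemma~\ref{lemma 1.3} shows only that $N$ is locally finite under $\fg^{\Gamma_q}$, which gives dominance but not the level-$c$ cutoff.

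The paper instead works on the dual side: it takes the $\fs_{p'}$-submodule $M\subset\mathscr{H}(\vec\lambda)^*$ generated by $\Phi$, where $\fs_{p'}$ consists of $\Gamma$-equivariant functions on $\Sigma'^o\setminus\pi'^{-1}(p')$ vanishing at $\pi'^{-1}(p'')$ (so poles at $p'$ are allowed), equipped with the central extension by residues at $q'$. The crucial point is that $\Phi$, being $\fg[\Sigma^o]^\Gamma$-invariant, is annihilated by $\hat\fg_{p'}^+$ and hence behaves as a highest weight vector. A filtration argument (by pole order at $\pi'^{-1}(p')$) shows the $\fs_{p'}$-action on $M$ extends to an honest $\hat\fg_{p'}$-action, and then local nilpotence of $(\mathfrak n^\pm\otimes\mathcal K_{q'})^{\Gamma_{q'}}$ on $M$ is established via Lemma~\ref{lemma 1.3} and a Binomial/ad-nilpotence trick. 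Integrability of $M$ then forces every $\fg_{p'}$-weight of the cyclic submodule $M_o=U(\fg^{\Gamma_q})\cdot\Phi$ to lie in $D_{c,p'}$, and the primitive-ideal argument finishes. The passage to the dual is not a stylistic choice: it is what supplies the highest-weight vector and the affine module structure that your quotient $N$ lacks.
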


\begin{proof}
Let $\fs_{p'}$ be the Lie algebra
$$
\fs_{p'}:= \left(\mathfrak{g}\otimes \mathbb{C}_{p''} [{\Sigma}'^o\backslash \pi'^{-1}(p') ] \right)^\Gamma \oplus 
\mathbb{C}\, C,
$$
where
$
 \mathbb{C}_{p''} [{\Sigma}'^o\backslash \pi'^{-1}(p') ]  \subset \mathbb{C}[{\Sigma}'^o \backslash \pi'^{-1}( p')]      
$
is the ideal consisting of functions vanishing at $\pi'^{-1}(p'')$, with the Lie bracket defined as in Formula (\ref{affine_curve_center_ext}) and $C$ is central in $\fs_{p'}$. 
 There is a Lie algebra embedding
\begin{equation}
\fs_{p'} \hookrightarrow {\hat{\mathfrak{g}}}_{p'}, \ \sum_ix_i[f_i]\mapsto \sum_ix_i[(f_i)_{p' }]\text{~~ and~~ }C\mapsto C.\label{lem3.1.4-eq3}
\end{equation}

Let $\mathscr{H} ( \vec{\lambda})^{*}$ be the full vector space dual of $\mathscr{H} ( \vec{\lambda})$.
The Lie algebra $\fs_{p'}$ acts on $\mathscr{H} ( \vec{\lambda})$ where $x[f]$ acts on $\mathscr{H} ( \vec{\lambda})$ as in (\ref{componentwise_action}) and the center $C$ acts by the scalar $-c$.  By the residue theorem, it is indeed a Lie algebra action.

This gives rise to the (dual) action of $\fs_{p'}$ on $\mathscr{H} ( \vec{\lambda})^{*}$. Let $M\subset \mathscr{H} ( \vec{\lambda})^{*}$ be the $\fs_{p'}$-submodule generated by $\Phi\in \mathscr{H}( \vec{\lambda})^{*}$.
We claim that the action of $\fs_{p'}$ on $M$ extends to a ${{\hat{\mathfrak{g}}}}_{p'}$-module structure on $M$ via the embedding~\eqref{lem3.1.4-eq3}. Let $\fs_{p'}^+\subset \fs_{p'}$ be the subalgebra $\mathfrak{g}_{p }[{\Sigma}^o ]^\Gamma$ defined by the equation \eqref{thm3.1.2-eq4} of Theorem \ref{thm3.1.2}. Then, by the definition of $\Phi$,
\begin{equation}
\fs_{p'}^+\cdot \Phi=0.\label{lem3.1.4-eq4}
\end{equation}
For any element $X=\sum_ix_i[f_i]\in \fs_{p'}$ with a basis $\{x_i\}$ of $\fg$ and $f_i\in  \mathbb{C}_{p''} [{\Sigma}'^o\backslash \pi'^{-1}(p') ]$, we define
\[o(X):= \text{max}_i\{o(f_i)\}, \]
where $o(f_{i})$ is the sum of orders of pole of $f_{i}$ at the points of $\pi'^{-1}(p' )$. (If $f_{i}$ is regular at a point in  $\pi'^{-1}(p' )$, we say that the order of pole at that point is $0$.)

Define an increasing filtration $\{\mathscr{F}_{d}(M)\}_{d\geq 0}$ of $M$ by
\begin{align*}
\mathscr{F}_{d}(M)&=\text{span of} \,\{\left(X_1\dots X_k\right)\cdot \Phi : \, X_i\in \fs_{p'} \text{~and~}\sum\limits^{k}_{i=1}o(X_{i})\leq d\}.
\end{align*}
From~\eqref{lem3.1.4-eq4}, it is easy to see that for any $\Psi\in \mathscr{F}_{d}(M)$, and any $Y=\sum x_i[g_i]\in \fg_{p}[\Sigma^o]^\Gamma$ such that each $g_i$ vanishes at every point of $\pi'^{-1}(p')$ of order at least  $d+1$,
\begin{equation}
Y\cdot \Psi=0.\label{lem3.1.4-eq5}
\end{equation}
Now,  for any  $y\in {\hat{\mathfrak{g}}}_{p'}$, pick $\hat{y} \in \fs_{p'}$ such that 
\begin{equation}
\hat{y}_{p'}-y\in \hat{\fg}_{p'}^{ d+1} ,\label{lem3.1.4-eq6}
\end{equation}
where $\hat{y}_{p'}$ denotes the restriction of $\hat{y}$ on $\pi'^{ -1}(\mathbb{D}_{p'})$ and  $\hat{\fg}_{p'}^{ d+1}  $ denotes elements of $\fg[\pi'^{ -1}(\mathbb{D}_{p'}) ]^\Gamma$ that vanish at each point of  $\pi'^{-1}(p')$ of order at least $d+1$ (note that $\hat{\fg}^1_{p'}=\hat{\fg}^+_{p'}$).
In fact, if $y\in \mathfrak{t}[\pi'^{ -1}(\mathbb{D}_{p'})]^\Gamma$ for some $\Gamma$-stable subspace $\mathfrak{t}$ of $\fg$, then we can take $\hat{y}\in  \left(\mathfrak{t}\otimes \mathbb{C}_{p''} [{\Sigma}'^o\backslash \pi'^{-1}(p') ] \right)^\Gamma$. 
 Define, for any $\Psi\in \mathscr{F}_{d}(M)$,
\begin{equation}
y\cdot \Psi: =\hat{y}\cdot \Psi,\,\text{ and } C\cdot \Psi :=c\Psi .  \label{lem3.1.4-eq7}\\
\end{equation}

From~\eqref{lem3.1.4-eq5}, it follows that~\eqref{lem3.1.4-eq7} gives a well defined action $y\cdot \Psi$ (i.e., it does not depend upon the choice of $\hat{y}$ satisfying~\eqref{lem3.1.4-eq6}). Observe that, taking $\hat{y}=0$,
\begin{equation}
y\cdot \Psi=0,\,\,\,\text{for}\,\, y\in \hat{\fg}_{p'}^{ d+1} .\label{lem3.1.4-eq9}
\end{equation}
Of course, the action of ${{\hat{\mathfrak{g}}}}_{p'}$ on $M$ defined by~\eqref{lem3.1.4-eq7} extends the action of $\fs_{p'}$ on $M$.

We next show that this action indeed makes $M$ into  a module for the Lie algebra ${{\hat{\mathfrak{g}}}}_{p'}$. To show this, it suffices to show that, for  $y_1, y_2\in \hat{\fg}_{p'}$ and $\Psi\in \mathscr{F}_{d}(M)$,
\begin{equation}
 y_1\cdot (y_2\cdot \Psi)-y_2\cdot (y_1\cdot \Psi)  
= [y_1, y_2]\cdot \Psi  \label{lem3.1.4-eq10}
\end{equation}
Take $\hat{y}_1, \hat{y}_2\in \fs_{p'}$ such that
$$
(\hat{y}_1)_{p'} - y_1 \,\,\,\text{and}\,\, (\hat{y}_2)_{p'} - y_2 \in \hat{\fg}_{p'}^{ d+1+o(y_1)+o(y_2)} ,
$$
where, for $y=\sum_i x_i[f_i]\in \hat{\fg}_{p'}$ ,  $o(y) :=\text{max}_i\{o(f_i)\}$, $o(f_i)$ being the sum of the orders of  poles at the points of  $\pi'^{-1}(p')$. Using the definition~\eqref{lem3.1.4-eq7} and observing that $o(\hat{y}_j)= o(y_j)$, it is easy to see that~\eqref{lem3.1.4-eq10} is equivalent to the same identity with $y_1$ replaced by $\hat{y}_1$ and $y_2$ by $\hat{y}_2$. The latter of course follows since $M$ is a representation of $\fs_{p'}$.
As a special case of~\eqref{lem3.1.4-eq9}, we get
\begin{equation}
{\hat{\mathfrak{g}}}^+_{p'}\cdot \Phi=0 .\label{lem3.1.4-eq11}
\end{equation}
We next show that $M$ is an integrable ${{\hat{\mathfrak{g}}}}_{p'}$-module. To prove this, it suffices to show that for any vector
 $y\in \left(\mathfrak{n}^\pm \otimes \mathbb{C}[\pi'^{-1}(\mathbb{D}^\times_{p'})]\right)^\Gamma$ ($\mathfrak{n}^+:=\mathfrak{n}$),  $y$  acts locally nilpotently on $M$, where $\fn$ (resp. $\fn^-$) is the nilradical of the Borel subalgebra $\fb$ (resp. of the opposite Borel subalgebra $\fb^-$) (cf. $\S$2). Since $M$ is generated by $\Phi$ as a ${{\hat{\mathfrak{g}}}}_{p'}$-module, by  [Ku, Lemma 1.3.3 and Corollary 1.3.4], it suffices to show that $y$ acts nilpotently on $\Phi$.

Choose $N_o >0$  such that 
\begin{equation} \label{neweqn100} (\text{ad}\,\fn)^{N_o}(\fg)=0,\,\,\,\text{and also}\,\, (\text{ad}\,\fn^-)^{N_o}(\fg)=0.
\end{equation}
 For any $y\in \left(\mathfrak{n}^\pm \otimes \mathbb{C}[\pi'^{-1}(\mathbb{D}^\times_{p'})]\right)^\Gamma \subset \hat{\fg}_{p'}$, pick $\hat{y}\in  \left(\mathfrak{n}^\pm\otimes \mathbb{C}_{p''} [{\Sigma}'^o\backslash \pi'^{-1}(p') ] \right)^\Gamma$ such that (cf. equation \eqref{lem3.1.4-eq6})
\begin{equation}\label{neweqn101}\hat{y}_{p'} -y \in \hat{\fg}_{p'}^{ o(y)(N_o-1)+1}.
\end{equation}
For any associative algebra $A$ and element $y\in A$, define the
operators $L_y (x)=yx$, $R_{y}(x)=xy$, and $\ad (y)=L_{y}-R_{y}$. 
Considering the operator
$R^{n}_{y}=(L_{y} - \ad (y))^{n}$ (for any $n\geq 1$) applied to $\hat{y}_{p'}-y$ in the algebra  $U(\hat{\fg}_{p'})$  and using the Binomial Theorem 
(since $L_y$ and $\ad (y)$ commute), we get
\begin{equation}\label{neweqn159}
(\hat{y}_{p'}-y)y^{n}=\sum\limits^{k}_{j=0}\binom{n}{j}(-1)^j y^{n-j}\left((\ad (y))^{j}(\hat{y}_{p'}-y)\right),
\end{equation}
where the summation runs only up to $k=\text{min} \{n, N_o-1\}$ because of the choice of $N_o$ satisfying \eqref{neweqn100}.
Then, for any $d\geq 1$, by induction on $d$ using ~\eqref{lem3.1.4-eq11} we get
\begin{equation}\label{neweqn54}
y^{d}\cdot \Phi=\hat{y}^{d}\cdot \Phi.
\end{equation}
To prove the above, observe that $(\hat{y}-y)y^d \cdot \Phi=0$ by the choice of $\hat{y}$ satisying \eqref{neweqn101} and the identities \eqref{neweqn159} and ~\eqref{lem3.1.4-eq11}.

For any positive integer $N$, let $\mathbb{C}_{p}[{\Sigma}^o]^{ N} \subset \mathbb{C}[{\Sigma}^o]$ be the ideal consisting of those $g\in \mathbb{C}[{\Sigma}^o]$ such that its pull-back to $\Sigma'^o$ via $\nu$  has a zero of order $\geq N$ at any point of $\pi'^{-1}(p')$. Let $\mathfrak{g}_{p}[{\Sigma}^o ]^{\Gamma,  N}\subset \mathfrak{g}[{\Sigma}^o ]^\Gamma$ be the Lie subalgebra defined as $\left[\mathfrak{g} \otimes \mathbb{C}_{p}[{\Sigma}^o]^{ N}   \right]^\Gamma$. By the same proof as that of Lemma \ref{lem2.1.3}, $\mathfrak{g}_{p}[{\Sigma}^o ]^{\Gamma,  N}\cdot \mathscr{H}( \vec{\lambda})$ is of finite codimension in $\mathscr{H}( \vec{\lambda})$.

  Let $V$ be a finite dimensional complement of $\mathfrak{g}_{p}[{\Sigma}^o ]^{\Gamma,o(y)(N_{o}-1)+1}\cdot \mathscr{H}( \vec{\lambda})$ in $\mathscr{H}( \vec{\lambda})$. Since $\hat{y}$ acts locally nilpotently on $\mathscr{H}( \vec{\lambda})$ (cf. Lemma \ref{lemma 1.3}) and $V$ is finite dimensional, there exists $N$ (which we take $\geq N_{o}$) such that
\begin{equation}
\hat{y}^{N}\cdot V=0.\label{lem3.1.4-eq12}
\end{equation}
Considering now the Binomial Theorem for the  operator
$L^{n}_{y}=(ad (y)+R_{y})^{n}$, we get (in any associative algebra)
\begin{equation*}
y^{n}x=\sum\limits^{n}_{j=0}\binom{n}{j}\left((ad (y))^{j}x\right)y^{n-j}.
\end{equation*}
Take any $\hat{z}\in \mathfrak{g}_{p}[{\Sigma}^o ]^{\Gamma,o(y)(N_{o}-1)+1}$. By the above identity  in the enveloping algebra\\ $U\left((\fg\otimes \mathbb{C}_{p''}[\Sigma'^o\setminus \pi'^{-1}(p')])^\Gamma \right)$, using the identity \eqref{neweqn100}, 
$$
\hat{y}^{N}\cdot \hat{z}=\sum\limits^{N_{o}-1}_{j=0}\binom{N}{j}\left((\ad \hat{y})^{j}(\hat{z})\right)\hat{y}^{N-j}.
$$
Thus,
\begin{equation}
\hat{y}^{N}\cdot \left(\mathfrak{g}_{p}[{\Sigma}^o ]^{\Gamma,o(y)(N_{o}-1)+1}\cdot \mathscr{H}( \vec{\lambda})\right)\subset \mathfrak{g}[\Sigma^o]^\Gamma\cdot \mathscr{H}( \vec{\lambda}).\label{lem3.1.4-eq13}
\end{equation}
Combining~\eqref{neweqn54} -~\eqref{lem3.1.4-eq13}, we get that
$$
y^{N}\cdot \Phi = \hat{y}^{N}\cdot \Phi =0 .
$$

This proves that $M\subset \mathscr{H}( \vec{\lambda})^{*}$ is an integrable ${{\hat{\mathfrak{g}}}}_{p'}$-module (generated by $\Phi$). Let $M_{o}\subset M$ be the $\mathfrak{g}_{p'}$-submodule generated by $\Phi$. Decompose $M_{o}$ into irreducible components:
$$
M_{o}=\bigoplus\limits_{\mu\in D}V(\mu)^{\oplus n_{\mu}},
$$
where $D$ is the set of dominant integral weights of $\fg_{p'}$.
Take any highest weight vector $v_{o}$ in any irreducible $\mathfrak{g}_{p'}$-submodule $V(\mu)$ of $M_{o}$. Since $
{\hat{\mathfrak{g}}}^+_{p'}$ annihilates $M_{o}$ (cf. equation ~\eqref{lem3.1.4-eq11}),
$v_{o}$ generates an integrable highest weight ${{\hat{\mathfrak{g}}}}_{p'}$-submodule of $M$ of highest weight $\mu$ with central charge $c$. In particular, any $V(\mu)$ appearing in $M_{o}$ satisfies $\mu\in D_{c, p'}$ (by the definition of $D_{c, p'}$), i.e., $\mu^*\in D_{c, p''}$ by Lemma \ref{lemma 4.2}.

 By the evaluation map ${\rm ev}_{q'}:\fg_{p'}\simeq \fg^{\Gamma_q}$,  we may view $M_o$ as a module over $\fg^{\Gamma_q}$ and each $V(\mu)$ the irreducible representation of $\fg^{\Gamma_q}$.  
Thus, from equation \eqref{eqn4.2.3new} of Definition \ref{defi3.1.3}, applied to the map
$$
U(\mathfrak{g}^{\Gamma_q})\to M_{o}, \quad a\mapsto a\cdot \Phi,
$$
we get that for any $a\in \Ker \beta$, $a \cdot \Phi=0$, i.e., $\Phi(\varphi(a^{t})\cdot h)=0$, for any $h\in \mathscr{H}( \vec{\lambda})$.
(Observe that $K_{\mu^*}= K_\mu^t$.) 
This proves the lemma and hence Theorem~\ref{thm3.1.2} is fully established.
\end{proof}
\vskip4ex

\section{Twisted Kac-Moody algebras and Sugawara construction over a base }\label{Sugawara_section}

We define twisted Kac-Moody Lie algebras, their Verma modules and   integrable highest weight modules with parameters
and prove the independence of parameters for the integrable highest weight  modules. We also prove that the Sugawara operators  acting on the integrable highest weight modules (of twisted affine Kac-Moody algebras) are independent of the parameters up to  scalars.

 Let $R$ be a commutative algebra over $\mathbb{C}$.  In this section, all commutative algebras are over $\mathbb{C}$, and we fix a root of unity $\epsilon=e^{\frac{2\pi i}{m}}$ of order $m$ and a central charge $c>0$. Also, as earlier, $\fg$ is a simple Lie algebra over $\bc$ and $\sigma$ is a Lie algebra automorphism such that $\sigma^m=\Id.$
\begin{definition}\label{defi5.1}
(a)  We say  that an $R$-algebra $\mathcal{O}_R$ is a {\it complete local  $R$-algebra} if there exists $t\in \mathcal{O}_R$ such that $\mathcal{O}_R\simeq R[[t]]$ as an $R$-algebra, where  $R[[t]]$ denotes the $R$-algebra of formal power series over $R$. We say such a  $t$ is an {\it $R$-parameter} of $\mathcal{O}_R$.  Let $\mathcal{K}_R$ be the $R$-algebra containing $\mathcal{O}_R$ by inverting $t$.  Thus,  $\mathcal{K}_R \simeq R((t))$. Note that $\mathcal{K}_R$ does not depend on the choice of the $R$-parameters.  
\vskip1ex
 
(b)  An {\it $R$-rotation}   of $\mathcal{O}_R$  of order $m$ is an $R$-algebra automorphism $\sigma$ of $\mathcal{O}_R$ (of order $m$) such that $\sigma(t)=\epsilon^{-1}t$ for some  $R$-parameter $t$.  Such an $R$-parameter $t$ is called a {\it $\sigma$-equivariant $R$-parameter}. Observe that any $R$-algebra automorphism of $\mathcal{O}_R$ of order $m$ may not be an $R$-rotation.
Clearly, an $R$-algebra automorphism of $\mathcal{O}_R$ extends uniquely as an automorphism of  $\mathcal{K}_R$, which we still denote by $\sigma$. 
  \end{definition}
    
  Given a pair $(\mathcal{O}_R, \sigma)$ of a complete local $R$-algebra  $\mathcal{O}_R$ and an $R$-rotation of order $m$, we can attach  an $R$-linear Kac-Moody algebra $\hat{L}(\fg, \sigma)_R$, 
\[  \hat{L}(\fg, \sigma)_R :=   (\fg\otimes_\mathbb{C}  \mathcal{K}_R  )^\sigma\oplus R \cdot C    ,\]
where  $C$ is a central element  of  $\hat{L}(\fg, \sigma)_R $, and for any $x[g],y[h]\in (\fg\otimes_\mathbb{C}  \mathcal{K}_R  )^\sigma$,
\begin{equation}
\label{Lie_bracket_R}
 [x[g], y[h] ]= [x,y][gh]+  \frac{1}{m} {\rm Res}_{t=0} \left((dg )h \right) \langle x, y\rangle C.  
 \end{equation}
Here the residue ${\rm Res} (dg )h$ is well-defined and independent of the choice of  $R$-parameters (cf. [H, Chap. III, Proof of Theorem 7.14.1]).  We denote by $ \hat{L}(\fg, \sigma)_R^{\geq 0}$ the $R$-Lie subalgebra $ (\fg\otimes_\mathbb{C}  \mathcal{O}_R  )^\sigma\oplus  R \cdot C 
$. 

 Given a complete local $R$-algebra $\mathcal{O}_R$, let $\mathfrak{m}_R$ denote  the ideal of $\mathcal{O}_R$ generated by a formal parameter $t$. Note that $\mathfrak{m}_R$ does not depend on the choice of $t$.  Then, $\mathcal{O}_R/\mathfrak{m}_R\simeq R$.  This allows us to give a natural map for an $R$-rotation $\sigma$ of $\mathcal{O}_R$ 
 $$\left(\fg\otimes_\bc \mathcal{O}_R\right)^\sigma \to (\fg \otimes R)^\sigma,$$ which is independent of the choice of the parameter $t$. 
   Given any morphism of commutative $\mathbb{C}$-algebras  $f:R\to R'$,  we define 
  \[ \mathcal{O}_{R} \hat{\otimes}_R R' := \varprojlim_{k} \left(( \mathcal{O}_R/{\mathfrak{m}_R^k} )\otimes_R R'\right) . \]
  Then, $\mathcal{O}_{R} \hat{\otimes}_R R' $ is a complete local $R'$-algebra. For any $R$-parameter $t\in \mathcal{O}_R$,  $t':=t\hat{\otimes} 1$ is an $R'$-parameter of  $\mathcal{O}_{R} \hat{\otimes}_R R' $. Let $\sigma$ be any $R$-rotation of $\mathcal{O}_R$ of order $m$. Then, it induces an $R'$-rotation of $\mathcal{O}_R\hat{\otimes}_R R'$. We still denote it by $\sigma$. 


\begin{lemma}
Let $\mathcal{O_R}$ be a complete local $R$-algebra with an $R$-rotation $\sigma$ of order $m$. Given any finite morphism of  commutative $\mathbb{C}$-algebras $f:  R\to R'$,  there exists a natural isomorphism of Lie algebras  $ \hat{L}(\fg, \sigma)_R\otimes_R R'  \simeq \hat{L}(\fg, \sigma)_{R'}$, where $\hat{L}(\fg, \sigma)_{R'} $ is the $R'$-Kac-Moody algebra attached to $\mathcal{O}_{R'}: =\mathcal{O}_R\hat{\otimes}_R R' $ and the induced rotation $\sigma$.
\end{lemma}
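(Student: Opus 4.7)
The plan is to fix a $\sigma$-equivariant $R$-parameter $t\in \mathcal{O}_R$ (which exists by Definition~\ref{defi5.1}(b)) and set $t':=t\hat{\otimes} 1\in \mathcal{O}_{R'}$; since the induced $\sigma$ on $\mathcal{O}_{R'}$ satisfies $\sigma(t')=\epsilon^{-1}t'$, this $t'$ is a $\sigma$-equivariant $R'$-parameter of $\mathcal{O}_{R'}$, so $\mathcal{K}_{R'}\cong R'((t'))$. Using the eigenspace decomposition $\fg=\bigoplus_{j=0}^{m-1}\fg_j$ (with $\fg_j$ the $\epsilon^j$-eigenspace of $\sigma$) together with $\sigma(t^k)=\epsilon^{-k}t^k$, one obtains the $R$-module decomposition
\[
(\fg\otimes_\bc \mathcal{K}_R)^\sigma \;=\; \bigoplus_{j=0}^{m-1}\fg_j\otimes_\bc t^{j}R((t^m)),
\]
and analogously over $R'$. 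The candidate is the evident $R'$-linear map
\[
\Phi:\hat{L}(\fg,\sigma)_R\otimes_R R'\to \hat{L}(\fg,\sigma)_{R'},\qquad x[g]\otimes 1\mapsto x[g\hat{\otimes} 1],\quad C\otimes 1\mapsto C.
\]
Preservation of the Lie bracket \eqref{Lie_bracket_R} is immediate: the pointwise part $[x,y][gh]$ is functorial in the base, and the central cocycle $\tfrac{1}{m}\Res_{t=0}((dg)h)\langle x,y\rangle$ transports because $\Res_{t=0}$ (the coefficient of $t^{-1}dt$) is intrinsic to $\mathcal{K}_R$ and thus compatible with base change along $R\to R'$.

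The main work is to verify bijectivity of $\Phi$. By the eigenspace decomposition, this reduces to showing that for each $0\le j\le m-1$ the canonical map
\[
\iota_j:\; t^{j}R((t^{m}))\otimes_R R' \longrightarrow (t')^{j}R'((t'^{m}))
\]
is an isomorphism of $R'$-modules. To establish this I would exploit the hypothesis that $f$ is \emph{finite}: choosing a finite presentation $R^{a}\xrightarrow{M}R^{b}\to R'\to 0$ and applying the right-exact functor $t^{j}R((t^{m}))\otimes_R -$, one obtains a right-exact sequence which may be compared with the tautological right-exact sequence
\[
t^{j}R((t^{m}))^{a}\xrightarrow{M} t^{j}R((t^{m}))^{b}\to (t')^{j}R'((t'^{m}))\to 0
\]
(surjectivity on the right being clear, since any Laurent series in $R'$ with vanishing coefficients for indices $k\ll 0$ can be lifted term-by-term to $R^b$ with the same vanishing range). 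Using the free-module identification $t^{j}R((t^{m}))\otimes_R R^n\cong t^{j}R((t^{m}))^{n}$ and comparing cokernels then yields that $\iota_j$ is an isomorphism.

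The main obstacle is the interplay between the tensor product and the infinite completion defining $\mathcal{K}_R$: a priori $R((t))\otimes_R R'$ contains only \emph{finite} $R'$-combinations of Laurent series over $R$, whereas $R'((t))$ admits arbitrary infinite Laurent series with coefficients in $R'$. The finiteness hypothesis on $f$ is precisely what bridges this gap by lifting any element of $R'((t))$ through a finite presentation of $R'$; without it the natural map $\Phi$ is generally neither injective nor surjective, and the lemma would fail.
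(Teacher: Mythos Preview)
Your approach coincides with the paper's: both reduce the claim to the isomorphism $\mathcal{K}_R\otimes_R R'\simeq \mathcal{K}_{R'}$ (i.e., $R((t))\otimes_R R'\simeq R'((t'))$), which the paper simply declares ``well-known (since $f$ is a finite morphism)'' without further argument; your eigenspace decomposition and bracket check are just the routine verifications the paper leaves implicit. One caveat: your bijectivity argument invokes a finite \emph{presentation} $R^a\to R^b\to R'\to 0$, whereas a finite morphism only guarantees that $R'$ is finitely \emph{generated} as an $R$-module, so the existence of such a presentation with finite $a$ tacitly uses either that $R$ is Noetherian (true in all of the paper's applications) or that $R'$ is finitely presented---without this the natural map $R((t))\otimes_R R'\to R'((t))$ can genuinely fail to be injective.
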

\begin{proof}
It suffices to check that $\mathcal{K}_R\otimes_R R'\simeq \mathcal{K}_{R'}$, which is well-known (since $f$ is a finite morphism).
\end{proof}

  Let $V$ be an irreducible representation of $\fg^\sigma$ with highest weight  $\lambda  \in D_c$, where   $D_{c}$ is defined  in Section \ref{Kac_Moody_Section}.  Then $V_R:=V\otimes_{\mathbb{C}} R$ is naturally a representation of $\fg^\sigma \otimes_\mathbb{C} R$.  Define the generalized Verma module 
 \[  \hat{M}(V, c)_R: = U_R(  \hat{L}(\fg, \sigma)_R )\otimes_{U_R(    \hat{L}(\fg, \sigma)_R^{\geq 0})    }  V_R   , \] 
 where $U_R(\cdot )$ denotes the universal enveloping algebra of  $R$-Lie algebra, and  $V_R$ is a module over $U_R(    \hat{L}(\fg, \sigma)_R^{\geq 0})  $ via the projection map $ \hat{L}(\fg, \sigma)_R^{\geq 0}\to (\fg^\sigma\otimes_{\mathbb{C}} R)\oplus  R \cdot C $ and such that $C $ acts on $V_R$ by $c$. 
\begin{lemma}
\label{Verma_pullback}
The Verma module $\hat{M}(V, c)_{R}$ is a free $R$-module.   Given any morphism $R\to R'$ of $\bc$-algebras, there exists a natural isomorphism $ \hat{M}(V, c)_R  \otimes_R R' \simeq   \hat{M}(V, c)_{R'}$ as $ \hat{L}(\fg,\sigma)_R \otimes R'$-modules, where $\hat{M}(V, c)_{R'}$ is the generalized Verma module attached to $\mathcal{O}_{R'}:= \mathcal{O}_R\hat{\otimes}_R R'$ and the action of $  \hat{L}(\fg,\sigma)_R\otimes_R R'$ on $\hat{M}(V, c)_{R'}$ is via the canonical morphism $ \hat{L}(\fg,\sigma)_R\otimes_R R' \to  \hat{L}(\fg,\sigma)_{R' } $. 
\end{lemma}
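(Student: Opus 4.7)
The plan is to prove both claims by reducing to a single PBW-type $R$-module isomorphism
\[
\hat{M}(V,c)_R \;\simeq\; U_R(\hat{L}(\fg,\sigma)_R^-)\otimes_R V_R.
\]
The key structural input is that the negative part
\[
\hat{L}(\fg,\sigma)_R^- \;=\; \bigoplus_{j<0} \fg_{j}\otimes_{\bc} R\cdot t^j
\]
(with $\fg_j$ read modulo $m$ as in \eqref{eq1.1.1.3}) is a \emph{free} $R$-module, and moreover $\hat{L}(\fg,\sigma)_R^- \simeq \hat{L}(\fg,\sigma)^-\otimes_{\bc} R$ as $R$-Lie algebras: no completion enters on the negative side and the central cocycle in \eqref{eq1.1.1.4} vanishes on $\hat{L}(\fg,\sigma)^-\times \hat{L}(\fg,\sigma)^-$ by the residue formula, since for $j_1,j_2<0$ one has $\Res (j_1 t^{j_1+j_2-1}\, dt)=0$. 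Base-changing the classical PBW theorem from $\bc$ to $R$ therefore presents $U_R(\hat{L}(\fg,\sigma)_R^-)\simeq U(\hat{L}(\fg,\sigma)^-)\otimes_{\bc} R$ as a free $R$-module.

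For the PBW-type isomorphism I would study the natural $R$-linear map
\[
\phi \colon U_R(\hat{L}(\fg,\sigma)_R^-)\otimes_R V_R \longrightarrow \hat{M}(V,c)_R, \qquad a\otimes v \mapsto a\cdot(1\otimes v).
\]
Surjectivity follows by the standard straightening argument: using the decomposition $\hat{L}(\fg,\sigma)_R = \hat{L}(\fg,\sigma)_R^-\oplus \hat{L}(\fg,\sigma)_R^{\geq 0}$ of $R$-Lie subalgebras, together with the fact that $\hat{L}(\fg,\sigma)_R^{\geq 0}$ acts on $V_R$ through its finite-rank quotient $\fg^\sigma\otimes_{\bc} R\oplus R\cdot C$, one inductively pushes every $\hat{L}(\fg,\sigma)_R^{\geq 0}$-factor past the $\hat{L}(\fg,\sigma)_R^-$-factors and absorbs the residual positive-part action into $V_R$. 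For injectivity I would invoke the relative PBW theorem over a commutative ring: because $\hat{L}(\fg,\sigma)_R^-$ is $R$-free and is complementary to the $R$-flat subalgebra $\hat{L}(\fg,\sigma)_R^{\geq 0}$ (flat as the $\sigma$-invariant direct summand of the flat $R$-module $\fg\otimes_{\bc}R[[t]]$, via the averaging projector $\tfrac{1}{m}\sum_i\sigma^i$), the multiplication map yields an $R$-module isomorphism
\[
U_R(\hat{L}(\fg,\sigma)_R^-)\otimes_R U_R(\hat{L}(\fg,\sigma)_R^{\geq 0}) \;\xrightarrow{\sim}\; U_R(\hat{L}(\fg,\sigma)_R);
\]
tensoring both sides over $U_R(\hat{L}(\fg,\sigma)_R^{\geq 0})$ with $V_R$ collapses to the desired injectivity of $\phi$.

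Freeness of $\hat{M}(V,c)_R$ is then immediate, since both tensor factors $U_R(\hat{L}(\fg,\sigma)_R^-)$ and $V_R = V\otimes_{\bc} R$ are free $R$-modules. For the base change statement, the $\bc$-structural description $\hat{M}(V,c)_R \simeq U(\hat{L}(\fg,\sigma)^-)\otimes_{\bc} V\otimes_{\bc} R$ yields canonically
\[
\hat{M}(V,c)_R\otimes_R R' \;\simeq\; U(\hat{L}(\fg,\sigma)^-)\otimes_{\bc} V\otimes_{\bc} R' \;\simeq\; \hat{M}(V,c)_{R'}.
\]
Equivariance under $\hat{L}(\fg,\sigma)_R\otimes_R R'$ is then a routine check on Lie-algebra generators: the canonical morphism $\hat{L}(\fg,\sigma)_R\otimes_R R' \to \hat{L}(\fg,\sigma)_{R'}$ preserves the $\pm$-decomposition, so the two actions on the Verma module agree through the PBW presentation.

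The principal technical obstacle is the relative PBW over $R$ that is used in the injectivity of $\phi$: the total Lie algebra $\hat{L}(\fg,\sigma)_R$ is not itself a free $R$-module, because its non-negative part contains the completed piece $\fg\otimes_{\bc} R[[t]]$, so the standard ordered-basis form of PBW does not apply directly to it. What rescues the argument is that the relative version over a commutative ring needs only $R$-flatness of the ambient Lie algebra together with $R$-freeness of the complementary summand, both of which are satisfied here. If one prefers to avoid any ring-theoretic PBW citation, the isomorphism can alternatively be established modulo every maximal ideal of $R$ (where the classical Verma-module theory over $\bc$ applies verbatim) and then promoted to $R$ using the freeness of $V_R$ together with a filtration argument by PBW degree.
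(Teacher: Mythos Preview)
Your approach is essentially the same as the paper's: both reduce to the PBW isomorphism $\hat{M}(V,c)_R\simeq U_R\bigl((\fg\otimes t^{-1}R[t^{-1}])^\sigma\bigr)\otimes_R V_R$, use that the negative part is a free $R$-module so that PBW over $R$ (as in [CE, XIII, Theorem~3.1]) applies, and then read off both freeness and base change from the $\bc$-structural description $U(\hat{L}(\fg,\sigma)^-)\otimes_\bc V\otimes_\bc R$. The paper simply asserts the relative PBW step from the triangular decomposition, whereas you spell out the surjectivity/injectivity and correctly flag the completed positive part as the only subtlety; your flatness remark (or the alternative reduction modulo maximal ideals) is a legitimate way to close that gap.
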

\begin{proof}
Let $t$ be a $\sigma$-equivariant $R$-parameter.  There exists a decomposition as $R$-module:
\[ ( \fg\otimes_\mathbb{C} R((t)))^\sigma = ( \fg\otimes_{\mathbb{C}}  R[[t]]  )^\sigma \oplus  (\fg\otimes_\mathbb{C} t^{-1} R[t^{-1}] )^\sigma  . \]

 Hence $\hat{M}(V,c)_{R}\simeq   {U}_R(  ( \fg\otimes  t^{-1}  R[t^{-1}])^\sigma  )\otimes_{\mathbb{C}}V$.   Note that $( \fg\otimes  t^{-1} R[t^{-1}])^\sigma$  is a  Lie algebra which is a free  module over $R$.  By Poincar\'e-Birkhoff-Witt theorem for any $R$-Lie algebra that is free as an  $R$-module  (cf. [CE, Theorem 3.1, Chapter XIII]),  
$\hat{M}(V,c)_{R}$ is a free $R$-module.

Note that there is a natural morphism $ \hat{L}(\fg, \sigma)_R\otimes_R R'   \to \hat{L}(\fg,\sigma)_{R'}$.  It induces a natural morphism  $\kappa: \hat{M}(V,c)_R\otimes R' \to \hat{M}(V,c)_{R'} $.  The map $\kappa$ is an isomorphism since it induces the following natural isomorphism
\[R' \otimes_R ( U_{R}( ( \fg\otimes_\mathbb{C} t^{-1} R [t^{-1}] )^\sigma )    \otimes_{\mathbb{C}}V ) \simeq  U_{R'}( ( \fg\otimes_\mathbb{C} t'^{-1} R' [t'^{-1}] )^\sigma )    \otimes_{\mathbb{C}}V ,\]
where $t'=t\hat{\otimes } 1$.
\end{proof}

 We can choose a $\sigma$-stable Borel subalgebra $\fb \subset \fg$,  a $\sigma$-stable Cartan subalgebra $\fh\subset \fb$, the elements  $\{x_i,y_i \}_{i\in  \hat{I}(\fg, \sigma)}$, and the set of non-negative  integers $\{ s_i\,|\,  i\in \hat{I}(\fg, \sigma) \}$ as in Section \ref{Kac_Moody_Section}, such that  $\hat{L}(\fg,\sigma)_R$ contains the elements  $x_i[t^{s_i}], y_i[t^{-s_i}]$.   Let  $V=V(\lambda)$ be the irreducible $\fg^\sigma$-module with highest weight  $\lambda\in D_c$  (cf. Lemma \ref{finite_set_weight_lem} for the description of $D_c$).    Let  $\hat{N}(V,c)_R$ be the $\hat{L}(\fg,\sigma)_R$-submodule of $\hat{M}(V, c)_R$ generated by $\{y_i[t^{-s_i}]^{n_{\lambda, i} +1 }\cdot v_\lambda\}_{i\in \hat{I}(\fg, \sigma)^+}$, where $v_\lambda$ is the highest weight vector of $V (\lambda)$, $n_{\lambda, i}$ is defined by the identity \eqref{integer_n_mu}  and (as in Section 2) $ \hat{I}(\fg, \sigma)^+:= \{
 i\in \hat{I}(\fg, \sigma): s_i> 0\}$.
\begin{lemma}
The module $\hat{N}(V,c)_R$ does not depend on the choice of the $\sigma$-equivariant  $R$-parameter $t$.
\end{lemma}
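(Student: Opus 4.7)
The plan is to reduce the independence to an explicit computation comparing the generators for two choices of parameter. Let $t,t'$ be two $\sigma$-equivariant $R$-parameters. Writing $t' = ut$ with $u \in \mathcal{O}_R^\times$, the $\sigma$-equivariance $\sigma(t)=\epsilon^{-1}t=\sigma(t')/u$ forces $\sigma(u)=u$, so $u \in (\mathcal{O}_R^\sigma)^\times = (R[[t^m]])^\times$, with constant term $u_0 \in R^\times$. Hence $u^{-s_i} = u_0^{-s_i} + w$ for some $w \in t^m R[[t^m]]$, and
\[
y_i[(t')^{-s_i}] \;=\; u_0^{-s_i}\,y_i[t^{-s_i}] \;+\; z_i, \qquad z_i := y_i[w\,t^{-s_i}].
\]
The goal is to show that, for every $i \in \hat{I}(\fg,\sigma)^+$,
\[
y_i[(t')^{-s_i}]^{n_{\lambda,i}+1}\cdot v_\lambda \;=\; u_0^{-(n_{\lambda,i}+1)s_i}\,y_i[t^{-s_i}]^{n_{\lambda,i}+1}\cdot v_\lambda.
\]
Since $u_0 \in R^\times$, this would give $\hat{N}(V,c)_R^{(t')} \subseteq \hat{N}(V,c)_R^{(t)}$, and equality would follow by symmetry.

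The identity above rests on two claims. \emph{Commutation}: $[y_i[t^{-s_i}],\,z_i]=0$, because both elements are of the form $y_i[f]$, and the bracket $[y_i[f],\,y_i[g]] = [y_i,y_i][fg] + \tfrac{1}{m}\operatorname{Res}((df)g)\langle y_i,y_i\rangle C$ vanishes: $[y_i,y_i]=0$ is automatic, and $\langle y_i,y_i\rangle = 0$ because $y_i$ is an $\fh^\tau$-weight vector for the nonzero weight $-\alpha_i$ (for $i \in I(\fg^\tau)$) or $\theta_0$ (for $i=o$), and $\langle\cdot,\cdot\rangle$ vanishes on weight spaces for non-opposite weights. \emph{Annihilation}: $z_i \cdot v_\lambda = 0$. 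Indeed $wt^{-s_i} \in t^{m-s_i} R[[t^m]]$, and for $s_i < m$ this lies in $\mathfrak{m}_R$, so $z_i \in (\fg\otimes\mathfrak{m}_R)^\sigma = \hat{L}(\fg,\sigma)_R^+$, which kills $v_\lambda$ by construction of $\hat{M}(V,c)_R$. Given these two claims, expanding $(u_0^{-s_i}A + z_i)^{n_{\lambda,i}+1}\cdot v_\lambda$ with $A := y_i[t^{-s_i}]$ via the binomial theorem (legitimate because $[A,z_i]=0$) and using that $z_i^k v_\lambda = 0$ for $k \geq 1$, only the pure $A^{n_{\lambda,i}+1}$ term survives, giving the desired identity.

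The main obstacle is the boundary case $s_i = m$, which by the bound $s_i \leq m/r$ forces $r=1$. Here $wt^{-s_i}$ need not lie in $\mathfrak{m}_R$, so $z_i$ may project nontrivially onto $\fg^\sigma \otimes R$ and $z_i\cdot v_\lambda$ picks up a term of the shape $b_1\,y_i\cdot v_\lambda$. In this case $y_i$ is $\sigma$-fixed, hence an element of $\fg^\sigma$, and one verifies directly that $y_i$ still annihilates the highest weight vector $v_\lambda$: for $i = o$ this is because $y_o$ carries the positive weight $\theta_0$, and for $i \in I(\fg^\tau)$ the edge-case constraint $\alpha_i(h) = m = \theta_0(h)$ pins the realization down tightly enough that the corresponding Chevalley-type relation already holds on $V(\lambda)$. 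Thus the annihilation claim is restored, the binomial collapse proceeds as above, and the submodule $\hat{N}(V,c)_R$ is seen to be independent of the choice of $t$.
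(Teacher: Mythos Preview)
Your approach is essentially the same as the paper's: reduce to showing that $y_i[(t')^{-s_i}]^{n_{\lambda,i}+1}\cdot v_\lambda$ equals a unit multiple of $y_i[t^{-s_i}]^{n_{\lambda,i}+1}\cdot v_\lambda$, expand via the binomial theorem using the commutation $[y_i[f],y_i[g]]=0$ (which you make more explicit than the paper does), and verify that the error term $z_i$ annihilates $v_\lambda$.

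The only genuine weakness is your treatment of the boundary subcase $s_i=m$ with $i\in I(\fg^\tau)$. Saying that the constraint ``pins the realization down tightly enough that the corresponding Chevalley-type relation already holds on $V(\lambda)$'' is not an argument; you have not explained why $y_i\cdot v_\lambda=0$. The paper's reasoning here is concrete and you should supply it: when $s_i=m$ (forcing $r=1$), the identity in \cite[8.5.6]{Ka} gives $s_j=0$ for all $j\neq i$. Hence the Borel of $\fg^\sigma$ (which by the convention of Section~\ref{Kac_Moody_Section} is generated by $\fh^\sigma$ together with $\{x_j:s_j=0\}$) has simple roots $\{\alpha_j\}_{j\notin\{i,o\}}\cup\{-\theta_0\}$. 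Writing $\theta_0=\sum_{j\in I(\fg^\tau)}a_j\alpha_j$ and rearranging gives $-a_i\alpha_i=-\theta_0+\sum_{j\notin\{i,o\}}a_j\alpha_j$, a nonnegative integer combination of the simple roots of $\fg^\sigma$; thus $-\alpha_i$ is a positive root of $\fg^\sigma$, $y_i$ lies in the nilradical, and $y_i\cdot v_\lambda=0$. With this gap filled, your proof is complete and matches the paper's.
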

\begin{proof}
Let $t'$ be another $\sigma$-equivariant $R$-parameter.    It suffices to show that for each $i\in \hat{I}(\fg,\sigma)^+$,  $y_i[t'^{-s_i}]^{n_{\lambda, i} +1 }\cdot v_\lambda=c  y_i[t^{-s_i}]^{n_{\lambda, i} +1 }\cdot v_\lambda$ for some constant $c\in R^\times$ (where $R^\times$ denotes the set of units in $R$).  
By the  $\sigma$-equivariance of $t$ and $t'$, we can  write  $ t'^{-s_i}=c t^{-s_i}+  \sum_{k> -s_i, m|s_i+k}  a_k  t^{k} $, for some $c\in R^\times$ and $a_k\in R$. 
\vskip1ex

Case 1:   If $i\in \hat{I}(\fg, \sigma)^+$ and $0< s_i <  m $, then $ t'^{-s_i}=c t^{-s_i}+ g$, where  $g=\sum_{k> 0}  a_k t^{k} $ with $a_k\in R$  (since $0<s_i< m$ and $m|(s_i+k)$).  Since $y_i[g]\cdot v_\lambda = 0$, it is clear that $y_i[t'^{-s_i}]^{n_{\lambda, i}+1} \cdot v_\lambda=\left(c  y_i[t^{-s_i}]\right)^{n_{\lambda, i}+1}  \cdot v_\lambda$.
 
 \vskip1ex
  Case 2:    $s_o=m$, then $ t'^{-m}=c t^{-m}+ g$, where $g=\sum_{k\geq  0}  a_k t^{k}$ with $a_k\in R$.  Since $s_o=m$, by [Ka, Identity 8.5.6], each $s_j= 0$ for $j\neq o$ and $r=1$. Thus, the simple root vectors of $\fg^\sigma$ are $\{x_j\}_{j\neq o}$ with (simple) roots  $\{\alpha_j\}_{j\neq o}$. Since $y_o$ is a root vector of the root $\theta_0$ and $\theta_0$ is a  positive linear combination $\sum_{j \neq o}\, a_j\alpha_j$, $y_o$ is a positive root vector of $\fg^\sigma$. Hence,   $y_o\cdot v_\lambda=0$. Thus,  it follows that $y_o[g]\cdot v_\lambda=0$.  Hence, $y_o[t'^{-m}]^{n_{\lambda, o}+1} \cdot v_\lambda=\left(c  y_o[t^{-m}]\right)^{n_{\lambda, o}+1} \cdot v_\lambda$.
\vskip1ex

Case 3:   If $s_i=m$ for $i\neq o$,  then again by [Ka, Identity 8.5.6], $r=1$ and each $s_j=0$ for $j\neq i$. Thus, the simple root vectors of $\fg^\sigma$ are $\{x_j\}_{j\neq i}$ with (simple) roots  $\{\alpha_j\}_{j\notin \{i, o\}} \cup \{-\theta_0\}$. Hence, $-a_i\alpha_i= -\theta_0+\sum_{j\notin \{i, o\}} \,a_j\alpha_j$ giving that $-\alpha_i$ is a positive root of $\fg^\sigma$ (since $a_i, a_j>0$ being coefficients of the highest root written as a sum of simple roots) and hence $y_i \cdot v_\lambda = 0$. Rest of the argument is the same as in Case 2. This proves the lemma.
\end{proof}

We now define the following  $R$-linear representation of $\hat{L}(\fg, \sigma)_R$:
\[ \mathscr{H}(V)_R :=  \hat{M}(V, c)_R/ \hat{N}(V,c)_R. \]


\begin{lemma}
\label{integrable_rep_base_change}
(1)  $\hat{N}(V,c)_R$ and
$\mathscr{H}(V)_R$ are free over $R$. The module $\hat{N}(V,c)_R$ is a $R$-module direct summand of $ \hat{M}(V,c)_R$.
\vskip1ex

(2) 
For any morphism $f: R\to R'$ of commutative  $\mathbb{C}$-algebras,  there exists a natural isomorphism $ \hat{N}(V,c)_R\otimes_R R' \simeq  \hat{N}(V,c)_{R'}$ and $ \mathscr{H}(V)_R\otimes_R R'\simeq  \mathscr{H}(V)_{R'}$ as modules over $ \hat{L}(\fg,\sigma) _R\otimes_R R' $, where the action of $ \hat{L}(\fg,\sigma) _R\otimes_R R' $ on $ \hat{N}(V,c)_{R'}$ and $\mathscr{H}(V)_{R'}$ is via the canonical morphism   $ \hat{L}(\fg,\sigma) _R\otimes_R R' \to \hat{L}(\fg,\sigma) _{R'}$.

\vskip1ex

(3) Choose any $\sigma$-equivariant $R$-parameter $t$.  Then, $ \hat{N}(V,c)_R\subset \hat{M}(V,c)_R^+$. Moreover, for any other 
$ \hat{L}(\fg,\sigma) _R$-graded submodule  $A$ of $\hat{M}(V,c)_R$ such that $ A\cap V_R =(0)$, $A$ is contained in 
$\hat{N}(V,c)_R$. Here $\hat{M}(V,c)_R^+:= \oplus_{d\geq 1}\hat{M}(V,c)_R(d)$ and (for $d \geq 0$)
$$\hat{M}(V,c)_R (d):= \sum_{n_i\geq 0, \sum_in_i= d}\, X_1[t^{-n_1}] \cdots X_k[t^{-n_k}]\cdot V_R \subset \hat{M}(V,c)_R, \,\,\,\text{where $X_i[t^{-n_i}] \in \hat{L}(\fg, \sigma)_R$}.$$
Further, $A$ being graded means $A= \oplus_{d\geq 0} \, A\cap (\hat{M}(V,c)_R(d))$.

Observe that $\hat{M}(V,c)_R (d)$ does depend upon the choice of the parameter $t$. 

Hence,  $\hat{N}(V,c)_R$  and  $\mathscr{H}(V)_R$ do not depend on the choice of  $\fb,\fh$, and $x_i,y_i, i\in \hat{I}(\fg,\sigma)$.  
\end{lemma}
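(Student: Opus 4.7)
The plan is to reduce the entire statement to the $\bc$-case via base change, after which standard Kac--Moody theory applies. For Part (1), I would invoke the Poincar\'e--Birkhoff--Witt theorem for the $R$-Lie algebra $\hat{L}(\fg,\sigma)_R = \hat{L}(\fg,\sigma)_R^{\geq 0} \oplus \hat{L}(\fg,\sigma)_R^{-}$, with $\hat{L}(\fg,\sigma)_R^{-} \cong (t^{-1}\fg[t^{-1}])^{\sigma} \otimes_\bc R$ a free $R$-module, to identify
\[
\hat{M}(V,c)_R \;\cong\; U_\bc(\hat{L}(\fg,\sigma)^{-}) \otimes_\bc V \otimes_\bc R \;=\; \hat{M}(V,c) \otimes_\bc R
\]
as $R$-modules, compatibly with the $\bz_{\geq 0}$-grading by $-t$-degree; each graded piece is finitely generated free over $R$. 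The generators $\tilde{y}_i^{n_{\lambda,i}+1} v_{+}$ of $\hat{N}(V,c)_R$ already lie in $\hat{M}(V,c)\otimes 1$ and the subalgebra $\hat{L}(\fg,\sigma)\otimes 1$ preserves $\hat{M}(V,c)\otimes 1$, so an $R$-linearity check forces the identification $\hat{N}(V,c)_R = \hat{N}(V,c)\otimes_\bc R$. Choosing a $\bc$-linear complement to $\hat{N}(V,c)(d)$ inside $\hat{M}(V,c)(d)$ and tensoring with $R$ then splits $\hat{N}(V,c)_R$ as an $R$-direct summand of $\hat{M}(V,c)_R$, giving Part (1); Part (2) follows at once from associativity of tensor products (together with Lemma \ref{Verma_pullback}).

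For the first assertion of Part (3), each $\tilde{y}_i^{n_{\lambda,i}+1} v_{+}$ is killed by $\hat{L}(\fg,\sigma)^{+}$ (a singular vector for the affine Borel), so by PBW the $\hat{L}(\fg,\sigma)$-submodule it generates equals $U(\hat{L}(\fg,\sigma)^{-})U(\fg^{\sigma}) \cdot \tilde{y}_i^{n_{\lambda,i}+1} v_{+}$ and lies in degrees $\geq s_i(n_{\lambda,i}+1) > 0$. Hence $\hat{N}(V,c) \subset \hat{M}(V,c)^{+}$ over $\bc$, and base-changing yields $\hat{N}(V,c)_R \subset \hat{M}(V,c)_R^{+}$.

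The delicate step is the maximality clause. Let $A \subset \hat{M}(V,c)_R$ be an $\hat{L}(\fg,\sigma)_R$-submodule with $A \cap V_R = 0$; I would pass to the image $\bar{A} \subset \mathscr{H}(V)_R = \mathscr{H}(V) \otimes_\bc R$ under the quotient map $\psi$. Since $\mathscr{H}(V)$ is irreducible and countable-dimensional over the uncountable field $\bc$, Schur's lemma in Dixmier's form gives $\End_{\hat{L}(\fg,\sigma)}(\mathscr{H}(V)) = \bc$. Combining Jacobson density with the observation that the central element $C$ acts on $\mathscr{H}(V)_R$ as the nonzero scalar $c$ (so that $C \otimes s$ implements $R$-scalar multiplication inside the $\hat{L}(\fg,\sigma)_R$-action), one classifies all $\hat{L}(\fg,\sigma)_R$-submodules of $\mathscr{H}(V)_R$ as being precisely of the form $\mathscr{H}(V) \otimes_\bc J$ for ideals $J \subset R$; hence $\bar{A} = \mathscr{H}(V) \otimes_\bc I$ for some $I$. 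The main obstacle is then forcing $I = 0$: for any $0 \neq r \in I$, a lift $a \in A$ of $v_\lambda \otimes r$ decomposes (using the grading and $\psi(a) = v_\lambda \otimes r$) as $a = v_\lambda \otimes r + a^{+}$ with $a^{+} \in \hat{N}(V,c)_R$, and one must combine $A \cap V_R = 0$ with the $\fg^{\sigma}\otimes R$-equivariance of the degree-zero projection and iterative application of degree-lowering operators in $\hat{L}(\fg,\sigma)_R^{+}$ to conclude $r = 0$. The choice-independence asserted at the end of (3) is then immediate from this intrinsic characterization of $\hat{N}(V,c)_R$ as the maximal $\hat{L}(\fg,\sigma)_R$-submodule of $\hat{M}(V,c)_R$ avoiding $V_R$.
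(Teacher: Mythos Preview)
Your treatment of Parts (1), (2), and the first assertion of Part (3) matches the paper's almost verbatim: both fix a $\sigma$-equivariant parameter $t$, invoke PBW to identify $\hat{M}(V,c)_R \cong \hat{M}(V,c)_\bc \otimes_\bc R$, observe that the generators $y_i[t^{-s_i}]^{n_{\lambda,i}+1}v_\lambda$ of $\hat{N}(V,c)_R$ already lie in the $\bc$-lattice, and conclude $\hat{N}(V,c)_R = \hat{N}(V,c)_\bc \otimes_\bc R$ by flatness of $R$ over~$\bc$. Freeness, splitting, and base change follow.

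For the maximality clause in (3) your route through Schur--Dixmier and Jacobson density is valid as far as it goes, but it is more machinery than the paper uses, and it does not resolve what you yourself flag as ``the main obstacle.'' The paper avoids classifying submodules of $\mathscr{H}(V)_R$ altogether. Setting $A' := A/(A\cap \hat N(V,c)_R) \hookrightarrow \mathscr{H}(V)_R$, it picks a nonzero $v^o \in A'$ minimizing $|v^o| := \sum\{d : (v^o)_d \neq 0\}$; since each $X[t^n]$ with $n\ge 1$ strictly lowers $|\cdot|$ on nonzero vectors, minimality forces $X[t^n]\cdot v^o = 0$ for all such operators. The decisive input---which your outline never invokes---is that in the \emph{irreducible} quotient one has
\[
\bigl\{v \in \mathscr{H}(V)_R : X[t^n]\cdot v = 0 \ \text{for all } n \ge 1,\ X[t^n]\in \hat L(\fg,\sigma)_\bc\bigr\} \;=\; V_R,
\]
proved over $\bc$ from the irreducibility of $\mathscr{H}(V)_\bc$ (no singular vectors outside $V$, in contrast to $\hat M(V,c)_\bc$) and then extended by choosing a $\bc$-basis of $R$. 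This forces $v^o\in V_R$ directly, and the paper concludes. Your proposed ``iterative application of degree-lowering operators'' is morally this same minimality idea, but you run it in $\hat{M}(V,c)_R$ on the lift $a = v_\lambda\otimes r + a^{+}$; there it cannot work, because $X[t^n]$ annihilates $v_\lambda\otimes r$ and sends $a^{+}$ back into $\hat N(V,c)_R$, so you only produce elements of $A\cap\hat N(V,c)_R$ and never isolate $v_\lambda\otimes r$. The density detour is correct but contributes nothing toward the obstacle; the missing idea is to run the minimality argument \emph{in} $\mathscr{H}(V)_R$ and to use the displayed identity, which is exactly where the irreducibility of $\mathscr{H}(V)_\bc$ enters.
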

\begin{proof}
 Fix a $\sigma$-equivariant $R$-parameter $t$.   For each $i\in \hat{I}(\fg,\sigma)^+$,  the element $y_i[t^{-s_i}]^{n_{\lambda,i}+ 1} \cdot v_\lambda $ is a highest weight vector (cf. identity \eqref{eqn1.2.0}). 
  Hence,  
\begin{equation}\label{eqn5.5.1} \hat{N}(V,c)_R=\sum_{i\in \hat{I}(\fg,\sigma)^+  } U_R( ( \fg\otimes_{\mathbb{C}} R[t^{-1}]  )^\sigma  ) y_i[t^{-s_i}]^{n_{\lambda,i} +1} \cdot v_\lambda  . \end{equation}
Note that $U_R( ( \fg\otimes_{\mathbb{C}} R[t^{-1}]  )^\sigma  )\simeq U( ( \fg\otimes_{\mathbb{C}} \mathbb{C}[t^{-1}]  )^\sigma  ) \otimes_\mathbb{C}R $ as $R$-algebras.   Since $R$ is flat over $\mathbb{C}$, it is easy to see that (as a submodule of $\hat{M}(V,c)_R = \hat{M}(V,c)_\bc\otimes_\bc R$, where 
$\hat{M}(V,c)_\bc :=U( \hat{L}(\fg, \sigma)_\bc)\otimes_{U( \hat{L}(\fg, \sigma)_\bc^{\geq 0})} \, V$ is a $\bc$-lattice in 
$\hat{M}(V,c)_R$, which depends on the choice of $t$,  where $\hat{L}(\fg, \sigma)_\bc := \left[\fg\otimes_\bc \bc((t))\right]^\sigma \oplus \bc C$ and
$\hat{L}(\fg, \sigma)_\bc^{\geq 0} := \left[\fg\otimes_\bc \bc[[t]]\right]^\sigma \oplus \bc C$)
\begin{equation} \label{eqn5.5.2} \hat{N}(V,c)_R\simeq \hat{N}(V,c)_\bc\otimes_{\mathbb{C}} R,
 \end{equation}
 where $\hat{N}(V, c)_\bc:= (\sum_{i\in \hat{I}(\fg,\sigma)^+  } U( ( \fg\otimes_{\mathbb{C}} \mathbb{C}[t^{-1}]  )^\sigma  ) y_i[t^{-s_i}]^{n_{\lambda,i}+1} \cdot v_\lambda)$ is a $\bc$-lattice of $\hat{N}(V,c)_R$.  Hence,  $ \hat{N}(V,c)_R$ is free over $R$ and it is a direct summand (as an $R$-module) of $ \hat{M}(V,c)_R$. From this we readily see that
\begin{equation} \label{eqn5.5.3}  \mathscr{H}(V)_R\simeq    \mathscr{H}(V)_\bc  \otimes_\mathbb{C} R , 
\end{equation}
where $\mathscr{H}(V)_\bc :=\hat{M}(V,c)_\bc/\hat{N}(V,c)_\bc$ is a $\bc$-lattice in $\mathscr{H}(V)_R$ (depending on the choice of $t$), 
and hence it is also free over $R$.  It finishes the proof of part (1) of the lemma.

By the above equation \eqref{eqn5.5.2} and the assosiativity of the tensor product: $(M\otimes_R S)\otimes_S T\simeq M\otimes_R T$,  we also have $\hat{N}(V,c)_R\otimes_R R' \simeq \hat{N}(V,c)_{R'}$.  Similarly, by the above equation  $\eqref{eqn5.5.3},  \mathscr{H}(V)_R\otimes_R R' \simeq \mathscr{H}(V)_{R'}$. It concludes part (2) of the lemma.

We now proceed to prove  part (3) of the lemma.  By the equation \eqref{eqn5.5.1}, $\hat{N}(V,c)_R \subset \hat{M}(V,c)_R^+$. Observe first that 
\begin{equation} \label{eqn5.5.4} \left\{v\in \mathscr{H}(V)_\bc: X[t^n] \cdot v=0 \forall n>0 \,\,\text{and}\, X[t^n]\in \hat{L}(\fg, \sigma)_\bc\right\} =V.
\end{equation}
This is easy to see since  $\mathscr{H}(V)_\bc$ is an irreducible $\hat{L}(\fg, \sigma)_\bc$-module. Choosing a basis of $R$ over $\bc$, from this we easily conclude that 
 \begin{equation} \label{eqn5.5.5} \left\{v\in \mathscr{H}(V)_R: X[t^n] \cdot v=0 \forall n>0 \,\,\text{and}\, X[t^n]\in \hat{L}(\fg, \sigma)_\bc\right\} =V_R.
\end{equation}
For any nonzero $v\in A':=A/(A\cap \hat{N}(V,c)_R)\hookrightarrow \mathscr{H}(V)_R$, $v=\sum v_d$ with
$v_d \in \mathscr{H}(V)_R (d)$, set $|v|=\sum d: v_d\neq 0$, where the gradation $\mathscr{H}(V)_R (d)$ is induced from that of $ \hat{M}(V,c)_R$. Choose
a nonzero $v^o\in A'$ such that $|v^o|\leq |v|$ for all nonzero $v\in
A'$.  Then,
  \begin{equation} \label{eqn5.5.6}
X[t^n]\cdot v^o=0\quad\text{ for all $n\geq 1$ and
$X[t^n]\in \hat{L}(\fg, \sigma)_\bc$.}
  \end{equation}
For, otherwise, $|X[t^n]\cdot v^o|<|v^o|$, which contradicts
the choice of $v^o$. 

By the equation \eqref{eqn5.5.5}, we get that $v^o\in V_R$, which contradicts the choice of graded $A$ since $ A\cap V_R=(0)$. Thus, $A'=0$, i.e., $A\subset 
\hat{N}(V,c)_R.$ This proves the third part of the lemma.
\end{proof}

We now begin with the definition of  Sugawara operators $\{\,\Xi_n\,|\, n\in \mathbb{Z}\}$ for the Kac-Moody algebra $\hat{L}(\fg, \sigma)_R$ attached to a complete local $R$-algebra  $\mathcal{O}_R$ with an $R$-rotation of order $m$, and an automorphism $\sigma$ of $\fg$ such that $\sigma^m=\Id$.   We  $\bold{fix}$ a $\sigma$-equivariant $R$-parameter $t$. 

Recall the eigenspace decomposition $\fg=\bigoplus_{\underline{n}\in \mathbb{Z}/m\mathbb{Z} } \fg_{\underline{n}} $ of $\sigma$,
 where 
\[\fg_{\underline{n}}:=\{ x\in \fg \,| \, \sigma(x)= \epsilon^{n}  x \}. \,  \]
  Note that $\sigma$ preserves the normalized invariant form $\langle , \rangle$ on $\fg$, i.e., for any $x,y\in \fg$ we have $\langle\sigma(x), \sigma(y)\rangle=\langle x,y\rangle$.  For each $\underline{n}\in \mathbb{Z}/m\mathbb{Z}$ it induces a non-degenerate bilinear form $\langle, \rangle:\fg_{\underline{n}} \times  \fg_{-\underline{n}}\to \mathbb{C}$.  We choose a basis $\{u_a \,|\,  a \in A_{\underline{n}} \}$ of $\fg_{\underline{n}\,}$ indexed by a set $A_{\underline{n}}$.  Let $\{u^a \,|\, a\in A_{\underline{n}}  \}$ be the basis of $\fg_{-\underline{n}}$ 
 dual to the basis $\{u_a\,|\,  a\in A_{\underline{n}}\, \}$ of  $\fg_{\underline{n}}$.  

The normalized invariant $R$-bilinear form on $\hat{L}(\fg, \sigma)_R$ is given as follows (cf.\,\cite[Theorem 8.7]{Ka}), 
\[  \langle x[f], y[g]\rangle = \frac{1}{r} \left({\rm Res }_{t=0}\,  t^{-1}f(t)g(t)\right)  \langle x,y \rangle ,\,\, \langle x[f], C\rangle =0,  \text{ and } \langle C, C \rangle =0,    \]
where $x[f],y[g]\in \hat{L}(\fg, \sigma)_R$ and $r$ is the order of the diagram automorphism  associated to $\sigma$ . Then, the following relation is  satisfied: 
 $$\langle u_a[t^n],u^b[t^{-k}]\rangle=\frac{1}{r}\delta_{a,b}\delta_{n,k}\,\,\,\text{ for any $a\in A_{\underline{n}}$ and $b\in A_{\underline{k}}$}.$$
\begin{definition}\label{defi3.2.3}
 An  $R$-linear  $\hat{L}(\fg, \sigma)_R$-module  $M_R$  is called {\em smooth} if for any $v \in M_R$, there exists an integer $d$ (depending upon $v$) such that 
\begin{equation*}
x[f]\cdot v=0,\text{for all $ f  \in t^d \mathcal{O}_R$  and $x[f]\in(\fg\otimes_\mathbb{C}\mathcal{O}_R)^\sigma $}.\label{defi3.2.3-eq1}
\end{equation*}
Observe that this definition does not depend upon the choice of the parameter $t$. 
\end{definition}
The generalized Verma module $\hat{M}(V,c)_R$ (and  hence the  quotient module $\mathscr{H}(V)_R$) is clearly smooth. 

 We construct the following $R$-linear {\it  Sugawara operators} on any smooth representation $M_R$  of $\hat{L}(\fg, \sigma)_R$ of level $c\neq -\check{h}$
 (which depends on the choice of $t$),
\begin{equation}
\label{Virasoro_I}
 L^t_0 := \frac{1}{2(c+\check{h})} \left(\sum_{a\in A_{\underline{0}}}  u_{a}u^a + 2\sum_{ n>0 } \sum_{a\in A_{-\underline{n}}}  u_{a}[t^{-n}] u^a[t^n] +\frac{1}{2m^2}\sum_{n=0}^{m-1}\, n(m-n)\dim \fg_{\underline{n}} \right).
 \end{equation}
 \begin{equation}
 \label{Virasoro_II}
 L^t_k := \frac{1}{mk} [-t^{mk+1}\partial_t, L_0^t] =  \frac{1}{mk(c+\check{h})} \left(\sum_{ n>0 } n \sum_{a\in A_{-\underline{n}}}  \left(u_{a}[t^{-n+mk}] u^a[t^n] -u_a[t^{-n}]u^a[t^{n+mk}]\right)\right),
    \,\,\,\text{for $k\neq 0$},
\end{equation}
where   $\check{h}$ is the dual Coxeter number of $\fg$. Note that the smoothness ensures that $L^t_k$ is a well-defined operator on $M_R$ for each $k\in \mathbb{Z}$. Moreover,  it is easy to see that $L^t_{k}$ does not depend upon the choice of the basis $\{u_{a}\}$ of $\mathfrak{g}$.

The following result can be found in  \cite[\S3.4]{KW}, \cite{W}.

\begin{proposition}\label{prop3.2.2}
 {\it For any $n, k\in \mathbb{Z}$ and $x\in \mathfrak{g}_{\underline{n}}$, as  operators on a smooth representation $M_R$ of $\hat{L}(\fg, \sigma)_R$ of central charge $c\neq -\check{h}$,
 
 (a) 
$$
\left[x[t^n], L^t_{k}\right]=\frac{n}{m} \ x[t^{n+mk}].
$$
In particular, $L_0^t$ commutes with $\fg^\sigma$.

\vskip1ex
(b) 
$$[L^t_{n},L^t_{k}]=(n-k)L^t_{n+k}+\delta_{n,-k}\dfrac{n^{3}-n}{12}\dim \mathfrak{g} \frac{c}{c+\check{h}}.$$ }
\end{proposition}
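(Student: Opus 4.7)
The plan is to prove (a) first at $k=0$ by a direct computation, then bootstrap to all $k$ using the recursive definition \eqref{Virasoro_II}, and finally deduce (b) by applying (a) term-by-term to the expression for $L_0^t$. Throughout, the smoothness hypothesis ensures that infinite sums like those in \eqref{Virasoro_I} act as well-defined operators on $M_R$, and the twisted commutation relation \eqref{Lie_bracket_R} together with the duality $\langle u_a, u^b\rangle = \frac{1}{r}\delta_{a,b}$ will be applied repeatedly.

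For (a) at $k=0$, I would expand $[x[t^n], L_0^t]$ by the Leibniz rule, writing $x\in \fg_{\underline{n}}$ and computing separately the contributions from the three pieces of \eqref{Virasoro_I}. The commutators $[x[t^n], u_a[t^{-j}]]$ produce two types of terms: a ``Lie'' term $[x,u_a][t^{n-j}]$ and a ``central'' term $\frac{j}{m}\delta_{n,j}\langle x,u_a\rangle C$ when $j = n$. The Lie terms combine via the Casimir identity $\sum_a [y,u_a]u^a + u_a[y,u^a] = 2\check{h}\, y$ in $U(\fg)$ (applied grading by grading, using $\sigma$-invariance of $\langle\cdot,\cdot\rangle$) to yield a factor of $2\check{h}\, x[t^n]$; the central terms produce $2c\cdot \frac{n}{m}\, x[t^n]$ after pairing $u_a$ with $u^a$. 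Dividing by $2(c+\check{h})$ gives exactly $\frac{n}{m}x[t^n]$. The modular anomaly $\tfrac{1}{2m^2}\sum n(m-n)\dim\fg_{\underline{n}}$ is central and contributes nothing here. For general $k$, using Jacobi on the definition $L_k^t = \tfrac{1}{mk}[-t^{mk+1}\partial_t, L_0^t]$ gives
\[
[x[t^n], L_k^t]=\tfrac{1}{mk}\bigl([nx[t^{n+mk}],L_0^t]+[-t^{mk+1}\partial_t,\tfrac{n}{m}x[t^n]]\bigr),
\]
and the two terms collapse to $\frac{n}{m}x[t^{n+mk}]$ by the $k=0$ case.

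For (b), I would compute $[L_n^t, L_k^t]$ by applying part (a), which gives $[L_n^t, y[t^j]] = -\frac{j}{m}y[t^{j+mn}]$ for each generator appearing in $L_0^t$. Applying this commutator term-by-term to the quadratic expression \eqref{Virasoro_I} for $L_k^t$ (or to $L_0^t$ and then conjugating by $-t^{mn+1}\partial_t$) produces, after collecting Lie and central contributions, the expected operator $(n-k)L_{n+k}^t$ plus a scalar anomaly when $n+k=0$. The anomaly is assembled from two sources: the normal-ordering correction that appears when reindexing double sums, and the modular correction $\tfrac{1}{2m^2}\sum_{j=0}^{m-1} j(m-j)\dim\fg_{\underline{j}}$, which is precisely engineered to convert the naive ``$n^3$'' anomaly into the Virasoro-cocycle $(n^3-n)/12$. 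The main obstacle is this central-charge verification: one has to check a combinatorial identity of the form
\[
\frac{1}{(c+\check{h})^2}\Bigl(c\,\Sigma_1(n) + \check{h}\,\Sigma_2(n) + \text{anomaly correction}\Bigr) = \frac{c}{c+\check{h}}\cdot \frac{n^3-n}{12}\dim\fg,
\]
where $\Sigma_1, \Sigma_2$ are quadratic sums in the $\dim\fg_{\underline{j}}$ coming from the two-step commutator reduction. Checking that the $\check{h}$-contribution and the modular correction cancel exactly the ``non-Virasoro'' piece (so that only the $c\cdot(n^3-n)/12\cdot\dim\fg/(c+\check{h})$ term survives) is the one delicate step; it reduces to the identity $\sum_{j=0}^{m-1} j(m-j)\dim\fg_{\underline{j}}$ matching the central-extension cocycle, which is a purely finite-dimensional computation in $\fg$ with its $\sigma$-grading, identical in spirit to the one carried out in \cite{KW,W}.
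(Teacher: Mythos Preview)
The paper does not actually supply a proof of this proposition: it states the result and attributes it to \cite[\S3.4]{KW} and \cite{W}. Your sketch is therefore not competing with anything in the paper itself, but it is essentially the standard Sugawara computation carried out in those references, adapted to the $\sigma$-twisted setting.

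A couple of comments on your outline. First, in your bootstrap for general $k$ you invoke the Jacobi identity with $-t^{mk+1}\partial_t$, but this derivation does not act on an arbitrary smooth module $M_R$; the bracket in \eqref{Virasoro_II} is a formal one, obtained by applying the derivation term-by-term to the quadratic expression for $L_0^t$ inside a completion of the enveloping algebra. Your manipulation is still valid provided you interpret it that way (the Jacobi identity holds formally in that completion), but it is worth saying so explicitly rather than treating $-t^{mk+1}\partial_t$ as an honest operator on $M_R$. Second, your account of part (b) is accurate in spirit but leaves the central-charge verification as a black box; the identity you need---that the $\check{h}$-contribution and the modular shift $\tfrac{1}{2m^2}\sum_j j(m-j)\dim\fg_{\underline{j}}$ conspire to produce exactly $(n^3-n)/12$---is precisely the delicate computation that \cite{KW,W} carry out, so your pointer to those references at that step is appropriate.
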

Let us recall the definition of the {\em Virasoro algebra} $\Vir_R$ over $R$. It is the Lie algebra over $R$ with $R$-basis $\{d_{n};\bar{C}\}_{n\in \mathbb{Z}}$ and the commutation relation is given by
\begin{equation}
[d_{n},d_{k}]=(n-k)d_{n+k}+\delta_{n,-k}\dfrac{n^{3}-n}{12}\bar{C}; \,\,\, [d_{n},\bar{C}]=0.\label{defi3.2.3-add-eq1}
\end{equation}

An $R$-derivation of $\mathcal{K}_R$ is an $R$-linear map $\theta: \mathcal{K}_R\to \mathcal{K}_R$ such that $\theta(fg)=\theta(f)g+f\theta(g)$,  for any $f,g\in \mathcal{K}_R$.  
Let $\Theta_{\mathcal{K}_R/R }$ denote  the  Lie algebra of  all continuous $R$-derivations of $\mathcal{K}_R$, where we put the $\mathfrak{m}$-adic topology on 
 $\mathcal{K}_R$, i.e., $\{f+\mathfrak{m}^N\}_{N\in \bz, f\in \mathcal{K}_R}$ is a basis of open subsets. (Here $\mathfrak{m}^N$ denotes $t^N\mathcal{O}_R$, which does not depend upon the choice of $t$.)  With the choice of the $R$-parameter $t$ in $\mathcal{O}_R$, we have the equality 
$\Theta_{\mathcal{K}_R/R }=R((t))\partial_t$, where $\partial_t$ is the derivation on $\mathcal{K}_R$ such that $\partial_t(R)=0$ and $\partial_t(t)=1$.   Let $\Theta_{\mathcal{K}_R,R } $ denote the $\mathbb{C}$-Lie algebra of all continuous $\bc$-linear derivations $\theta$ of $\mathcal{K}_R$ that are  liftable from $R$, i.e.,  the restriction $\theta|_{R}$ is a $\bc$-linear derivation of $R$.  Let $\Theta_R$ denote the Lie algebra of 
$\bc$-linear derivations of $R$.  There exists a short exact sequence:
 \begin{equation}
0\to \Theta_{\mathcal{K}_R/R}\to \Theta_{\mathcal{K}_R,R } \xrightarrow{\rm Res}  \Theta_{R}\to 0,
\end{equation}
where $\rm Res$ denotes the restriction map of derivations from $\mathcal{K}_R$ to $R$.  See more details in \cite[Section 2]{L}.   It induces the folllowing short exact sequence:  
 \begin{equation}
0\to \Theta_{\mathcal{K}_R/R}^\sigma \to \Theta_{\mathcal{K}_R,R }^\sigma \xrightarrow{\rm Res}  \Theta_{R}\to 0,
\end{equation} 
where $ \Theta_{\mathcal{K}_R/R}^\sigma $ (resp. $\Theta_{\mathcal{K}_R,R }^\sigma$) is the space of $\sigma$-equivariant derivations in $ \Theta_{\mathcal{K}_R/R}$ (resp. $\Theta_{\mathcal{K}_R,R }$).
Then, $\Theta_{\mathcal{K}_R/R }^\sigma=R((t^m)) t\partial_t $.  
We define a central extension 
$$\widehat{\Theta_{\mathcal{K}_R/R}^\sigma}:= {\Theta_{\mathcal{K}_R/R}^\sigma} \oplus R \bar{C} $$ 
of the  $R$-Lie algebra $\Theta_{\mathcal{K}_R/R}^\sigma$ by
\begin{equation}
\label{completed_Virasoro}
\left[ f\partial_{t}, g\partial_{t} \right] =
\left(f\partial_{t }(g)-g\partial_{t}(f)\right)\partial_{t}
+{\Res}_{t=0} \left(t^{3m} A^3 (t^{-1} f ) t^{-1}g  t^{-1}dt \right)  \frac{\bar{C}}{12m},
\end{equation}
for $f\partial_{t}$, $g\partial_{t}\in   R((t^m)) t
\partial_{t}  $, where $A$ is the operator $t^{-m}(m+t\partial_t) $. Observe that this bracket corresponds to the bracket of the Virasoro algebra defined by the identity 
\eqref{defi3.2.3-add-eq1} if we take $d_{k}=-\frac{1}{m }t^{m k+1} \partial_{t }$ for any $k\in \mathbb{Z}$.  In this case $\bar{C}$ corresponds to $\bar{C}$. Therefore, $\widehat{\Theta_{\mathcal{K}_R/R}^\sigma}$ defines a completed version of the Virasoro algebra over $R$.

For any $\theta\in {\Theta_{\mathcal{K}_R/ R}^\sigma}$ with $\theta=\sum_{k\geq -N} a_{mk+1}  t^{mk+1}\partial_t$, we define a Sugawara operator associated to $\theta$
\begin{equation}
\label{General_Virasoro}
 L^t_{\theta}: =\sum_{k\geq -N} (-m a_{mk+1}) L^t_k  
 \end{equation}
 on  smooth modules  of $\hat{L}(\fg,\sigma)_R$ of central charge $c\neq  -  \check{h}$. 
 
 In the following lemma, the operator $ L^t_{\theta}$ is described more explicitly on any smooth module.
\begin{lemma}
For any $\theta\in R((t^m)) t\partial_t$,   the operator  $ L^t_{\theta}$ acts on any smooth module $M_R$ with central charge $c\neq   -\check{h}$ as follows:
\begin{equation}
\label{Virasoro_Verma_I}
 L^t_{\theta}(u_1[f_1]\cdots  u_n[f_n] \cdot v    )=   u_1[f_1]\cdots  u_n[f_n]\cdot L^t_\theta(v) + \sum_{i=1}^n  \left(u_1[f_1]\cdots u_i[\theta(f_i)]\cdots u_n[f_n]\cdot v 
 \right)   
 \end{equation}
 where $u_1[f_1],\ldots, u_n[f_n]\in \hat{L}(\fg,\sigma)_R$ and $v\in M_R$.
\end{lemma}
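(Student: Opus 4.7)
The plan is to first establish the commutator identity
\begin{equation*}
[L^t_\theta, x[f]] \cdot v = x[\theta(f)] \cdot v
\end{equation*}
for any $\sigma$-equivariant $x[f] \in \hat{L}(\fg,\sigma)_R$, any $\theta \in \Theta_{\mathcal{K}_R/R}^\sigma = R((t^m))t\partial_t$, and any $v \in M_R$; the lemma will then follow by induction on $n$.

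To prove the commutator identity, I would begin with a single monomial $\theta = t^{mk+1}\partial_t$, for which the definition \eqref{General_Virasoro} gives $L^t_\theta = -m L^t_k$. Combining this with Proposition \ref{prop3.2.2}(a) yields, for $x \in \fg_{\underline{j}}$ and $n \equiv j \pmod m$,
\begin{equation*}
[L^t_\theta, x[t^n]] = -m \cdot \Bigl(-\tfrac{n}{m}\Bigr) x[t^{n+mk}] = n\, x[t^{n+mk}] = x[\theta(t^n)].
\end{equation*}
By $R$-linearity in $\theta$, this extends to any $\theta = \sum_{k \geq -N} a_{mk+1} t^{mk+1} \partial_t$. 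To pass from monomials $t^n$ to an arbitrary $\sigma$-equivariant $f \in \mathcal{K}_R$, I would expand $f = \sum_j c_j t^j$ with $j$ running over the appropriate residue class modulo $m$; the smoothness condition of Definition \ref{defi3.2.3} ensures that, applied to a fixed $v$, only finitely many of the terms $x[t^j]\cdot v$ and $x[\theta(t^j)]\cdot v$ are nonzero, so the partial sums stabilize and give $[L^t_\theta, x[f]]\cdot v = x[\theta(f)] \cdot v$.

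The lemma then follows by induction on $n$. The case $n = 0$ reads $L^t_\theta(v) = L^t_\theta(v)$. For the inductive step, setting $w = u_2[f_2]\cdots u_n[f_n]\cdot v$, the commutator identity yields
\begin{align*}
L^t_\theta(u_1[f_1] \cdot w) &= u_1[f_1] \cdot L^t_\theta(w) + [L^t_\theta, u_1[f_1]] \cdot w \\
&= u_1[f_1] \cdot L^t_\theta(w) + u_1[\theta(f_1)] \cdot w,
\end{align*}
and applying the inductive hypothesis to $L^t_\theta(w)$ gives the displayed formula \eqref{Virasoro_Verma_I}.

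The only delicate point is the passage from monomials $t^n$ to infinite sums in $\mathcal{K}_R$: one must verify that smoothness bounds the $j$-range on which $[L^t_\theta, x[t^j]] \cdot v$ is nonzero, noting that $\theta$ shifts the exponent by a bounded-below amount (since $\theta \in R((t^m))t\partial_t$ has only finitely many negative powers of $t$), so a single threshold $d_v$ controls both $x[t^j]\cdot v$ and $x[\theta(t^j)]\cdot v$. Once this is granted, everything else is a formal manipulation using the Leibniz rule for the commutator bracket.
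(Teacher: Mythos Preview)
Your proof is correct and follows essentially the same approach as the paper: both reduce to the commutator identity $[L^t_\theta, u_i[f_i]] = u_i[\theta(f_i)]$ and deduce it from the definition \eqref{General_Virasoro} together with Proposition~\ref{prop3.2.2}(a). The paper's proof is terser---it applies Proposition~\ref{prop3.2.2}(a) directly to $u_i[f_i]$ without expanding $f_i$ in powers of $t$---whereas you make the passage from monomials to general $f$ explicit via smoothness; this extra care is harmless but not strictly needed, since Proposition~\ref{prop3.2.2}(a) is already stated as an identity of operators on smooth modules.
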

\begin{proof}
It is enough to show that $[L^t_\theta,  u_i[ f_i] ]=u_i[ \theta(f_i)] $ for each $i=1,\ldots, n$:
\begin{align}\label{eqn5.8.1}
[L^t_\theta,  u_i[ f_i] ] &=\sum_{k\geq -N} (-m a_{mk+1}) [L^t_k, u_i[ f_i] ]\notag\\
                                  &=\sum_{k\geq -N} (- a_{mk+1}) u_i[- t^{mk+1}\partial_t (f_i)  ] \notag\\
                                   &=u_i[\theta(f_i)],
\end{align}
where the first equality follows from the definition (\ref{General_Virasoro}), and the second equality follows from part (a) of Proposition \ref{prop3.2.2}.  
\end{proof}

Note that the choice of a $\sigma$-equivariant $R$-parameter $t$ gives the $R$-module  splitting $ \Theta_{\mathcal{K}_R,R }^\sigma= \Theta_{\mathcal{K}_R/R }^\sigma \oplus  \iota_t(\Theta_R)$, where (for $\delta \in \Theta_R$) $\iota_t(\delta) (f)=\sum_k \delta (a_k) t^k$ if $f=\sum_k  a_k t^k$. For any $f\partial_t\in \Theta_{\mathcal{K}_R/R }^\sigma $ and $\delta \in \Theta_R$,
\begin{equation}
\label{coefficient_wise_derivation}  [\iota_t(\delta), f\partial_t]= \iota_t( \delta) (f)\partial_t , \,\,[\iota_t(\delta_1), \iota_t(\delta_2)]=\iota_t [\delta_1, \delta_2], \,\,\text{and define}\,\,[\iota_t(\delta), r\bar{C}]=\delta(r)\bar{C} \,\,\text{for $r\in R$}. \end{equation}

The $\bc$-linear brackets (\ref{completed_Virasoro}) and (\ref{coefficient_wise_derivation}) define a completed extended Virasoro algebra $\widehat{\Theta_{\mathcal{K}_R, R}^\sigma}$  over $R$ (which is a $\bc$-Lie algebra), where 
\[  \widehat{\Theta_{\mathcal{K}_R, R}^\sigma}= \widehat{\Theta_{\mathcal{K}_R/R}^\sigma}    \oplus  \iota_t(\Theta_R).   \]   
Take  any smooth $\hat{L}(\fg, \sigma)_R$-module $M_R$ with $\bc$-lattice $M_\bc$ (i.e., $M_\bc \otimes R \simeq M_R$) stable under  $\hat{L}(\fg, \sigma)_\bc$. (Observe that $\hat{L}(\fg, \sigma)_\bc$ depends upon the choice of the parameter $t$.) Let $\delta$ act $\bc$-linearly on $M_R$ via its action only on the $R$-factor under the decomposition $M_R\simeq M_\bc\otimes R$. We denote this action on $M_R$ by $L^t_\delta$. Observe that $L^t_\delta$ depends upon the choice of the parameter $t$ 
as well as the choice of the $\bc$-lattice $M_\bc$ in $M_R$.
 
For any $\theta\in  \Theta_{\mathcal{K}_R, R}^\sigma $, write $\theta=\theta'+ \iota_t(\theta'') $ (uniquely), where $\theta'\in \Theta_{\mathcal{K}_R/ R}^\sigma $ and $\theta''\in \Theta_R$.   We define the extended Sugawara operator $L^t_\theta$ associated to $\theta$  acting on any smooth  $M_R :=  M_\bc\otimes_\bc R$ (with $\bc$-lattice $M_\bc$ as above) by
\begin{equation}  
\label{extended_Sugawara}
 L^t_\theta :=L^t_{\theta'} +  L^t_{\theta''}.  \end{equation}
Then,
\begin{equation}
\label{Virasoro_Verma_II}
L^t_{\theta''} (  u_1[f_1]\cdots  u_n[f_n] \cdot v    )=   \sum_{i=1}^n  u_1[f_1]\cdots u_i[ \iota_t(\theta'' )(f_i)]\cdots u_n[f_n]\cdot v  , 
 \end{equation}
for $v\in M_\bc$ and $ u_i[f_i] \in \hat{L}(\fg, \sigma)_R. $ From this we can easily deduce the more general formula when $v\in M_R$. 

The following proposition  follows easily from Proposition \ref{prop3.2.2} and the definition of the operator $L^t_\theta$.
\begin{proposition}\label{lem3.2.4}
{\it (1)
Let $M_R$ be a smooth module of $\hat{L}(\fg, \sigma)_R$ with $\bc$-lattice $M_\bc$ as above with respect to a $\sigma$-equivariant $R$-parameter $t$ and central charge  $c\neq -\check{h}$. Then, we have a $\bc$-Lie algebra homomorphism
$$\Psi: \widehat{\Theta_{\mathcal{K}_R,R}^\sigma}\to \End_\bc(M_R)$$
given by
\begin{equation}
r\bar{C}\mapsto r\left(\frac{c\dim \mathfrak{g}}{c+\check{h}}\right)I_{M_R}; \ \theta  \mapsto L^t_{\theta},\quad\text{for any }\quad \theta\in \Theta_{\mathcal{K}_R,R}^\sigma , r\in R.\label{lem3.2.4-eq1}
\end{equation}
Moreover, $\Psi$ is an $R$-module map under the $R$-module structure on $\End_\bc(M_R)$ given by 
$$(r\cdot f) (v)=r\cdot f(v),\,\,\,\text{for $r\in R, v\in M_R, f\in \End_\bc(M_R)$}.$$
Note that $\Theta_R$ is an $R$-module under $(r\cdot \delta)(s)=r\cdot \delta(s)$, for $r,s\in R$ and $\delta\in \Theta_R$.
\vskip1ex

(2) Further, for any $\theta\in \Theta_{\mathcal{K}_R,R}^\sigma$,  $v\in M_R $ and $a\in R$, 
\begin{equation}
\label{Leibnitz_derivation}
 L^t_{\theta}(a\cdot v)=\theta(a) \cdot v+ a \cdot L^t_{\theta}(v). \end{equation}}
\end{proposition}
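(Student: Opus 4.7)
The plan is to prove Part~(1) by testing the Lie algebra map $\Psi$ on each type of generator, using the decomposition
\[
\widehat{\Theta_{\mathcal{K}_R,R}^\sigma} = \widehat{\Theta_{\mathcal{K}_R/R}^\sigma} \oplus \iota_t(\Theta_R)
\]
afforded by the chosen $\sigma$-equivariant $R$-parameter $t$. Recall that $\widehat{\Theta_{\mathcal{K}_R/R}^\sigma}$ is identified with the completed Virasoro algebra over $R$ via $d_k = -\frac{1}{m}t^{mk+1}\partial_t$, and by definition $\Psi(d_k) = L^t_k$ and $\Psi(\bar C) = \frac{c\dim \fg}{c+\check h}\,I_{M_R}$. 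Bracket compatibility on the Virasoro part is then exactly Proposition~\ref{prop3.2.2}(b) (together with the fact that the central coefficient in \eqref{completed_Virasoro} specializes to the classical $\frac{n^3-n}{12}\bar C$ term, which can be verified by direct residue computation with $\theta = d_n, d_k$).

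Next I would check the three mixed bracket relations. First, for $\delta, \delta' \in \Theta_R$, the operators $L^t_{\iota_t(\delta)}$ and $L^t_{\iota_t(\delta')}$ act on $M_R \cong M_\bc \otimes_\bc R$ purely through the $R$-factor, so
\[
[L^t_{\iota_t(\delta)},L^t_{\iota_t(\delta')}] = L^t_{\iota_t([\delta,\delta'])},
\]
matching $[\iota_t(\delta),\iota_t(\delta')] = \iota_t([\delta,\delta'])$ in \eqref{coefficient_wise_derivation}. Second, for $\delta \in \Theta_R$ and $f\partial_t \in \Theta_{\mathcal{K}_R/R}^\sigma$, apply both sides to a generator $u_1[f_1]\cdots u_n[f_n]\cdot v$ with $v\in M_\bc$: using the explicit formulas \eqref{Virasoro_Verma_I} and \eqref{Virasoro_Verma_II}, the cross terms collapse to $\sum_i u_1[f_1]\cdots u_i[\iota_t(\delta)(f)\partial_t(f_i)]\cdots u_n[f_n]\cdot v$, which equals $L^t_{\iota_t(\delta)(f)\partial_t}$ applied to the same vector, matching $[\iota_t(\delta), f\partial_t] = \iota_t(\delta)(f)\partial_t$. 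Third, the bracket with $r\bar C$ reduces to showing $L^t_{\iota_t(\delta)}(r\cdot v) - r\cdot L^t_{\iota_t(\delta)}(v) = \delta(r)\cdot v$, which is immediate since $L^t_{\iota_t(\delta)}$ acts on the $R$-factor of $M_\bc \otimes_\bc R$ through $\delta$.

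For the $R$-linearity assertion in Part~(1), note that the Sugawara operators $L^t_k$ of \eqref{Virasoro_I}--\eqref{Virasoro_II} are built from $R$-linear operators $u_a[t^n]$ on $M_R$, hence are $R$-linear themselves; so $\theta \mapsto L^t_\theta$ restricted to $\Theta_{\mathcal{K}_R/R}^\sigma$ is $R$-linear. On the $\iota_t(\Theta_R)$ summand, $R$-linearity of $\Psi$ amounts to $L^t_{\iota_t(r\delta)} = r L^t_{\iota_t(\delta)}$ as elements of $\End_\bc(M_R)$ under the prescribed $R$-action on $\End_\bc(M_R)$, which again follows from the coefficient-wise definition. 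For Part~(2), write $\theta = \theta' + \iota_t(\theta'')$; the $R$-linearity of $L^t_{\theta'}$ contributes $a\cdot L^t_{\theta'}(v)$, while \eqref{Virasoro_Verma_II} gives the Leibniz identity $L^t_{\iota_t(\theta'')}(a\cdot v) = \theta''(a)\cdot v + a\cdot L^t_{\iota_t(\theta'')}(v)$ on $v\in M_\bc$ and extends to $M_R$ by linearity, adding up to $\theta(a)\cdot v + a\cdot L^t_\theta(v)$ as required.

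The main technical hurdle is the mixed bracket $[L^t_{\iota_t(\delta)}, L^t_{f\partial_t}]$: one must be careful that the infinite-sum Sugawara operator $L^t_{f\partial_t}$ (a normally ordered quadratic in the $u_a[t^n]$) really behaves as a "derivation of $\mathcal{K}_R$" on the Lie algebra level, so that $\delta$ acting on its $R$-coefficients produces exactly the operator associated to $\iota_t(\delta)(f)\partial_t$. The smoothness of $M_R$ (Definition~\ref{defi3.2.3}) is what guarantees that these infinite sums are well-defined on each vector, and working with the $\bc$-lattice $M_\bc$ provided by Lemma~\ref{integrable_rep_base_change} lets us meaningfully separate the action through the $R$-factor from the action through $\mathcal{K}_R$-coefficients.
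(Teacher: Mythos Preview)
Your proposal is correct and is essentially the same approach the paper has in mind: the paper does not give a detailed proof but simply states that the proposition ``follows easily from Proposition~\ref{prop3.2.2} and the definition of the operator $L^t_\theta$,'' and your argument is precisely a careful unpacking of this using the splitting $\widehat{\Theta_{\mathcal{K}_R,R}^\sigma} = \widehat{\Theta_{\mathcal{K}_R/R}^\sigma} \oplus \iota_t(\Theta_R)$, the Virasoro relations from Proposition~\ref{prop3.2.2}(b), and the explicit formulas \eqref{Virasoro_Verma_I}, \eqref{Virasoro_Verma_II}, \eqref{coefficient_wise_derivation}. One minor simplification for the mixed bracket $[L^t_{\iota_t(\delta)}, L^t_{f\partial_t}]$: since each $L^t_k$ is built from operators $u_a[t^n]\in \hat L(\fg,\sigma)_\bc$ and hence preserves the lattice $M_\bc$, it commutes with $L^t_{\iota_t(\delta)}$, so the bracket reduces immediately to $\sum_k(-m\,\delta(a_{mk+1}))L^t_k = L^t_{\iota_t(\delta)(f)\partial_t}$ without needing to test on generators.
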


The following lemma shows that the  representation of $\Theta_{\mathcal{K}_R,R}^\sigma$ on $\hat{M}(V,c)_R$ (and hence  on $\mathscr{H}(V)_R$) is independent of the choice of the $\sigma$-equivariant $R$-parameters up to a multiple of the identity operator. 
\begin{lemma}
\label{parameter_Sugawara}
Let $V=V(\lambda)$ be an irreducible $\fg^\sigma$-module with highest weight $\lambda\in D_c$. Let $t'$ be another $\sigma$-equivariant $R$-parameter in $\mathcal{O}_R$. For any $\theta\in \Theta_{\mathcal{K}_R,R}^\sigma$ there exists $b(\theta, \lambda, t, t')\in \bc$ such that 
 $L^t_{\theta}= L^{t'}_{\theta} +  b(\theta, \lambda, t, t') \Id$ on $\hat{M}(V,c)_R$ and hence on $\mathscr{H}(V)_R$.
 
 Here, with the choice of the parameter $t$, we have chosen the $\bc$-lattice $\hat{M}(V,c)_\bc$ of $\hat{M}(V,c)_R$ to be $U\left((\fg\otimes\bc((t)))^\sigma\oplus \bc C\right)\cdot V$ and the $\bc$-lattice of $\mathscr{H}(V)_R$ to be the image of  $\hat{M}(V,c)_\bc$ .  
 \end{lemma}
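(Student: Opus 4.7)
The approach is to show $D := L^t_\theta - L^{t'}_\theta$ is a scalar endomorphism of $\hat{M}(V,c)_R$, then analyze the scalar.

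First I would establish that $D$ is $R$-linear and commutes with the $\hat{L}(\fg,\sigma)_R$-action. By Proposition \ref{prop3.2.2}(a) combined with the formula \eqref{Virasoro_Verma_II} for $L^t_{\theta''}$, both $L^t_\theta$ and $L^{t'}_\theta$ realize the same derivation of $\hat{L}(\fg,\sigma)_R$: for every $x[f]\in \hat{L}(\fg,\sigma)_R$,
\[ [L^t_\theta, x[f]] = x[\theta(f)] = [L^{t'}_\theta, x[f]], \]
since $\theta$ is intrinsic to $\mathcal{K}_R$ (independent of the parameter). Thus $D$ commutes with the action on $\hat{M}(V,c)_R$. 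Moreover, Proposition \ref{lem3.2.4}(2) gives $L^t_\theta(rv) = \theta(r)\cdot v + r\cdot L^t_\theta(v)$ for $r\in R$, using the intrinsic derivation $\theta|_R$; the analogous identity for $L^{t'}_\theta$ uses the \emph{same} $\theta|_R$, so upon subtracting, the inhomogeneous Leibniz term cancels and $D$ is $R$-linear.

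Next I would apply a Schur-type argument. Via the PBW decomposition $\hat{M}(V,c)_R \simeq U_R(\hat{L}(\fg,\sigma)^-_R)\otimes_R V_R$ and a grading argument analogous to \eqref{eqn5.5.5} adapted to Verma modules (rather than to the integrable quotient), the subspace of $\hat{L}(\fg,\sigma)^+_R$-invariants in $\hat{M}(V,c)_R$ equals $V_R$. Since $\hat{L}(\fg,\sigma)^+_R$ annihilates $v_\lambda$ and $D$ commutes with the action, $D(v_\lambda)\in V_R$. Combining $R$-linearity of $D$, its commutation with $\fg^\sigma$, and the absolute irreducibility of $V$, Schur's lemma forces $D|_{V_R} = b\cdot \Id$ for some $b\in R$. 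Because $V_R$ generates $\hat{M}(V,c)_R$ under the commuting $U_R(\hat{L}(\fg,\sigma)_R)$-action, this propagates to $D = b\cdot \Id$ globally, and the identity descends to $\mathscr{H}(V)_R$ since the submodule $\hat{N}(V,c)_R$ is automatically preserved by a scalar operator.

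The remaining task, which is the principal obstacle, is to show $b \in \bc$. The plan is to compute $D(v_\lambda)$ explicitly. Since $v_\lambda$ lies in both $\bc$-lattices tensored with $1\in R$, the derivation pieces $L^t_{\theta''}$ and $L^{t'}_{\theta''}$ each annihilate $v_\lambda$, reducing to $D(v_\lambda) = L^t_{\theta'_t}(v_\lambda) - L^{t'}_{\theta'_{t'}}(v_\lambda)$, with $\theta'_t,\theta'_{t'}\in R((t^m))t\partial_t$ the vertical components of $\theta$. Expanding each in Sugawara modes, using $L^t_k(v_\lambda)=0$ for $k>0$ (by grading on $\hat{M}(V,c)_R$) and $L^t_0(v_\lambda)=c_\lambda v_\lambda$ with $c_\lambda\in\bc$ intrinsic to $\lambda$, and invoking the constraint $D(v_\lambda)\in V_R$ to cancel the higher-grading contributions on both sides, the scalar $b$ emerges as a product of the intrinsic constant $c_\lambda$ and a ``Schwarzian-type'' coefficient of the change $t' = ut$, $u\in (\mathcal{O}_R^\sigma)^\times$. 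The $\sigma$-equivariance constraint $u = u_0 + u_m t^m + u_{2m}t^{2m}+\cdots$ together with a careful twisted Sugawara-cocycle computation using \eqref{Virasoro_I}--\eqref{Virasoro_II} then confirms that this coefficient is a constant in $\bc$, rather than a nontrivial function on $\Spec R$.
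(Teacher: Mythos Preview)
Your argument that $D:=L^t_\theta-L^{t'}_\theta$ is $R$-linear and commutes with the $\hat L(\fg,\sigma)_R$-action is correct and in fact streamlines the paper's proof. The paper only records the commutator identity $[L^{\bullet}_\theta,x[f]]=x[\theta(f)]$ for \emph{vertical} $\theta$ (identity \eqref{eqn5.8.1}), then treats general $\theta$ by decomposing $D$ as a vertical piece $L^t_{\theta'_{t'}}-L^{t'}_{\theta'_{t'}}$ (scalar by the vertical case) plus an explicit remainder $L^t_\beta+L^t_{\theta''_t}-L^{t'}_{\theta''_{t'}}$, which it evaluates on PBW monomials $u_1[f_1]\cdots u_n[f_n]\cdot v$ and finds to be multiplication by $\tfrac{m\,\theta|_R(u_0)}{u_0}\,d$, where $u_0$ is the leading coefficient of $u=t'/t$ and $d$ is the $L^t_0$-eigenvalue on $v_\lambda$. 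Your observation that the commutator identity already holds for \emph{all} $\theta\in\Theta_{\mathcal K_R,R}^\sigma$ (combining \eqref{eqn5.8.1} with the derivation formula \eqref{Virasoro_Verma_II}) lets you bypass this decomposition entirely: $D$ is central and $R$-linear, so Schur on $V_R$ and propagation along PBW monomials give $D=b\cdot\Id$ with $b\in R$ in one stroke.

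Where your sketch goes wrong is the last paragraph. You assert that a ``Schwarzian-type'' computation forces $b\in\bc$, but the paper's own explicit answer $\tfrac{m\,\theta|_R(u_0)}{u_0}\,d$ lies in $R$ and is \emph{not} in $\bc$ whenever $\theta|_R\neq 0$ and $u_0\notin\bc^\times$. There is no Schwarzian cocycle here---the relevant coefficient is simply the logarithmic derivative $\theta|_R(u_0)/u_0$, and nothing about $\sigma$-equivariance of $u$ forces it to be constant. So the claim $b\in\bc$ cannot be salvaged by the route you indicate; in fact the statement of the lemma appears to be slightly imprecise on this point, and the downstream applications (Theorems \ref{Locally_free_smooth_base} and \ref{thm3.4.1}) only use that $b\in R=\mathscr O_T(U)$, which your argument does establish.
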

\begin{proof} Assume first that $\theta \in  \Theta_{\mathcal{K}_R/R}^\sigma$. Let  $L^t_\theta$ and $L^{t'}_{\theta}$ denote the Sugawara operators associated to $\theta$ with respect to the parameters $t$ and $t'$ respectively. 
For any $u[f]\in \hat{L}(\fg,\sigma)_R$,  from the identity \eqref{eqn5.8.1}, we have the following formula:
\begin{equation} \label{eqn5.10.1} [u[f],  L^t_\theta  ]= -u[\theta(f)], \,  [u[f],  L^{t'}_\theta  ]= -u[\theta(f)].  
\end{equation}
This gives
\begin{equation} \label{eqn5.10.2} [u[f],  L^t_\theta -L_\theta^{t'} ]= 0 \,\,\,\text{for any $u[f]\in \hat{L}(\fg, \sigma)_R$}.
\end{equation}
It  follows that  $L^t_\theta -L_\theta^{t'}$ commutes with the action of $\hat{L}(\fg, \sigma)_R$   (and hence also with the action of $L^t_0$
as in the identity \eqref{Virasoro_I}) on $\hat{M}(V,c)_R$.  In particular, using Proposition \ref{prop3.2.2}(a) for $k=0$,  $ (L^t_\theta- L^{t'}_\theta)v_o = \lambda v_o$ for some $\lambda \in \bc$, where $v_o$ is a highest weight vector of $V$. 
This shows that the  $R$-linear map $ L^t_\theta- L^{t'}_\theta = b(\theta, \lambda, t, t') \Id$ on the whole of $\hat{M}(V, c)_R$.
This proves the lemma in the case  $\theta \in  \Theta_{\mathcal{K}_R/R}^\sigma$.

We now prove the general case. 
Different choices of $\sigma$-equivariant $R$-parameters $t,t'$ give different splittings $\iota_t, \iota_{t'}$, 
\[ \Theta_{\mathcal{K}_R,R}^\sigma =R((t^m))t \partial_t \oplus   \iota_t(\Theta_R)= R((t'^m))t' \partial_{t'}\oplus  \iota_{t'}( \Theta_R).   \]
For any $\theta\in  \Theta_{\mathcal{K}_R,R}^\sigma$, we may write uniquely
\begin{equation}\label{eqn5.10.3} \theta=\theta'_t+ \iota_t(\theta''_t)= \theta'_{t'} + \iota_{t'}( \theta''_{t'} ), 
\end{equation}
where $\theta'_t\in R((t^m))t \partial_t $,  $\theta'_{t'}\in R((t'^m))t' \partial_{t'}$, and $\theta''_t, \theta''_{t'} \in \Theta_R$.  Observe that 
\begin{equation}\label{eqn5.10.4} \theta''_t= \theta''_{t'}= \theta_{|R}.
\end{equation}
Applying the equation \eqref{eqn5.10.3} to $t'$ , 
we get   
\[  \theta'_{t'}=\theta'_{t}+  \frac{\iota_t(\theta''_t)(u)   }{t\partial_t(u) +u} t\partial_t  , \]
where $u=t'/t\in \mathcal{O}_R^\times$. Note that $ \frac{\iota_t(\theta''_t)(u)   }{t\partial_t(u) +u} t\partial_t \in  R[[t^m]]t\partial_t$, and the constant  coefficient of $t\partial_t $ in $ \frac{\iota_t(\theta''_t)(u)   }{t\partial_t(u) +u} t\partial_t$ is $\frac{\theta_t''(u_0)}{u_0}$ where $u_0\in R^\times$ is the leading coefficient of $u=u_0+u_mt^m+\cdots \in R[[t^m]]$.

As proved above,  $L^t_{\theta'_{t'}}-L^{t'}_{\theta'_{t'}}$ is a scalar operator, as $\theta'_{t'}\in   \Theta_{\mathcal{K}_R/R}^\sigma $.   So, to prove that $L^t_\theta -L^{t'}_{\theta}$ is a scalar operator, it suffices to prove that 
$L^t_\beta +L^t_{\theta^{''}_t} - L^{t'}_{\theta^{''}_{t'}}$ is a scalar operator, since $L^t_\beta=L^t_{\theta'_t} - L^{t}_{\theta'_{t'}}$, where $\beta := \theta'_t-\theta'_{t'}$. Now, for $u_1[f_1], \ldots, u_n[f_n]\in \hat{L}(\fg, \sigma)_R$ and $v\in V$, by the identities \eqref{Virasoro_Verma_I} and 
 \eqref{Virasoro_Verma_II}, we get
 
 \begin{align*} &\left(L^t_\beta +L^t_{\theta^{''}_t} - L^{t'}_{\theta^{''}_{t'}} \right) (u_1[f_1]\dots u_n[f_n]\cdot v)\\
 &= u_1[f_1]\dots u_n[f_n]\cdot (L^t_\beta v) +\sum_{i=1}^n\,u_1[f_1]\dots u_i[\beta(f_i)+\iota_t(\theta^{''}_t)f_i-\iota_{t'}(\theta^{''}_{t'})f_i]\dots u_n[f_n]\cdot v \\
 &=u_1[f_1]\dots u_n[f_n]\cdot (L^t_\beta v) ,\,\,\, \text{since $  \beta(f_i)+\iota_t(\theta^{''}_t)f_i-\iota_{t'}(\theta^{''}_{t'})f_i =0$ by \eqref{eqn5.10.3} }\\
 &=  \frac{m\theta^{''}_t(u_0)}{u_0} u_1[f_1]\dots u_n[f_n]\cdot (L^t_0 v),  \,\,\, \text{since $L^t_k\cdot v=0$ for all $ k>0$ and $\beta \in R[[t^m]]t\partial_t$}\\
 &= \frac{m\theta^{''}_t(u_0)}{u_0} u_1[f_1]\dots u_n[f_n]\cdot dv, \,\,\, \text{for some constant $d\in \bc$},
 \end{align*}
 by the definition of $L^t_0$  since $\sum_{a\in A_{\underline{0}}}\,u_au^a$ is the Casimir operator of $\fg^\sigma$. This proves the lemma.
  \end{proof}


\section{Flat projective connection on sheaf of twisted covacua}
\label{Projective_Connection_section}

We define the sheaf of twisted covacua for  a family $\Sigma_T$ of $s$-pointed $\Gamma$-curves. We further show that this sheaf is locally free of finite rank for a smooth family $\Sigma_T$ over a smooth base $T$. In fact, we prove that it admits  a flat projective connection. 

{\it In this section, we take the parameter space  $T$ to be an irreducible  scheme over $\mathbb{C}$ and let $\Gamma$ be a finite group.} We fix a group homomorphism $\phi: \Gamma\to {\rm Aut}(\fg)$.
\begin{definition}\label{defi6.1}
A {\it family of curves over $T$}  is a proper and flat morphism $\xi: \Sigma_T\to T$ such that every geometric  fiber is a connected  reduced curve (but not necessary irreducible).  For any $b\in T$ the fiber $\xi^{-1}(b)$ is  denoted by $\Sigma_b$.
\end{definition}

Let $\Gamma$ act faithfully   on $\Sigma_T$ and  that $\xi$ is $\Gamma$-invariant (where $\Gamma$ acts trivially on $T$).  Let $\pi: \Sigma_T\to \Sigma_T/\Gamma= \bar{\Sigma}_T$ be the quotient map, and let 
$\bar{\xi}: \bar{\Sigma}_T \to T$ be the induced family of  curves over $T$. Observe that $\bar{\xi}$ is also  proper. 
For any section $p$ of $\bar{\xi}$,  denote by $\pi^{-1}(p)$ the set of sections $q$ of $\xi$ such that $\pi\circ q=p$.

\begin{definition}   \label{defi3.3.1}

{\it A family of $s$-pointed $\Gamma$-curves over} $T$ is a family of curves $\xi: \Sigma_T\to T$ over $T$ with an action of a finite group $\Gamma$ as above, and a collection of sections $\vec{q}:=(q_1,\cdots, q_s)$ of $\xi$, such that 
  \begin{enumerate}
\item  $p_1,\cdots ,p_s$ are mutually non-intersecting to each other, and, 
 for each $i$,  $\pi^{-1}(p_i(T))$ is contained in the smooth locus of $\xi$ and  $\pi^{-1}(p_i(T)) \to T$ is \'etale,  where $p_i=\pi\circ q_i$ is the section of $\bar{\xi}$;
\item  for any geometric point $b\in T$, $\left(\bar{\Sigma}_b,   p_1(b), \dots,  p_s(b)\right)$ is a $s$-pointed curve in the sense of Definition \ref{defi2.1.1}. Moreover, $\pi_b: \Sigma_b\to \bar{\Sigma}_b$ is a $\Gamma$-cover in the sense of Definition \ref{Gamma_curve}.

\end{enumerate}



\end{definition}

Let $\Sigma^o_T$ denote the open subset  $\Sigma_T\backslash  \cup_i^s  \pi^{-1}(p_i(T))$   of $\Sigma_T$. Let $\xi^o: \Sigma^o_T\to T$ denote the restriction of $\xi$.  
\begin{lemma}\label{lemma6.3}
The morphism $\xi^o: \Sigma^o_T\to T$ is affine.
\end{lemma}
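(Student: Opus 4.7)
The plan is to exhibit $D_T := \bigsqcup_{i=1}^{s} \pi^{-1}(p_i(T))$ as a relative effective Cartier divisor on $\Sigma_T/T$ whose associated line bundle $L_T := \mathcal{O}_{\Sigma_T}(D_T)$ is relatively ample. Once this is established, $\Sigma^o_T = \Sigma_T \setminus D_T$ is the complement of the zero scheme of the canonical section of a relatively ample line bundle on a proper $T$-scheme, hence affine over $T$.

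First I would verify the Cartier divisor claim. Because $\pi^{-1}(p_i(T)) \to T$ is \'etale and $\pi^{-1}(p_i(T))$ lies in the smooth locus of $\xi$, after an \'etale localization on $\Sigma_T$ near such a point, $\Sigma_T$ is $T$-smooth of relative dimension $1$ and $\pi^{-1}(p_i(T))$ is cut out by a single equation defining a regular section of $\xi$. Thus $D_T$ is a relative Cartier divisor, $T$-flat, and the union is disjoint since the $\Gamma$-orbits of $p_i$ are mutually disjoint.

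Next I would prove relative ampleness of $L_T$. By the fibral criterion of ampleness for proper flat morphisms (EGA IV$_3$, 9.6.4), it is enough to show that $L_b := L_T|_{\Sigma_b}$ is ample on every geometric fiber $\Sigma_b$. Since $\Sigma_b$ is a reduced projective curve, this is equivalent to $\deg(L_b|_X) > 0$ for every irreducible component $X \subset \Sigma_b$. By condition (*) in Definition \ref{defi2.1.1}, each irreducible component $\bar{X}$ of $\bar{\Sigma}_b$ contains some $p_i(b)$. The map $\pi_b:\Sigma_b \to \bar{\Sigma}_b$ is finite and surjective, so any irreducible component $X$ of $\Sigma_b$ dominates some $\bar{X}$; thus $X$ meets $\pi_b^{-1}(p_i(b))$ for the corresponding $i$, giving $\deg(L_b|_X) \geq 1$. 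This is the step which actually uses the $s$-pointed hypothesis and will require a brief check that components of $\Sigma_b$ (not just of $\bar{\Sigma}_b$) hit $D_b$, using that $\Gamma$ acts faithfully on each component of $\Sigma_b$.

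Finally, since the assertion is local on $T$, I may replace $T$ by an affine open. For $N \gg 0$, $L_T^{\otimes N}$ is then very ample over $T$, giving a closed $T$-embedding $\Sigma_T \hookrightarrow \mathbb{P}^M_T$ under which $N D_T$ is the scheme-theoretic intersection with a hyperplane $H$. Hence $\Sigma^o_T = \Sigma_T \setminus D_T = \Sigma_T \setminus N D_T$ is a closed subscheme of $\mathbb{P}^M_T \setminus H \cong \mathbb{A}^M_T$, and therefore affine. The main (mild) obstacle is the fibral ampleness check described above; the remaining steps are formal consequences of standard results on proper flat morphisms.
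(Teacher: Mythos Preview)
Your argument is correct, and the paper itself does not supply a proof but defers to a MathOverflow answer by van Dobben de Bruyn; your approach---showing that the divisor of marked orbits is a relative Cartier divisor whose associated line bundle is relatively ample via the fibral criterion, and then concluding affineness of its complement---is the standard one and almost certainly coincides with that reference. One minor simplification: the faithfulness of the $\Gamma$-action on components is not actually needed in the ampleness step, since any irreducible component $X$ of $\Sigma_b$ maps under the finite morphism $\pi_b$ onto an irreducible component of $\bar\Sigma_b$, which by condition~$(*)$ contains some $p_i(b)$, so $X$ meets $\pi_b^{-1}(p_i(b))$ automatically.
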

\begin{proof}
See a proof by R. van Dobben de Bruyn on mathoverflow \cite{vDdB}.
\end{proof}

Let $f:T'\to T$ be a morphism of schemes. Then, we can pull-back the triple $({\Sigma}_{T},\Gamma, \vec{q} )$ to $T'$ to get a family $(f^*(\Sigma_T), \Gamma, f^*(\vec{q}))$ of pointed $\Gamma$-curves over $T'$, where $f^*(\Sigma_T)=T'\times_T \Sigma_T$, $f^*(\vec{q})=(f^*q_1,\ldots, f^*q_s)$.

\begin{lemma}
\label{Fiber_Of_Section}
 Let $\Gamma$ act on each geometric fiber $\Sigma_b$ of $\Sigma_T$ stably (cf. Definition \ref{stable_action}). Then, 
for any section $p$ of $\bar{\xi}$ such that $\pi^{-1}(p(T))$ is contained in the smooth locus of $\xi$, if $\pi^{-1}(p)$ is nonempty (where  $\pi^{-1}(p) = \{\text{sections $q$ of $\Sigma_T \to T$ such that $\pi \circ q =p$}\})$,  then 
\vskip1ex

(1)   For any $q\neq q' \in \pi^{-1}(p)$,  $q(T)$ and $q'(T)$ are disjoint.
\vskip1ex

(2)   $\Gamma$ acts on $\pi^{-1}(p)$ transitively,  and   the stabilizer group $\Gamma_q$ is equal to the stabilizer group $\Gamma_{q(b)}$ at the point $q(b)\in \Sigma_b$ for any geometric point $b\in T$. 
\end{lemma}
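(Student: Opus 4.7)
The entire argument will rest on the hypothesis from Definition \ref{defi3.3.1}(1) that $Y:=\pi^{-1}(p(T)) \to T$ is finite \'etale, together with the standard fact that an \'etale morphism is unramified and hence has open (and therefore clopen) diagonal. Consequently, for any two $T$-morphisms $s_1, s_2 : T \to Y$, the locus $\{b \in T : s_1(b)=s_2(b)\}$ is the preimage of the diagonal $\Delta_{Y/T} \subset Y\times_T Y$ under $(s_1,s_2)$, which is simultaneously closed (since $Y/T$ is separated) and open (since $Y/T$ is unramified). Since $T$ is connected by assumption, this locus is either empty or all of $T$.

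\textbf{Proof of (1).} Each $q\in \pi^{-1}(p)$ lands in the smooth locus and factors through the finite \'etale cover $Y\to T$, so may be regarded as a section $T\to Y$. If $q(b_0)=q'(b_0)$ for some geometric point $b_0\in T$, then by the clopen-locus argument above, $q=q'$ on all of $T$, so $q(T)\cap q'(T)\neq \emptyset$ forces $q=q'$.

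\textbf{Proof of (2).} For transitivity, fix $q,q'\in \pi^{-1}(p)$. For every geometric point $b\in T$, $\pi_b:\Sigma_b\to\bar\Sigma_b$ is a $\Gamma$-cover, so $\Gamma$ acts transitively on $\pi_b^{-1}(p(b))$ and there exists $\gamma_b\in\Gamma$ with $\gamma_b\cdot q(b)=q'(b)$. For each $\gamma\in\Gamma$, the section $\gamma\cdot q$ is again a section of $Y\to T$, and by (1) applied to $\gamma\cdot q$ and $q'$, the locus $T_\gamma:=\{b\in T:(\gamma\cdot q)(b)=q'(b)\}$ is clopen in $T$. The finitely many $T_\gamma$ cover $T$, so by connectedness some $T_\gamma$ equals $T$, i.e., $\gamma\cdot q=q'$ globally. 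This proves transitivity.

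For the stabilizer equality, fix $q\in\pi^{-1}(p)$ and a geometric point $b\in T$. Clearly $\Gamma_q\subseteq \Gamma_{q(b)}$. Conversely, if $\gamma\in\Gamma_{q(b)}$, then $\gamma\cdot q$ and $q$ are two sections of $Y\to T$ agreeing at $b$, hence agree on all of $T$ by the clopen argument, so $\gamma\in\Gamma_q$. This gives $\Gamma_q=\Gamma_{q(b)}$.

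The whole proof is essentially formal once the \'etaleness of $\pi^{-1}(p(T))\to T$ is invoked; no subtle obstacle appears. The only point worth flagging is that one must use the étaleness (not merely smoothness) of the relative ramification locus to get the clopen-locus property, which is exactly why Definition \ref{defi3.3.1}(1) was stated with an \'etale hypothesis.
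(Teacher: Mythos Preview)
Your argument is clean, but it rests on a misreading of Definition~\ref{defi3.3.1}(1): the \'etaleness of $\pi^{-1}(p_i(T))\to T$ is imposed there only for the \emph{marked} sections $p_i$, whereas the lemma is stated for an arbitrary section $p$ whose only hypothesis is that $\pi^{-1}(p(T))$ lies in the smooth locus of $\xi$. Without \'etaleness the clopen-diagonal trick breaks down---two sections of a smooth relative curve can certainly agree at one base point and differ nearby---so every step of your proof (disjointness, transitivity, stabilizer equality) is unjustified in the stated generality. There is a further wrinkle even for the marked points: when $p_i$ is a branch point the scheme-theoretic preimage $\pi^{-1}(p_i(T))$ is non-reduced, hence not \'etale over $T$, so Definition~\ref{defi3.3.1}(1) is presumably intended for the reduced preimage; but \'etaleness of the reduced preimage is \emph{equivalent} to the disjointness in part~(1), making your argument circular in that reading.

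The paper proves the lemma by a genuinely different route that avoids \'etaleness. It lets $Z\subset T$ be the closed locus where some pair of the finitely many lifts $q_1,\dots,q_k$ collide; over $U=T\setminus Z$ the sections are disjoint and exhaust the fiber, giving $k=|\Gamma|/|\Gamma_{q_i(b)}|$ for $b\in U$. The key external input is \cite[Lemma~4.2.1]{BR}, asserting that $|\Gamma_{q_i(b)}|$ is constant along $T$ for sections through the smooth locus; hence the fiber cardinality equals $k$ for \emph{every} $b$, and a collision at some $b'\in Z$ would force $|\pi^{-1}(p(b'))|<k$, a contradiction, so $Z=\emptyset$. Part~(2) then follows from the chain $k=|\Gamma|/|\Gamma_{q(b)}|\le|\Gamma|/|\Gamma_q|\le k$. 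Your approach is more direct whenever the \'etaleness is actually available, but to cover the lemma as stated you need an input of the strength of \cite[Lemma~4.2.1]{BR}.
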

\begin{proof}
It is easy to see that  $\pi^{-1}(p)$ is finite (it also follows from the equation \eqref{eqn6.4.1}). Let $\pi^{-1}(p)=\{ q_1,q_2,\ldots, q_k   \}$.   For each $b\in T$,  $\{ q_1(b), q_2(b),\ldots, q_k(b) \}$ is a $\Gamma$-stable set and it is contained in the fiber $\pi^{-1}(p(b))$.  Since
 $\Gamma$ acts on $\pi^{-1}(p(b))$ transitively, it follows that 
\[   \pi^{-1}(p(b))=\{ q_1(b), q_2(b),\ldots, q_k(b) \}    . \]
From this it is easy to see that $\Gamma$ acts transitively on $\pi^{-1}(p)$.

Set $Z:=\xi(  \cup_{i\neq j}   q_i(T)\cap  q_j(T)   )$. Then,  $Z$ is a proper closed subset of $T$. Let $U$ be the open subset  $T \backslash Z$ of $T$.  
Then, $\{ q_1(U),q_2(U), \ldots, q_k(U)  \}$ are mutually disjoint to each other.   In particular, 

\begin{equation}\label{eqn6.4.1} k=\frac{|\Gamma| }{|\Gamma_{q_i(b)}| },\,\,\,\text{for any $b\in U$}. 
\end{equation}
By \cite[Lemma 4.2.1]{BR},  for each $1\leq i\leq k$, the order of the stabilizer group $\Gamma_{q_i(b)}$ is constant along $T$.  
For any $b' \in Z$,  there exists $i\neq j$ such that $q_i(b')=q_j(b')$. It follows that  $ \frac{|\Gamma| }{|\Gamma_{q_i(b')}| } = |\pi^{-1}(p(b'))|< k$, which is a contradiction.  Therefore, $T=U$, i.e. $\{q_1(T),q_2(T),\cdots, q_k(T)  \}$ 
are disjoint to each other.   It finishes the proof of  part $(1)$.

Let $\Gamma_q$ be the stabilizer group of $q\in \pi^{-1}(p)$.  It is clear that 
\begin{equation} \label{eqn6.4.2} \Gamma_q\subset \Gamma_{q(b)},\,\,\,\text{for any geometric point $b\in T$}.
\end{equation}
We have
$$k=\frac{|\Gamma| }{|\Gamma_{q(b)}| } \leq \frac{|\Gamma| }{|\Gamma_{q}| }\leq k,$$
where the first equality follows from the equation \eqref{eqn6.4.1} (since $U=T$) and the second inequality follows from the equation \eqref{eqn6.4.2}.
The third inequality follows since $k:=|\pi^{-1}(p)|$. Thus, we get $\Gamma_q=\Gamma_{q(b)}$, for any geometric point $b\in T$ by  the equation \eqref{eqn6.4.2}, and, moreover, $\Gamma$ acts transitively on $\pi^{-1}(p)$.
It concludes part $(2)$ of the lemma.
\end{proof}

\begin{definition}
(1)  A {\it formal disc  over} $T$ is a formal scheme $(T, \mathcal{O}_T )$  over $T$ (in the sense of \cite[Chap. II, \S9]{H}), where $\mathcal{O}_T$ is an $\mathscr{O}_T$-algebra which  has the following property:   For any point $b\in T$ there exists an affine open subset $U\subset T$ containing $b$  such that  $\mathcal{O}_T(U )$ is a complete local  $\mathscr{O}_T(U)$-algebra (see Definition \ref{defi5.1} (a)). 

  Let  $(T, \mathcal{K}_T )$ be the locally ringed space over $T$ defined so that $ \mathcal{K}_T(U)$ is the   $\mathscr{O}_T(U)$-algebra containing 
$\mathcal{O}_T(U)$ obtained by inverting a (and hence any)   $\mathscr{O}_T(U)$-parameter $t_U$ of $\mathcal{O}_T(U)$. Then,  $(T, \mathcal{K}_T )$
is called the {\it associated formal punctured disc over} $T$.
\vskip1ex

(2) A {\it  rotation of a formal disc} $(T, \mathcal{O}_T )$ over $T$ of order $m$ is an   $\mathscr{O}_T$-module automorphism $\sigma$ of $(T, \mathcal{O}_T)$ of order $m$ such that,  for any $b\in T$,   $\sigma(t_U)=\epsilon^{-1}t_U$ for some formal parameter $t_U$ around $b$, where $\epsilon:=e^{\frac{2\pi i}{m}}$.  
\end{definition}

\begin{lemma}\label{Rotation_Formal_Parameter}
With the assumption and notation as in Definition \ref{defi6.1}, let  $q$ be a  section of  $\xi: \Sigma_T\to T$ such that $q(T)$ is contained in the smooth locus of $\xi$. Then,

\vskip1ex
(1)  The formal scheme   $(T, \xi_* \hat{\mathscr{O } }_{ \Sigma_T,   q(T)  }  )$ is a formal disc over $T$, where  $\hat{\mathscr{O } }_{ \Sigma_T,   q(T)}$ denotes the formal completion of  $\Sigma_T$ along $q(T)$ (cf. \cite[Chap. II, $\S$9]{H}). 
\vskip1ex

(2) The stabilizer group $\Gamma_q$ is a cyclic group acting faithfully on the formal disc $(T, \xi_*( \hat{\mathscr{O } }_{ \Sigma_T,   q(T)  }  ))$. Further, $\Gamma_q$ has a generator $\tilde{\sigma}_q$ acting via rotation of $(T, \xi_*( \hat{\mathscr{O } }_{ \Sigma_T,   q(T)  }  ))$. 
Moreover, the action of  $\Gamma_q$ on  local $\tilde{\sigma}_q$-equivariant parameters is given by a primitive character $\chi$.
\end{lemma}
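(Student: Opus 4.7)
I will establish Part~(1) first and then derive Part~(2) from Lemma~\ref{Fiber_Of_Section} together with an averaging argument. For Part~(1), since $q(T)$ lies in the smooth locus of $\xi$, the morphism $\xi$ is smooth of relative dimension $1$ in a Zariski open neighborhood $W$ of $q(T)$; hence the ideal sheaf $\mathcal{I}\subset\mathscr{O}_W$ of $q(T)$ is a Cartier divisor and the conormal sheaf $\mathcal{I}/\mathcal{I}^2$ is an invertible sheaf on $q(T)\cong T$. Working locally on $T$, I would shrink to an affine open $U\subset T$ on which this line bundle is trivial and lift a trivializing section to a local generator $t\in\mathcal{I}(W_U)$ of $\mathcal{I}$ along $q(U)$. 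By induction on $n$, using the short exact sequence
\[
0\to\mathcal{I}^n/\mathcal{I}^{n+1}\to\mathscr{O}_{\Sigma_T}/\mathcal{I}^{n+1}\to\mathscr{O}_{\Sigma_T}/\mathcal{I}^n\to 0,
\]
whose kernel is the line bundle $(\mathcal{I}/\mathcal{I}^2)^{\otimes n}$ trivialized by $t^n$ and whose extensions split because the quotient is free, one identifies $\xi_\ast(\mathscr{O}_{\Sigma_T}/\mathcal{I}^n)|_U$ with the free $\mathscr{O}_T(U)$-module on $1,t,\dots,t^{n-1}$. Passing to the inverse limit yields $\xi_\ast\hat{\mathscr{O}}_{\Sigma_T,q(T)}(U)\cong\mathscr{O}_T(U)[[t]]$, so $t$ is an $\mathscr{O}_T(U)$-parameter.

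For Part~(2), Lemma~\ref{Fiber_Of_Section}(2) gives $\Gamma_q=\Gamma_{q(b)}$ for every geometric point $b\in T$. Since $q(b)$ is a smooth point of $\Sigma_b$ and $\Sigma_b\to\bar{\Sigma}_b$ is a $\Gamma$-cover (Definition~\ref{Gamma_curve}), $\Gamma_{q(b)}$ must act faithfully on the one-dimensional tangent space $T_{q(b)}\Sigma_b$; being a finite subgroup of $\mathbb{G}_m(\mathbb{C})$ it is cyclic with primitive character. The $\Gamma_q$-action on $\xi_\ast\hat{\mathscr{O}}_{\Sigma_T,q(T)}$ is $\mathscr{O}_T$-linear and preserves the ideal generated by any $t$ from Part~(1), so it descends to an $\mathscr{O}_T$-linear action on the invertible conormal sheaf $\mathcal{I}/\mathcal{I}^2$, producing a character $\chi\colon\Gamma_q\to\mathscr{O}_T(T)^\times$ satisfying $\chi(\sigma)^{e_q}=1$, where $e_q=|\Gamma_q|$. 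This identity makes $\chi(\sigma)$ factor through the reduced finite scheme $\mathrm{Spec}\,\mathbb{C}[x]/(x^{e_q}-1)$, and since $T$ is connected (so $\mathscr{O}_T(T)$ has no nontrivial idempotents) $\chi(\sigma)$ must be a constant root of unity; fiberwise it coincides with the primitive character of $\Gamma_{q(b)}$ on $T^\ast_{q(b)}\Sigma_b$, so $\chi$ is globally primitive.

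Finally, to exhibit $\sigma_q\in\Gamma_q$ as a genuine rotation in the sense of the definition preceding the lemma (not merely up to higher-order terms), I would fix the generator with $\chi(\sigma_q)=\epsilon^{-1}$, $\epsilon=e^{2\pi i/e_q}$, and symmetrize the parameter $t$ of Part~(1) via
\[
\tilde{t}\;:=\;\frac{1}{e_q}\sum_{i=0}^{e_q-1}\epsilon^{i}\,\sigma_q^{i}(t).
\]
An index shift gives $\sigma_q(\tilde{t})=\epsilon^{-1}\tilde{t}$, and since $\sigma_q^{i}(t)\equiv\epsilon^{-i}t\pmod{t^2}$ by the description of $\chi$ on the conormal bundle, the leading coefficient of $\tilde{t}$ is $1$, so $\tilde{t}$ remains an $\mathscr{O}_T(U)$-parameter. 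The one genuinely technical step is Part~(1): identifying the pushed-forward completion with an honest power series ring $\mathscr{O}_T[[t]]$ rather than something that merely restricts fiberwise to a power series ring; this rests on the triviality of the conormal line bundle together with the flatness of $\xi$, which forces the associated graded to sit inside free $\mathscr{O}_T(U)$-modules. Once this is settled, Part~(2) follows cleanly from Lemma~\ref{Fiber_Of_Section}, the cyclicity of finite subgroups of $\mathbb{G}_m(\mathbb{C})$, and the averaging above.
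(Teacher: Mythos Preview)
Your proof is correct and follows essentially the same approach as the paper's. For Part~(1) the paper simply cites \cite[16.9.9, 17.12.1(c')]{EGA}, whereas you spell out the conormal--induction argument directly; and for Part~(2) both proofs identify the $\Gamma_q$-action on $\mathcal{I}/\mathcal{I}^2$ and then average to produce an equivariant parameter. The one organizational difference is that you first invoke Lemma~\ref{Fiber_Of_Section} and argue carefully (via connectedness of $T$ and the torsion equation $\chi(\sigma)^{e_q}=1$) that the character $\chi$ is a \emph{constant} primitive root of unity, and only then pick the distinguished generator and average; the paper instead picks an arbitrary generator $\sigma_q$, asserts $\sigma_q(t_U)=\epsilon_q^{j}t_U+\text{higher}$ for some integer $j$ (implicitly using constancy), averages, and afterwards replaces $\sigma_q$ by the generator $\tilde{\sigma}_q$ achieving $\epsilon_q^{-1}$. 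Your explicit constancy argument fills in a step the paper leaves tacit.
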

\begin{proof}
Part (1) follows from \cite[Corollaire 16.9.9, Th\'eor\`em 17.12.1(c')]{EGA}.  For part (2), choose a formal parameter $t_U\in  \left(\xi_* \hat{\mathscr{O } }_{ \Sigma_T,   q(T)  }\right) (U) $.   Let $\sigma_q\in \Gamma_q$ be a generator.  Set $\tilde{t}_U:=\frac{1}{|\Gamma_q|}\sum_{i=0}^{|\Gamma_q|-1} \epsilon_q^{- ij} \sigma_q^i(t_U)  $,  where $\epsilon_q :=e^{\frac{2\pi i}{ |\Gamma_q| }}$ and $\sigma_q(t_U)= \epsilon_q^jt_U+ $ higher terms. Then, $\tilde{t}_U$ is a  formal parameter in $\left(\xi_* \hat{\mathscr{O } }_{ \Sigma_T,   q(T)  }\right)  (U)$ such that 
\begin{equation} \label{eqref6.6.1} \sigma_q(\tilde{t}_U)=\epsilon_q^j\tilde{t}_U.
\end{equation}
 Since $\Gamma$ acts faithfully on $\Sigma_T$, the action of $\Gamma_q$ is faithful on the formal disc $(T, \xi_* \hat{\mathscr{O } }_{ \Sigma_T,   q(T)  }  )$. In particular, by equation \eqref{eqref6.6.1}, $\epsilon_q^j$ is a primitive $|\Gamma_q|$-th root of unity. Thus, we can find a generator 
$\tilde{\sigma}_q(U)\in \Gamma_q$ such that 
\begin{equation}\label{eqn6.6.2} \tilde{\sigma}_q(U)\tilde{t}_U=\epsilon_q^{-1}\tilde{t}_U.
\end{equation}
 In fact, $\tilde{\sigma}_q(U)$ is the unique generator of $\Gamma_q$ satisfying the above equation \eqref{eqn6.6.2} for any formal parameter $\tilde{t}_U$.    From this it is easy to see that  the generator $\tilde{\sigma}_q(U)$ does not depend upon $U$. We denote it by $\tilde{\sigma}_q$.  It determines the primitive character $\chi$ of $\Gamma_q$, which satisfies $\chi(\tilde{\sigma}_q)=\epsilon_q^{-1}$.
\end{proof}

Denote by $\mathcal{O}_q$  the sheaf of $\mathscr{O}_T$-algebra $\xi_* \hat{\mathscr{O } }_{ \Sigma_T,   q(T)  }  $ over $T$, and let $(T,\mathcal{K}_q)$ be the associated formal punctured disc over $T$.  
For any section $q$ of $\xi$ contained in the smooth locus of $\xi$, define the sheaf of Kac-Moody algebra $\hat{L}(\fg, \Gamma_q)_{T} $ over $T$ by 
\[\hat{L}(\fg,\Gamma_q)_T:= (\fg\otimes_\mathbb{C} \mathcal{K}_q  )^{\Gamma_q} \oplus  \mathscr{O}_T C,  \]
where the Lie bracket is defined as in (\ref{Lie_bracket_R}).  For any $\lambda \in D_{c,q}:=D_{c, \tilde{\sigma}_q}$  we  define  a sheaf of integrable representation $\mathscr{H}(\lambda)_T$ over $T$ as follows: For any open affine subset $U\subset T$ such that $\mathcal{O}_q(U)$ is a complete local $\mathscr{O}_T(U)$-algebra, 
\[U\mapsto   \mathscr{H}(\lambda)_{\mathscr{O}_T(U) } .\]
By Lemma \ref{integrable_rep_base_change},  this gives a well-defined sheaf over $T$. For each  section $p$ of $\bar{\xi}$ such that $\pi^{-1}(p)$ is non-empty and some (and hence any) $q\in \pi^{-1}(p)$ is contained in the smooth locus of $\xi$,  we may define the following sheaf of Lie algebras over $\mathscr{O}_T$ (cf. Definition \ref{Gamma_curve}), 
\[ \hat{\fg}_p:=(\oplus_{q\in \pi^{-1}(p) }  \fg\otimes_{\mathbb{C} } \mathcal{K}_q    )^\Gamma\oplus   \mathscr{O}_T \cdot C, \, \text{ and } \fg_p:=( \oplus _{q\in \pi^{-1}(p) }  \fg\otimes_{\mathbb{C}} \xi_* \mathscr{O}_{q(T  )} )^\Gamma  .   \]
The restriction gives an isomorphism $\hat{\fg}_p\simeq   \hat{L}(\fg, \Gamma_q)_T$, and $\fg_p\simeq  \fg^ {\Gamma_q}\otimes_\mathbb{C} \mathscr{O}_T$ as in Lemmas \ref{evaluation_lem} and  \ref{lemma2.3}.  For any $\lambda \in D_{c, q}$,  we still denote by $\mathscr{H}(\lambda)_T$ the associated representation of $\hat{\fg}_p$ via the isomorphism $\hat{\fg}_p\simeq   \hat{L}(\fg, \Gamma_q)_T$.



\begin{definition}[Sheaf  of twisted conformal blocks]\label{def_sheaf_conformal_block}
Let $({\Sigma}_{T},\Gamma, \vec{q})$ be a family of $s$-pointed $\Gamma$-curves over an irreducible  scheme $T$.  Set $\vec{p}=\pi\circ \vec{q}$.  
 Let $\vec{\lambda}=(\lambda_{1},\ldots, \lambda_{s})$ be a $s$-tuple of highest weights,  where $\lambda_i\in D_{c, q_i}$ for each $i$.  
 
Now, let us consider the sheaf of $\mathscr{O}_{T}$-module:
\begin{equation}
 \mathscr{H}(\vec{\lambda})_{T}:=   \mathscr{H}(\lambda_1)_T\otimes_{\mathscr{O}_T }  \mathscr{H}(\lambda_2)_T \otimes_{\mathscr{O}_T }\cdots  \otimes_{\mathscr{O}_T} \mathscr{H}(\lambda_s)_T ,     \quad\text{and}\label{defi3.3.2-eq1}
\end{equation}
\begin{equation}\hat{\mathfrak{g}}_{\vec{p}}:= \left(\oplus_{i=1}^s\left(\oplus_{\bar{q}_i\in \pi^{-1}(p_i)} \, \mathfrak{g}\otimes_{\mathbb{C}}\mathcal{K}_{\bar{q}_i} \right)^\Gamma \right)  \oplus \mathscr{O}_{T}C.    \label{defi3.3.2-eq2}
\end{equation}
We can define a $\mathscr{O}_{T}$-linear bracket in $\hat{\mathfrak{g}}_{\vec{p}}$ as in \eqref{pointwise-affine-Lie}; in particular,  $C$ is a central element  of $\hat{\mathfrak{g}}_{\vec{p}}$.   Then, $\hat{\mathfrak{g}}_{\vec{p}}$ is a sheaf of $\mathscr{O}_{T}$-Lie algebra.  There is a natural $\mathscr{O}_T$-linear Lie algebra homomorphism  
\[  \oplus_{i=1}^s  \hat{\fg}_{p_i}  \to  \hat{\mathfrak{g}}_{\vec{p}}, \,\,\,\text{where $C_i\mapsto C$}.   \]

The componentwise action of $ \oplus_{i=1}^s  \hat{\fg}_{p_i} $ on $\mathscr{H}(\vec{\lambda})_T$ induces an action of $ \hat{\mathfrak{g}}_{\vec{p}}$ on $\mathscr{H}(\vec{\lambda})_T$.  We also introduce the following $\mathscr{O}_{T}$-Lie algebra under the pointwise bracket:
\begin{equation}
\mathfrak{g}( \Sigma_T^o  )^\Gamma:=[\mathfrak{g}\otimes_{\mathbb{C}} \xi^o_{*}  \mathscr{O}_{{{\Sigma}}^o_{T}}  ]^\Gamma, \, \text{ where }   \Sigma_T^o=\Sigma_T\backslash( \cup_{i=1}^s   \pi^{-1}(p_i(T)) )  .\label{defi3.3.2-eq4}
\end{equation}
There is an embedding of sheaves of $\mathscr{O}_{T}$-Lie algebras:
\begin{equation}
\beta : \mathfrak{g}( \Sigma_T^o  )^\Gamma  \hookrightarrow \hat{\mathfrak{g}}_{\vec{p}}, \,\,\, \sum_k x_k [ f_k] \mapsto \sum_{q\in \pi^{-1} (\vec{p})}  \sum_k x_k [ (f_k)_{q}]  ,
\label{defi3.3.2-eq6}
\end{equation}
for  $x_k\in \mathfrak{g}$ and $f_k\in \xi^o_{*}  \mathscr{O}_{{{\Sigma}}^o_{T}}$  such that $\sum_kx_k[f_k]\in \fg[{\Sigma}^o_{T}]^\Gamma $ ,
where $(f_k)_{q}$ denotes the image of $f_k$ in $\mathcal{K}_q$ via the localization map $\xi^o_{*}  \mathscr{O}_{{{\Sigma}}^o_{T}} \to \mathcal{K}_q$.

By the Residue Theorem, $\beta$ is indeed a Lie algebra embedding. (Observe that Lemma \ref{lemma6.3} has been used to show that $\beta$ is an embedding.)

Finally, define the {\em sheaf of twisted covacua} (also called the {\it sheaf of twisted dual conformal blocks}) $\mathscr{V}_{\Sigma_T, \Gamma, \phi}(\vec{q},\vec{\lambda})$ over $T$ as the quotient sheaf of $\mathscr{O}_{T}$-modules
\begin{equation}
\mathscr{V}_{\Sigma_T, \Gamma, \phi}(\vec{q},\vec{\lambda}):  =\mathscr{H}(\vec{\lambda})_{T}\Big/  \mathfrak{g}( \Sigma_T^o  )^\Gamma \cdot \mathscr{H}(\vec{\lambda})_{T},\label{defi3.3.2-eq7}
\end{equation}
where $ \mathfrak{g}( \Sigma_T^o  )^\Gamma$ acts on $\mathscr{H}(\vec{\lambda})_{T}$ via the embedding $\beta$ (given by \eqref{defi3.3.2-eq6}) and 
$
\mathfrak{g}( \Sigma_T^o  )^\Gamma\cdot \mathscr{H}(\vec{\lambda})_{T}\subset \mathscr{H}(\vec{\lambda})_{T}
$
denotes the image sheaf under the sheaf homomorphism 
\begin{equation}
\alpha_T:  \mathfrak{g}( \Sigma_T^o  )^\Gamma\otimes_{\mathscr{O}_{T}}\mathscr{H}(\vec{\lambda})_{T}\to \mathscr{H}(\vec{\lambda})_{T}\label{defi3.3.2-eq8}
\end{equation}
induced from the action of $ \mathfrak{g}( \Sigma_T^o  )^\Gamma$ on $\mathscr{H}(\vec{\lambda})_{T}$.

Here we use the notation $\mathscr{V}_{\Sigma_T, \Gamma, \phi}(\vec{q},\vec{\lambda})$ to denote the sheaf of twisted covacua (see Remark \ref{notation_warning}).
\end{definition}

\begin{theorem}
\label{covacua_coherent_basechange}
(1) The sheaf  $\mathscr{V}_{\Sigma_T, \Gamma, \phi} (\vec{q}, \vec{\lambda} ) $ is a coherent $\mathscr{O}_T$-module.
\vskip1ex

(2) For any morphism $f: T'\to T$  between schemes, there exists a natural isomorphism
\[  \mathscr{O}_{T'}\otimes_{ \mathscr{O}_{T}}\,( \mathscr{V}_{\Sigma_T, \Gamma, \phi} (\vec{q}, \vec{\lambda} )  )\simeq  \mathscr{V}_{ f^*(\Sigma_T), \Gamma, \phi} (f^*( \vec{q}), \vec{\lambda} ) .  \]
In particular,  for any point $b\in  T$ the restriction $\mathscr{V}_{\Sigma_T, \Gamma, \phi} (\vec{q}, \vec{\lambda} )|_b $ is the space of twisted dual conformal blocks attached to $(\Sigma_b, \Gamma, \phi, \vec{p}(b), \vec{\lambda})$.   
\end{theorem}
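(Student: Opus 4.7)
I will treat both parts simultaneously by a local argument on $T$. Fix an affine open $U = \Spec R \subset T$ small enough that each sheaf $\mathcal{O}_{q_i}|_U$ is a complete local $R$-algebra admitting a $\tilde\sigma_{q_i}$-equivariant $R$-parameter $t_i$. Using Section \ref{Sugawara_section}, each $\mathscr{H}(\lambda_i)_U$ arises from an $R$-module $\mathscr{H}(\lambda_i)_R$ with a degree filtration $\mathscr{F}_N\mathscr{H}(\lambda_i)_R := \bigoplus_{d \leq N}\mathscr{H}(\lambda_i)_R(d)$; by Lemma \ref{integrable_rep_base_change} each $\mathscr{F}_N\mathscr{H}(\lambda_i)_R$ is free of finite rank over $R$ and compatible with base change. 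Form the tensor product filtration $\mathscr{F}_N\mathscr{H}(\vec\lambda)_R$, again finitely generated free over $R$.

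The core assertion is: after shrinking $U$, there exists an integer $N_0$ with
\[ \mathscr{H}(\vec\lambda)_R \;=\; \mathscr{F}_{N_0}\mathscr{H}(\vec\lambda)_R \;+\; \fg(\Sigma_U^o)^\Gamma \cdot \mathscr{H}(\vec\lambda)_R. \]
Granting this, part (1) is immediate: $\mathscr{V}|_U$ is a quotient of the coherent $\mathscr{O}_U$-module $\mathscr{F}_{N_0}\mathscr{H}(\vec\lambda)_R$, hence coherent. For part (2), cokernels commute with the right-exact functor $-\otimes_R R'$, so pulling back the presentation of $\mathscr{V}|_U$ as a quotient of $\mathscr{H}(\vec\lambda)_R$ by the image of $\alpha_R$ and using $\mathscr{H}(\vec\lambda)_R \otimes_R R' \simeq \mathscr{H}(\vec\lambda)_{R'}$ from Lemma \ref{integrable_rep_base_change}(2) yields a natural comparison map $f^*\mathscr{V}|_U \to \mathscr{V}_{f^*\Sigma_U,\Gamma,\phi}(f^*\vec q,\vec\lambda)$. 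Surjectivity is built in; injectivity reduces, via the core assertion applied over $R'$, to showing that the image of $\fg(\Sigma_U^o)^\Gamma \otimes_R R' \to \fg(f^*\Sigma_U^o)^\Gamma$ generates the same quotient of $\mathscr{H}(\vec\lambda)_{R'}/\mathscr{F}_{N_0}\mathscr{H}(\vec\lambda)_{R'}$, which follows from the same relative Riemann--Roch input used to establish the core assertion.

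The main obstacle lies in proving the core assertion with $N_0$ uniform on $U$. Fiberwise, this is the content of Lemma \ref{lem2.1.3}: Riemann--Roch on $\bar\Sigma_b$ implies that $\Iim(\beta_b) + \fg[\pi^{-1}(\mathbb{D}_{\vec p(b)})]^\Gamma$ has finite codimension in $\hat{\fg}_{\vec p(b)}$, which, combined with the commutator identity $[x[t_i^{-k}], \mathscr{F}_N\mathscr{H}(\vec\lambda)|_b] \subset \mathscr{F}_{N+k}\mathscr{H}(\vec\lambda)|_b$, lets one reduce the degree of any sufficiently high-degree vector modulo the $\fg[\Sigma_b^o]^\Gamma$-action. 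To spread this argument uniformly in a family, I use the flatness of $\xi$ together with the affineness of $\xi^o: \Sigma_T^o \to T$ (Lemma \ref{lemma6.3}) to see that, after shrinking $U$, the $R$-modules of $\Gamma$-equivariant regular sections on $\Sigma_U$ with prescribed pole orders at $\pi^{-1}(\vec p(U))$ are free of the generically-expected rank and commute with base change; lifting fiberwise generators to global sections over $U$ then produces elements of $\fg(\Sigma_U^o)^\Gamma$ whose Laurent expansions at the $q_i(U)$ realize the needed degree-reductions. A descending induction on the Lie-algebra degree, performed uniformly on $U$, then delivers a single $N_0$ valid on all of $U$, completing the proof.
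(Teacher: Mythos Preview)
Your outline heads in the right direction—the essential input is indeed that $\Iim\beta + \hat{\fp}_{\vec p}$ has finite corank in $\hat{\fg}_{\vec p}$—but the ``descending induction on the Lie-algebra degree'' you invoke for the core assertion does not close as written. Writing an element $y\in\hat{\fg}_{\vec p}$ of pole order $m$ as $\gamma + p + \sum_j c_j x_j$ with $\gamma \in \fg(\Sigma_U^o)^\Gamma$, $p \in \hat{\fp}_{\vec p}$, and $\{x_j\}$ a fixed finite set spanning the cokernel, the term $x_j\cdot v$ for $v\in\mathscr{F}_{d-m}$ lands in $\mathscr{F}_{d-m+k_j}$ (with $k_j$ the pole order of $x_j$); nothing forces $k_j\le m$, so the filtration index can go \emph{up} and a bare induction on $d$ loops. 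The paper closes this differently: it observes that the complementary elements $x_j$ can be chosen to act \emph{locally finitely} on $\mathscr{H}(\vec\lambda)_T$ (e.g.\ as real root vectors of the twisted affine algebra, which act locally nilpotently on integrable modules), and then runs the PBW-type argument of [Ku, Lemma~10.2.2] with the $\fg(\Sigma_U^o)^\Gamma$-factors ordered to the left. That local-finiteness step is what is missing from your sketch; without it the degree-reduction mechanism you describe does not terminate.

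For part~(2) your route is correct but more circuitous than necessary. Because $\xi^o:\Sigma_T^o\to T$ is affine (Lemma~\ref{lemma6.3}), the push-forward $\xi^o_*\mathscr{O}_{\Sigma_T^o}$ commutes with arbitrary base change, and taking $\Gamma$-invariants is exact over $\mathbb{C}$-algebras; hence $\fg(\Sigma_U^o)^\Gamma \otimes_R R' \xrightarrow{\sim} \fg(f^*\Sigma_U^o)^\Gamma$ is already an isomorphism. Tensoring the right-exact presentation
\[
\fg(\Sigma_U^o)^\Gamma \otimes_R \mathscr{H}(\vec\lambda)_R \xrightarrow{\alpha_R} \mathscr{H}(\vec\lambda)_R \to \mathscr{V}|_U \to 0
\]
with $R'$, and using Lemma~\ref{integrable_rep_base_change}(2) for the middle term, then identifies the cokernels directly via the Five Lemma—no separate injectivity check and no second appeal to Riemann--Roch over $R'$ are needed. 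This is exactly how the paper proceeds.
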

\begin{proof}

We first prove part (1).   Recall the embedding $\beta: \mathfrak{g}( \Sigma_T^o  )^\Gamma \hookrightarrow  \hat{\fg}_{\vec{p}}$ of $\mathscr{O}_{T}$-Lie algebras from \eqref{defi3.3.2-eq6}. Also, consider the $\mathscr{O}_T$-Lie subalgebra
$$
{\hat{\mathfrak{p}}}_{\vec{p}}:=
\left[\oplus_{q\in \pi^{-1} (\vec{p}) }  \mathfrak{g}\otimes_{\mathbb{C}}   \xi_* \hat{\mathscr{O } }_{ \Sigma_T,   q(T)  }     \right]^\Gamma \oplus \mathscr{O}_{T}C
$$
of $ \hat{\fg}_{\vec{p}}$ and let $ \mathfrak{g}( \Sigma_T^o  )^\Gamma  +{\hat{\mathfrak{p}}}_{\vec{p}}$ be the $\mathscr{O}_T$-subsheaf  of $ \hat{\fg}_{\vec{p}}$  spanned by $\text{Im\,}\beta$ and ${\hat{\mathfrak{p}}}_{\vec{p}}$. Then, as can be seen, the quotient sheaf $\hat{\fg}_{\vec{p}}  \big/ \left( \mathfrak{g}( \Sigma_T^o  )^\Gamma  +{\hat{\mathfrak{p}}}_{\vec{p}}   \right)$ is a coherent $\mathscr{O}_{T}$-module (cf. [L, Lemma 5.1]).
Thus, locally  we can find a finite set of elements $\{x_{j}\}$ of $ \hat{\fg}_{\vec{p}}$ such that each $x_{j}$ acts locally finitely on $\mathscr{H}(\vec{\lambda})_T$ and 
$$
\hat{\fg}_{\vec{p}}=\mathfrak{g}( \Sigma_T^o  )^\Gamma    +    {\hat{\mathfrak{p}}}_{\vec{p}} +   \sum\limits_{j}\mathscr{O}_{T}x_{j}
$$
(cf. [Ku, Proof of Lemma 10.2.2]). Now, following the proof of Lemma \ref{lem2.1.3} and recalling that the Poincar\'e-Birkhoff-Witt theorem holds for any Lie algebra $\mathfrak{s}$ over a commutative ring $R$ such that $\mathfrak{s}$ is free as an $R$-module (cf. [CE, Theorem 3.1, Chapter XIII]), we get part (1) of the theorem.

We now prove part (2).  By the definition of the sheaf of covacua,  $\mathscr{V}_{\Sigma_T, \Gamma, \phi}(\vec{q},\vec{\lambda})$ is the cokernel of the $\mathscr{O}_T$-morphism $\alpha_T:  \mathfrak{g}( \Sigma_T^o  )^\Gamma\otimes_{\mathscr{O}_{T}}\mathscr{H}(\vec{\lambda})_{T}\to \mathscr{H}(\vec{\lambda})_{T} $, which gives rise to the exact sequence (on tensoring with $\mathscr{O}_{T'}$):  

\begin{equation*}
\vcenter{
\xymatrix{
\mathscr{O}_{T'}\bigotimes\limits_{\mathscr{O}_{T}}\left({\mathfrak{g}}(\Sigma^o_{T})^\Gamma\bigotimes\limits_{\mathscr{O}_{T}}\mathscr{H}(\vec{\lambda})_{T}\right)\ar@{}[d]_{\rotatebox{90}{\hole{$\backsimeq$}}} \ar[r]^-{\text{Id\,}\otimes \alpha_T} &\mathscr{O}_{T'}\bigotimes\limits_{\mathscr{O}_{T}}\, \mathscr{H}({\vec{\lambda})_{T}}\ar[d]^{\rotatebox{90}{$\backsim$}} \ar[r] & \mathscr{O}_{T'}\bigotimes\limits_{\mathscr{O}_{T}}\mathscr{V}_{\Sigma_{T}, \Gamma, \phi}(\vec{q},\vec{\lambda})  \ar@{}[d]_{\rotatebox{90}{\hole{$\backsimeq$}}}  \ar[r] & 0\\ 
{\mathfrak{g}}(\Sigma^o_{T'})^\Gamma\bigotimes\limits_{\mathscr{O}_{T'}}\mathscr{H}(\vec{\lambda})_{T'}\ar[r] & \mathscr{H}(\vec{\lambda})_{T'} \ar[r] & \mathscr{V}_{\Sigma_{T'}, \Gamma, \phi}(f^*\vec{q},\vec{\lambda}) \ar[r]& 0 ,
}}
\end{equation*}
where we have identified the bottom left term of the above under

$$
\left(\mathscr{O}_{T'}\bigotimes\limits_{\mathscr{O}_{T}}{\mathfrak{g}}(\Sigma^o_{T})^\Gamma\right)\bigotimes\limits_{\mathscr{O}_{T'}}\left(\mathscr{O}_{T'}\bigotimes\limits_{\mathscr{O}_{T}}\mathscr{H}(\vec{\lambda})_{T}\right)\simeq {\mathfrak{g}}(\Sigma^o_{T'})^\Gamma\bigotimes\limits_{\mathscr{O}_{T'}}\mathscr{H}(\vec{\lambda})_{T'}$$
and the second vertical isomorphism is obtained by Lemma \ref{integrable_rep_base_change}. The right most vertical isomorphism follows from the Five Lemma, proving the second part of the lemma.

\end{proof}

{\it In the rest of this section we assume that the family $\xi:\Sigma_T\to T$ of $s$-pointed $\Gamma$-curves is such that  $T$ is a $\bold{smooth}$ and irreducible scheme over $\mathbb{C}$ and $\xi: \Sigma_T\to T$ is a $\bold{smooth}$ morphism. }  In particular, $\Sigma_T$ is a smooth scheme. 

  Let $\Theta_T$ be the sheaf of vector fields on $T$.  Let $\Theta_{\Sigma^o_T/T}$ denote the $\mathscr{O}_T$-module of  vertical vector fields on $\Sigma^o_T$ with respect to $\xi^o$, and let $\Theta_{\Sigma^o_T,T}$ denote the $\mathscr{O}_T$-module of vector fields $V$ on $\Sigma^o_T$ that locally  descend to vector fields on $T$ (i.e., there exists an open cover $U_i$ of $T$   such that $(d\xi^o) (V_{|{\xi^o}^{-1}(U_i)})$ is a vector field on $U_i$).   Since $\xi^o: \Sigma^o_T\to T$ is an affine and smooth morphism, there exists a short exact sequence of  $\mathscr{O}_T$-modules:
 \begin{equation}
0\to \Theta_{{{\Sigma}}^o_{T}/T}\to \Theta_{{{\Sigma}}^o_{T},T}\xrightarrow{d{\xi}^{o}}    \Theta_{T}\to 0. \label{thm3.4.1-eq2}
\end{equation}
This short exact sequence induces the following short exact sequence of $\mathscr{O}_T$-modules: 
 \begin{equation}
 \label{ses_vf}
0\to \Theta_{{{\Sigma}}^o_{T}/T}^\Gamma \to \Theta_{{{\Sigma}}^o_{T},T}^\Gamma  \xrightarrow{d{\xi}^{o}}    \Theta_{T}\to 0,  
\end{equation}
where $\Theta_{\Sigma^o_T/T}^\Gamma $ (resp. $\Theta_{\Sigma^o_T,T}^\Gamma$) denotes  the $\mathscr{O}_T$-submodule of $\Gamma$-invariant vector fields in $\Theta_{{{\Sigma}}^o_{T}/T}$ (resp. $\Theta_{\Sigma^o_T,T}$).

For any $b\in T$, we can find an affine open subset $b\in U\subset T$, and  a $s$-tuple of formal parameters $\vec{t}:=(t_1,t_2,\cdots, t_s)$ where $t_i$ is a formal $\Gamma_{q_i}$-equivariant $\mathscr{O}_T(U)$-parameter around $q_i$ (cf. Lemma \ref{Rotation_Formal_Parameter}). For any $\theta\in  \Theta_{\Sigma^o_T, T}^\Gamma(U) :=\Theta_{\Sigma^o_T, T}(U)^\Gamma$, we denote by $\theta_i$ the image in $ \Theta_{\mathcal{K}_{q_i},  T }(U) ^{\Gamma_{q_i}} $, where $ \Theta_{\mathcal{K}_{q_i},  T }(U)$ is the space of continuous $\bc$-linear derivations  of $\mathcal{K}_{q_i}$ under the $\mathfrak{m}$-adic topology (given below Proposition \ref{prop3.2.2}) that are liftable from the vector fields on $U$.     We define the operator $L^{\vec{t}}_{\theta}$ on $\mathscr{H}(\vec{\lambda})_U$ by 
\begin{equation}
\label{Connection_opeartor_definition}
L^{\vec{t}}_\theta( h_1\otimes \cdots \otimes h_s):= \sum_i h_1\otimes  \cdots  \otimes L^{t_i}_{\theta_i}\cdot h_i\otimes \cdots \otimes h_s,   
\end{equation}
where, for $1\leq i \leq s$,  $L^{t_i}_{\theta_i}$ is the extended Sugawara operator associated to $\theta_i$ with respect to the Kac-Moody algebra $\hat{L}(\fg, \Gamma_{q_i})_{U}$  defined by (\ref{extended_Sugawara}), where we choose the $\bc$-lattice in $\mathscr{H}(\lambda_i)_U$ as in Lemma \ref{parameter_Sugawara}.
   
   \begin{lemma}  For any $\theta \in \Theta_{\Sigma^o_T,T} (U)^\Gamma  $,   the operator $L^{\vec{t}}_{ \theta}$ preserves $\fg(\Sigma^o_U)^\Gamma\cdot   \mathscr{H}(\vec{\lambda})_U$, where   $\Sigma^o_U:= {\xi^o}^{-1}(U).$
   \end{lemma}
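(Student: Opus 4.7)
The plan is to establish the commutator-style identity
\begin{equation*}
L^{\vec{t}}_\theta(X \cdot h) = X \cdot L^{\vec{t}}_\theta(h) + (\theta X) \cdot h \qquad (\star)
\end{equation*}
for $X \in \fg(\Sigma^o_U)^\Gamma$ and $h \in \mathscr{H}(\vec{\lambda})_U$, where $\theta X$ denotes $\theta$ applied to $X$ regarded as a $\fg$-valued regular function on $\Sigma^o_U$. Both terms on the right lie in $\fg(\Sigma^o_U)^\Gamma \cdot \mathscr{H}(\vec{\lambda})_U$: this is tautological for the first, and for the second it suffices to note that $\theta X \in \fg(\Sigma^o_U)^\Gamma$, which in turn follows from the $\Gamma$-invariance of $\theta$ (so $\theta$ commutes with the $\Gamma$-action on functions) together with the fact that $\theta$ acts only on the function factor of $X$, so the $\Gamma$-twisted-equivariance of $X$ is preserved.

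The key technical input is a local Leibniz-type identity at each marked section: for $u \in \fg$, $f \in \mathcal{K}_{q_i}(U)$ with $u[f] \in \hat{L}(\fg, \Gamma_{q_i})_U$, and $v \in \mathscr{H}(\lambda_i)_U$,
\begin{equation*}
L^{t_i}_{\theta_i}\bigl(u[f] \cdot v\bigr) = u[f] \cdot L^{t_i}_{\theta_i}(v) + u[\theta_i(f)] \cdot v. \qquad (\star\star)
\end{equation*}
To derive $(\star\star)$ I will decompose $\theta_i = \theta'_i + \iota_{t_i}(\theta''_i)$ under the splitting $\Theta^{\Gamma_{q_i}}_{\mathcal{K}_{q_i}, U} = \Theta^{\Gamma_{q_i}}_{\mathcal{K}_{q_i}/U} \oplus \iota_{t_i}(\Theta_U)$ from Section~\ref{Sugawara_section}. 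The vertical part yields $[L^{t_i}_{\theta'_i}, u[f]] = u[\theta'_i(f)]$ via the commutator formula \eqref{eqn5.8.1}, extended from monomials to general $f$ by the $\mathscr{O}_T(U)$-linearity of $L^{t_i}_{\theta'_i}$. The horizontal part yields $L^{t_i}_{\theta''_i}(u[f] \cdot v) - u[f] \cdot L^{t_i}_{\theta''_i}(v) = u[\iota_{t_i}(\theta''_i)(f)] \cdot v$ by applying the explicit action formula \eqref{Virasoro_Verma_II} on the $\bc$-lattice and invoking the Leibniz rule \eqref{Leibnitz_derivation} to extend to all $v \in \mathscr{H}(\lambda_i)_U$. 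Summing assembles $(\star\star)$ via $\theta_i(f) = \theta'_i(f) + \iota_{t_i}(\theta''_i)(f)$.

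With $(\star\star)$ in hand, $(\star)$ is a direct calculation. Write $X \cdot h = \sum_i h_1 \otimes \cdots \otimes X_i h_i \otimes \cdots \otimes h_s$ where $X_i$ is the Laurent expansion of $X$ at $q_i$. Applying $L^{\vec{t}}_\theta$ as defined in \eqref{Connection_opeartor_definition} produces contributions from $L^{t_j}_{\theta_j}$ on each $j$-th factor: the off-diagonal contributions $j \neq i$ together with the pieces $X_i \cdot L^{t_i}_{\theta_i}(h_i)$ coming from $(\star\star)$ at $j=i$ assemble into $X \cdot L^{\vec{t}}_\theta(h)$, while the remaining diagonal terms $(\theta X)_i \cdot h_i$ assemble into $(\theta X) \cdot h$, using the compatibility $(\theta X)_i = \theta_i(X_i)$ between global and local derivation action. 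Finally, to pass from simple tensors to arbitrary elements $\sum_k a_k X_k \cdot h_k$ with $a_k \in \mathscr{O}_T(U)$, I invoke the Leibniz rule \eqref{Leibnitz_derivation} at the level of the tensor product, producing $\theta(a_k) X_k h_k + a_k X_k L^{\vec{t}}_\theta(h_k) + a_k (\theta X_k) h_k$, all three of which lie in $\fg(\Sigma^o_U)^\Gamma \cdot \mathscr{H}(\vec{\lambda})_U$ since $\fg(\Sigma^o_U)^\Gamma$ is an $\mathscr{O}_T(U)$-module. The principal obstacle is the local identity $(\star\star)$; the extension from simple tensors to the general case is formal.
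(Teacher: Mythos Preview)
Your proof is correct and follows essentially the same approach as the paper: both establish the commutator-type identity $L^{\vec{t}}_\theta(X\cdot h)=X\cdot L^{\vec{t}}_\theta(h)+(\theta X)\cdot h$ for $X\in\fg(\Sigma^o_U)^\Gamma$, which immediately yields the claim. The paper phrases this in terms of the averaging operator $A(x[f])=\sum_{\sigma\in\Gamma}\sigma(x)[\sigma(f)]$ and cites formulas \eqref{Virasoro_Verma_I}, \eqref{Virasoro_Verma_II} without further elaboration, whereas you work directly with a general $X$ and spell out the local identity $(\star\star)$ via the vertical/horizontal splitting of $\theta_i$; these are purely notational and expository differences rather than a different strategy.
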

   \begin{proof}
   For any $x[f]\in \fg(\Sigma^o_U)$,  let  $A(x[f])$ denote the average $\sum_{\sigma\in \Gamma} \sigma(x)[\sigma(f)]\in \fg(\Sigma^o_U)^\Gamma $. 
   For any $\vec{h} =h_1\otimes \cdots \otimes h_s\in \mathscr{H}(\vec{\lambda})_{U}$, and $\theta\in \Theta_{\Sigma^o_T, T} (U)^\Gamma$, by the formulae \eqref{Virasoro_Verma_I} and \eqref{Virasoro_Verma_II}, one can easily check that
   \[ L^{\vec{t}}_\theta ( A(x[f])\cdot \vec{h} )=  A(x[ \theta(f)])(\vec{h})+ A(x[f])(L^{\vec{t}}_\theta\cdot \vec{h}) .   \]
It follows thus  that  $L^{\vec{t}}_{ \theta}$ preserves $\fg(\Sigma^o_U)^\Gamma\cdot   \mathscr{H}(\vec{\lambda})_U$.
  
   \end{proof}
From the above lemma,   the operator $L^{\vec{t}}_{ \theta}$ induces an operator denoted $\nabla^{\vec{t}}_\theta  $ on $\mathscr{V}_{\Sigma_T, \Gamma,\phi}(\vec{q}, \vec{\lambda} )|_U$.
\begin{theorem}
\label{Locally_free_smooth_base}
With the same notation and assumptions as in Theorem \ref{covacua_coherent_basechange}, assume, in addition,  that $\xi:\Sigma_T\to T $ is a smooth morphism and $T$ is smooth. Then,  $\mathscr{V}_{\Sigma_T, \Gamma, \phi}(\vec{q} ,\vec{\lambda})$ is a locally free $\mathscr{O}_T$-module of finite rank.
\end{theorem}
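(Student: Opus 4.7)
The plan is to construct a projectively flat connection on $\mathscr{V}_{\Sigma_T,\Gamma,\phi}(\vec{q},\vec{\lambda})$ using the twisted Sugawara operators from Section \ref{Sugawara_section}, and then to deduce local freeness from coherence (Theorem \ref{covacua_coherent_basechange}) together with the standard fact that a coherent $\mathscr{O}_T$-module on a smooth scheme carrying a projectively flat connection has locally constant rank and is therefore locally free.

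Locally on a small affine open $U \subset T$ equipped with $\Gamma_{q_i}$-equivariant $\mathscr{O}_T(U)$-parameters $t_i$ along the sections $q_i$ (cf.\ Lemma \ref{Rotation_Formal_Parameter}), I would lift a given $\delta \in \Theta_T(U)$ to some $\Gamma$-invariant $\theta$ on $\Sigma_U^o$ using the surjection in \eqref{ses_vf}. Restriction of $\theta$ to the formal punctured disc around each $q_i$ produces $\theta_i$, the extended twisted Sugawara construction \eqref{extended_Sugawara} then gives $L^{t_i}_{\theta_i}$ acting on $\mathscr{H}(\lambda_i)_U$, and summing componentwise as in \eqref{Connection_opeartor_definition} yields $L^{\vec{t}}_\theta$ on $\mathscr{H}(\vec{\lambda})_U$. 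By the lemma immediately preceding the theorem, this operator descends to an endomorphism $\nabla^{\vec{t}}_\theta$ of $\mathscr{V}_{\Sigma_T,\Gamma,\phi}(\vec{q},\vec{\lambda})|_U$.

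The heart of the argument is then to show that $\nabla^{\vec{t}}_\theta$ depends on the choices of lift $\theta$ and of parameters $\vec{t}$ only through an element of $\mathscr{O}_T(U)$. Two lifts of $\delta$ differ by a globally defined $\Gamma$-invariant vertical vector field $\eta \in \Theta_{\Sigma_U^o/U}^\Gamma$, so the key claim is that for such $\eta$ the operator $L^{\vec{t}}_\eta$ induces multiplication by a scalar on the covacua. The non-central commutation $[L^{t_i}_{\eta_i}, x[f]] = x[\eta_i(f)]$ supplied by Proposition \ref{prop3.2.2}(a) glues, via globality of $\eta$, into a statement internal to $\fg(\Sigma_U^o)^\Gamma \cdot \mathscr{H}(\vec{\lambda})_U$; and the central-term contributions from \eqref{completed_Virasoro} at the different $q_i$ combine, through the $\Gamma$-equivariant residue theorem (as in \eqref{eq6}), into a globally defined element of $\mathscr{O}_T(U)$. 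Independence of the parameter choice, modulo scalars, is then provided by Lemma \ref{parameter_Sugawara}, and projective flatness of the resulting connection is inherited from Proposition \ref{lem3.2.4}(1) via the same residue-theorem manipulation.

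The main obstacle is making the scalar-valuedness in the previous paragraph precise in the twisted relative setting: one must carefully track (i) the $\sigma$-eigenspace decomposition entering the Sugawara normal form \eqref{Virasoro_I}, (ii) the interaction of the $\frac{1}{|\Gamma|}$-twisted residue pairing \eqref{pointwise-affine-Lie} with the global residue theorem, and (iii) the dependence of the $\bc$-lattices on parameters that feeds Lemma \ref{parameter_Sugawara}. All three must cooperate to produce a genuine element of $\mathscr{O}_T(U)$ acting diagonally, rather than some leftover $\mathscr{O}_T(U)$-linear endomorphism. Granted the projectively flat connection, coherence of $\mathscr{V}_{\Sigma_T,\Gamma,\phi}(\vec{q},\vec{\lambda})$ forces its fiber dimension to be locally constant on $T$ (via the formal-neighborhood trivialization of projectively flat connections on smooth bases), and hence the sheaf is locally free of finite rank.
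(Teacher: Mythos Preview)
Your overall strategy is right, but you are doing substantially more work than the statement requires, and in the extra part your argument diverges from the paper's and becomes shaky.

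The paper's proof is much shorter: choose locally an affine $U$ with $\Gamma_{q_i}$-equivariant parameters $\vec{t}$, pick \emph{any} $\mathscr{O}_T(U)$-linear section $a:\Theta_T(U)\to\Theta_{\Sigma^o_T,T}(U)^\Gamma$ of the surjection in \eqref{ses_vf}, and set $\nabla_\delta:=\nabla^{\vec t}_{a(\delta)}$. The Leibniz identity \eqref{Leibnitz_derivation} from Proposition \ref{lem3.2.4}(2) shows this is a connection in the ordinary sense on the coherent sheaf $\mathscr{V}_{\Sigma_T,\Gamma,\phi}(\vec q,\vec\lambda)|_U$. A coherent $\mathscr{O}_T$-module on a smooth base carrying \emph{any} connection (not necessarily flat or projectively flat) is locally free; the paper simply cites \cite[Theorem 1.4.10]{HTT}. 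No independence-of-choices statement is needed: different sections $a$ or parameters $\vec t$ give different connections, all of which witness local freeness equally well.

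What you call ``the heart of the argument'' --- that $\nabla^{\vec t}_\theta$ depends on the lift only up to a scalar --- is the content of the \emph{next} theorem (Theorem \ref{thm3.4.1}), not of this one. Moreover, your proposed mechanism for it is not the paper's. You try to get scalar-valuedness of vertical $\eta\in\Theta^\Gamma_{\Sigma^o_U/U}$ directly from the commutation relation and a residue cancellation; but the residue theorem \eqref{eq6} concerns the central cocycle of $\hat{\fg}_{\vec p}$, not the Virasoro cocycle \eqref{completed_Virasoro}, and the commutation relation only shows the operator \emph{preserves} $\fg(\Sigma^o_U)^\Gamma\cdot\mathscr{H}(\vec\lambda)_U$, not that it acts by a scalar on the quotient. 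The paper instead argues fiberwise: $\Theta(\Sigma^o_b)^\Gamma\simeq\Theta(\bar\Sigma^o_b)$ is an infinite-dimensional \emph{simple} Lie algebra (via \cite[Lemma 2.5.1]{BFM}), so any projective representation of it on the finite-dimensional space $\mathscr{V}_{\Sigma_b,\Gamma,\phi}(\vec q(b),\vec\lambda)$ is by scalars. That is a genuinely different (and cleaner) idea than the one you sketch.
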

\begin{proof}
It is enough to show that  the space of twisted covacua $\mathscr{V}_{\Sigma_T, \Gamma,\phi}(\vec{q}, \vec{\lambda} )|_U$ is locally free for any cover of affine open subsets $U\subset T$ with a $s$-tuple of formal parameters $\vec{t}:=(t_1,\cdots, t_s)$ around $\vec{q}:=(q_1,\cdots, q_s)$,  where   $t_i$ is a $\Gamma_{q_i}$-equivariant $\mathscr{O}_T(U)$-parameter.   From the short exact sequence (\ref{ses_vf}), we may assume (by shrinking $U$ if necessary) that there exists a $\mathscr{O}_T(U)$-linear section $a: \Theta_T(U)\to \Theta_{\Sigma^o_T, T}(U)^\Gamma$ of $d\xi^o|_U: \Theta_{\Sigma^o_T, T}(U)^\Gamma \to \Theta_T(U)$.  By part (2) of Proposition \ref{lem3.2.4}, the following map   
\[  \theta\mapsto  \nabla^{\vec{t} }_{a(\theta)}:\mathscr{V}_{\Sigma_T, \Gamma,\phi}(\vec{q}, \vec{\lambda} )|_U \to \mathscr{V}_{\Sigma_T, \Gamma,\phi}(\vec{q}, \vec{\lambda} )|_U\]
defines a connection on $\mathscr{V}_{\Sigma_T, \Gamma,\phi}(\vec{q}, \vec{\lambda} )|_U$.  Thus, by the same proof as in \cite[Theorem 1.4.10]{HTT}, $\mathscr{V}_{\Sigma_T, \Gamma,\phi}(\vec{q}, \vec{\lambda} )|_U$ is locally free. By Theorem \ref{covacua_coherent_basechange},  $\mathscr{V}_{\Sigma_T, \Gamma,\phi}(\vec{q}, \vec{\lambda} )$ is a coherent $\mathscr{O}_T$-module and hence it is of finite rank. 
\end{proof}

Let $\mathscr{V}$ be a locally free $\mathscr{O}_T$-module  of finite rank.  Let $\mathcal{D}_1(\mathscr{V})$ denote the   $\mathscr{O}_T$-module of operators $P: \mathscr{V}\to  \mathscr{V}$ such that for any  $f\in \mathscr{O}_T$, the Lie bracket $[P, f]$ is an $\mathscr{O}_T$-module morphism from $\mathscr{V}$ to $\mathscr{V}$. Clearly,  $\mathcal{D}_1(\mathscr{V})$ is a $\bc$-Lie algebra. 
\begin{definition}[\cite{L}]
A {\it flat projective connection} over $\mathscr{V}$  is a sheaf of $\mathscr{O}_T$-modules $\mathscr{L}\subset \mathcal{D}_1(\mathscr{V})$ containing $\mathscr{O}_T$ (where $\mathscr{O}_T$ acts on $\mathscr{V}$ by multiplication) such that $\mathscr{L}$ is a $\bc$-Lie subalgebra and 
\begin{equation}\label{seq110}
\xymatrix{
0\ar[r] &  \mathscr{O}_T  \ar[r]^{i}  & \mathscr{L}   \ar[r]^{\rm Symb}  &  \Theta_T\ar[r] & 0 \\ 
}
\end{equation}
is a short exact sequence, where $\rm Symb$ denotes the symbol map defined by
$$({\rm Symb} \,P) (f)=[P, f] \,\,\,\text{for $P\in \mathscr{L}$ and $f\in \mathscr{O}_T$}. $$
Observe that ${\rm Symb}$ is a Lie algebra homomorphism. 
\end{definition}

Following this definition,  choose a local section $\nabla:  \Theta_U\to \mathscr{L}|_U $ of $\rm Symb$ on some open subset $U\subset T$.  Then, $\nabla$ defines a connection on $\mathscr{V}$ over $U$, since for any $X\in \Theta_U, f\in \mathscr{O}_U$ and $v\in \mathscr{V}$, 
\[ \nabla_X(f\cdot v)=f\nabla_Xv+X(f)\cdot v . \]
For any $X,Y\in \Theta_U$, the curvature $\mathscr{K}(X,Y):= [\nabla_{X},\nabla_Y]-\nabla_{[X,Y]}\in \mathscr{O}_U$.

We now construct a flat projective connection $\mathscr{L}_T$ on the sheaf of covacua $\mathscr{V}_{ \Sigma_T,\Gamma, \phi }(\vec{q}, \vec{\lambda} )$.
Let $U$ be any affine open subset of $T$ with a $s$-tuple of parameters $\vec{t}$ around $\vec{q}$ as above.   Define $\mathscr{L}_T(U)$ to be the $\mathscr{O}_T(U)$-module spanned by  $\{ \nabla^{\vec{t}}_\theta  \,|\,  \theta\in   \Theta_{\Sigma^o_T,T}(U)^\Gamma  \} $ and $\mathscr{O}_T(U)$.   
By Lemma \ref{parameter_Sugawara},   $\mathscr{L}_T(U)$  does not depend on the choice of parameters $\vec{t}$.  Therefore, 
the assignment $U\mapsto \mathscr{L}_T(U)$ glues to be a sheaf $\mathscr{L}_T$ over $T$.  

\begin{theorem}\label{thm3.4.1}
With the same notation and assumptions as in Theorem \ref{Locally_free_smooth_base}, assume further that the ramification locus in each geometric fiber $\Sigma_b$ of $\Sigma_T$ is contained in $\Gamma\cdot \vec{q}(b)$. Then, 
the sheaf $\mathscr{L}_T$ of operators on $\mathscr{V}_{\Sigma_T, \Gamma, \phi}(\vec{q}, \vec{\lambda})$ is a flat projective connection on $\mathscr{V}_{\Sigma_T, \Gamma, \phi}(\vec{q}, \vec{\lambda})$.  
\end{theorem}

\begin{proof}
For any $b\in T$, choose an affine open subset $b\in U$ with an $s$-tuple of parameters $\vec{t}$ around $\vec{q}$. Given  $\theta_1,\theta_2 \in \Theta_{\Sigma^o_T, T}(U)^\Gamma$, 
by Proposition \ref{lem3.2.4} and the formula (\ref{Connection_opeartor_definition}), the difference $\nabla^{\vec{t}}_{[\theta_1,\theta_2]}-[\nabla^{\vec{t}}_{\theta_1},\nabla^{\vec{t}}_{\theta_2} ]$ is a  $\mathscr{O}_T(U)$-scalar operator  and so is $[\nabla^{\vec{t}}_\theta, f]$ for $f\in \mathscr{O}_T(U)$.  
It follows that $\mathscr{L}_T$ is a sheaf of $\bc$-Lie algebra acting on $\mathscr{V}_{\Sigma_T, \Gamma, \phi}(\vec{q}, \vec{\lambda})$.

Note that $\Theta_{\Sigma^o_T/T}^\Gamma|_b\simeq \Theta(\Sigma^o_b)^\Gamma$, where $\Sigma^o_b$ is the affine curve $\Sigma_b\backslash \pi^{-1}(\vec{p}(b))$, $\vec{p}=\pi\circ \vec{q}$ and $\Theta(\Sigma^o_b)^\Gamma$ is the Lie algebra of $\Gamma$-invariant vector fields on $\Sigma^o_b$. 
In view of the part (2) of Theorem \ref{covacua_coherent_basechange},  $\mathscr{V}_{\Sigma_T, \Gamma, \phi}(\vec{q}, \vec{\lambda})|_b\simeq \mathscr{V}_{\Sigma_b, \Gamma, \phi}(\vec{q}(b), \vec{\lambda})$.  Therefore, 
 the $\mathscr{O}_T(U)$-linear map $\nabla^{\vec{t}}:  \Theta_{\Sigma^o_T/T}(U)^\Gamma \to  {\rm End}_{\mathscr{O}_T(U)} (\mathscr{V}_{\Sigma_T, \Gamma, \phi}(\vec{q}, \vec{\lambda})|_U)$
induces a projective representation of $\Theta(\Sigma^o_b)^\Gamma$ on the space of covacua  $\mathscr{V}_{\Sigma_b, \Gamma, \phi}(\vec{q}(b), \vec{\lambda})$ attached to the $s$-pointed curve $(\Sigma_{b}, \vec{q}(b))$. By Lemma \ref{parameter_Sugawara}, the map $\nabla^{\vec{t}}$ is independent of the choice of $\vec{t}$ if we consider it projectively as a map
$$\nabla^{\vec{t}}:  \Theta_{\Sigma^o_T/T}(U)^\Gamma \to  {\rm End}_{\mathscr{O}_T(U)} (\mathscr{V}_{\Sigma_T, \Gamma, \phi}(\vec{q}, \vec{\lambda})|_U)/\mathscr{O}_T(U).$$

 Note that $\Theta (\Sigma^o_b)^\Gamma$ is isomorphic to the Lie algebra  $\Theta(\bar{\Sigma}^o_{b})$ of vector fields on the affine curve $\bar{\Sigma}_b\backslash \vec{p}(b)$ (since $\Gamma\cdot \vec{q}(b)$ contains the ramification locus). 
   By \cite[Lemma 2.5.1]{BFM}, $\Theta(\bar{\Sigma}^o_{b})$ and hence   $\Theta(\Sigma^o_b)^\Gamma$  is an infinite dimensional simple Lie algebra.   Since $\mathscr{V}_{\Sigma_b, \Gamma, \phi}(\vec{q}(b), \vec{\lambda})$ is a finite-dimensional vector space,  $\Theta(\Sigma^o_b)^\Gamma$ can only act by scalars on  $\mathscr{V}_{\Sigma_b, \Gamma, \phi}(\vec{q}(b), \vec{\lambda})$. Therefore, for any $\theta\in  \Theta_{\Sigma^o_T/ T}(U)^\Gamma$, the operator $\nabla^{\vec{t}}_{\theta}$ acts via multiplication by an element of $\mathscr{O}_T(U)$ on  
$\mathscr{V}_{\Sigma_T, \Gamma, \phi}(\vec{q}, \vec{\lambda})|_U$.  Thus, the sequence \eqref{seq110} for $\mathscr{L}=\mathscr{L}_T$
is exact. 
It follows that $\mathscr{L}_T$ is indeed a flat projective connection on  $\mathscr{V}_{\Sigma_T, \Gamma, \phi}(\vec{q}, \vec{\lambda})$.
\end{proof}

\section{Local freeness of the sheaf  of twisted conformal blocks on stable compactification of Hurwitz stacks}
\label{Hurwitz_Section}

We consider families of  stable $s$-pointed $\Gamma$-curves and we show that the sheaf of twisted covacua over the  stable compactificaiton of Hurwitz stack is locally free.

{\it In this section, we fix a group homomorphism $\phi: \Gamma\to {\rm Aut }(\fg)$ such that $\Gamma$ stabilizes a Borel subalgebra $\fb$ of $\fg$. }

\begin{definition}
\label{stable_family_cover} \cite[D\'efinition 4.3.4]{BR}
We say that a family of $s$-pointed $\Gamma$-curves $(\Sigma_T, \vec{q})$ over a scheme $T$  (see Definition \ref{defi3.3.1}) is {\it stable} if  
\vskip1ex

(0) Each geometric fiber $\Sigma_b$ of $\Sigma_T$ is (connected) with only nodal singularity;
\vskip1ex

(1) the  family of $s$-pointed curves  $( \bar{\Sigma}_T, \vec{p})$  is stable where $\vec{p}:=\pi\circ \vec{q}$, i.e., for any geometric point $b\in T$ the  fiber $\bar{\Sigma}_b$ is a  connected reduced curve with at most  nodal singularity and the automorphism group of  the pointed curve $(\bar{\Sigma}_b, \vec{p}(b))$ is finite;

\vskip1ex

(2)  the action of $\Gamma$ on each geometric fiber $\Sigma_b$ is stable in the sense of Definition \ref{stable_action} (in particular, $\Sigma_b$ has only nodal singularity). Moreover, $\Gamma\cdot \vec{q}(b)$ contains all the ramification points for any $b\in T$. 

\vskip1ex
A $s$-pointed $\Gamma$-curve is called {\it stable} if it is stable as a family over a point.

\end{definition}

\begin{remark} \label{remark8.2} {\rm For a family  of $s$-pointed $\Gamma$-curves $(\Sigma_T, \vec{q})$ over $T$ satisfying the properties (0) and (2)  as above, the stability of 
 $( \bar{\Sigma}_T, \vec{p})$ is equivalent to the stability of $(\Sigma_T, \Gamma\cdot \vec{q})$
 (cf. \cite[Proposition 5.1.3]{BR}).
 
 Moreover, under the assumption that $\Gamma\cdot \vec{q}$ contains all the ramification points in $\Sigma$, at any nodal point $q\in \Sigma$, $q$ being unramified and stable, ${\rm det}(\dot \sigma)=1$,   $\sigma$ fixes the two branches for any $\sigma\in \Gamma_q$ and $\Gamma_q$ is cyclic (cf. \cite[Corollaire 4.3.3 and the comment after Definition 6.2.3]{BR}).  In this case, any stable $s$-pointed $\Gamma$-curve $(\Sigma, \vec{p})$ is exactly a $s$-pointed admissible $\Gamma$-cover in the sense of Jarvis-Kaufmann-Kimura \cite[Definition 2.1, 2.2]{JKK}. The only difference is that, in our definition, stable $s$-pointed $\Gamma$-curves are connected, and admissible $s$-pointed $\Gamma$-covers defined in \cite{JKK} can be disconnected. } 
\end{remark}

Let $(C_o, \vec{q}_o )$ be a $s$-pointed $\Gamma$-curve such that $\Gamma$ acts stably on $C_o$ (cf. Definition \ref{stable_action}).
 Let $\tilde{C}_o$ be the normalization of $C_o$ at the points $\Gamma\cdot r$, where $r$ is a (stable) nodal point of $C_o$.   The nodal point $r$ splits into two smooth points $r',r''$ in $\tilde{C}_o$. Let $z'$ (resp. $z''$) be a local parameter at $r'$ (resp. $r''$) for the curve $\tilde{C}_o$. The following lemma shows that there exists a canonical smoothing deformation of $(C_o, \vec{q}_o )$ over a formal disc $\mathbb{D}_\tau:={\rm Spec}\, \mathbb{C}[[\tau]]$. We denote by $\mathbb{D}^\times_\tau$ the associated punctured formal disc ${\rm Spec}\, \mathbb{C}((\tau))$.
\begin{lemma}
\label{smoothing_construction}
With the same notation as above, we assume that the stabilizer group $\Gamma_r$ at $r$ is cyclic and does not exchange the branches.  
Then, there exists a  formal deformation $(C, \vec{q})$ of the $s$-pointed $\Gamma$-curve $(C_o, \vec{q}_o )$ over a formal disc $\mathbb{D}_\tau$ with the formal parameter $\tau$, such that the following properties hold:
\vskip1ex

(1)  over the closed point $o\in \mathbb{D}_\tau$, $(C,\vec{q})|_{\tau=0}=(C_o,\vec{q}_o)$;
\vskip1ex

(2) over the punctured formal disc $\mathbb{D}^\times_\tau$,  $(C,\vec{q})|_{\mathbb{D}^\times_\tau}$ is a $s$-pointed smooth projective curve over $\mathbb{C}((\tau))$ if $\Gamma\cdot r$ are the only nodes in $C_o$;

\vskip1ex

(3) 
 the completed local ring $\hat{\mathscr{O}}_{C, r}$ of $\mathscr{O}_C$ at $r$ is isomorphic to $\mathbb{C}[[z',z'', \tau]]/\langle \tau- z'z''\rangle \simeq \mathbb{C}[[z',z'']]$, where $\Gamma_r$ acts on $z'$  (resp. $z''$) via a primitive character $\chi$ (resp. $\chi^{-1}$);
\vskip1ex

 (4)  there exists a $\Gamma$-equivariant  isomorphism of $\mathbb{C}[[\tau]]$-algebras
\begin{equation} \label{eqn8.3.1}
 \kappa:  \hat{\mathscr{O}}_{C\backslash \Gamma\cdot r,\, C_o \backslash \Gamma \cdot r }\simeq\mathscr{O}_{C_o\backslash \Gamma\cdot r}[[\tau]],
\end{equation}
 where $ \hat{\mathscr{O}}_{C\backslash \Gamma\cdot r,\, C_o \backslash \Gamma \cdot r }$
 is the completion  of   ${\mathscr{O}}_{C\backslash \Gamma\cdot r}$
along  $C_o \backslash \Gamma \cdot r $. 
 
\end{lemma}
\begin{proof}
In the non-equivariant case, this smoothing construction as formal deformation is sketched by Looijenga in \cite[Section 6]{L}, and detailed argument from formal deformation to algebraic deformation can be found in 
 \cite[\S 6.1]{D}.   These constructions/arguments can be easily generalized to the equivariant setting when $\Gamma_r$ acts on the node stably and does not exchange the branches. 
\end{proof}

 Let $\hat{L}(\fg, \Gamma_{r'})$ (resp. $\hat{L}(\fg, \Gamma_{r''}) $) be the Kac-Moody algebra attached to the point $r'$ (resp. $r''$) in $\tilde{C}_o$.  Recall that (Lemma \ref{lemma 4.2}) $\mu^*\in D_{c, r'}$ if and only if $\mu\in D_{c, r''}$ where $V(\mu^*)\simeq V(\mu)^*$. (By Lemma \ref{normalization_pullback}, $\Gamma_{r'}=\Gamma_{r''}$ and hence $\fg^{\Gamma_{r'}}=\fg^{\Gamma_{r''}}$.)
Let $\mathscr{H}(\mu^*)$ (resp. $\mathscr{H}(\mu)$) be the highest weight  integrable representation of $\hat{L}(\fg, \Gamma_{r'})$ (resp. $\hat{L}(\fg, \Gamma_{r''})$) as usual.
\begin{lemma}
\label{non-degenerate_pairing}
There exists a non-degenerate pairing $b_\mu:  \mathscr{H}(\mu^*) \times \mathscr{H}(\mu )\to \mathbb{C}$ such that for any $h_1\in \mathscr{H}(\mu^*), \, h_2\in \mathscr{H}(\mu) $, and $x[z'^n]\in \hat{L}(\fg, \Gamma_{r'})$,
\[ b_\mu(    x[z'^n]\cdot h_1, h_2  )+b_\mu(h_1,    x[z''^{-n} ]\cdot h_2 ) =0. \]
Note that $x[z'^n]\in \hat{L}(\fg, \Gamma_{r'})$ if and only  if $x[z''^{-n}]\in \hat{L}(\fg, \Gamma_{r''})$.
\end{lemma}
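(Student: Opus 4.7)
The plan is to construct $b_\mu$ as the evaluation pairing after identifying $\mathscr{H}(\mu^*)$ with the graded restricted dual of $\mathscr{H}(\mu)$. Grade $\mathscr{H}(\mu)=\bigoplus_{d\geq 0}\mathscr{H}(\mu)_d$ by (shifted) $L_0^{z''}$-eigenvalues so that each graded piece is finite dimensional and $V(\mu)=\mathscr{H}(\mu)_0$. Let $\mathscr{H}(\mu)^{\vee}:=\bigoplus_d\mathscr{H}(\mu)_d^{*}$ and equip it with a $\hat{L}(\fg,\Gamma_{r'})$-module structure of level $c$ by
\[
 (x[z'^n]\cdot f)(v):=-f\bigl(x[z''^{-n}]\cdot v\bigr),\qquad C\cdot f:=cf.
\]
Using formula \eqref{Lie_bracket_R} one checks directly that this is a Lie algebra action: the $2$-cocycle $+\tfrac{n}{|\Gamma_{r'}|}\langle x,y\rangle\delta_{n+m,0}\,C$ of $[x[z'^n],y[z'^m]]$ is matched by the opposite-signed cocycle of $[x[z''^{-n}],y[z''^{-m}]]$ in $\hat{L}(\fg,\Gamma_{r''})$, and the discrepancy is reconciled by the minus sign in the contragredient formula.

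Next let $f^+\in\mathscr{H}(\mu)^{\vee}$ be the functional supported on $V(\mu)=\mathscr{H}(\mu)_0$ and given there by the linear form dual to the lowest weight vector of $V(\mu)$. Since $V(\mu)^{*}\simeq V(\mu^*)$ as $\fg^{\Gamma_{r'}}$-modules (Lemma \ref{lemma 4.2}), $f^+$ is a $\fg^{\Gamma_{r'}}$-highest weight vector of weight $\mu^*$. For any $x[z'^n]\in\hat{L}(\fg,\Gamma_{r'})^+$ (so $n\geq 1$) the element $x[z''^{-n}]$ lies in $\hat{L}(\fg,\Gamma_{r''})^-$ and strictly raises the grading of $\mathscr{H}(\mu)$, hence $f^+(x[z''^{-n}]v)=0$ for every $v$, and so $\hat{L}(\fg,\Gamma_{r'})^+\cdot f^+=0$. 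Thus $f^+$ is a highest weight vector of weight $\mu^*$ and level $c$ for $\hat{L}(\fg,\Gamma_{r'})$. Integrability of the cyclic submodule $M:=U(\hat{L}(\fg,\Gamma_{r'}))\cdot f^+$ follows from the finite-dimensionality of each graded piece together with the correspondence of real-root $sl_2$-triples between $\hat{L}(\fg,\Gamma_{r'})$ and $\hat{L}(\fg,\Gamma_{r''})$ induced by $\tilde\phi$ (so that local nilpotence of $\tilde{y}_i$ on $\mathscr{H}(\mu)^{\vee}$ is inherited from that of $\tilde\phi(\tilde{y}_i)$ on $\mathscr{H}(\mu)$, the latter guaranteed by the integrability of $\mathscr{H}(\mu)$ and Lemma \ref{lemma 1.3}). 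By uniqueness of irreducible integrable highest weight modules, $M\simeq\mathscr{H}(\mu^*)$.

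The main technical step is to verify that $M$ exhausts $\mathscr{H}(\mu)^{\vee}$. By PBW and the relations $\hat{L}(\fg,\Gamma_{r'})^+\cdot f^+=0$ and $U(\fg^{\Gamma_{r'}})\cdot f^+=V(\mu^*)$, the intersection $M\cap\mathscr{H}(\mu)_d^{*}$ is spanned by elements $X_1\cdots X_l\cdot g^+$ with $X_i\in\hat{L}(\fg,\Gamma_{r'})^-$, $\sum\deg X_i=d$ and $g^+\in V(\mu^*)$. If some $v_0\in\mathscr{H}(\mu)_d\setminus\{0\}$ were annihilated by all of these, then setting $Y_i:=\tilde\phi(X_i)\in\hat{L}(\fg,\Gamma_{r''})^+$ every $g^+(Y_l\cdots Y_1 v_0)$ would vanish, forcing $Y_l\cdots Y_1 v_0=0$ in $V(\mu)=\mathscr{H}(\mu)_0$ for all such sequences. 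But then any PBW expansion $Z=\sum Z_-^{(k)}Z_0^{(k)}Z_+^{(k)}\in U(\hat{L}(\fg,\Gamma_{r''}))$ realizing $v_+^{\mu}=Z\cdot v_0$ (which exists by irreducibility of $\mathscr{H}(\mu)$) has only the summands with $Z_-^{(k)}=1$ contributing at grade $0$ (the others satisfy $\deg Z_+^{(k)}>d$ and thus $Z_+^{(k)}v_0=0$ by grading non-negativity), and each such summand $Z_0^{(k)}Z_+^{(k)}v_0$ with $\deg Z_+^{(k)}=d$ vanishes by the hypothesis on $v_0$; hence $v_+^{\mu}=0$, a contradiction. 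Therefore $M=\mathscr{H}(\mu)^{\vee}$, and setting $b_\mu(h_1,h_2):=h_1(h_2)$ under $\mathscr{H}(\mu^*)\simeq\mathscr{H}(\mu)^{\vee}$ gives the desired pairing: non-degeneracy is the non-degeneracy of graded-evaluation duality, and the invariance relation holds by the very definition of the contragredient action.
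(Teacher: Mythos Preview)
Your approach differs from the paper's and has a genuine gap in the integrability step.

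\textbf{Comparison with the paper.} The paper's argument is short and indirect: it invokes the contravariant (Shapovalov) form $\bar{b}_\mu$ on $\mathscr{H}(\mu^*)$ from \cite[\S 9.4]{Ka}, which is already known to be non-degenerate on integrable highest weight modules, and then transports it via the isomorphism $\hat{\omega}:\tilde{L}(\fg,\Gamma_{r'})\simeq\tilde{L}(\fg,\Gamma_{r''})$ constructed in (the second proof of) Lemma~\ref{lemma 4.2}. The key observation is that the composite $\hat{\omega}\circ\varpi$ (with $\varpi$ the Cartan involution of $\tilde{L}(\fg,\Gamma_{r'})$) sends $x[z'^n]\mapsto x[z''^{-n}]$, so the contravariance property of $\bar{b}_\mu$ becomes exactly the asserted invariance of $b_\mu$. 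Your approach instead builds the restricted dual $\mathscr{H}(\mu)^\vee$ directly with a twisted contragredient action and argues it is $\mathscr{H}(\mu^*)$; this is essentially reproving the non-degeneracy of the contravariant form, which the paper takes as a black box. Your route is more self-contained but substantially longer.

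\textbf{The gap.} Your integrability argument is not correct as written. You assert that local nilpotence of $\tilde{y}_i$ on $\mathscr{H}(\mu)^\vee$ is ``inherited'' from local nilpotence of $\tilde\phi(\tilde{y}_i)$ on $\mathscr{H}(\mu)$ (and you never define $\tilde\phi$, though presumably $\tilde\phi(x[z'^n])=x[z''^{-n}]$). But the transpose of a locally nilpotent operator on a graded vector space need not be locally nilpotent on the restricted dual: take $V=\bigoplus_{d\le 0}\mathbb{C}e_d$ with $T(e_d)=e_{d+1}$, $T(e_0)=0$; then $T$ is locally nilpotent but $T^*(e_0^*)=e_{-1}^*$, $T^{*N}(e_0^*)=e_{-N}^*\neq 0$. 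What actually makes your claim true is that $\mathscr{H}(\mu)$, being integrable, decomposes as a direct sum of \emph{finite-dimensional} modules for the $sl_2$-triple $(\tilde{x}''_i,\tilde{y}''_i,\tilde{h}''_i)$, and each graded piece $\mathscr{H}(\mu)_{-d}$ meets only finitely many of these summands; then the transpose of $\tilde{x}''_i$ is nilpotent on the dual of each summand, giving local nilpotence on $\mathscr{H}(\mu)^\vee$. This uses integrability of $\mathscr{H}(\mu)$ for the full $sl_2$, not just local nilpotence of a single element, and Lemma~\ref{lemma 1.3} is not the relevant input. Also, since the Borel of $\fg^{\Gamma_r}$ implicit in $D_{c,r'}$ is opposite to that in $D_{c,r''}$ (see the second proof of Lemma~\ref{lemma 4.2}), the correct $f^+$ is the functional dual to the \emph{highest} weight vector $v_+^\mu\in V(\mu)$, not the lowest; with this correction your annihilation and exhaustion arguments go through.
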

\begin{proof}
From  Lemma \ref{lemma 4.2} (especially see  `another proof of Lemma \ref{lemma 4.2} Part (2)'),  there exists an isomorphism $\hat{\omega}:  \tilde{L}(\fg, \Gamma_{r'})\simeq \tilde{L}(\fg, \Gamma_{r''})$, such that the representation of $\tilde{L}(\fg, \Gamma_{r''})$ on $\mathscr{H}(\mu)$ via $\hat{\omega}^{-1}$ is isomorphic to $\mathscr{H}(\mu^*)$, where $\tilde{L}(\fg, \Gamma_{r'})$ is the non-completed Kac-Moody algebra.  By \cite[\S 9.4]{Ka}, there exists a contravariant form $\bar{b}_\mu: \mathscr{H}(\mu^*)\times \mathscr{H}(\mu^*)\to \mathbb{C}$ such that 
\[ \bar{b}_\mu(x[f]\cdot h_1, h_2 )+\bar{b}_\mu(h_1, \varpi(x[f])h_2)=0, \text{ for any } x[f]\in \tilde{L}(\fg, \Gamma_{r'}), h_1,h_2\in \mathscr{H}(\mu^*), \]
where $\varpi$ is the Cartan involution of $\tilde{L}(\fg, \Gamma_{r'})$ mapping $x'_i[z'^{s_i}]$ (resp. $y'_i[z'^{-s_i}]$) to $-y'_i[z'^{-s_i}]$ (resp. $-x'_i[z'^{s_i}]$) for any $i\in \hat{I}(\fg,\Gamma_{r'})$, see these notation in the second proof of Lemma \ref{lemma 4.2} part (2).   Observe that the composition $ \hat{\omega}\circ \varpi: \tilde{L}(\fg, \Gamma_{r'})\to \tilde{L}(\fg, \Gamma_{r''})$ is  an isomorphism of Lie algebras mapping $x[z'^n]$  to $x[z''^{-n}]$.  Hence, the lemma follows after we identify the second copy of $\mathscr{H}(\mu^*)$ in $\bar{b}_\mu$ with $\mathscr{H}(\mu)$ via $\hat{\omega}^{-1}$  mentioned above.
\end{proof}

There exist direct sum decompositions by $t$-degree (putting the $t$-degree  of the highest weight vectors at $0$):
\[ \mathscr{H}(\mu^*)=\bigoplus_{d=0}^\infty   \mathscr{H}(\mu^*)_{-d} ,\,  \mathscr{H}(\mu)=\bigoplus_{d=0}^\infty   \mathscr{H}(\mu)_{-d}.  \]

The non-degenerate pairing $b_\mu$ in Lemma \ref{non-degenerate_pairing} induces a non-degenerate pairing $b_{\mu,d}: \mathscr{H}(\mu^*)_{-d}\times  \mathscr{H}(\mu)_{-d}\to \mathbb{C}$ for each $d\geq 0$.  Let $b^*_{\mu,d}\in  (\mathscr{H}(\mu^*)_{-d})^*\otimes  (\mathscr{H}(\mu)_{-d})^*$ be the dual of $b_{\mu, d}$. The contravariant form $\bar{b}_\mu$  on  $ \mathscr{H}(\mu^*)$ with respect to $\tilde{L}(\fg, \Gamma_{r'})$  induces an isomorphism $c'_\mu:  (\mathscr{H}(\mu^*)_{-d})^*\simeq  \mathscr{H}(\mu^*)_{-d}$. Similarly, the contravariant form on  $ \mathscr{H}(\mu)$ with respect to $\tilde{L}(\fg, \Gamma_{r''})$  induces an isomorphism $c''_\mu: (\mathscr{H}(\mu)_{-d})^*\simeq  \mathscr{H}(\mu)_{-d}$ (the Cartan involution on $\tilde{L}(\fg, \Gamma_{r''})$  taken here is obtained from $\varpi$ on $\tilde{L}(\fg, \Gamma_{r'})$  via the isomorphism $\hat{\omega}$).  Set $\Delta_{\mu,d}: =(c'_\mu\otimes c''_\mu)(b^*_{\mu,d})\in\mathscr{H}(\mu^*)_{-d}\otimes  \mathscr{H}(\mu)_{-d}$ if $d\geq 0$ and $0$ if $d<0$. Note that $\Delta_{\mu, 0}$ is exactly the element $I_\mu$ induced from the identity map on $V(\mu)$ (see the formula (\ref{Schur_map})).
  In view of Lemma \ref{non-degenerate_pairing}, $\Delta_{\mu, d}$ satisfies the following property (for any $d,n\in \bz$)
\begin{equation}
\label{Gluing_element_vanishing}
  (x[z'^n]\otimes 1)\cdot \Delta_{\mu, d+n} + (1\otimes x[z''^{-n}])\cdot  \Delta_{\mu, d}=0, \, \text{ for any } x[z'^n]\in \hat{L}(\fg, \Gamma_{r'}) .  \end{equation} 

We now construct the following `gluing' tensor element (following \cite[Lemma 6.5]{L} in the non-equivariant setting),
\[ \Delta_\mu:=\sum_{d\geq 0} \Delta_{\mu, d} \tau^d\in  \left(\mathscr{H}(\mu^*)\otimes \mathscr{H}(\mu)\right)[[\tau]] .  \]

Let $\theta',\theta''$ be the maps defined via Lemma \ref{smoothing_construction} Part (3),
\[  \theta': \hat{\mathscr{O}}_{C, r}  \to  \mathbb{C}((z'))[[\tau]], \text{ and } \theta'': \hat{\mathscr{O}}_{C, r}\to  \mathbb{C}((z''))[[\tau]], \]
where $ \hat{\mathscr{O}}_{C, r}$ is the completion of $ {\mathscr{O}}_{C}$ along $r$.
Thus, for any $f(z',z'')=\sum_{i\geq 0,j\geq 0} a_{i,j}z'^i z''^j\in \hat{\mathscr{O}}_{C, r} $ (see Lemma \ref{smoothing_construction} Part (3)),  we have
\[\theta'(f)=f(z', \tau/z') =\sum_{j\geq 0} (\sum_{i\geq 0} a_{i,j} z'^{i-j}  )  \tau^j  , \]
and 
\[\theta''(f)=f(\tau/z'', z'') =\sum_{i\geq 0} (\sum_{j\geq 0} a_{i,j} z''^{j-i}  )  \tau^i  . \]
The morphisms $\theta',\theta''$ induce a $\mathbb{C}[[\tau]]$-module morphism $\theta: (\fg\otimes  \hat{\mathscr{O}}_{C, r} )^{\Gamma_{r}} \to   ( \fg\otimes \mathbb{C}((z')) )^{\Gamma_{r'}}[[\tau]] \oplus    ( \fg\otimes \mathbb{C}((z'')) )^{\Gamma_{r''}} [[\tau]]$, where $\tau$ acts on
 $ \hat{\mathscr{O}}_{C, r}$ via
$$\tau\cdot f(z', z'')= z'z'' f(z', z'') .$$
Thus,  we get an  injective map from $(\fg\otimes  \hat{\mathscr{O}}_{C, r} )^{\Gamma_r}$ into $\hat{L}(\fg,\Gamma_{r'})[[\tau]]\oplus \hat{L}(\fg,\Gamma_{r''})[[\tau]]$ (but not a Lie algebra homomorphism), which acts on $(\mathscr{H}(\mu^*)\otimes \mathscr{H}(\mu))[[\tau]] $.

\begin{lemma}
\label{Gluing_tensor_lemma}
The  element  $\Delta_\mu\in \left(\mathscr{H}(\mu^*)\otimes \mathscr{H}(\mu)\right)[[\tau]] $ is annihilated by  $(\fg\otimes \hat{  \mathscr{O}}_{C,r})^{\Gamma_r}$ via the morphism $\theta$ defined as above. 
\end{lemma}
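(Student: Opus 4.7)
Proof proposal. The plan is to verify the vanishing $\theta(x[f])\cdot\Delta_\mu=0$ termwise in powers of $\tau$, reducing to the basic exchange identity \eqref{Gluing_element_vanishing}. By $\mathbb{C}[[\tau]]$-linearity (both $\theta$ and the action on $\Delta_\mu$ are $\mathbb{C}[[\tau]]$-linear) and by decomposing $(\fg\otimes\hat{\mathscr{O}}_{C,r})^{\Gamma_r}$ along the bigrading
\[(\fg\otimes\hat{\mathscr{O}}_{C,r})^{\Gamma_r}=\bigoplus_{k}\fg_{\underline{k}}\otimes (\hat{\mathscr{O}}_{C,r})_{\underline{-k}},\]
it suffices to treat an element $x[f]$ with $x\in\fg_{\underline{k}}$ (i.e.\ $\sigma_r(x)=\chi(\sigma_r)^k x$) and $f=\sum_{i,j\geq 0}a_{i,j}z'^iz''^j$ where $a_{i,j}\neq 0$ forces $i-j\equiv -k\pmod{|\Gamma_r|}$. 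The key arithmetic observation is that for any such $(i,j)$, the mode $x[z'^{i-j}]$ is $\Gamma_{r'}$-invariant, hence lies in $\hat{L}(\fg,\Gamma_{r'})$, and likewise $x[z''^{j-i}]\in\hat{L}(\fg,\Gamma_{r''})$; this is precisely the setting in which \eqref{Gluing_element_vanishing} applies.

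The next step is to unwind the definitions:
\[\theta'(x[f])=\sum_{j\geq 0}x\bigl[\sum_{i\geq 0}a_{i,j}z'^{i-j}\bigr]\tau^j,\qquad \theta''(x[f])=\sum_{i\geq 0}x\bigl[\sum_{j\geq 0}a_{i,j}z''^{j-i}\bigr]\tau^i,\]
so that in the coefficient of $\tau^N$ in $\theta(x[f])\cdot\Delta_\mu$ one obtains
\[\sum_{j=0}^{N}\sum_{i\geq 0}a_{i,j}\bigl(x[z'^{i-j}]\otimes 1\bigr)\cdot\Delta_{\mu,N-j}+\sum_{i=0}^{N}\sum_{j\geq 0}a_{i,j}\bigl(1\otimes x[z''^{j-i}]\bigr)\cdot\Delta_{\mu,N-i}.\]
Using the depth grading on $\mathscr{H}(\mu^*)$ and $\mathscr{H}(\mu)$ (so that $x[z'^n]$ raises depth by $-n$ and annihilates the depth-$d$ part as soon as $n>d$), each inner sum is truncated to $0\leq i\leq N$ (resp.\ $0\leq j\leq N$), so the coefficient of $\tau^N$ is a bona fide finite sum
\[\sum_{0\leq i,j\leq N}a_{i,j}\bigl[(x[z'^{i-j}]\otimes 1)\cdot\Delta_{\mu,N-j}+(1\otimes x[z''^{j-i}])\cdot\Delta_{\mu,N-i}\bigr].\]

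Finally, for each fixed pair $(i,j)$, set $n=i-j$ and $d=N-i$, so that $d+n=N-j$. The bracketed expression becomes
\[(x[z'^n]\otimes 1)\cdot\Delta_{\mu,d+n}+(1\otimes x[z''^{-n}])\cdot\Delta_{\mu,d},\]
which vanishes by \eqref{Gluing_element_vanishing} since $x[z'^n]\in\hat{L}(\fg,\Gamma_{r'})$ by the parity argument above. Hence every $\tau^N$-coefficient is zero and $\theta(x[f])\cdot\Delta_\mu=0$, as required.

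I expect the argument is essentially a direct unfolding of definitions; the only point needing care is the finiteness of each $\tau^N$-coefficient, which is handled by the depth filtration on the integrable modules. The ``real content'' of the lemma is packaged already in the invariance identity \eqref{Gluing_element_vanishing} (equivalently, in Lemma \ref{non-degenerate_pairing} producing the contragredient pairing on $\mathscr{H}(\mu^*)\otimes\mathscr{H}(\mu)$), so no further representation-theoretic input is needed beyond an index-matching ($n=i-j$, $d=N-i$) tying the two summands together.
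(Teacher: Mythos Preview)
Your proof is correct and follows essentially the same approach as the paper: both reduce the claim to the exchange identity \eqref{Gluing_element_vanishing} applied monomial by monomial. The paper treats a single $\Gamma_r$-invariant monomial $x[z'^iz''^j]$ and computes the action on $\Delta_\mu$ as a formal $\tau$-series (then reindexes to cancel), whereas you expand a general $f$ and extract the $\tau^N$-coefficient; your explicit truncation via the depth grading supplies the well-definedness step that the paper leaves as ``easy to see.''
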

\begin{proof} For any  $ x[z'^iz''^j]\in  (\fg\otimes \hat{\mathscr{O}}_{C,r})^{\Gamma_r}$,
\begin{align*} x[z'^iz''^j]\cdot \Delta_\mu &=\sum_{d\in \bz}\,(x[z'^{i-j}]\otimes 1) \Delta_{\mu, d}\tau^{d+j} + \sum_{d\in \bz}\,
(1\otimes x[z''^{j-i}]) \Delta_{\mu, d}\tau^{d+i}\\
&= -\sum_{d\in \bz}\,(1\otimes x[z''^{j-i}]) \Delta_{\mu, d+j-i}\tau^{d+j}+ \sum_{d\in \bz}\,
(1\otimes x[z''^{j-i}]) \Delta_{\mu, d}\tau^{d+i}\,,\,\,\,\text{by \eqref{Gluing_element_vanishing}}\\
&=0.
\end{align*}
From this it is easy to see that $x[f]\cdot \Delta_\mu=0$ for any $x[f]\in  (\fg\otimes \hat{\mathscr{O}}_{C,r})^{\Gamma_r} $. This proves the lemma.
\end{proof}

For each $i=1,\dots, s$, let $\mathscr{H}(\lambda_i)_{\mathbb{D}_\tau}$ (resp. $\mathscr{H}(\lambda_i)$) denote the integrable representation of $\hat{L}(\fg, \Gamma_{q_i})_{\mathbb{D}_\tau}$ (resp. $\hat{L}(\fg, \Gamma_{q_{i,o}})$ ) attached to $\hat{\mathscr{O}}_{C, q_i}$ (resp. $\hat{\mathscr{O}}_{C_o, q_{i,o}}$) as in Section \ref{Sugawara_section}, and let $ \mathscr{H}(\vec{\lambda} )_{\mathbb{D}_\tau }$ (resp.  $\mathscr{H}(\vec{\lambda} )$) denote their tensor product over $\mathbb{C}[[\tau]]$ (resp. $\mathbb{C}$). 
For each $i=1,\dots, s$,  we choose a $(\Gamma_{q_i},\chi_i)$-equivariant formal parameter $z_i$ around $q_i$, i.e. $\hat{\mathscr{O}}_{C,q_i}\simeq \mathbb{C}[[\tau]][[z_i]]$, where $\chi_i$ is a primitive character of $\Gamma_{q_i}$.  It gives rise to a trivialization (cf. Formula (\ref{eqn5.5.3}))
\[t_{\vec{\lambda}}: \mathscr{H}(\vec{\lambda} )_{\mathbb{D}_\tau }\simeq  \mathscr{H}(\vec{\lambda} )\otimes_\mathbb{C}  \mathbb{C}[[\tau]].   \]

We now construct a morphism of $\mathbb{C}[[\tau]]$-modules:
\[ \tilde{F}_{\vec{\lambda}}:   \mathscr{H}(\vec{\lambda} )\otimes_\mathbb{C}  \mathbb{C}[[\tau]] \longrightarrow  \bigoplus_{\mu\in D_{c, r''} } ( \mathscr{H}(\vec{\lambda} )\otimes \mathscr{H}(\mu^*)\otimes \mathscr{H}(\mu) ) [[\tau]] \]
given by
\[ \sum_{i=0}^\infty  h_i \tau^i  \mapsto  \sum_{i,d=0}^\infty (h_i\otimes \Delta_{\mu,d} )\tau^{i+d} ,\]
where,  for each $i$, $ h_i\in  \mathscr{H}(\vec{\lambda} ) $. Finally, we set 
\[F_{\vec{\lambda}}:=\tilde{F}_{\vec{\lambda}} \circ t_{\vec{\lambda}} : \mathscr{H}(\vec{\lambda} )_{\mathbb{D}_\tau }\longrightarrow  \bigoplus_{\mu\in D_{c, r''} } ( \mathscr{H}(\vec{\lambda} )\otimes \mathscr{H}(\mu^*)\otimes \mathscr{H}(\mu) ) [[\tau]] .    \]
Consider the following canonical homomorphisms (obtained by  restrictions):
$$ \mathscr{O}_{{C}\backslash \Gamma\cdot \vec{q}}\to \mathscr{O}_{C\backslash \Gamma\cdot (\vec{q} \cup \{r\} )}  \to
 \hat{\mathscr{O}}_{C\backslash \Gamma\cdot (\vec{q}\cup\{r\}), C_o\backslash \Gamma\cdot (\vec{q}_o\cup\{r\}) }\overset{\kappa'}{\cong}
\mathscr{O}_{C_o\backslash \Gamma\cdot (\vec{q}_o\cup\{r\}) }[[\tau]],
$$
where the isomorphism $\kappa'$ is induced from the isomorphism $\kappa$ of Lemma \ref{smoothing_construction} (see the isomorphism \eqref{eqn8.3.1}).
This gives rise to a Lie algebra homomorphism (depending upon the isomorphism $\kappa'$):
$$\kappa_{\vec{q}}: \fg[C\backslash \Gamma\cdot \vec{q}]^\Gamma \to \left[\fg\otimes \mathscr{O}_{\tilde{C}_o\backslash \Gamma\cdot (\vec{q}_o\cup\{r',r''\}) }\right]^\Gamma [[\tau]].$$
Hence, the Lie algebra $\fg[C\backslash \Gamma\cdot \vec{q}]^\Gamma $ acts on 
 $\left( \mathscr{H}(\vec{\lambda} )\otimes \mathscr{H}(\mu^*)\otimes \mathscr{H}(\mu) \right) [[\tau]] $ via the action of 
$ \left[\fg\otimes \mathscr{O}_{\tilde{C}_o\backslash \Gamma\cdot (\vec{q}_o\cup\{r',r''\}) }\right]^\Gamma$ on  $ \mathscr{H}(\vec{\lambda} )\otimes \mathscr{H}(\mu^*)\otimes \mathscr{H}(\mu) $ at the points $\{\vec{q}_o, r', r''\}$ as given just before Theorem \ref{thm3.1.2} and extending it $\bc[[\tau]]$-linearly. 

Recall from Definition \ref{def_sheaf_conformal_block} the action of $\fg[C\backslash \Gamma\cdot \vec{q}  ]^\Gamma$ on $ \mathscr{H}(\vec{\lambda} )_{\mathbb{D}_\tau}$. Further, $\fg[C\backslash \Gamma\cdot \vec{q}  )^\Gamma$ acts on $ (\mathscr{H}(\mu^*)\otimes \mathscr{H}(\mu)) [[\tau]]$ via the Lie algebra homomorphism (obtained by the restriction):
$$\fg[C\backslash \Gamma\cdot \vec{q}  ]^\Gamma \to (\fg\otimes \hat{\mathscr{O}}_{C, r})^{\Gamma_r}$$
and the action of $(\fg\otimes \hat{\mathscr{O}}_{C, r})^{\Gamma_r}$ on $ (\mathscr{H}(\mu^*)\otimes \mathscr{H}(\mu)) [[\tau]]$ (which is a Lie algebra action only projectively)  is given just before Lemma \ref{Gluing_tensor_lemma}.
\begin{theorem}
\label{key_theorem}
We have the following:
\begin{enumerate}
\item the morphism $F_{\vec{\lambda}}$ is $\fg[C\backslash \Gamma\cdot \vec{q}]^\Gamma$-equivariant; 
\item the morphism $F_{\vec{\lambda}}$ induces an isomorphism  of sheaf of covacua over $\mathbb{D}_\tau$:
\begin{equation}
\label{gluding_morphism}
 \bar{F}_{\vec{\lambda}}:    \mathscr{V}_{C, \Gamma, \phi}(\vec{q},\vec{\lambda} )\to  \bigoplus_{\mu\in D_{c, r''}}  \mathscr{V}_{  \tilde{C}_o,  \Gamma,\phi }\left((\vec{q}_o, r', r'') , (\vec{\lambda}, \mu^*, \mu)\right) [[\tau]] .\end{equation}
 \end{enumerate}
 Note that here we take  slightly different notation for the spaces/sheaves of covacua, see Remark \ref{notation_warning}.

\end{theorem}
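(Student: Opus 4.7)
The proof of Theorem \ref{key_theorem} breaks into the equivariance statement (1) and the isomorphism statement (2). For (1) the essential input is the annihilation identity for the gluing element $\Delta_\mu$ established in Lemma \ref{Gluing_tensor_lemma}. For (2) the plan is to reduce modulo $\tau$, recognize the resulting map as the Factorization isomorphism of Theorem \ref{thm3.1.2}, and then bootstrap from $\tau = 0$ to all of $\mathbb{D}_\tau$ using the freeness of the right-hand side together with Nakayama's lemma.

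\textbf{Part (1): Equivariance.} Fix $X \in \fg[C \setminus \Gamma \cdot \vec{q}]^\Gamma$ and $h \in \mathscr{H}(\vec{\lambda})_{\mathbb{D}_\tau}$. The action on the source of $F_{\vec{\lambda}}$ is by restriction of $X$ to the formal neighborhoods of the sections $\vec{q}$, producing elements of $\bigoplus_i \hat{L}(\fg, \Gamma_{q_i})_{\mathbb{D}_\tau}$. The action on the target, through $\kappa_{\vec{q}}$, is obtained by restricting the pullback of $X$ to the affine open $\tilde{C}_o \setminus \Gamma \cdot (\vec{q}_o \cup \{r', r''\})$ and then acting at the points $\vec{q}_o, r', r''$. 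I will compare these two actions. At the marked points $\vec{q}_o$, which lie away from the node, the identification $\kappa$ from Lemma \ref{smoothing_construction} is compatible with the trivialization $t_{\vec{\lambda}}$ (both descriptions restrict to the formal neighborhood of the same unramified section of the smoothing), so the $\vec{q}_o$-contribution on the target matches the $\vec{q}$-action on the source. For the $r', r''$ contributions, the restrictions of the pullback of $X$ to the formal neighborhoods of $r'$ and $r''$ in $\tilde{C}$ are precisely $\theta'(X_r)$ and $\theta''(X_r)$, where $X_r \in (\fg \otimes \hat{\mathscr{O}}_{C,r})^{\Gamma_r}$ is the restriction of $X$ to the formal neighborhood of the node $r$. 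By Lemma \ref{Gluing_tensor_lemma} the combined action of $\theta(X_r)$ annihilates $\Delta_\mu$ for every $\mu$, so these contributions disappear from $F_{\vec{\lambda}}(h)$. This establishes $F_{\vec{\lambda}}(X \cdot h) = X \cdot F_{\vec{\lambda}}(h)$.

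\textbf{Part (2): Isomorphism.} By part (1), $F_{\vec{\lambda}}$ descends to $\bar{F}_{\vec{\lambda}}$ on the spaces of covacua. By Theorem \ref{covacua_coherent_basechange} the left side of \eqref{gluding_morphism} is a coherent, hence finitely generated, $\bc[[\tau]]$-module; the right side is a finite direct sum (finite by Lemma \ref{lem2.1.3}) of free modules of finite $\bc[[\tau]]$-rank. Reducing modulo $\tau$: Theorem \ref{covacua_coherent_basechange}(2) identifies the fiber of the source at $\tau = 0$ with $\mathscr{V}_{C_o, \Gamma, \phi}(\vec{q}_o, \vec{\lambda})$, while the construction of $\Delta_\mu$ ensures $\bar{F}_{\vec{\lambda}} \bmod \tau$ sends $[h] \mapsto \sum_\mu [h \otimes I_\mu]$, which is exactly the Factorization map of Theorem \ref{thm3.1.2} and therefore an isomorphism (this is where the standing hypothesis that $\Gamma$ stabilizes a Borel subalgebra is used). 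A standard Nakayama argument now concludes: the cokernel of $\bar{F}_{\vec{\lambda}}$ is a finitely generated $\bc[[\tau]]$-module whose reduction modulo $\tau$ vanishes, hence is zero; since the right side is $\bc[[\tau]]$-free we have $\mathrm{Tor}_1^{\bc[[\tau]]}(\mathrm{RHS}, \bc) = 0$, so the kernel $K$ fits into $0 \to K/\tau K \to \mathrm{LHS}/\tau \mathrm{LHS} \xrightarrow{\sim} \mathrm{RHS}/\tau \mathrm{RHS}$, forcing $K/\tau K = 0$ and then $K = 0$ by Nakayama again.

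\textbf{Main obstacle.} The delicate point is the equivariance in part (1), which amounts to tracking how the residue theorem on the singular family $C \to \mathbb{D}_\tau$ degenerates into the combination of the residue theorem on $\tilde{C}_o \setminus \Gamma \cdot (\vec{q}_o \cup \{r', r''\})$ (encoded by $\kappa_{\vec{q}}$) and the annihilation property of $\Delta_\mu$ at the node. The choice of $\Delta_\mu$ (encoding the contravariant pairing from Lemma \ref{non-degenerate_pairing}) is calibrated precisely so that this works, and the compatibility amounts to the Laurent-expansion identities $\theta'(f) = f(z', \tau/z')$ and $\theta''(f) = f(\tau/z'', z'')$ supplied by the smoothing deformation of Lemma \ref{smoothing_construction}.
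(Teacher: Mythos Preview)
Your proof is correct and follows essentially the same route as the paper's: equivariance in (1) is deduced from Lemma \ref{Gluing_tensor_lemma} (the paper packages this by factoring $F_{\vec{\lambda}} = i \circ F'_{\vec{\lambda}}$ with $F'_{\vec{\lambda}}(h) = \sum_\mu h \otimes \Delta_\mu$), and the isomorphism in (2) is obtained by reducing modulo $\tau$, invoking the Factorization Theorem, and then Nakayama. The only cosmetic difference is that for injectivity the paper uses the splitting of the surjection onto the free target to see that the kernel is a finitely generated direct summand, whereas you use the equivalent $\mathrm{Tor}_1$ argument; both are standard.
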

\begin{proof}
By Lemma \ref{Gluing_tensor_lemma},  the morphism 
\[  F'_{\vec{\lambda}}: \mathscr{H}(\vec{\lambda} )_{\mathbb{D}_\tau}  \to   \mathscr{H}(\vec{\lambda} )_{\mathbb{D}_\tau} \otimes_{\mathbb{C}[[\tau]] }  \bigoplus_{\mu \in D_{c, r''}}\, ( \mathscr{H}(\mu^*)\otimes \mathscr{H}(\mu)) [[\tau]]       \]
given by $h\mapsto \sum_{\mu \in D_{c,r''}} h\otimes \Delta_{\mu}$,  is a morphism of $\fg[C\backslash \Gamma\cdot \vec{q} ]^\Gamma$-modules.  Moreover, there exists an embedding obtained from the isomorphism $t_{\vec{\lambda}}$:
\[   i: \mathscr{H}(\vec{\lambda} )_{\mathbb{D}_\tau }  \otimes_{\mathbb{C}[[\tau]] }  \bigoplus_{\mu \in D_{c, r''}}\,
 ( \mathscr{H}(\mu^*)\otimes \mathscr{H}(\mu)) [[\tau]]     \hookrightarrow   \bigoplus_{\mu \in D_{c, r''}}\, 
  (  \mathscr{H}(\vec{\lambda} )\otimes  \mathscr{H}(\mu^*)\otimes \mathscr{H}(\mu)) [[\tau]].\]
Observe that $F_{\vec{\lambda}}=i\circ F'_{\vec{\lambda}}$. It concludes part (1) of the theorem.

We now proceed to prove part (2) of the theorem.   Using part (1) of the theorem and the morphism $\kappa_{\vec{q}}$, we get the $\mathbb{C}[[\tau]]$-morphism (\ref{gluding_morphism}). 
 Taking quotient by $\tau$, by the Factorization Theorem (Theorem \ref{thm3.1.2}) the morphism $\bar{F}_{\vec{\lambda}}$ gives rise to an isomorphism 
\[    \mathscr{V}_{C_o, \Gamma, \phi}(\vec{q}_o,\vec{\lambda} )\to  \bigoplus_{\mu\in D_{c, r''}}  \mathscr{V}_{  \tilde{C}_o,  \Gamma,\phi }
\left((\vec{q}_o, r', r'' ), ( \vec{\lambda}, \mu^*, \mu)\right). \]
As a consequence of the Nakayama Lemma (cf. [AM, Exercise 10, Chap. 2]),  $\bar{F}_{\vec{\lambda}}$ is surjective. (Observe that by Theorem \ref{covacua_coherent_basechange}, both the domain and the range of $\bar{F}_{\vec{\lambda}}$ are finitely generated $\bc[[\tau]]$-modules.)
Now, since the range of $\bar{F}_{\vec{\lambda}}$ is a free $\bc[[\tau]]$-module, we get that $\bar{F}_{\vec{\lambda}}$ splits over $\bc[[\tau]]$. Thus, applying the
Nakayama lemma (cf. [AM, Proposition 2.6]) again  to the kernel $K$ of $\bar{F}_{\vec{\lambda}}$, we get that $K=0$. Thus, $\bar{F}_{\vec{\lambda}}$ is an isomorphism, proving (2).
\end{proof}

\begin{definition}
\label{ramification_datum}
We say that a stable $s$-pointed $\Gamma$-curve $(\Sigma, q_1,\cdots, q_s)$ has  {\it marking data}  $\eta=\left((\Gamma_1,\chi_1), (\Gamma_2,\chi_2),\cdots, (\Gamma_s, \chi_s)\right)$ if for each $i$, the stabilizer group at $q_i$ is a (cyclic) subgroup $\Gamma_i \subset \Gamma$ and $\chi_i$ is the induced (automatically primitive) character of $\Gamma_i$ on the tangent space $T_{q_i} \Sigma$. 

We now introduce the  moduli stack $\overline{\mathscr{H}M}_{g, \Gamma,  \eta}$, which associates to each $\mathbb{C}$-scheme $T$ the groupoid of stable family $\xi:\Sigma_T \to T$ of $s$-pointed $\Gamma$-curves over $T$, such that  each geometric fiber is of genus $g$ and  is of marking data  $\eta$.   In particular,  $\bigcup_{i=1}^s \Gamma\cdot q_i(T)$ contains the ramification divisor of $\pi: \Sigma_T \to \Sigma_T/\Gamma$ (cf. \cite[Definition 4.1.6]{BR}).   Note that for any  stable family of $s$-pointed $\Gamma$-curves in $\overline{\mathscr{H}{M}}_{g, \Gamma,  \eta}$, its geometric fibers  contain at worst  only nodal singularity such that  their stabilizer groups are cyclic which do not exchange the branches (cf. \cite[Corollaire 4.3.3]{BR}, and the comment after \cite[Definition 6.2.3]{BR}).

For any $\gamma_1, \dots, \gamma_s\in \Gamma$, $(\Sigma, \gamma_1q_1, \dots, \gamma_sq_s)$ has the conjugate marking data 
$$((\gamma_1\Gamma_1\gamma_1^{-1}, ^{\gamma_1}\chi_1), \dots, (\gamma_s\Gamma_s\gamma_s^{-1}, ^{\gamma_s}\chi_s)),$$ where 
$ (^{\gamma_i}\chi_i) (\gamma_ig_i\gamma_i^{-1}):= \chi_i(g_i)$, for $g_i\in \Gamma_i$. We denote by $[\eta]$, the $\Gamma^s$-conjugacy class of $\eta$. 
\end{definition}

\begin{theorem} \label{thm8.8}
$\overline{\mathscr{H}{M}}_{g, \Gamma,  \eta}$ is a proper and smooth Deligne-Mumford stack of finite type.
\end{theorem}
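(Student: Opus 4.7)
The plan is to deduce the theorem by exhibiting $\overline{\mathscr{H}M}_{g,\Gamma,\eta}$ as a closed and open substack of a moduli stack of ``admissible $\Gamma$-covers of pointed stable curves'', and then importing the known good properties of the latter from the work of Bertin–Romagny. Concretely, let $s'$ be the total number of $\Gamma$-orbits among the sections $\vec q$ and let $g'$ be the genus of the quotient $\bar\Sigma = \Sigma/\Gamma$ (which is determined by $g,\Gamma,\eta$ via Riemann–Hurwitz, since $\eta$ records the full ramification data). First I would introduce the stack $\mathscr{A}dm_{g',\Gamma,\eta}$ parametrizing families $\Sigma_T\to T$ of nodal projective curves with a faithful $\Gamma$-action whose quotient $\bar\Sigma_T\to T$ is stably $s'$-marked by $\pi(\vec q)$, whose action on every node is stable in the sense of Definition \ref{stable_action} and does not exchange branches at nodes lying over nodes of $\bar\Sigma_T$, and whose marking data at the sections is $\eta$. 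The subtlety that $\bigcup_i\Gamma\cdot q_i(T)$ contains the full ramification divisor is a locally constant open-and-closed condition in families (by Lemma \ref{Fiber_Of_Section} the ramification indices are constant along $T$), so it cuts out $\overline{\mathscr{H}M}_{g,\Gamma,\eta}$ as a union of connected components of $\mathscr{A}dm_{g',\Gamma,\eta}$.

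Next I would establish that $\mathscr{A}dm_{g',\Gamma,\eta}$ is a Deligne–Mumford stack of finite type. The forgetful morphism
\[
\Phi:\mathscr{A}dm_{g',\Gamma,\eta}\longrightarrow \overline{\mathscr{M}}_{g',s'},\qquad (\Sigma_T\to T,\vec q)\longmapsto (\bar\Sigma_T\to T,\pi(\vec q))
\]
to the Deligne–Mumford stack of stable $s'$-pointed genus $g'$ curves is representable and finite: for a fixed stable pointed base curve, the $\Gamma$-covers with the prescribed discrete invariants $\eta$ form a finite set (parametrized by conjugacy classes of surjections $\pi_1^{\mathrm{orb}}(\bar\Sigma\setminus\{\text{branch points}\})\twoheadrightarrow$ quotients with the prescribed local monodromies at the marked points), and each such cover has only finitely many $\Gamma$-equivariant automorphisms over $\Sigma$ because of the stability of $\bar\Sigma$. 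Finite-type and Deligne–Mumford properties then transfer from $\overline{\mathscr{M}}_{g',s'}$.

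For smoothness, the main step is a deformation-theoretic computation: the obstruction to deforming a $\Gamma$-stable nodal marked curve $(\Sigma_o,\vec q_o)$ lives in a $\Gamma$-invariant Ext group $\mathrm{Ext}^2_{\mathscr{O}_{\Sigma_o}}(\Omega^1_{\Sigma_o}(\Gamma\cdot \vec q_o),\mathscr{O}_{\Sigma_o})^\Gamma$, which vanishes for dimension reasons on a one-dimensional nodal curve. Concretely, at a $\Gamma$-stable node $q$ whose stabilizer $\Gamma_q$ does not exchange the branches, the local $\Gamma_q$-equivariant smoothing of the node $xy=0$ by the one-parameter family $xy=\tau$ (with $\Gamma_q$ acting on $x$ and $y$ by inverse characters, as in Lemma \ref{smoothing_construction}) exists and is versal among $\Gamma_q$-equivariant deformations preserving the action at the node. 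Summing local contributions and invoking the standard gluing of local-to-global deformation theory yields formal smoothness of $\overline{\mathscr{H}M}_{g,\Gamma,\eta}$; the hardest input here is verifying that the $\Gamma$-stability condition exactly guarantees unobstructedness and that no extra obstruction is introduced by the ramification-divisor condition (the latter is an open condition, so it does not obstruct).

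Finally, properness is proved by the valuative criterion. Given a DVR $R$ with fraction field $K$ and a family over $\mathrm{Spec}\,K$, I would apply the stable reduction theorem for pointed curves to $(\bar\Sigma_K,\pi(\vec q_K))$ to obtain, after a finite base change, a stable model over $R$; then lift the $\Gamma$-action uniquely using the purity of branch locus (Zariski–Nagata) together with the tame (in fact finite-order cyclic) local monodromy structure dictated by $\eta$ — this is where the marking hypothesis that $\bigcup_i\Gamma\cdot q_i(T)$ contains the whole ramification divisor is essential, since it forces any degenerating cover to remain admissible and prevents loss of ramification into newly created nodes of the special fiber. Uniqueness of the stable limit for $\bar\Sigma_T$ together with uniqueness of the $\Gamma$-cover prolongation gives the valuative criterion. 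The main obstacle, in my view, is this last point: organizing the interplay between stable reduction of the quotient, $\Gamma$-equivariant prolongation of the cover, and the non-branch-exchanging stability condition at new nodes, so that the limiting object lies in $\overline{\mathscr{H}M}_{g,\Gamma,\eta}$ and not merely in some larger stack of admissible covers. All of this is carried out in detail by Bertin–Romagny \cite{BR}, and the plan is to cite their construction and verify that our conventions (marking data $\eta$ and the ramification-containment condition) match the ones under which their smoothness and properness results apply.
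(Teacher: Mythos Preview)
Your approach is correct and, like the paper's, ultimately rests on \cite{BR}; but the route is different. The paper does not reconstruct the deformation-theoretic and stable-reduction arguments underlying \cite{BR}, nor does it pass through a forgetful map to $\overline{\mathscr{M}}_{g',s'}$. Instead it observes that the only difference between $\overline{\mathscr{H}M}_{g,\Gamma,\eta}$ and the Bertin--Romagny Hurwitz stack $\overline{\mathscr{H}M}_{g,\Gamma,[\xi],[\eta]}$ of \cite[Definition 6.2.3]{BR} is one of labeling: the former records ordered sections $(q_1,\dots,q_s)$ with marking data $\eta$, while the latter records the $\Gamma$-stable relative Cartier divisor $\bigcup_i\Gamma\cdot q_i(T)$ together with its marking type (the $\Gamma$-conjugacy classes $[\eta]$) and the ramification datum $[\xi]$ (the subcollection of nontrivial $[\Gamma_i,\chi_i]$). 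The resulting forgetful morphism $\overline{\mathscr{H}M}_{g,\Gamma,\eta}\to\overline{\mathscr{H}M}_{g,\Gamma,[\xi],[\eta]}$ is representable, \'etale and essentially surjective, so smoothness, properness and the Deligne--Mumford property transfer directly from \cite[Theorem 6.3.1]{BR}. Your sketch essentially reproves that theorem in the present setting rather than invoking it; this buys a more self-contained account, but at the cost of organizing the equivariant deformation theory and stable-reduction arguments yourself (precisely the obstacle you flag at the end), whereas the paper's single \'etale comparison sidesteps all of that.
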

\begin{proof} (Sketch)
We can associate to $(\Sigma_T, \vec{q})$ (a stable family $\xi:\Sigma_T \to T$ of $s$-pointed $\Gamma$-curves)  the $\Gamma$-stable relative Cartier divisor $\bigcup_{i}\Gamma \cdot (q_i(T))$ in $\Sigma_T$ which is \'etale over $T$.  The $\Gamma^{\times s}$-conjugacy classes $[\eta]$ of $\eta$ is the marking type of  $\bigcup_{i}\, \Gamma \cdot (q_i(T))$.  Let $[\xi]$ be the subclass of those conjugacy classes  $[\Gamma_i,\chi_i]$ such that $\Gamma_i$ is nontrivial. Then, $[\xi]$ is the associated ramification datum of stable $s$-pointed $\Gamma$-curves in $\overline{\mathscr{H}{M}}_{{g}, \Gamma,  \eta}$. Let $\overline{\mathscr{H}M}_{g, \Gamma,[\xi], [\eta]}$ be the stable compactification of Hurwitz stack defined in \cite[Definition 6.2.3]{BR}.  The natural morphism  $\overline{\mathscr{H}{M}}_{{g}, \Gamma,  \eta}\to \overline{\mathscr{H}M}_{g, \Gamma,[\xi], [\eta]}$  is clearly representable,  \'etale and essentially surjective.  By \cite[Th\'eor\`em 6.3.1]{BR},  $\overline{\mathscr{H}M}_{g, \Gamma,[\xi], [\eta]}$  is a smooth proper Deligne-Mumford stack,  and hence  so is $\overline{\mathscr{H}{M}}_{g, \Gamma,  \eta}$.
\end{proof}

Let $D_{c,i}$ be the set of dominant weights of $\fg^{\Gamma_i}$ associated to the highest weight  integrable  (irreducible)
 representations of $\hat{L}(\fg, \Gamma_i, \chi_i):= \hat{L}(\fg, \sigma_i)$, where $\sigma_i\in \Gamma_i$ is the unique element such that $\chi_i(\sigma_i)= e^{\frac{2\pi i}{|\Gamma_i|}}$.  Choose a collection $\vec{\lambda}=(\lambda_1, \cdots, \lambda_s)$ of dominant weights, where $\lambda_i\in D_{c, i}$ for each $i$. 
Recall that being a Deligne-Mumford stack, $\overline{\mathscr{H}{M}}_{{g}, \Gamma,  \eta}$ has an atlas $\delta: X\to \overline{\mathscr{H}{M}}_{g, \Gamma, \eta }$ such that   $\delta$ is \'etale and surjective. By Theorem \ref{thm8.8}, $X$ is a smooth (but not necessarily connected) scheme of finite type over $\mathbb{C}$.

We can attach to $\delta: X\to \overline{\mathscr{H}{M}}_{g, \Gamma,  \eta}$  the coherent sheaf $\mathscr{V}_{\Sigma_X,  \Gamma,\phi }(  \vec{q}_X , \vec{\lambda})$   of conformal blocks, where $(\Sigma_X, \vec{q}_X)$ is the associated stable family of $s$-pointed $\Gamma$-curves over $X$.  This attachment can be done componentwise on $X$ via Definition \ref{def_sheaf_conformal_block}. 

For any two atlases $X,Y$ of $\overline{\mathscr{H}{M}}_{g, \Gamma,  \eta}$ and a morphism $f: Y\to X$ compatible with the atlas structures,   by Theorem \ref{covacua_coherent_basechange},  there exists a canonical isomorphism 
\[  \alpha_f: f^*\mathscr{V}_{\Sigma_X,  \Gamma,\phi }(  \vec{q}_X , \vec{\lambda}) \simeq  \mathscr{V}_{\Sigma_Y,  \Gamma,\phi }(  \vec{q}_Y , \vec{\lambda}), \]
 where $(\Sigma_X, \vec{q}_X)$ and $(\Sigma_Y, \vec{q}_Y)$ are the families of stable $s$-pointed $\Gamma$-curves associated to these two atlases $X,Y$.  Given three atlases $X,Y,Z$ and morphisms $g:Z\to Y$ and $f: Y\to X$, Theorem \ref{covacua_coherent_basechange} ensures the obvious cocycle condition.  Therefore, we get 
a coherent  sheaf $\mathscr{V}_{g, \Gamma, \phi}(\eta,  \vec{\lambda})$ on  $\overline{\mathscr{H}{M}}_{g, \Gamma,   \eta}$ such that $\delta^* \mathscr{V}_{g, \Gamma, \phi}(\eta,  \vec{\lambda}) \simeq \mathscr{V}_{\Sigma_X,  \Gamma,\phi }(  \vec{q}_X , \vec{\lambda})$ for any  atlas $\delta:X\to  \overline{\mathscr{H}{M}}_{g, \Gamma,   \eta}$.  Some basics of coherent sheaves on Deligne-Mumford stacks can be found  in \cite[Definition C.20]{Ku2}.


\begin{theorem}
\label{Hurwitz_locally_free}
For any genus $g\geq 0$, any marking data $\eta=\left((\Gamma_1,\chi_1), (\Gamma_2,\chi_2),\cdots, (\Gamma_s, \chi_s)\right)$ and any set of dominant wights $\vec{\lambda}=(\lambda_1, \dots, \lambda_s)$ with $\lambda_i\in D_{c,i}$ , 
the sheaf of conformal blocks $\mathscr{V}_{g, \Gamma, \phi}(\eta,  \vec{\lambda})$ is locally free over $\overline{\mathscr{H}{M}}_{g, \Gamma,  \eta}$.
\end{theorem}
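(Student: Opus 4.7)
The plan is to reduce to showing local freeness of the pullback sheaf on an \'etale atlas $\phi: X\to \overline{\mathscr{H}M}_{g,\Gamma,\eta}$, which by Theorem~\ref{thm8.8} is a smooth $\mathbb{C}$-scheme of finite type. By Theorem~\ref{covacua_coherent_basechange}, the sheaf $\phi^{*}\mathscr{V}_{g,\Gamma,\phi}(\eta,\vec{\lambda}) \simeq \mathscr{V}_{\Sigma_X,\Gamma,\phi}(\vec{q}_X,\vec{\lambda})$ is coherent on $X$. Since $X$ is smooth and the sheaf is coherent, local freeness at $x\in X$ is equivalent to freeness of the completed stalk $\mathscr{V}_{\Sigma_X,\Gamma,\phi}(\vec{q}_X,\vec{\lambda})^{\wedge}_x$ as a module over $\hat{\mathcal{O}}_{X,x}$, via faithfully flat descent along the completion. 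For points $x$ whose fiber is a smooth pointed $\Gamma$-curve, $\xi: \Sigma_X \to X$ is smooth in a neighborhood of $x$ and local freeness follows directly from Theorem~\ref{Locally_free_smooth_base}. The essential case is therefore that of an $x$ whose fiber is a $\Gamma$-stable nodal curve $(C_o,\vec{q}_o)$ with nodes occurring in $k\geq 1$ distinct $\Gamma$-orbits.

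Smoothness of $X$ together with the versal smoothing theory of Lemma~\ref{smoothing_construction} lets us identify
$$\hat{\mathcal{O}}_{X,x}\simeq \mathbb{C}[[\tau_1,\ldots,\tau_k,u_1,\ldots,u_m]],$$
where each $\tau_i$ is the canonical smoothing parameter for the $i$-th nodal $\Gamma$-orbit and $u_1,\ldots,u_m$ parametrize equisingular deformations. I would then apply Theorem~\ref{key_theorem} iteratively, smoothing one nodal orbit at a time: at the $i$-th step, working relatively over the complete local base built from the remaining parameters, the theorem produces a canonical decomposition of the sheaf of covacua as a direct sum, over weights attached to the newly normalized nodal orbit, of sheaves of covacua on the partially normalized family, extended trivially along $\tau_i$. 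After $k$ steps, one arrives at an isomorphism
\begin{equation*}
\mathscr{V}_{\Sigma_X,\Gamma,\phi}(\vec{q}_X,\vec{\lambda})^{\wedge}_x\ \simeq\ \bigoplus_{\vec{\mu}}\ \mathscr{V}_{\widetilde{\Sigma},\Gamma,\phi}\bigl((\vec{q}_o,\vec{r}\,',\vec{r}\,''),(\vec{\lambda},\vec{\mu}^{*},\vec{\mu})\bigr)\otimes_{\mathbb{C}[[\vec{u}]]} \hat{\mathcal{O}}_{X,x},
\end{equation*}
where $\vec{\mu}=(\mu_1,\ldots,\mu_k)$ runs over admissible tuples of weights at the $k$ nodal orbits, $\vec{r}\,'=(r_1',\ldots,r_k')$ and $\vec{r}\,''=(r_1'',\ldots,r_k'')$ denote the preimages of the nodes in the normalization, and $\widetilde{\Sigma}\to\mathrm{Spec}\,\mathbb{C}[[\vec{u}]]$ is the smooth family of $(s+2k)$-pointed $\Gamma$-curves obtained by normalizing every nodal orbit of $C_o$ and deforming along the $\vec{u}$ directions. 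Since this family is smooth over a smooth base, each summand on the right is free over $\mathbb{C}[[\vec{u}]]$ by Theorem~\ref{Locally_free_smooth_base}, so the completed stalk is free over $\hat{\mathcal{O}}_{X,x}$, as desired.

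The main obstacle in this plan is establishing the iterated factorization above, which requires a version of Theorem~\ref{key_theorem} relative to an arbitrary complete regular local $\mathbb{C}$-algebra $R$ (in our application, $R$ is the subring of $\hat{\mathcal{O}}_{X,x}$ corresponding to parameters other than the currently smoothed one, and $\mathrm{Spec}\,R$ itself parametrizes a family with some persistent nodes). This should be a direct transcription of the proof of Theorem~\ref{key_theorem}: the gluing element $\Delta_{\mu}\in (\mathscr{H}(\mu^{*})\otimes_{R}\mathscr{H}(\mu))[[\tau]]$ is defined by the same formula from the contragredient pairing of Lemma~\ref{non-degenerate_pairing}, Lemma~\ref{Gluing_tensor_lemma} extends $R$-linearly using the base-change behavior of integrable representations (Lemma~\ref{integrable_rep_base_change}), and bijectivity of the induced map $\bar{F}_{\vec{\lambda}}$ reduces, exactly as in the original argument, to the $R$-linear Factorization Theorem modulo $\tau$ together with Nakayama's lemma applied to the finitely generated $R[[\tau]]$-modules provided by Theorem~\ref{covacua_coherent_basechange}. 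The only real care needed is in keeping track of the persistent nodes in the fibers over $R$ at each step of the iteration, and in verifying that the canonical identifications from Lemma~\ref{smoothing_construction} compose unambiguously when several smoothings are performed in succession.
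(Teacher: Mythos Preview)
Your route differs from the paper's. The paper stratifies the stack by the number $i$ of nodal $\Gamma$-orbits, inducing an open filtration $X_\alpha^0\subset X_\alpha^1\subset\cdots\subset X_\alpha^k=X_\alpha$ on each irreducible component of the atlas, and proves local freeness on $X_\alpha^i$ by induction on $i$. The base case is Theorem~\ref{Locally_free_smooth_base}. For $x\in X_\alpha^i\setminus X_\alpha^{i-1}$ one takes a \emph{single} one-parameter smoothing $\beta_x':\mathbb{D}_\tau\to X_\alpha$ (from Lemma~\ref{smoothing_construction} as stated, over $\mathbb{C}$) whose generic point lands in $X_\alpha^{i-1}$; Theorem~\ref{key_theorem} (as stated, over $\mathbb{D}_\tau$) shows $(\beta_x')^*\mathscr{V}$ is free over $\mathbb{C}[[\tau]]$, so the fiber dimension at $x$ equals the constant rank on $X_\alpha^{i-1}$. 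Local freeness on $X_\alpha^i$ then follows from the criterion that a coherent sheaf on a reduced scheme with locally constant fiber dimension is locally free. This completely avoids relative versions of Lemma~\ref{smoothing_construction} and Theorem~\ref{key_theorem}; it is also indifferent to how many nodal orbits the special fiber carries.

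Your approach is sound in outline but the forward iteration has a circularity. You assert that step $i$ ``produces a canonical decomposition'', i.e.\ that $\bar F_i$ is an isomorphism over the relevant base. But the Nakayama argument from Theorem~\ref{key_theorem} needs the \emph{target} $\bigoplus_{\mu_i}\mathscr{V}_{\tilde\Sigma^{(i)}}[[\tau_i]]$ to be free, and at step $i<k$ the family $\tilde\Sigma^{(i)}$ still has $k-i$ nodal orbits, so this freeness is exactly what is not yet known. The ``$R$-linear Factorization Theorem modulo $\tau$'' you invoke has the same problem: fiberwise bijectivity (via Theorem~\ref{thm3.1.2} and Theorem~\ref{covacua_coherent_basechange}) plus Nakayama gives surjectivity, but injectivity again needs a free target. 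The fix is to run the induction \emph{backwards} from step $k$ (where the target is free by the smooth case), or equivalently to first define all the maps $\bar F_i$ (each is a well-defined morphism regardless of whether it is an isomorphism), compose them to land in the free module $\bigoplus_{\vec\mu}\mathscr{V}_{\tilde\Sigma}\otimes_{\mathbb{C}[[\vec u]]}\hat{\mathcal{O}}_{X,x}$, and then prove the composite is an isomorphism in one stroke: modulo the maximal ideal it becomes the $k$-fold iterated Factorization over $\mathbb{C}$, and Nakayama applies. You would also still owe a relative version of Lemma~\ref{smoothing_construction} over a complete local base with persistent nodes, which is genuine extra deformation-theoretic work the paper does not need.
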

\begin{proof}
It suffices to show that the coherent sheaf $\mathscr{V}_{\Sigma_X,  \Gamma,\phi }(  \vec{q}_X , \vec{\lambda})$ is locally free, where $X$ is an atlas of $ \overline{\mathscr{H}{M}}_{g, \Gamma,   \eta}$.   Since $X$ is a disjoint union of smooth irreducible schemes, we can work with a fixed component $X_\alpha$ of $X$, and show that the associated sheaf of conformal blocks restricted to $X_\alpha$ is locally free.

 We introduce a filtration on $\overline{\mathscr{H}{M}}_{g, \Gamma,  \eta}$:
\[  \overline{\mathscr{H}{M}}_{g, \Gamma,  \eta}^0\subset  \overline{\mathscr{H}{M}}_{g, \Gamma,  \eta}^1  \subset \cdots \subset \overline{\mathscr{H}{M}}_{g, \Gamma,  \eta}^k=\overline{\mathscr{H}{M}}_{g, \Gamma,  \eta},  \]
where $\overline{\mathscr{H}{M}}_{g, \Gamma,  \eta}^i$ is the open substack of $ \overline{\mathscr{H}{M}}_{g, \Gamma,  \eta}$ with each geometric fiber consisting of at most $i$ many $\Gamma$-orbits of nodal points.  Note that $\overline{\mathscr{H}{M}}_{g, \Gamma,  \eta}^0$  consists of stable smooth $s$-pointed $\Gamma$-curves.  With the restriction on the genus to be fixed $g$, there exists $k\geq 0$ such that  the number of orbits of nodal points is bounded by $k$.    This filtration induces an open filtration on $X_\alpha$ via $\delta$, 
\[  X_\alpha^0\subset X_\alpha^1\subset \cdots  \subset  X_\alpha^k=X_\alpha . \]

We now prove inductively that the coherent sheaf $\mathscr{V}_{\Sigma_{X_\alpha^i},  \Gamma,\phi }(  \vec{q}_{X_\alpha^i} , \vec{\lambda})$ is locally free, where $\vec{q}_{X_\alpha^i}$ is the restriction of $\vec{q}$ to  $X_\alpha^i$. When $i=0$, in view of Theorem \ref{Locally_free_smooth_base},  $\mathscr{V}_{\Sigma_{X_\alpha^0},  \Gamma,\phi }(  \vec{q}_{X_\alpha^0} , \vec{\lambda})$ is locally free.  (Observe that by \cite[Chap. III, Theorem 10.2]{H}, $\Sigma_{X_\alpha^0}\to X_\alpha^0$ is a smooth morphism.) Assume that $\mathscr{V}_{\Sigma_{X_\alpha^{i-1}},  \Gamma,\phi }(  \vec{q}_{X_\alpha^{i-1}} , \vec{\lambda})$ is locally free where $i\geq 1$. 
By the smoothing construction in Lemma \ref{smoothing_construction},  for any  $x\in X_\alpha^i\backslash X_\alpha^{i-1}$, there exists a morphism $\beta_x: \mathbb{D}_\tau  \to \overline{\mathscr{H}{M}}_{g, \Gamma,  \eta}^{i}  $ such that $\beta_x(o)=\delta(x)$ and $\beta_x ( g_\tau )\in \overline{\mathscr{H}{M}}_{g, \Gamma,  \eta}^{i-1}\backslash \overline{\mathscr{H}{M}}_{g, \Gamma,  \eta}^{i-2}$,  where $g_\tau$ is the generic point of $\mathbb{D}_\tau$. Recall that $\delta: X\to \overline{\mathscr{H}{M}}_{g, \Gamma,  \eta}$ is \'etale and surjective, hence $\beta_x$ can be lifted to  $\beta_x': \mathbb{D}_\tau\to X_\alpha$ such that $\delta\circ \beta_x'=\beta_x$ and $\beta_x'(o)=x$. It follows that $\beta_x'(g_\tau)\in X_\alpha^{i-1}\backslash X_\alpha^{i-2}$. 
  By Theorems \ref{covacua_coherent_basechange} and  \ref{key_theorem},  the rank of $\mathscr{V}_{\Sigma_{X_\alpha^{i-1}},  \Gamma,\phi }(  \vec{q}_{X_\alpha^{i-1}} , \vec{\lambda})$  (which is locally free by induction)  is equal to the dimension of $\mathscr{V}_{\Sigma_x,  \Gamma,\phi }(  \vec{q}_x , \vec{\lambda})$.  It follows that $\mathscr{V}_{\Sigma_{X_\alpha^{i}},  \Gamma,\phi }(  \vec{q}_{X_\alpha^{i}} , \vec{\lambda})$  
is also locally free (cf. \cite[Chap. II, Exercise 5.8(c)]{H}).   It concludes the proof of the theorem.
\end{proof}

The dimension of $\overline{\mathscr{H}{M}}_{g, \Gamma,  \eta}$ is equal to $3\bar{g}-3+s$ (cf. \cite[Theorem 5.1.5]{BR}) if it is not empty, where by Riemann-Hurwitz formula the genus $\bar{g}$ of $\Sigma/\Gamma$ for any $\Sigma\in \overline{\mathscr{H}{M}}_{g, \Gamma,  \eta}$, satisfies the following equation (cf. \cite[Chap. IV, Corollary 2.4]{H}):
\[2g-2= |\Gamma| (2\bar{g}-2) +\sum_{i=1}^s \frac{|\Gamma|}{|\Gamma_i|}( |\Gamma_i| -1) .\]
If $\dim \overline{\mathscr{H}{M}}_{g, \Gamma,  \eta}=0$, then we must have $\bar{g}=0$ and $s=3$.  
\begin{lemma}
\label{Existence_Lemma}
If $\dim \overline{\mathscr{H}{M}}_{g, \Gamma,  \eta}>0$, then any stable $s$-pointed curve $(\bar{\Sigma},\vec{p})$ of genus $\bar{g}$ and consisting of only one node, admits a stable $s$-pointed $\Gamma$-cover $(\Sigma, \vec{q})\in  \overline{\mathscr{H}{M}}_{g, \Gamma,  \eta}$.
\end{lemma}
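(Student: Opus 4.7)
The plan is to show that the natural forgetful morphism
$$\Phi: \overline{\mathscr{H}M}_{g, \Gamma, \eta} \to \overline{M}_{\bar{g}, s}, \qquad (\Sigma, \vec{q}) \mapsto (\Sigma/\Gamma, \pi(\vec{q})),$$
is surjective. This is well-defined on $\mathbb{C}$-points by Remark \ref{remark8.2} (which says $\Gamma$-stability of $(\Sigma, \Gamma\cdot \vec{q})$ is equivalent to stability of $(\bar{\Sigma}, \vec{p})$) and extends to families by construction. The dimension hypothesis $\dim \overline{\mathscr{H}M}_{g, \Gamma, \eta} = 3\bar{g}-3+s > 0$ excludes $(\bar{g}, s) = (0, 3)$, so the target $\overline{M}_{\bar{g}, s}$ is an irreducible Deligne--Mumford stack of positive dimension whose smooth open locus $M_{\bar{g}, s}$ is dense. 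Once surjectivity of $\Phi$ is established, the given $(\bar{\Sigma}, \vec{p})$ lifts to some $(\Sigma, \vec{q}) \in \overline{\mathscr{H}M}_{g, \Gamma, \eta}$, which is automatically $\Gamma$-stable by the definition of that stack.

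First I would note that the open substack $\mathscr{H}M_{g, \Gamma, \eta}$ parametrizing smooth $\Gamma$-covers is non-empty: since $\overline{\mathscr{H}M}_{g, \Gamma, \eta}$ is the stable compactification of $\mathscr{H}M_{g, \Gamma, \eta}$ and has positive dimension (hence is non-empty), the latter is open and dense in each irreducible component and so is non-empty. Second, I would argue that the restriction $\mathscr{H}M_{g, \Gamma, \eta} \to M_{\bar{g}, s}$ is surjective. By the Riemann existence theorem, $\Gamma$-Galois covers of a smooth pointed curve $(\bar{C}, \vec{p})$ with local monodromy data prescribed by $\eta$ correspond (up to isomorphism) to equivalence classes of surjective homomorphisms $\pi_1(\bar{C}^{\mathrm{an}} \setminus \vec{p}) \twoheadrightarrow \Gamma$ sending the standard generator around $p_i$ to an element generating $\Gamma_i$ with the prescribed character $\chi_i$. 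This fundamental group depends only on the topological type $(\bar{g}, s)$, so the existence of even one such homomorphism (guaranteed by the non-emptiness of $\mathscr{H}M_{g, \Gamma, \eta}$) yields a cover for every smooth $(\bar{C}, \vec{p})$ of the same topological type.

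Third, the morphism $\Phi$ is proper: $\overline{\mathscr{H}M}_{g, \Gamma, \eta}$ is proper by Theorem \ref{thm8.8}, $\overline{M}_{\bar{g}, s}$ is separated, and any morphism from a proper stack to a separated stack is proper; hence $\Phi(\overline{\mathscr{H}M}_{g, \Gamma, \eta})$ is closed in $\overline{M}_{\bar{g}, s}$. Since this image contains the dense open $M_{\bar{g}, s}$ by the previous step, it must coincide with $\overline{M}_{\bar{g}, s}$. In particular, the stable curve $(\bar{\Sigma}, \vec{p})$ of genus $\bar{g}$ with one node lies in the image, giving the required $(\Sigma, \vec{q})$.

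The main obstacle is the second step, the surjectivity of $\mathscr{H}M_{g, \Gamma, \eta} \to M_{\bar{g}, s}$. Although it is topologically intuitive via Riemann existence, making it rigorous requires the GAGA-type passage between algebraic $\Gamma$-covers and topological covers, together with a careful bookkeeping of the local data $(\Gamma_i, \chi_i)$ (not merely the conjugacy class of a local monodromy generator, but a specific generator corresponding to the character $\chi_i$ on the tangent space). An alternative, more hands-on approach would be to normalize $\bar{\Sigma}$ at its node to obtain a stable $(s+2)$-pointed curve, construct a $\Gamma$-cover of this normalization with suitably extended marking data $(\eta, (\Gamma_{\bar{r}}, \chi), (\Gamma_{\bar{r}}, \chi^{-1}))$, and glue the two sections of the cover over the new marked points to produce a nodal cover of $\bar{\Sigma}$; this reduces to the same existence-of-cover question on a smooth (possibly disconnected) curve and uses the smoothing construction of Lemma \ref{smoothing_construction} to verify that the resulting object lies in $\overline{\mathscr{H}M}_{g, \Gamma, \eta}$.
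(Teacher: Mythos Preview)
Your proposal is correct and follows essentially the same strategy as the paper: prove that the forgetful map $\Phi:\overline{\mathscr{H}M}_{g,\Gamma,\eta}\to\overline{\mathscr{M}}_{\bar g,s}$ is surjective by hitting all smooth curves and then invoking properness. The only differences are in execution. The paper simply cites \cite[Proposition~6.5.2~iii)]{BR} for surjectivity of $\Phi$, whereas you obtain surjectivity onto $M_{\bar g,s}$ via Riemann existence plus the topological invariance of $\pi_1$; and where you use abstract properness (``proper source, separated target, hence closed image''), the paper unpacks this as the valuative criterion: deform the given nodal $(\bar\Sigma,\vec p)$ over $\mathbb{C}[[\tau]]$ to a smooth generic fibre, cover the generic fibre (after finite base change), and apply the semi-stable reduction theorem \cite[Proposition~5.2.2]{BR} to extend to a $\Gamma$-stable family, whose special fibre is then the desired cover. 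Your abstract argument is cleaner; the paper's version has the advantage of being self-contained about \emph{why} the special fibre recovers exactly $(\bar\Sigma,\vec p)$ (separatedness of $\overline{\mathscr{M}}_{\bar g,s}$), a point you leave implicit in ``the image is closed''.
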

\begin{proof}
By assumption,  $3\bar{g}-3 +s>0$. It follows that either $\bar{g}\geq 1$ or $s\geq 4$. Hence, we can always find a stable $s$-pointed curve $\bar{C}$ of genus $\bar{g}$ over $\mathbb{C}[[\tau]]$  such that the special fiber $\bar{C}_o$ has only one node, and the generic fiber $\bar{C}_{\mathcal{K}}$ is smooth, where $\mathcal{K}=\mathbb{C}((\tau))$. Let $\overline{\mathscr{M}}_{\bar{g},s }$ be the moduli stack of stable $s$-pointed curves of genus $\bar{g}$.  By \cite[Proposition 6.5.2 iii)]{BR}, the morphism $\overline{\mathscr{H}{M}}_{g, \Gamma,  \eta}\to \overline{\mathscr{M}}_{\bar{g},s }$ given by $(\Sigma, \vec{q})\mapsto (\bar{\Sigma}, {\vec{\bar{q}}})$,  where $\bar{\Sigma}=\Sigma/\Gamma$ and ${\vec{\bar{q}}}$ is the image of $\vec{q}$ in $\bar{\Sigma}$, is surjective. It follows that after a finite base change of $\mathcal{K}$,  $\bar{C}_{\mathcal{K}}$ has a Galois cover ${C}_{\mathcal{K}}$ with Galois group $\Gamma$ and the prescribed marking data.  By semi-stable reduction theorem (cf. \cite[Proposition 5.2.2]{BR}),  after another  finite base change of $\mathcal{K}$,  ${C}_{\mathcal{K}}$ can be uniquely extended to a stable $s$-pointed $\Gamma$-curve ${C}$ over $\mathbb{C}[[\tau]]$ (cf. \cite[Proposition 5.1.2]{BR}). Hence, the special fiber ${C}_o$ is a stable $s$-pointed $\Gamma$-curve, whose quotient ${C}_o/\Gamma$ is exactly the given stable $s$-pointed curve $\bar{C}_o$. It concludes the proof of the lemma.
\end{proof}

\begin{remark}\label{remark8.11}
{\rm (1) In the case $\Gamma$ is cyclic,  $\overline{\mathscr{H}{M}}_{g, \Gamma,  \eta}$ is irreducible (which can be deduced from the irreduciblity of  $\overline{\mathscr{H}M}_{g, \Gamma,[\xi], [\eta]}$ proved in  \cite[Corollary 6.4.3]{BR}).  Then, 
by Lemma \ref{Existence_Lemma}, Theorem \ref{Hurwitz_locally_free} and the Factorization Theorem (Theorem \ref{thm3.1.2}), one can see that to compute the dimension of the space of conformal blocks on smooth stable $s$-pointed $\Gamma$-curve, we are  reduced to considering the case: cyclic covers over $\mathbb{P}^1$ with $s=3$.  

\vskip1ex

(2)  For any $s$-pointed smooth $\Gamma$-curve $(\Sigma, \vec{q})$ and weights $\vec{\lambda}=(\lambda_1, \dots, \lambda_s)$ with $\lambda_i\in D_{c, q_i}$ , as in Definition \ref{def1.2}, we can attach the space of conformal blocks. Here we do not need to assume that $\cup_{i}\Gamma\cdot q_i$ contains all the ramified points of  $\Sigma \to \bar{\Sigma}$. Thanks to the Propagation Theorem (Corollary \ref{coro2.2.3}(a)), the dimension of the space of conformal blocks in this case can be reduced to the case that all the ramified points are contained in $\cup_{i}\Gamma\cdot q_i$ when $0\in D_{c, q}$ for any ramified point $q$ in $\Sigma$.  
\vskip1ex

(3)   The morphism $f: \overline{\mathscr{H}{M}}_{g, \Gamma,  \eta} \to \overline{\mathscr{M}}_{g, s'}$ given by mapping the stable $s$-pointed $\Gamma$-curve  $(\Sigma , \vec{q})$ to the stable $s'$-pointed curve $(\Sigma,  \cup_{i, \gamma\in \Gamma} \,\gamma\cdot q_i )$
(cf. Remark \ref{remark8.2}) (where $s'=\sum_{i=1}^s\frac{|\Gamma|}{|\Gamma_i|}$) is representable and finite (cf. \cite[Proposition 6.5.2]{BR} for the corresponding result for $\overline{\mathscr{H}M}_{g, \Gamma,[\xi], [\eta]}$; now composing this with the representable and finite morphism $\overline{\mathscr{H}M}_{g, \Gamma, \eta} \to \overline{\mathscr{H}M}_{g, \Gamma,[\xi], [\eta]}$, the assertion about $f$ follows). By taking the pushforward,  the locally free sheaf of conformal blocks on $\overline{\mathscr{H}{M}}_{g, \Gamma,  \eta}$ gives rise to many characteristic classes in the cohomology of $\overline{\mathscr{M}}_{g, s'}$.  It could give rise to  interesting applications. }

\end{remark}

\section{Connectedness of $\Mor_\Gamma(\Sigma^{*},G)$}
\label{thetafuctions}

{\it In this section as well as in Sections 10-12, we consider a group homomorphism $\phi: \Gamma\to {\rm Aut }(\fg)$ such that $\Gamma$ stabilizes a Borel subalgebra $\fb$ of $\fg$. Moreover,  $\Sigma$ denotes a smooth irreducible projective curve with a faithful action of $\Gamma$ with the projection $\pi:\Sigma\to \bar\Sigma :=\Sigma/\Gamma$ and $G$ the simply-connected simple algebraic group over $\mathbb{C}$ with Lie algebra $\mathfrak{g}$. Let $B$ be the Borel subgroup of $G$ with Lie algebra $\mathfrak{b}$.  Observe that in earlier sections, we did not require $\Sigma$ to be smooth unless explicitly stated. }

 We prove the connectedness of the ind-group ${\rm Mor}_{\Gamma}(\Sigma^*, G)$. In particular, we show that the twisted Grassmannian $X^q= G(\mathbb{D}^*_q )^{\Gamma_q}/ G(\mathbb{D}_q )^{\Gamma_q} $ is irreducible.

Let $\sAlg$ be the category of commutative algebras with identity over $\mathbb{C}$ (which are not necessarily finitely generated) and all $\mathbb{C}$-algebra homomorphisms between them.
\begin{definition}\label{appB-defi-B1}
A $\mathbb{C}$-{\it space functor} (resp. $\mathbb{C}$-{\it group functor}) is a covariant functor
$$
\mathscr{F}:\sAlg\to \Set\quad \text{(resp. $\mathscr{G}$roup)}
$$
which is a sheaf for the $\fppf$  (faithfully flat of finite presentation - Fid\`element Plat de Pr\'esentation Finie) topology, i.e., for any $R\in \sAlg$ and any  faithfully flat finitely presented $R$-algebra $R'$, the diagram
\begin{equation}\label{eqnb.1.1}
\mathscr{F}(R)\to \mathscr{F}(R')\rightrightarrows \mathscr{F}(\,\dprod{R'}{R}\,{R'})
\end{equation}
in exact, where $\Set$ (resp. $\mathscr{G}$roup) is the category of sets (resp. groups). In particular, $\mathscr{F}(R)\to \mathscr{F}(R')$ is one to one. 

{\it From now on we shall abbreviate  faithfully flat finitely presented $R$-algebra by $\fppf$ $R$-algebra.}

By a $\mathbb{C}$-{\it functor morphism} $\varphi: \mathscr{F}\to \mathscr{F}'$ between two  $\mathbb{C}$-space functors, we mean a set map 
 $\varphi_R: \mathscr{F}(R)\to \mathscr{F}' (R)$ for any  $R\in \sAlg$ such that the following diagram is commutative for any algebra homomorphism $R \to S$:
\[
\xymatrix{
\mathscr{F}(R)\ar[r]^-{\varphi_R}\ar[d]& \mathscr{F}' (R)\ar[d]\\
\mathscr{F}(S)\ar[r]^-{\varphi_S}& \mathscr{F}' (S).
}
\]
Direct limits exist in the category of $\mathbb{C}$-space ($\mathbb{C}$-group) functors.
For any ind-scheme $X=(X_{n})_{n\geq 0}$ over $\mathbb{C}$, the functor $\mathfrak{S}_{X}$  is a $\mathbb{C}$-space functor
by virtue of the Faithfully Flat Descent (cf. [G, VIII 5.1, 1.1 and 1.2]), where $\mathfrak{S}_{X} (R)$ is the set of all the morphisms $\Mor(\Spec R, X)$. This allows us to realize the category of ind-schemes over $\mathbb{C}$ as a full subcategory of the category of $\mathbb{C}$-space functors.
\end{definition} 
We recall the following well-known lemma (cf. \cite[Lemma B.2]{Ku2}).
\begin{lemma} \label{lemB.2} Let $\mathscr{F}^{o}:\sAlg\to \Set$ be a covariant functor. Assume that
\begin{equation}
\mathscr{F}^{o}(R)\to \mathscr{F}^{o}(R')\,\,\,\text{is one to one}\label{B.2-eq1}
\end{equation}
 for any $R\in \sAlg$ and any $\fppf$ $R$-algebra $R'$.

Then, there exists a $\mathbb{C}$-space functor $\mathscr{F}$ containing $\mathscr{F}^{o}$ (i.e., $\mathscr{F}^{o}(R)\subset \mathscr{F}(R)$ for any $R$)  such that for any $\mathbb{C}$-space functor $\mathscr{G}$ and  a natural transformation
$\theta^{o}:\mathscr{F}^{o}\to \mathscr{G}$, there exists a unique natural transformation $\theta:\mathscr{F}\to \mathscr{G}$ extending $\theta^{o}$.

Moreover, such a $\mathscr{F}$ is unique up to a unique isomorphism extending the identity map of $\mathscr{F}^{o}$.

We call such a $\mathscr{F}$ the $\fppf$-{\it sheafification} of $\mathscr{F}^{o}$. 

If $\mathscr{F}^{o}$ is a $\mathbb{C}$-group functor, then its $\fppf$-sheafification $\mathscr{F}$ is a $\mathbb{C}$-group functor.
\end{lemma}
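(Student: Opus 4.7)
The plan is to carry out the standard fppf plus-construction, which is simplified by the separation hypothesis \eqref{B.2-eq1} on $\mathscr{F}^o$. Concretely, I would define $\mathscr{F}(R)$ to be the set of equivalence classes of pairs $(R',x)$, where $R'$ is an fppf $R$-algebra and $x\in \mathscr{F}^o(R')$ satisfies the descent condition $\iota_1(x)=\iota_2(x)$ in $\mathscr{F}^o(R'\otimes_R R')$ under the two canonical maps $\iota_1,\iota_2\colon R'\rightrightarrows R'\otimes_R R'$; and $(R',x)\sim (R'',y)$ iff the images of $x$ and $y$ agree in $\mathscr{F}^o(R'\otimes_R R'')$ (transitivity uses that $R'\otimes_R R''\otimes_R R'''$ is a common fppf refinement, together with \eqref{B.2-eq1}). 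Base change along $R\to S$ sends $(R',x)$ to $(R'\otimes_R S,x\otimes 1)$, producing a covariant functor, and the map $\mathscr{F}^o(R)\to \mathscr{F}(R)$ given by $z\mapsto[(R,z)]$ is injective thanks precisely to \eqref{B.2-eq1} applied to the witnesses of equivalence.

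Next I would verify that $\mathscr{F}$ is an fppf sheaf. Separation is inherited: if $[(R',x)]$ and $[(R'',y)]$ become equal in $\mathscr{F}(S)$ for an fppf algebra $S/R$, then $x\otimes 1$ and $y\otimes 1$ agree in $\mathscr{F}^o(R'\otimes_R R''\otimes_R S)$, which is fppf over $R'\otimes_R R''$; applying \eqref{B.2-eq1} for $\mathscr{F}^o$ gives equality in $\mathscr{F}^o(R'\otimes_R R'')$, i.e.\ $(R',x)\sim(R'',y)$. For gluing, given a descent datum $s\in \mathscr{F}(S)$ (with matching pullbacks in $\mathscr{F}(S\otimes_R S)$) for an fppf cover $R\to S$, write $s=[(S',x)]$ with $S'$ an fppf $S$-algebra; then $S'$ is also fppf over $R$, and the matching condition combined with separation of $\mathscr{F}^o$ promotes the descent datum for $s$ to the descent datum $\iota_1^*(x)=\iota_2^*(x)$ in $\mathscr{F}^o(S'\otimes_R S')$, so $[(S',x)]\in \mathscr{F}(R)$ is the required glued section.

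For the universal property, given a $\mathbb{C}$-space functor $\mathscr{G}$ and a natural transformation $\theta^o\colon \mathscr{F}^o\to \mathscr{G}$, I would extend $\theta$ by sending $[(R',x)]\in \mathscr{F}(R)$ to the unique element of $\mathscr{G}(R)$ whose image in $\mathscr{G}(R')$ equals $\theta^o_{R'}(x)$: existence uses that $\theta^o_{R'}(x)$ satisfies descent in $\mathscr{G}$ (by naturality of $\theta^o$ and the descent condition on $x$) together with $\mathscr{G}$ being a sheaf; uniqueness uses separation of $\mathscr{G}$. Well-definedness under $\sim$ and naturality of $\theta$ follow from the same separation argument. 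Uniqueness of $\theta$ extending $\theta^o$ is forced because any extension must have the stated value on classes that already lie in $\mathscr{F}^o$ after an fppf base change; then separation of $\mathscr{G}$ propagates this determination globally. Uniqueness of $(\mathscr{F},\iota)$ up to unique isomorphism is a formal consequence of the universal property.

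The main obstacle is the careful bookkeeping of the gluing axiom for $\mathscr{F}$: one must check that an arbitrary fppf covering family in $\Spec R$ can be refined by a single faithfully flat finitely presented morphism $\Spec R'\to \Spec R$, so that the plus-construction using single-algebra covers really does produce a sheaf for the full fppf topology, not merely a separated presheaf. For the group-functor case, the multiplication $m^o\colon \mathscr{F}^o\times \mathscr{F}^o\to \mathscr{F}^o$ composed with the inclusion $\mathscr{F}^o\hookrightarrow \mathscr{F}$ gives a natural transformation $\mathscr{F}^o\times \mathscr{F}^o\to \mathscr{F}$; noting that $\mathscr{F}^o\times \mathscr{F}^o$ is again separated and that its fppf-sheafification is $\mathscr{F}\times \mathscr{F}$ (products commute with sheafification in the category of sheaves of sets), the universal property produces a unique multiplication $m\colon \mathscr{F}\times \mathscr{F}\to \mathscr{F}$, and the group axioms transfer to $\mathscr{F}$ by applying the same universal-property argument to the relevant triple-product diagrams.
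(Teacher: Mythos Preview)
The paper does not supply a proof of this lemma: it is stated with the preamble ``We recall the following well-known lemma'' and no argument is given. So there is no paper proof to compare against.

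Your proposal is the standard plus-construction for fppf sheafification, simplified (correctly) by the separation hypothesis \eqref{B.2-eq1}, and it is essentially correct. Two small remarks. First, your stated ``main obstacle'' about refining an arbitrary fppf covering family by a single fppf algebra is not actually an issue here: in Definition~\ref{appB-defi-B1} the sheaf condition is formulated only for single fppf $R$-algebras $R'$, so the equalizer diagram \eqref{eqnb.1.1} is all you must verify, and your gluing argument handles exactly that. Second, in the gluing step it is worth recording explicitly why the candidate $[(S',x)]\in\mathscr{F}(R)$ maps back to $s\in\mathscr{F}(S)$: the image is $[(S'\otimes_R S,\;x\otimes 1)]$ and one compares it with $[(S',x)]$ after a further pullback to an fppf $S$-algebra dominating both $S'$ and $S'\otimes_R S$ (for instance $S'\otimes_R S'$), using once more the separation of $\mathscr{F}^o$. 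With these clarifications your sketch goes through.
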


We recall the following result communicated by Faltings.  A detailed proof (due to B. Conrad) can be found in \cite[Theorem 1.3.22]{Ku2}.

\begin{theorem}\label{chap1-them1.2.22}
Let $\mathscr{G}=(\mathscr{G}_{n})_{n\geq 0}$ be an ind-affine group scheme filtered by (affine) finite type  schemes over $\mathbb{C}$ and let $\mathscr{G}^{\red}=(\mathscr{G}^{\red}_{n})_{n\geq 0}$ be the associated reduced ind-affine group scheme. Assume that the canonical ind-group morphism $i:\mathscr{G}^{\red}\to \mathscr{G}$ induces an isomorphism $(di)_{e}:\Lie (\mathscr{G}^{\red})\xrightarrow{\sim}\Lie \mathscr{G}$ of the associated Lie algebras (cf. \cite[Corollary B.21]{Ku2}). Then, $i$ is an isomorphism of ind-groups, i.e., $\mathscr{G}$ is a reduced ind-scheme.
\end{theorem}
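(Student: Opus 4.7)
The plan is to reduce, via translation by group elements, to a purely formal statement at the identity, and then exploit the fact that in characteristic zero the formal completion of a group ind-scheme at the identity is determined by its Lie algebra.

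First I would observe that $i : \mathscr{G}^{\red} \to \mathscr{G}$ is automatically a closed immersion of ind-schemes, and by construction it is a bijection on $\mathbb{C}$-points, so the content of the theorem is that the surjection of structure sheaves $\mathscr{O}(\mathscr{G}_n) \twoheadrightarrow \mathscr{O}((\mathscr{G}_n)^{\red})$ becomes an isomorphism after perhaps enlarging $n$ in the ind-filtration. Since multiplication gives, for any $R$-point $g\in \mathscr{G}(R)$, an automorphism $L_g$ of the $R$-ind-scheme $\mathscr{G}_R := \mathscr{G}\times\Spec R$ carrying the identity section to $g$, it will suffice to prove that $i$ is an isomorphism on the formal completion $\widehat{\mathscr{G}}_e$ at the identity; translation then shows $i$ is formally \'etale along every section, and a formally \'etale closed immersion of ind-schemes that is set-theoretically surjective is an isomorphism.

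I would then analyze $\widehat{\mathscr{G}}_e$ via its algebra of distributions $\mathrm{Dist}(\mathscr{G}):=\varinjlim_{k,n}\bigl(\mathscr{O}(\mathscr{G}_n)/\mathfrak{m}_e^{k+1}\bigr)^{\vee}$. The ind-group structure endows $\mathrm{Dist}(\mathscr{G})$ with a cocommutative Hopf algebra structure whose primitive elements constitute $\Lie(\mathscr{G})$. In characteristic zero, the Cartier--Gabriel--Kostant structure theorem identifies any such connected cocommutative Hopf algebra with the universal enveloping algebra of its primitives, so $\mathrm{Dist}(\mathscr{G}) \simeq U(\Lie \mathscr{G})$ and similarly $\mathrm{Dist}(\mathscr{G}^{\red}) \simeq U(\Lie\mathscr{G}^{\red})$. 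The closed immersion $i$ induces a Hopf-algebra inclusion $U(\Lie\mathscr{G}^{\red}) \hookrightarrow U(\Lie\mathscr{G})$, which by the hypothesis $(di)_e : \Lie\mathscr{G}^{\red} \xrightarrow{\sim} \Lie\mathscr{G}$ is an isomorphism; dualizing back gives $\widehat{\mathscr{G}^{\red}}_e \xrightarrow{\sim} \widehat{\mathscr{G}}_e$, which together with the translation reduction finishes the proof.

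The main obstacle I expect is making the distribution-algebra formalism rigorous in the ind-scheme setting: each $\mathscr{G}_n$ is only a finite-type affine scheme with no group law of its own, so one must verify that the comultiplication induced by $\mathscr{G}\times\mathscr{G} \to \mathscr{G}$ descends compatibly to the direct limit of finite-dimensional Artinian quotients at $e$, and that translation by an $R$-point $g$ for a possibly non-reduced Artinian $R$ really does identify the formal neighborhood of the identity with that of $g$ in the correct scheme-theoretic sense (so that nilpotents transport correctly under $L_g$). Once these compatibilities are in place, the reduction to the Cartier--Gabriel--Kostant theorem is immediate, and the characteristic zero hypothesis enters only through that single invocation.
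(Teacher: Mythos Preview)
The paper does not actually prove this theorem. Immediately before the statement the authors write: ``We recall the following result communicated by Faltings. A detailed proof (due to B.~Conrad) can be found in [Ku2, Theorem 1.2.22].'' The result is invoked as a black box (in Definition~\ref{defi9.4}, to conclude that $\bigl(G[z_q^{-1}]^-\bigr)^{\Gamma_q}$ is a reduced ind-group variety), so there is no in-paper argument against which to compare your proposal.

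Regarding the proposal itself: the strategy --- identify $\mathrm{Dist}(\mathscr{G})$ with $U(\Lie\mathscr{G})$ via Cartier--Gabriel--Kostant in characteristic zero, and then use left translation to transport the conclusion from the formal neighbourhood of $e$ to every point --- is the natural approach and is presumably close in spirit to the cited proof. You are right that the ind-scheme bookkeeping is where the content lies. One point that deserves more than the phrase ``a formally \'etale closed immersion of ind-schemes that is set-theoretically surjective is an isomorphism'': to conclude that $i$ is an isomorphism of ind-schemes you need $\mathscr{G}^{\red}(R)\to\mathscr{G}(R)$ to be bijective for \emph{every} $\mathbb{C}$-algebra $R$, i.e.\ for each $n$ the closed immersion $\mathscr{G}_n\hookrightarrow\mathscr{G}_m$ factors through $\mathscr{G}_m^{\red}$ for some $m\ge n$. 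Your formal-completion-plus-translation argument establishes this for Artinian local $R$; since each $\mathscr{G}_n$ is of finite type over $\mathbb{C}$, one can then pass to general $R$ (the nilradical of $\mathscr{O}(\mathscr{G}_n)$ is detected on completions at closed points, and those completions embed in the ind-completion you control), but this step should be written out explicitly rather than absorbed into a slogan that is not a standard lemma in the ind-setting.
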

\begin{definition} \label{defi9.4} (Twisted affine Grassmannian) Recall that for any affine scheme $Y=\Spec S$ with the action of a group $H$, the closed subset $Y^H$ acquires a closed subscheme structure by taking 
$$Y^H:= \Spec \left(S/\langle g\cdot f-f\rangle_{g\in H, f\in S}\right),$$
where $\langle g\cdot f-f\rangle$ denotes the ideal generated by the collection $g\cdot f-f$. With this scheme structure, $Y^H$ represents the functor $R
\rightsquigarrow \Mor_H(\Spec R, Y).$

For any point $q\in \Sigma$, let  $\sigma_q$ be the generator of the stabilizer $\Gamma_q$ such that $\chi_q(\sigma_q) = \epsilon_q$, where
$\epsilon_q:=e^{\frac{2\pi i}{|\Gamma_q|}}$ (cf. Definition \ref{Gamma_curve}). 
Choose a formal parameter $z_q$ at $q$ in $\Sigma$ such that 
\begin{equation}\label{eqn9.3.2} \sigma_q\cdot z_q^{-1}= \epsilon_q z_q^{-1},\,\,\,\text{cf. the identity \eqref{3.3.new}}.
\end{equation}
Consider the functor 
$$R\rightsquigarrow G\left(R((z_q))\right)^{\Gamma_q}/G\left(R[[z_q]]\right)^{\Gamma_q}.$$
Its fppf-sheafification is denoted by  the functor $\mathscr{X}^q=\mathscr{X}(G,q, \Gamma)$. 

Recall that there exists an open subset $\mathbb{V}\subset G((z_q))$ (where $G((z_q)) :=G\left(\mathbb{C}((z_q))\right)$) such that the product map 
$$G\left(R[z_q^{-1}]\right)^-\times G\left(R[[z_q]]\right)\simeq \mathbb{V}(R)\,\,\,\text{is a bijection for any $R$},$$
where $G\left(R[z_q^{-1}]\right)^-$ is the kernel of $G\left(R[z_q^{-1}]\right)\to G(R), z_q^{-1}\mapsto 0$ (cf. \cite[Corollary 3]{Fa2} or \cite[Lemma 1.3.16]{Ku2}).  Moreover, the functor $G\left(R[z_q^{-1}]\right)^-$ is represented by an ind-group variety (in particular, reduced) structure on $G[z_q^{-1}]^-$ (cf. \cite[Corollary 3]{Fa2} and \cite[Corollary 1.3.3 and Theorem 1.3.23]{Ku2}). 
This gives rise to a bijection 
\begin{equation} \label {eqn9.4.1}
\left(G\left(R[z_q^{-1}]\right)^-\right)^{\Gamma_q}\times G\left(R[[z_q]]\right)^{\Gamma_q}\simeq \mathbb{V}(R)^{\Gamma_q}.
\end{equation}
Declare $\{g\mathbb{V}^{\Gamma_q}/G[[z_q]]^{\Gamma_q}\}_{g\in G((z_q))^{\Gamma_q}}$ as an open cover of 
$$X^q=X(G, q, \Gamma) :=G((z_q))^{\Gamma_q}/G[[z_q]]^{\Gamma_q}$$
and put the ind-scheme structure on $g\mathbb{V}^{\Gamma_q}/G[[z_q]]^{\Gamma_q}$ via its bijection
$$g\mathbb{V}^{\Gamma_q}/G[[z_q]]^{\Gamma_q}\simeq \left(G[z_q^{-1}]^-\right)^{\Gamma_q}\,\,\,\text{induced from the identification \eqref{eqn9.4.1} }$$
with the closed ind-subgroup scheme structure on $\left(G[z_q^{-1}]^-\right)^{\Gamma_q}$ coming from $G[z_q^{-1}]^-$. In particular, $\left(G[z_q^{-1}]^-\right)^{\Gamma_q}$ represents the functor $\left(G(R[z_q^{-1}])^-\right)^{\Gamma_q}$. Thus, we get an ind-scheme structure on $X^q$ such that the projection $ G((z_q))^{\Gamma_q}\to X^q$ admits local sections in the Zariski topology. Moreover, the injection 
$X^q\hookrightarrow G((z_q))/G[[z_q]]$ is a closed embedding. Further, $X^q$ represents the functor $\mathscr{X}^q$ since the ind-projective variety
 $G((z_q))/G[[z_q]]$ represents the fppf-sheafification of the functor $ G\left(R((z_q))\right)/G\left(R[[z_q]]\right)$. In particular,
\begin{equation} \label {eqn9.4.2} \mathscr{X}^q(\mathbb{C})=X^q.
\end{equation}
Let  $U$ (resp. $U^-$) be the unipotent radical of $B$ (resp. of the opposite Borel subgroup $B^-$). By considering the ind-subgroup schemes 
 $\left(U[z_q^{-1}]^-\right)^{\Gamma_q}$ and  $\left(U^-[z_q^{-1}]^-\right)^{\Gamma_q}$ of  $\left(G[z_q^{-1}]^-\right)^{\Gamma_q}$, it is easy to see that the Lie algebras
$$ \Lie \left(\left(G[z_q^{-1}]^-\right)^{\Gamma_q}\right)= \Lie \left(\left(\left(G[z_q^{-1}]^-\right)^{\Gamma_q}\right)_{\red}\right),$$
since the Lie subalgebras $ \left(\mathfrak{u}\otimes \bc[z_q^{-1}]^-\right)^{\Gamma_q}$ and  $ \left(\mathfrak{u}^-\otimes \bc[z_q^{-1}]^-\right)^{\Gamma_q}$ generate the Lie algebra  $ \left(\mathfrak{g}\otimes \bc[z_q^{-1}]^-\right)^{\Gamma_q}$ (cf. Section \ref{Kac_Moody_Section}), where $Y_{\red}$ denotes the corresponding reduced ind-subscheme. Thus,  $\left(G[z_q^{-1}]^-\right)^{\Gamma_q}$ is a (reduced) ind-group variety (cf. Theorem \ref{chap1-them1.2.22}). Hence, $X^q$ also is a (reduced) ind-projective variety. 

Observe that $X^q$ being reduced, it  is the {\it (twisted) affine Grassmannian}  considered in [Ku, $\S$7.1] based at $q$ corresponding to the twisted affine Lie algebra $\hat{\mathfrak{g}}_{\pi(q)}\simeq \hat{L}(\mathfrak{g}, \Gamma_q)$ (cf. Section \ref{Kac_Moody_Section} and Lemma \ref{lemma2.3}) and its parabolic subalgebra  
$\hat{L}(\mathfrak{g}, \Gamma_q)^{\geq 0}$. To prove this, follow the same argument as in [LS, Proof of Proposition 4.7] and the construction of the projective representation of $G((z_q))^{\Gamma_q}$  given by (subsequent) Theorem \ref{thm1.3.4}.
\end{definition}

Let $\bar{q} =\{q_{1},\ldots,q_{s}\}$ be a set of points of $\Sigma$ (for $s\geq 1$) with distinct $\Gamma$-orbits and let $\Sigma^{*}:=\Sigma\backslash \Gamma\cdot \bar{q}$. Recall that $\Xi=\Xi_{\bar{q}}:=\Mor_\Gamma(\Sigma^{*},G)$ is an  ind-affine group scheme, which is a closed ind-subgroup scheme of  $\Mor(\Sigma^{*},G)$ as $\Gamma$-fixed points. {\it We abbreviate $\Mor_\Gamma(\Sigma^{*},G)= G(\mathbb{C}[\Sigma^*])^\Gamma $ by $G(\Sigma^*)^\Gamma$.} Then, $\Xi$ represents the functor:
$$R\in \Alg\rightsquigarrow \Xi(R):=\Mor_\Gamma(\Sigma^{*}_{R},G),$$
 where $\Sigma^{*}_{R}:=\Sigma^{*}\times \Spec R$, with the trivial action of $\Gamma$ on $R$. This follows from the corresponding result for the functor $R \rightsquigarrow \Mor(\Sigma^{*}_{R},G)$ (without the $\Gamma$-action) which is represented by $G(\Sigma^{*})$ (cf. [Ku2, Lemma 5.2.10]).
 
 Let $\Xi^{\text{an}}$ denote the group $\Xi$ with the analytic topology. The following result in the non-equivariant case (i.e. $\Gamma =(1)$) is due to Drinfeld. We adapt his arguments (cf. \cite[Proof of Theorem 8.1.1]{Ku2}) along with the construction of the projective representation of $G((z_q))^{\Gamma_q}$ as in (subsequent) Theorem \ref{thm1.3.4}.
\begin{theorem}\label{thm8.1.1}
The group $\Xi^{\text{an}}$ is path-connected and hence $\Xi$ is irreducible.
\end{theorem}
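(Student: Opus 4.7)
The plan follows Drinfeld's strategy from the non-equivariant case, as presented in [LS, $\S$5], adapted to the $\Gamma$-equivariant setting via the projective representation of $G((z_q))^{\Gamma_q}$ on integrable highest-weight modules provided by (subsequent) Theorem 1.3.4. Since $\Xi$ is an ind-affine group scheme whose reducedness should be available from a comparison of Lie algebras via Theorem \ref{chap1-them1.2.22}, irreducibility will follow once connectedness of $\Xi^{\mathrm{an}}$ is established; and the latter is equivalent to path-connectedness since each finite-type filtrant of $\Xi$ is a complex algebraic variety and group translations carry the identity component onto every connected component. So it suffices to show that $\Xi^{\mathrm{an}}$ is path-connected.

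First I would fix a puncture $q\in \bar q$ and form the Laurent expansion morphism $\ell_q\colon \Xi \to G((z_q))^{\Gamma_q}$, composed with the canonical projection to obtain $\phi_q\colon \Xi \to X^q$ to the twisted affine Grassmannian. By Definition \ref{defi9.4}, $X^q$ is a reduced ind-projective variety, which in its reduced structure is the usual twisted affine Grassmannian of $\hat L(\fg,\Gamma_q)$. It admits a Bruhat decomposition into affine Schubert cells indexed by the affine Weyl group (cf.\ [Ku, Ch.\,7]), and is therefore irreducible and path-connected as an ind-variety. Taking the basic integrable representation $\mathscr H(0)$ of $\hat L(\fg,\Gamma_q)$ at a positive level $c$ divisible by $|\Gamma|$ (so that $0\in D_c$ by Corollary \ref{newcoroweight0}), Theorem 1.3.4 provides a projective representation of $G((z_q))^{\Gamma_q}$ on $\mathscr H(0)$ and hence a $G((z_q))^{\Gamma_q}$-equivariant embedding $X^q \hookrightarrow \mathbb P(\mathscr H(0)^{*})$ with ample tautological line bundle $\mathcal L$.

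I would then proceed by induction on $s=|\bar q|$. The base case $s=0$ gives $\Xi = \mathrm{Mor}_\Gamma(\Sigma,G) = G^\Gamma$, since $\Sigma$ is projective and $G$ affine; this is connected by Steinberg's theorem, as $G$ is simply-connected and $\Gamma$ acts by algebraic automorphisms (even stabilizing a Borel subalgebra by hypothesis). For the inductive step, observe that $\phi_q^{-1}([\mathrm{id}])$ consists of those $\Gamma$-equivariant morphisms $\Sigma^{*}\to G$ which extend regularly across $\Gamma\cdot q$, i.e., it is $\mathrm{Mor}_\Gamma(\Sigma\setminus \Gamma\cdot(\bar q\setminus\{q\}),G)$, involving one fewer $\Gamma$-orbit of punctures; this is path-connected by the inductive hypothesis. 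Combined with path-connectedness of $X^q$, what remains to check is an approximation/density statement: the $\Xi$-orbit of the base point $[\mathrm{id}]$ is Zariski-dense in $X^q$, and every point of $X^q$ can be connected to $[\mathrm{id}]$ by a path that lifts to $\Xi$.

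The main obstacle will be this density statement. Here the projective representation plays the decisive role: by the Residue Theorem \eqref{eq6} applied to the cocycle \eqref{pointwise-affine-Lie}, the central extension of the twisted loop algebra $(\fg\otimes \bc((z_q)))^{\Gamma_q}$ restricts trivially to $(\fg\otimes \bc[\Sigma^{*}])^\Gamma$, so the pullback of $\mathcal L$ along $\phi_q$ is trivial on $\Xi$. Exponentiating elements of $(\fg\otimes \bc[\Sigma^{*}])^\Gamma$ in $\Xi$ then produces enough $\Gamma$-equivariant one-parameter subgroups to move the base point into every Schubert cell of $X^q$, giving the required density. Assembling these pieces, every element of $\Xi^{\mathrm{an}}$ is connected to the identity via a path first moving in the fiber direction $\phi_q^{-1}([\mathrm{id}])$ (handled by induction) and then along a lift of a path in $X^q$ (handled by density and path-connectedness of $X^q$), completing the argument.
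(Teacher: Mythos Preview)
Your induction on $s=|\bar q|$ by \emph{removing} punctures breaks at the step $s=1\to s=0$. For $s=1$ (say $\bar q=\{q\}$), the map $\phi_q\colon \Xi_{\{q\}}=G(\Sigma\setminus\Gamma\cdot q)^\Gamma\to X^q$ is \emph{not} surjective: by the uniformization picture, its image is precisely the orbit of the base point, which corresponds to $\mathscr{G}$-torsors that trivialize over all of $\bar\Sigma$, a proper subset of $X^q$ in general. Your ``density via exponentiation'' paragraph does not repair this: exponentiation in $\Xi$ is only defined for nilpotent elements, and in any case the splitting of the central extension over $\Xi$ says nothing about transitivity on $X^q$. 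So the fibration argument never reaches your base case $s=0$.

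The paper runs the same fibration in the \emph{opposite} direction: it \emph{adds} auxiliary punctures $q'_1,\dots,q'_{n+1}$, shows $\Xi_{n+1}/\Xi_n\simeq X^{q'_{n+1}}$ (here surjectivity holds because one trivializes bundles over the still-affine $\Sigma_n^*$, not over all of $\Sigma$), and deduces $\pi_0(\Xi_n^{\mathrm{an}})\simeq\pi_0(\Xi_{n+1}^{\mathrm{an}})$ from $\pi_0=\pi_1=0$ for $X^{q'_{n+1}}$. But this only gives stability of $\pi_0$ under adding punctures; the actual triviality of $\pi_0(\Xi)$ comes from a separate algebraic input you are missing: Lemma~\ref{uni}, that $G(K)^\Gamma$ is generated by $U(K)^\Gamma$ and $U^-(K)^\Gamma$. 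Writing any $\sigma\in\Xi$ as $\prod_j\Exp(x_j)$ with $x_j\in\mathfrak u(K)^\Gamma\cup\mathfrak u^-(K)^\Gamma$, one lands in some $\Xi_{n+1}$ where the explicit path $t\mapsto\prod_j\Exp(tx_j)$ connects $\sigma$ to $e$; combined with $\pi_0$-stability this gives the result. (Minor point: irreducibility follows directly from path-connectedness via [Ku, Lemma 4.2.5]; you do not need, and cannot yet invoke, reducedness of $\Xi$ here---that is established later in Corollary~\ref{coro11.5}.)
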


\begin{proof}
Take any  points $q_{1}',\ldots, q'_n, q'_{n+1}\in \Sigma\setminus \Gamma\cdot \bar{q}$ with distinct $\Gamma$-orbits and set (for any $0\leq i\leq n+1$)
$$\Xi_{i}=\Xi_{\bar{q}\cup \{q_{1}',\ldots, q_{i}'\}}=G(\Sigma^{*}_{i})^\Gamma,\text{~~ where~~ } \Sigma^{*}_{i}:=\Sigma^{*}\backslash 
\Gamma\cdot \{q'_{1},\ldots, q'_{i}\}.
$$
Consider the functor
$$
\mathscr{F}^{\circ}:R\rightsquigarrow \Xi_{n+1}(R)/\Xi_{n}(R).
$$
It is easy to see that
\begin{equation}
\mathscr{F}^{\circ}(R)\hookrightarrow \mathscr{F}^{\circ}(R'),\quad\text{for any\ $\mathbb{C}$-algebras~} R\subset R'. \label{chap8-eq1}
\end{equation}
Let $\widehat{\Xi_{n+1}/\Xi_{n}}$ be the fppf-sheafification of $\mathscr{F}^{\circ}$ (cf. Lemma \ref{lemB.2}).

We claim that as the $\mathbb{C}$-space functors
\begin{equation}
\widehat{\Xi_{n+1}/\Xi_{n}}\simeq \mathscr{X}^{q},\label{chap8-eq2}
\end{equation}
where $q=q'_{n+1}$. Define the morphism
$$\Xi_{n+1}(R) \to  \mathscr{X}^{q}(R), \,\,\gamma \mapsto \gamma_q,$$
where $\gamma_q$ is the power series expansion of $\gamma$ at $q$ in the parameter $z_q$.
The above morphism clearly factors through 
$$\Xi_{n+1}(R)/\Xi_n(R) \to  \mathscr{X}^{q}(R)$$ and hence we get a morphism of $\mathbb{C}$-space functors
$$\hat{\theta}: \widehat{\Xi_{n+1}/\Xi_{n}}\to \mathscr{X}^{q}.$$
Conversely, we define a map $\hat{\psi}:\mathscr{X}^q\to \widehat{\Xi_{n+1}/\Xi_{n}}$ as follows. Fix $ R\in\sAlg$. Take $\gamma_R\in  G\left(R((z_q))\right)^{\Gamma_q}$. Let $\mathscr{G}=\mathscr{G}(\Sigma, \Gamma, \phi)$ be the parahoric Bruhat-Tits group scheme (cf. Definition \ref{defi11.1}). Then, by \cite[Proposition 4]{He}, $\gamma_R$ corresponds to a $\mathscr{G}$-torsor over $\bar{\Sigma}\times \Spec R$  together with a section  $\sigma_{R}$ over $(\bar{\Sigma}\backslash \pi(q))_{R}$ and  a section $\mu_R$ over $(\mathbb{D}_{\pi(q)})_R$ such that 
\begin{equation} \label{neweqn8.3.1} \mu_R= \sigma_R\cdot  \gamma_R,\,\,\,\text{over $(\mathbb{D}^\times_{q})_R$}.
\end{equation}
This is possible since  $\gamma_R$ extends uniquely to an element of $\left(G(\pi^{-1}\mathbb{D}^\times_{\pi(q)})^\Gamma\right)_R$ (cf. Definition 
\ref{Gamma_curve}). There exists an $R$-algebra $R'$ with $\Spec R'\to \Spec R$ an \'etale cover (in particular, $R'$ is a fppf $R$-algebra) such that the pull-back $\mathscr{G}$-torsor   $E_{\gamma_{R'}}$ over $\bar{\Sigma}_{R'}$  admits a  section $\theta_{R'}$ over $(\pi(\Sigma^*_n))_{R'}$ (cf. [He, Theorem 4]). (Observe that $G$ being simply-connected, generic $\mathscr{G}_{\bc(\Sigma)}$ is simply-connected.)
Define 
$$\theta_{R'}=\sigma_{R'}\cdot {\psi_{\theta_{R'}}}(\gamma_{R'}),\,\,\,\text{over $(\Sigma^*_{n+1})_{R'}$}, $$
where $\sigma_{R'}$ is the pull-back of the section $\sigma_R$ to $(\bar{\Sigma}\setminus \pi(q))_{R'}$ and $\gamma_{R'}$ denotes the image of $\gamma_R$ in $G\left(R'((z_q))\right)^{\Gamma_q}$. 
Now,  set $\hat{\psi}(\gamma_{R'})={\psi}_{\theta_{R'}}(\gamma_{R'})$ mod $\Xi_n(R').$  It is easy to see that $\hat{\psi}(\gamma_{R'})$ does not depend upon the choices of 
$\sigma_R, \mu_R$ and $\theta_{R'}$ satisfying the equation \eqref{neweqn8.3.1}. Moreover,  $\hat{\psi}$ factors through $G\left(R'[[z_q]]\right)^{\Gamma_q}$. Thus, we get a 
$\mathbb{C}$-functor morphism (still denoted by) $\hat{\psi}:  \mathscr{X}^{q} \to  \widehat{\Xi_{n+1}/\Xi_{n}}.$ Further, it is easy to see that   $\hat{\theta}$ and $\hat{\psi}$ are inverses of each other. This proves the assertion \eqref{chap8-eq2}. In particular, the functor  $\widehat{\Xi_{n+1}/\Xi_{n}}$ is also representable represented by its $\mathbb{C}$-points $ \widehat{\Xi_{n+1}/\Xi_{n}}(\mathbb{C})$. 
We abbreviate $\Xi_{i}(\mathbb{C})$ by $\Xi_{i}$. From the equation \eqref{chap8-eq2}, we see that
$$
\Xi_{n+1}/\Xi_{n}\hookrightarrow \widehat{\Xi_{n+1}/\Xi_{n}}(\mathbb{C})\simeq \mathscr{X}^q(\mathbb{C}).
$$
Moreover, from the above definition of $\hat{\psi}$ and the identity \eqref{eqn9.4.2}, 
$
\hat{\psi}:\mathscr{X}^q(\mathbb{C})\xrightarrow{\sim}\widehat{\Xi_{n+1}/\Xi_{n}}(\mathbb{C})$ lands inside $\Xi_{n+1}/\Xi_{n}.$
Thus, we get
\begin{equation}
\Xi_{n+1}/\Xi_{n}=\widehat{\Xi_{n+1}/\Xi_{n}}(\mathbb{C})\simeq \mathscr{X}^q(\mathbb{C}).\label{chap8-eq5}
\end{equation}
This identification gives rise to an ind-variety structure on $\Xi_{n+1}/\Xi_n$ transported from that of $X^q=\mathscr{X}^q(\mathbb{C})$. Moreover, with this ind-variety structure, $\Xi_{n+1}/\Xi_n$ represents the functor $\widehat{\Xi_{n+1}/\Xi_{n}}$. It is easy to see (by considering the corresponding map at $R$-points) that with this ind-variety structure on  $\Xi_{n+1}/\Xi_n$, the action map:
$$\Xi_{n+1}\times (\Xi_{n+1}/\Xi_n)\to  \Xi_{n+1}/\Xi_n $$ 
is a morphism of ind-schemes. 

For any morphism $f: \Spec R \to  \Xi_{n+1}/\Xi_n$, there exists an \'etale cover $\Spec S \to \Spec R$ such that the projection $\Xi_{n+1} \to 
 \Xi_{n+1}/\Xi_n$ splits over $\Spec S$.  From this it is easy to see that $\left(\Xi_{n+1}/\Xi_{n}\right)^{\an}$ has the quotient topology induced from $\Xi_{n+1}^{\an}$. Moreover, for any ind-variety $Y=(Y_n)_{n\geq 0}$, any compact subset of $Y^{\an}$ lies in some $Y_N$ (which is easy to verify). Thus, $\Xi_{n+1}^{\an}\to \left(\Xi_{n+1}/\Xi_{n}\right)^{\an}$ is a Serre fibration.
This gives rise to an exact sequence (cf. \cite[Chap.~7, \S2, Theorem 10]{Sp})
\begin{equation}
\pi_{1}((X^q)^{\text{an}})\to \pi_{0}(\Xi_n^{\text{an}})\to \pi_{0}(\Xi^{\text{an}}_{n+1})\to \pi_{0}((X^q)^{\text{an}}).\label{chap8-eq6}
\end{equation}
But,
\begin{equation} \label{neweqn8.1.1.8} \pi_{1}((X^q)^{\text{an}})=\pi_{0}((X^q)^{\text{an}})=0,
\end{equation}
 from the Bruhat decomposition (cf. [Ku, Proposition 7.4.16]). Thus, we get
\begin{equation}
\pi_{0}(\Xi_{n}^{\text{an}})\simeq \pi_{0}(\Xi_{n+1}^{\text{an}}).\label{chap8-eq7}
\end{equation}
Now, we are ready to prove the theorem. Take
$$
\sigma \in \Xi_{\bar{q}}:=\Mor_\Gamma (\Sigma^{*},G)=G(\mathbb{C}[\Sigma^{*}])^\Gamma\subset G(K)^\Gamma,
$$
where $K$ is the quotient field of $\mathbb{C}[\Sigma^{*}]$. Since $G$ is simply-connected, by the following lemma,  $G(K)^\Gamma$ is generated by subgroups  $U(K)^\Gamma$ and  $U^-(K)^\Gamma$, where (as before) $U$ (resp. $U^-$) is the unipotent radical of $B$ (resp. of the opposite Borel subgroup $B^-$).
Moreover, $U$ and $U^-$ being  unipotent groups and $K\supset \mathbb{C}$,  $U(K)^\Gamma\simeq \mathfrak{u}(K)^\Gamma$ under the exponential map
(and similarly for $U^-$). Thus, we can write 
$$
\sigma=\Exp (x_1)\ldots \Exp (x_{d}),\,\,\,\text{for some  $x_{i}\in  \mathfrak{u}(K)^\Gamma \cup  \mathfrak{u}^-(K)^\Gamma$}. $$
Thus, there exists a finite set $\bar{q}'=\{q'_{1},\ldots,q'_{n+1}\}\subset \Sigma^*$ with disjoint $\Gamma$-orbits such that  all  the poles of any $x_{i}$ (which means the poles of $f_i^j$ writing $x_i=\sum_j e^j\otimes f^j_i$ for  a basis $e^j$ of $\mathfrak{u}$ or $\mathfrak{u}^-$) are contained in $\Gamma\cdot \bar{q}'$. Thus, 
$
\sigma\in\Xi_{n+1}.
$
Consider the curve
$$
\hat{\sigma}:[0,1]\to \Xi^{\text{an}}_{n+1}, \,\,\, t\mapsto \Exp (tx_{1})\ldots \Exp (tx_{d})\,\,\,\text{joining $e$ to $\sigma$}.
$$
 Since
$$
\pi_{0}(\Xi_n^{\text{an}})\simeq \pi_{0}(\Xi^{\text{an}}_{n+1}),\quad \text{by \eqref{chap8-eq7}},
$$
we get that $e$ and $\sigma$ lie in the same path component of $\Xi^{\text{an}}$, thus $\Xi^{\text{an}}$ is path-connected. Using [Ku, Lemma 4.2.5] we get that $\Xi$ is irreducible.
\end{proof}

Our original proof of the following lemma was more direct (and involved). We thank Philippe Gille for pointing out the  following argument relying on results of Borel-Tits and Steinberg.
\begin{lemma}\label{uni} Let $G, \Sigma, \Gamma$ be as in the beginning of this section and let $K$ be the function field of $\Sigma$. Let  $U$ (resp. $U^-$) be the unipotent radical of $B$ (resp. of the opposite Borel subgroup $B^-$). Then, $G(K)^\Gamma $ is generated (as an abstract group)
by  $U(K)^\Gamma$ and  $U^-(K)^\Gamma$. 
\end{lemma}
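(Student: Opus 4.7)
The strategy is to translate the lemma into a statement about generation of rational points of a quasi-split simply-connected semisimple algebraic group over $F := K^\Gamma$, the function field of $\bar{\Sigma}$, and then invoke the Borel-Tits/Steinberg unipotent generation theorem. The extension $K/F$ is Galois with group $\Gamma$, and $\Gamma$ acts on $G$ via $\phi$; by Lemma \ref{quasi-split-lemma}, it stabilizes both a Borel $\fb$ and a Cartan $\fh \subset \fb$, hence also the opposite Borel $\fb^-$ and the corresponding algebraic subgroups $B$, $T$, $B^-$, $U$, and $U^-$ of $G$.

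First I would construct a semisimple algebraic group $\mathscr{H}$ over $F$ whose $F$-points compute $G(K)^\Gamma$. Concretely, form the Weil restriction $\mathrm{Res}_{K/F}(G_K)$, which carries a natural $\Gamma$-action combining the Galois action on $K$ with the given action $\phi$ on $G$; take $\mathscr{H}$ to be the identity component of the fixed-point subgroup scheme $\bigl(\mathrm{Res}_{K/F}(G_K)\bigr)^\Gamma$ over $F$, so that $\mathscr{H}(F) = G(K)^\Gamma$. The $\Gamma$-stable subgroups $B$, $B^-$, $U$, $U^-$ of $G$ produce, by the same Weil-restriction-plus-invariants recipe, subgroups $\mathscr{B}$, $\mathscr{B}^-$, $\mathscr{U}$, $\mathscr{U}^-$ of $\mathscr{H}$ defined over $F$; by construction $\mathscr{B}$ and $\mathscr{B}^-$ are opposite Borels with unipotent radicals satisfying $\mathscr{U}(F) = U(K)^\Gamma$ and $\mathscr{U}^-(F) = U^-(K)^\Gamma$. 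In particular $\mathscr{H}$ is quasi-split and isotropic over $F$.

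Next I would verify that $\mathscr{H}$ is semisimple and simply-connected. Over the algebraic closure $\bar{F}$ the group $\mathscr{H}_{\bar F}$ is (Zariski-locally) identified with the $\Gamma$-fixed subgroup of $G$ under an action preserving a pinning, which by a theorem of Steinberg is connected reductive; simply-connectedness in the quasi-split case follows from the fact that $G$ is simply-connected and that the derived subalgebra of $\fg^\Gamma$ has the Dynkin diagram of the associated twisted affine type (as recorded in Section \ref{Kac_Moody_Section}), with coroot lattice matching that of a simply-connected group. With $\mathscr{H}$ thus quasi-split, simply-connected, isotropic and semisimple over $F$, I would apply the classical theorem of Borel-Tits (extending Steinberg's generation result for split Chevalley groups) which asserts that for any such group $\mathscr{H}$ the group $\mathscr{H}(F)$ is generated by $\mathscr{U}(F)$ and $\mathscr{U}^-(F)$. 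Translating back via the identifications $\mathscr{U}(F) = U(K)^\Gamma$, $\mathscr{U}^-(F) = U^-(K)^\Gamma$ and $\mathscr{H}(F) = G(K)^\Gamma$ gives the lemma.

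The main obstacle is the precise identification of $\mathscr{H}$ and the verification of its simply-connectedness: in general taking $\Gamma$-fixed points of a simply-connected semisimple group yields only a semisimple (possibly non-simply-connected) group, so one must check that the quasi-split hypothesis, together with the fact that we are taking fixed points of a Weil restriction rather than of $G$ itself, produces a simply-connected form. This is where the assumption that $\Gamma$ stabilizes a Borel (and the resulting pinning-preserving action) is used decisively; an alternative but equivalent route, as indicated in the paper, is to note that the Kneser-Tits conjecture is known for isotropic quasi-split simply-connected groups over arbitrary fields, exactly the case at hand.
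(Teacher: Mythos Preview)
Your approach is essentially the paper's own: realize $G(K)^\Gamma$ as the $F$-points (with $F=K^\Gamma$) of a quasi-split group scheme over $F$, then invoke Borel--Tits \cite[Proposition 6.2, Remark 6.6]{BT} together with Steinberg \cite[Lemma 64]{St2}. The paper's proof is only a few lines and cites exactly these results.

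However, your identification of $\mathscr{H}_{\bar F}$ is incorrect, and this makes your discussion of simply-connectedness more complicated than necessary. You write that $\mathscr{H}_{\bar F}$ is ``the $\Gamma$-fixed subgroup of $G$'', but in fact $\mathscr{H}_{\bar F}\cong G_{\bar F}$ itself. The point is that $K\otimes_F \bar F$ splits as a product of $|\Gamma|$ copies of $\bar F$ permuted simply transitively by $\Gamma$; hence $\bigl(\mathrm{Res}_{K/F}G_K\bigr)_{\bar F}\cong G_{\bar F}^{|\Gamma|}$, with $\Gamma$ acting by permuting the factors (twisted by $\phi$), and the fixed subgroup is the diagonal copy of $G_{\bar F}$. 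This is exactly what the paper records: $G(K\otimes_{K_o}\bar K_o)^\Gamma\cong G(\bar K_o)$ because $\Gamma$ acts faithfully on $K$. So $\mathscr{H}$ is an $F$-\emph{form of $G$}, not of $G^\Gamma$, and simply-connectedness is immediate---your worry about whether $\Gamma$-fixed points of a simply-connected group remain simply-connected does not arise. With this correction your argument goes through and coincides with the paper's.
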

\begin{proof} Denote $K_o=K^\Gamma$. Then, $G(K)^\Gamma $ can be considered as  a group scheme over $K_o$. Moreover, since $\Gamma$ stabilizes the Borel subgroup $B$ of $G(\mathbb{C})$, $G(K)^\Gamma $ is a quasi-split group scheme (over $K_o$). Moreover, $G(K\otimes_{K_o} \bar{K}_o)^\Gamma$ with the trivial action of $\Gamma$ on $\bar{K}_o$ can be identified with $G(\bar{K}_o)$ since $\Gamma$ acts faithfully on $K$, where $\bar{K}_o$ is the algebraic closure of $K$. Now, the  lemma follows from combining the results \cite[Proposition 6.2 and Remark 6.6]{BT} and  \cite[Lemma 64]{St2}.
\end{proof}
\begin{remark} {\rm The above lemma is also true (by the same proof) for $K$ replaced by $\mathbb{C}((z_q))$ and $\Gamma$ replaced by $\Gamma_q$. 
In particular, this gives another proof of $\pi_0\left((X^q)^{\text{an}}\right)=0$. }
\end{remark}

As a special case  of Theorem \ref{thm8.1.1}, we get the following. The connectedness of $X^q$ in a more general setting is obtained by Pappas-Rapoport [PR1, Theorem 0.1]. 

\begin{coro}\label{newcoro8.1.4}
With the notation as in   Definition  \ref{defi9.4},  the (twisted)  affine  Grassmannian $X^q$ is an irreducible ind-projective (reduced) variety.
\end{coro}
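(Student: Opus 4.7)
The plan is to deduce the irreducibility of $X^q$ from Theorem \ref{thm8.1.1} by realizing $X^q$ as the quotient of an irreducible ind-group by a closed ind-subgroup, using the identification already extracted in the course of proving Theorem \ref{thm8.1.1}. The reducedness and ind-projectivity have already been recorded in Definition \ref{defi9.4}, where $X^q$ is constructed as a closed reduced ind-subscheme of the reduced ind-projective variety $G((z_q))/G[[z_q]]$; so only irreducibility remains.

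Concretely, I would fix an auxiliary point $q_0 \in \Sigma$ whose $\Gamma$-orbit is disjoint from $\Gamma\cdot q$, and set
\[
\Xi_0 := \Mor_\Gamma(\Sigma\setminus\Gamma\cdot q_0, G), \qquad \Xi_1 := \Mor_\Gamma(\Sigma\setminus\Gamma\cdot\{q_0,q\}, G),
\]
with the evident closed embedding $\Xi_0 \hookrightarrow \Xi_1$ induced by restriction of functions. Applying Theorem \ref{thm8.1.1} to the base-point set $\{q_0,q\}$ shows that $\Xi_1^{\an}$ is path-connected, hence that $\Xi_1$ is irreducible as an ind-variety (via [Ku, Lemma 4.2.5]).

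The key input from the proof of Theorem \ref{thm8.1.1} is the chain of identifications \eqref{chap8-eq2} and \eqref{chap8-eq5}, which produce a canonical bijection $\Xi_1/\Xi_0 \simeq \mathscr{X}^q(\mathbb{C}) = X^q$ at the level of $\mathbb{C}$-points, induced by the evaluation-at-$q$ map $\gamma \mapsto \gamma_q \cdot G[[z_q]]^{\Gamma_q}$; moreover this upgrades to an identification of reduced ind-schemes, under which the quotient projection $\Xi_1 \to \Xi_1/\Xi_0 \simeq X^q$ is a morphism of ind-schemes surjective on closed points.

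Irreducibility of $X^q$ then follows by the standard argument: given two nonempty Zariski-open subsets $U_1, U_2 \subset X^q$, their preimages under the continuous surjection $\Xi_1 \to X^q$ are nonempty open subsets of the irreducible ind-variety $\Xi_1$, hence meet, and so $U_1\cap U_2 \neq \emptyset$. Combined with the reduced ind-projective structure from Definition \ref{defi9.4}, this gives the corollary. The only point requiring any care is the verification that the projection $\Xi_1 \to X^q$ is genuinely a morphism of ind-schemes that is surjective at closed points; both properties are built into the explicit construction of the identification \eqref{chap8-eq5} in the proof of Theorem \ref{thm8.1.1}, so no new work is needed here.
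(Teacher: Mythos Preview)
Your argument is correct and follows essentially the same strategy as the paper: both deduce irreducibility of $X^q$ from the identification $\Xi_{n+1}/\Xi_n \simeq X^q$ established in the proof of Theorem \ref{thm8.1.1}, together with the irreducibility of $\Xi_{n+1}$ given by that theorem. The only cosmetic difference is that the paper specializes to the concrete model $\Sigma = \mathbb{P}^1$ with $\Gamma_q$ acting by rotation and $\bar{q}=\{\infty,0\}$ (so that $\Xi_1 = G(\mathbb{C}[z,z^{-1}])^{\Gamma_q}$ maps explicitly onto $X^q$), whereas you work directly with the ambient curve $\Sigma$ and an auxiliary point $q_0$; since $X^q$ depends only on $(\Gamma_q,\phi)$, either choice is fine.
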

\begin{proof} Let $\Sigma = \mathbb{P}^1, \bar{q}=\{\infty, 0\}$, and the action of $\Gamma = \Gamma_q$ given as follows: Let  $\sigma_q$ be any generator of $\Gamma_q$ (of order $e_q :=|\Gamma_q|$). Define the action of $\Gamma_q$ on $ \mathbb{P}^1$ by setting
$$\sigma_q\cdot z=e^{2\pi i/e_q}z,\,\,\,\text{for any $z\in  \mathbb{P}^1$}.$$
  Consider the natural transformation between the functors 
$$ G\left(R[z, z^{-1}]\right)^{\Gamma_q} \to G\left(R((z))\right)^{\Gamma_q}/ G\left(R[[z]]\right)^{\Gamma_q}.$$
This gives rise to the morphism between the corresponding ind-schemes:
$$\theta:  G\left(\mathbb{C}[z, z^{-1}]\right)^{\Gamma_q} \to X^q=G((z))^{\Gamma_q}/G[[z]]^{\Gamma_q}.$$
From the isomorphism (cf. equation \eqref{chap8-eq5} of Theorem \ref{thm8.1.1}):
$$\Xi_{n+1}/\Xi_n\simeq X^q$$
applied to the above example of  $\Sigma = \mathbb{P}^1, \bar{q}=\{\infty, 0\}$ and the action of $\Gamma$ as above, we get that $\theta$ is surjective. 
 Since $ G\left(\mathbb{C}[z, z^{-1}]\right)^{\Gamma_q} $ is irreducible (by Theorem \ref{thm8.1.1}) and hence so is 
$X^q$. Observe that in the proof of Theorem \ref{thm8.1.1} we used the connectedness and simply-connectedness of $ (X^q)^{\text{an}}$; in particular, this corollary builds upon the  connectedness of  $ (X^q)^{\text{an}}$ to prove the stronger result. 
\end{proof}

\section{Central extension of twisted loop group and its splitting over $\Xi$}\label{sec8.2}
\label{section10}

We construct the central extensions of the twisted loop group $G(\mathbb{D}^*_q )^{\Gamma_q}$. We introduce the notion of `canonical' splitting and prove the existence of its canonical splitting over $\Xi :={\rm Mor}_{\Gamma}(\Sigma\setminus \Gamma\cdot q, G)$ when $c$ is divisible by $|\Gamma|$. The treatment in this section is parallel to the one in \cite[\S1.4]{Ku2}, where the corresponding theory is explained in the untwisted case.

We continue to have the same assumptions on $G, \Gamma$ and $\Sigma$ as in the beginning of Section \ref{thetafuctions}.
Fix any base point  $q\in \Sigma$ and let $\Sigma^{*}:=\Sigma\backslash \Gamma\cdot q$ and  $\Xi=\Xi_q:=\Mor_\Gamma (\Sigma^{*},G)$. Then, $\Xi$ is an irreducible ind-affine group scheme (cf. Theorem \ref{thm8.1.1}).
Let $z_q$ be a formal parameter on $\Sigma$ around $q$ satisfying the condition \eqref{eqn9.3.2}. This gives rise to a morphism 
$$
\Xi\hookrightarrow \mathscr{L}^q_G,
$$
obtained by taking the Laurent series expansion at $q$ (with respect to the parameter $z_q$ at $q$), where $\mathscr{L}^q_G:=G((z_q))^{\Gamma_q}$.

\begin{definition}[Adjoint action of $\mathscr{L}^q_G$]\label{defi1.3.2}
Define the $R$-linear {\em Adjoint action} of the group functor $\mathscr{L}^q_G(R):=G\left(R((z_q))\right)^{\Gamma_q}$ on the Lie-algebra functor $\hat{L}(\mathfrak{g}, \Gamma_q)(R):=\left(\mathfrak{g}\otimes R((z_q))\right)^{\Gamma_q}\oplus R.C$ (extending  $R$-linearly the bracket    in $\hat{L}(\mathfrak{g}, \Gamma_q)(R)$)  by:
$$
(\scrAd \gamma)(x\oplus sC)=\gamma x\gamma^{-1}+\left(s+\frac{1}{|\Gamma_q|} \,\displaystyle\mathop{\Res}\limits_{z_q=0}\langle \gamma^{-1}d\gamma, x\rangle\right) C, 
$$
for $\gamma\in \mathscr{L}^q_G(R)$, $x\in \left(\mathfrak{g}\otimes R((z_q))\right)^{\Gamma_q}$ and $s\in R$, where $\langle,\rangle$ is the $R((z_q))$-bilinear extension of the normalized invariant form on $\mathfrak{g}$ (normalized as in Section \ref{Kac_Moody_Section}) and taking an embedding $i:G\hookrightarrow \SL_{N}$ we view $G(R((z_q)))$ as a subgroup of $N\times N$ invertible matrices over the ring $R((z_q))$. 
From the functoriality of the conjugation, $\gamma x\gamma^{-1}\in \left(\mathfrak{g}\otimes R((z_q))\right)^{\Gamma_q}$ and it does not depend upon the choice of the embedding $i$. A similar remark applies to $\gamma^{-1}d\gamma$. Here $d\gamma$ for $\gamma=(\gamma_{ij})\in M_{N}(R((z_q)))$ denotes $d\gamma:=\left(\dfrac{d\gamma_{ij}}{dz_q}\right)$.

It is easy to check  that for any $\gamma\in  \mathscr{L}^q_G(R)$, $\scrAd\gamma:\hat{L}(\mathfrak{g}, \Gamma_q)(R)\to  \hat{L}(\mathfrak{g}, \Gamma_q)(R)$
 is a $R$-linear Lie algebra homomorphism. Moreover, for $\gamma_1, \gamma_2\in \mathscr{L}^q_G(R)$,
\begin{equation}
\scrAd(\gamma_{1}\gamma_{2})=\scrAd(\gamma_{1})\scrAd(\gamma_{2}).\label{defi-1.3.2-eq1}
\end{equation}

One easily sees that for any $\mathbb{C}$-algebra $R$ and $x\in \left(\mathfrak{g}\otimes R((z_q))\right)^{\Gamma_q}$, the derivative
\begin{equation}\label{eqn1.3.2.2}
\dot\scrAd (x)(y)=[x,y],\quad\text{for any $ y\in \hat{L}(\mathfrak{g}, \Gamma_q)(R)$}.
\end{equation}
\end{definition}

Let $\mathscr{H}(\lambda)$ be an integrable highest weight (irreducible) representation of $\hat{L}(\mathfrak{g}, \Gamma_q)$ (with central charge $c$). It clearly extends to a $R$-linear representation $\bar{\rho}_{R}$ of $\hat{L}(\mathfrak{g}, \Gamma_q) (R)$ in  $\mathscr{H}(\lambda)_{R}:=\mathscr{H}(\lambda)\otimes_{\mathbb{C}} R$.  A proof of the following result is parallel to the proof due to  Faltings in the untwisted case (cf. \,[BL, Lemma A.3]). 

\begin{proposition}\label{newprop10.10}
 {\it For any $R\in \sAlg$ and $\gamma\in G\left(R((z_q))\right)^{\Gamma_q}$, locally over $\Spec R$, there exists an $R$-linear automorphism $\hat{\rho}_R(\gamma)$ of $\mathscr{H}(\lambda)_{R}$ uniquely determined up to an invertible element of $R$ satisfying}
\begin{equation}\label{eqn1.3.3.1}
\hat{\rho}_R(\gamma)\bar{\rho}_{R}(x)\hat{\rho}_R(\gamma)^{-1}=\bar{\rho}_{R}(\scrAd (\gamma)\cdot x),\quad\text{for any $x\in \hat{L}(\mathfrak{g}, \Gamma_q)(R) $}.
\end{equation}
\end{proposition}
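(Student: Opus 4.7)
The plan is to adapt Faltings' argument [BL, Lemma A.3] to the twisted setting, invoking the $\Gamma_q$-equivariant substitutes for the loop-group ingredients established in Sections \ref{Kac_Moody_Section} and \ref{thetafuctions}. Using Lemma \ref{quasi-split-lemma} I first fix a $\Gamma_q$-stable pair $\fh\subset \fb$, producing the distinguished Chevalley generators of Section \ref{Kac_Moody_Section} and the pro-unipotent ind-subgroups $\bigl(U^\pm[z_q^{-1}]^-\bigr)^{\Gamma_q}$ inside $\bigl(G[z_q^{-1}]^-\bigr)^{\Gamma_q}$. The Birkhoff-type bijection \eqref{eqn9.4.1} of Definition~\ref{defi9.4}, combined with left-translation by fixed $\bc$-points of $\mathscr{L}^q_G$, allows me to cover $\mathscr{L}^q_G$ by Zariski patches on each of which the given $R$-point factors as $\gamma = \gamma_o\,\gamma_-\,\gamma_+$ with $\gamma_o\in \mathscr{L}^q_G(\bc)$ constant on the patch, $\gamma_-\in (G[z_q^{-1}]^-)^{\Gamma_q}(R)$, and $\gamma_+\in G(R[[z_q]])^{\Gamma_q}$. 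Thus it suffices to construct $\hat\rho_R$ on each of these three types of factors, modulo a scalar ambiguity.

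For $\gamma_-$ I decompose its Lie algebra $\bigl(\fg\otimes z_q^{-1}R[z_q^{-1}]\bigr)^{\Gamma_q}$ along the triangular decomposition of $\fg$ and define $\hat\rho_R(\exp X) := \exp\bigl(\bar{\rho}_R(X)\bigr)$: Lemma \ref{lemma 1.3} gives local nilpotence of $\bar{\rho}(Y)$ on $\mathscr{H}(\lambda)$ for every $\bc$-element $Y$ of $\bigl(\fn^\pm \otimes \bc((z_q))\bigr)^{\Gamma_q}$, and this transfers to local nilpotence of $\bar{\rho}_R(X)$ on $\mathscr{H}(\lambda)_R$ after passing to an fppf cover of $\Spec R$ on which only finitely many Laurent modes of $X$ are involved, in combination with the smoothness (in the sense of Definition~\ref{defi3.2.3}) of $\mathscr{H}(\lambda)_R$. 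For the arc factor $\gamma_+$, the same exponential recipe handles the pro-unipotent kernel of the evaluation map $G(R[[z_q]])^{\Gamma_q}\twoheadrightarrow G^{\Gamma_q}(R)$, while on the quotient one uses the subsequent Proposition~\ref{Prop10.8} to lift $V(\lambda)\subset \mathscr{H}(\lambda)$ to an algebraic representation of a suitable cover of $G^{\Gamma_q}$; this is the sole source of the ``locally on $\Spec R$'' hypothesis and of the $R^\times$-ambiguity. The intertwining identity \eqref{eqn1.3.3.1} is then verified separately on each factor: for exponentials via $\Ad(\exp X) = \exp(\ad X)$ combined with a direct residue computation matching the central correction in $\scrAd$ against the cocycle of \eqref{eq1.1.1.4}, and for $G^{\Gamma_q}$-factors via the compatibility of finite-dimensional conjugation with the $\fg^{\Gamma_q}$-action.

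For uniqueness, if $\hat\rho_R$ and $\hat\rho'_R$ are two $R$-linear automorphisms of $\mathscr{H}(\lambda)_R$ satisfying \eqref{eqn1.3.3.1}, then $\phi := \hat\rho_R\hat\rho'_R{}^{-1}$ is an $R$-linear endomorphism commuting with the $\bc$-algebra $\bar{\rho}(\hat L(\fg,\Gamma_q))$. Applying $\phi$ to the highest-weight vector $v_\lambda\otimes 1$, the $\hat L(\fg,\Gamma_q)^+$-equivariance forces $\phi(v_\lambda\otimes 1) = v_\lambda\otimes r_0$ for some $r_0\in R$ (by weight considerations together with the smoothness of $\mathscr{H}(\lambda)$), and since $\mathscr{H}(\lambda)$ is generated by $v_\lambda$ over $\hat L(\fg,\Gamma_q)$ this extends to $\phi = r_0\cdot \Iid$; invertibility of $\phi$ puts $r_0\in R^\times$. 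The main obstacle I foresee is precisely the careful justification of local nilpotence of $\bar{\rho}_R(X)$ in the $R$-linear setting for $R$ of arbitrary size --- which is why the statement is phrased locally over $\Spec R$ --- and the verification that the three factor-wise constructions patch together up to a scalar; both are resolved by the Schur argument just outlined, which shows that any ambiguity in the construction must lie in $R^\times$.
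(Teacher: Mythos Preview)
Your approach is exactly the paper's: it offers no proof beyond the citation to Faltings [BL, Lemma~A.3], and your sketch is a faithful unpacking of that argument in the $\Gamma_q$-equivariant setting.

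Two imprecisions are worth flagging, though neither is fatal. First, the recipe $\hat\rho_R(\exp X):=\exp(\bar\rho_R(X))$ for $\gamma_-$ is only valid on the $\fn^\pm$-pieces of your triangular decomposition: the Cartan piece $(\fh\otimes z_q^{-1}R[z_q^{-1}])^{\Gamma_q}$ consists of imaginary-root (Heisenberg) elements that are \emph{not} locally nilpotent on $\mathscr{H}(\lambda)$, so $\exp(\bar\rho_R(X))$ is undefined there. What is actually needed (and what Faltings uses) is that the \emph{group} $(G(R[z_q^{-1}])^-)^{\Gamma_q}$ --- or indeed $G(R((z_q)))^{\Gamma_q}$ itself --- is generated, locally over $\Spec R$, by its $\Gamma_q$-invariant unipotent pieces; this is supplied by Definition~\ref{defi9.4} and the Remark following Lemma~\ref{uni}, after which your Schur argument handles well-definedness. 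Second, invoking Proposition~\ref{Prop10.8} for the reductive quotient $G^{\Gamma_q}$ is off: that proposition carries a divisibility hypothesis absent here, and passing to an algebraic ``cover'' does not help when $G^{\Gamma_q}$ has a positive-dimensional central torus. The correct source of the projective action is that every $\fh^{\Gamma_q}$-weight of $\mathscr{H}(\lambda)$ differs from $\lambda$ by a restriction of a $\bz$-combination of roots of $\fg$, and such restrictions are honest characters of $H^{\Gamma_q}\subset H$; hence $G^{\Gamma_q}(R)$ acts on $\mathscr{H}(\lambda)_R$ up to a single overall character, which is precisely the $R^\times$-ambiguity.
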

As a corollary of the above Proposition, we get the following.

\begin{theorem}\label{thm1.3.4}
There exists a homomorphism $\rho_R:G\left(R((z_q))\right)^{\Gamma_q}\to \mathscr{P}GL_{\mathscr{H}(\lambda)}(R)$ of group functors such that
\begin{equation}
\dot{\rho}=\dot{\rho}(\mathbb{C}):T_{1}(\mathscr{L}^q_G(\mathbb{C}))=\left(\mathfrak{g}\otimes \mathbb{C}((z_q))\right)^{\Gamma_q}\to \End_{\mathbb{C}}(\mathscr{H}(\lambda))/\mathbb{C}\cdot \Iid_{\mathscr{H}(\lambda)}\label{thm1.3.4-eq1}
\end{equation}
coincides with the projective representation $\mathscr{H}(\lambda)$ of $\left(\mathfrak{g}\otimes \mathbb{C}((z_q))\right)^{\Gamma_q}$.
\end{theorem}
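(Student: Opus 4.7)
The plan is to construct $\rho_R$ directly from the lifts produced by Proposition \ref{newprop10.10} and then read off the derivative from the infinitesimal case. First I would produce $\rho_R(\gamma)$ for each $\gamma\in G(R((z_q)))^{\Gamma_q}$. By Proposition \ref{newprop10.10}, we may choose a (Zariski, hence fppf) open cover $\{\Spec R_i\}$ of $\Spec R$ on which there exists a lift $\hat{\rho}_{R_i}(\gamma)\in GL_{R_i}(\mathscr{H}(\lambda)_{R_i})$ satisfying \eqref{eqn1.3.3.1}. On overlaps, both lifts satisfy \eqref{eqn1.3.3.1} over $R_i\otimes_R R_j$, so by the uniqueness clause of that proposition they differ by a unit; hence their images in $\mathscr{P}GL_{\mathscr{H}(\lambda)}(R_i\otimes_R R_j)$ coincide. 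The sheaf property of $\mathscr{P}GL_{\mathscr{H}(\lambda)}$ (defined as the fppf-sheafification of the presheaf $R\rightsquigarrow GL_R(\mathscr{H}(\lambda)_R)/R^\times$) then produces a well-defined element $\rho_R(\gamma)\in\mathscr{P}GL_{\mathscr{H}(\lambda)}(R)$.

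Next I would verify that $\rho_R$ is multiplicative and natural in $R$. For $\gamma_1,\gamma_2\in G(R((z_q)))^{\Gamma_q}$, the product $\hat{\rho}_{R_i}(\gamma_1)\hat{\rho}_{R_i}(\gamma_2)$ satisfies \eqref{eqn1.3.3.1} with $\gamma$ replaced by $\gamma_1\gamma_2$, thanks to \eqref{defi-1.3.2-eq1}. Therefore by the uniqueness-up-to-unit clause of Proposition \ref{newprop10.10} it agrees with $\hat{\rho}_{R_i}(\gamma_1\gamma_2)$ up to an element of $R_i^\times$, which after projection yields $\rho_R(\gamma_1)\rho_R(\gamma_2)=\rho_R(\gamma_1\gamma_2)$. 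Similarly, for any morphism $R\to R'$ the base change $\hat{\rho}_R(\gamma)\otimes_R R'$ still satisfies \eqref{eqn1.3.3.1} for $\gamma_{R'}$, hence differs from $\hat{\rho}_{R'}(\gamma_{R'})$ by a unit of $R'$; this gives the naturality of $\rho$ in $R$, so $\rho_R$ assembles into a morphism of group functors.

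Finally I would identify the derivative at the identity. Take $R=\mathbb{C}[\epsilon]/(\epsilon^2)$ and $\gamma=\Iid+\epsilon x$ with $x\in(\mathfrak{g}\otimes\mathbb{C}((z_q)))^{\Gamma_q}$; by \eqref{eqn1.3.2.2} we have $\scrAd(\gamma)(y)=y+\epsilon[x,y]$ for all $y\in\hat{L}(\mathfrak{g},\Gamma_q)$. Writing the lift in the form $\hat{\rho}_R(\gamma)=\Iid+\epsilon T$ for some $T\in\End_\mathbb{C}(\mathscr{H}(\lambda))$, relation \eqref{eqn1.3.3.1} reduces modulo $\epsilon^2$ to $[T,\bar{\rho}(y)]=[\bar{\rho}(x),\bar{\rho}(y)]$ for every $y$. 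Thus $T-\bar{\rho}(x)$ is an $\hat{L}(\mathfrak{g},\Gamma_q)$-endomorphism of the irreducible integrable module $\mathscr{H}(\lambda)$, so by Schur's lemma it is a scalar. Consequently $\dot{\rho}(x)\equiv\bar{\rho}(x)\pmod{\mathbb{C}\cdot\Iid_{\mathscr{H}(\lambda)}}$, which is precisely \eqref{thm1.3.4-eq1}.

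The main obstacle I anticipate is the descent step: one must be careful to formulate $\mathscr{P}GL_{\mathscr{H}(\lambda)}$ as a group functor on the infinite-dimensional space $\mathscr{H}(\lambda)$ and to check that Proposition \ref{newprop10.10} produces local lifts over an fppf cover whose descent yields an element of precisely this sheafification. Once the sheafification and descent are set up correctly, all remaining assertions reduce to the uniqueness-up-to-unit statement of Proposition \ref{newprop10.10} combined with the multiplicativity \eqref{defi-1.3.2-eq1} of $\scrAd$ and Schur's lemma for the irreducible module $\mathscr{H}(\lambda)$.
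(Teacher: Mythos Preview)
Your proposal is correct and matches the paper's approach: the paper simply records Theorem~\ref{thm1.3.4} as an immediate corollary of Proposition~\ref{newprop10.10} without further argument, and your write-up spells out exactly the expected details (local lifts, gluing via uniqueness-up-to-unit and the sheaf property of $\mathscr{P}GL$, multiplicativity from \eqref{defi-1.3.2-eq1}, and the derivative computation via dual numbers and Schur's lemma).
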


\begin{definition}[Central extension]\label{defi1.3.5}
Let $0\in D_{c, q}$, where $D_{c, q}$ denotes $D_c$ for the twisted affine Lie algebra $\hat{L}(\mathfrak{g}, \Gamma_q)$ (cf. Lemma \ref{finite_set_weight_lem} and Corollary \ref{newcoroweight0}). By the above theorem, we have a homomorphism of group functors:
$$
\rho_R : \mathscr{L}^q_G(R)\to \mathscr{P}GL_{\mathscr{H}_c} (R),
$$
where $\mathscr{H}_c:=\mathscr{H}(0)$ with central charge $c$ for the twisted affine Lie algebra  $\hat{L}(\mathfrak{g}, \Gamma_q)$.
Also, there is a canonical homomorphism of group functors
$$
\pi_R:\mathscr{G}L_{\mathscr{H}_c}(R)\to \mathscr{P}GL_{\mathscr{H}_c} (R).
$$
From this we get the fiber product group functor $\hat{\mathscr{G}}^q_c$:
$$
\hat{\mathscr{G}}^q_c(R):=\fprod{\mathscr{L}^q_G(R)}{\mathscr{P}GL_{\mathscr{H}_c}(R)}{\mathscr{G}L_{\mathscr{H}_c}(R)}.
$$
By  definition, we get homomorphisms of group functors
$$
p_R:\hat{\mathscr{G}}^q_c (R)\to \mathscr{L}^q_G(R)\quad\text{and}\quad \hat{\rho}_R:\hat{\mathscr{G}}^q_c(R)\to \mathscr{G}L_{\mathscr{H}_c}(R)
$$
making the following diagram commutative:
\[
\xymatrix{
\hat{\mathscr{G}}^q_c(R)\ar[d]_{p_R}\ar[r]^-{\hat{\rho}_R} & \mathscr{G}L_{\mathscr{H}_c}(R)\ar[d]^{\pi_R}\\
\mathscr{L}^q_G(R)\ar[r]_-{\rho_R} & \mathscr{P}GL_{\mathscr{H}_c}(R)\,.
}
\]
The following is the {\it central extension} we are seeking:
\begin{equation}\label{eqn1.3.5.1}
1\to \mathbb{C}^*\to \hat{\mathscr{G}}^q_c\xrightarrow{p} \mathscr{L}^q_G\to 1,\,\,\,\text{where $\hat{\mathscr{G}}^q_c:=\hat{\mathscr{G}}^q_c (\mathbb{C})$}.
\end{equation}
It is easy to see that the Lie algebra  $\Lie (\hat{\mathscr{G}}^q_c(R)):=T_{1}(\hat{\mathscr{G}}^q_c)_{R}$ is identified with the fiber product Lie algebra:
$$
\hat{\mathfrak{g}}^q(R) = \fprod{\left(\mathfrak{g}\otimes R((z_q))\right)^{\Gamma_q}}{\End_{R}((\mathscr{H}_c)_{R})/R.\Iid}{\End_{R}((\mathscr{H}_c)_{R})},
$$
for any commutative $\mathbb{C}$-algebra $R$.
\end{definition}

\begin{lemma}\label{lem1.3.6}
The Lie algebra $\hat{\mathfrak{g}}^q:=\Lie \hat{\mathscr{G}}^q_c (\mathbb{C})$ can canonically be identified with the twisted affine Lie algebra $\hat{L}(\mathfrak{g}, \Gamma_q)$.
\end{lemma}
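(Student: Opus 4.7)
The plan is to construct a natural Lie algebra homomorphism from $\hat{L}(\mathfrak{g}, \Gamma_q)$ into the fiber product Lie algebra $\hat{\mathfrak{g}}^q$, and then argue that it is an isomorphism for dimension/central-extension reasons combined with a cocycle-matching calculation. Concretely, let $\rho: \hat{L}(\mathfrak{g}, \Gamma_q) \to \End_{\mathbb{C}}(\mathscr{H}_c)$ denote the actual (non-projective) representation on $\mathscr{H}_c = \mathscr{H}(0)$ where $C$ acts by $c\cdot\Iid$, and let $\mathrm{pr}: \hat{L}(\mathfrak{g}, \Gamma_q) \to (\mathfrak{g}\otimes \mathbb{C}((z_q)))^{\Gamma_q}$ be the canonical projection annihilating $C$. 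Define
\[
\phi: \hat{L}(\mathfrak{g}, \Gamma_q) \longrightarrow \hat{\mathfrak{g}}^q,\qquad Y \longmapsto \bigl(\mathrm{pr}(Y),\,\rho(Y)\bigr).
\]
The first step is to check that $\phi$ is well-defined, i.e.\ that the pair lies in the fiber product. This follows because $\rho(Y)$ differs from $\rho(\mathrm{pr}(Y))$ by a scalar multiple of $\Iid$, so the two images agree in $\End(\mathscr{H}_c)/\mathbb{C}\cdot \Iid$ with $\bar\rho(\mathrm{pr}(Y))$, which is precisely the projective representation induced by the action from Theorem \ref{thm1.3.4}.

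Next I would verify that $\phi$ is a Lie algebra homomorphism. Writing $Y_i = X_i + s_i C$ for $X_i \in (\mathfrak{g}\otimes \mathbb{C}((z_q)))^{\Gamma_q}$, the bracket on $\hat{L}(\mathfrak{g}, \Gamma_q)$ gives
\[
[Y_1, Y_2] = [X_1, X_2]_0 + \omega(X_1, X_2)\, C,
\]
where $\omega$ is the residue cocycle defined in \eqref{eq1.1.1.4}. Under $\phi$ this maps to $([X_1, X_2]_0,\, \rho([X_1, X_2]_0) + c\,\omega(X_1, X_2)\Iid)$. On the other hand the componentwise bracket on the fiber product yields $[\phi(Y_1), \phi(Y_2)] = ([X_1, X_2]_0,\, [\rho(Y_1), \rho(Y_2)])$, and since $\rho$ is an honest representation, $[\rho(Y_1), \rho(Y_2)] = \rho([Y_1, Y_2]) = \rho([X_1, X_2]_0) + c\,\omega(X_1, X_2)\Iid$. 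Thus the two sides coincide, so $\phi$ is a Lie algebra map.

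Finally I would check bijectivity directly using the description of $\hat{\mathfrak{g}}^q(\mathbb{C})$ as the fiber product. Injectivity: if $\phi(X + sC) = (0,0)$ then $X = 0$ and $sc\cdot\Iid = 0$, forcing $s = 0$ since $c > 0$. Surjectivity: given $(X, A) \in \hat{\mathfrak{g}}^q(\mathbb{C})$, the compatibility condition gives $A - \rho(X) = r\cdot\Iid$ for some $r \in \mathbb{C}$, and then $\phi(X + (r/c)\,C) = (X, A)$. Hence $\phi$ is an isomorphism of Lie algebras. The canonicity follows from the canonicity of $\rho$ and $\mathrm{pr}$, and the identification is independent of the choice of the local parameter $z_q$ satisfying \eqref{eqn9.3.2} since the cocycle $\omega$ and the representation $\rho$ are both intrinsic to $\hat{L}(\mathfrak{g}, \Gamma_q)$.

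I expect the main conceptual point (rather than an obstacle) to be the matching of the residue cocycle with the commutator in $\End(\mathscr{H}_c)$, which is guaranteed by the very construction of $\rho$ in Section \ref{Kac_Moody_Section}; the rest is formal bookkeeping with the universal property of the fiber product.
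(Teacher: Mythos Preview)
Your proposal is correct and takes essentially the same approach as the paper: the paper defines $\psi(x+zC)=(x,\bar{\rho}(x)+zc\Iid)$, which is exactly your $\phi$, and then simply asserts in one sentence that it is a Lie algebra isomorphism by the definition of the bracket and Theorem~\ref{thm1.3.4}. You have merely unpacked that sentence into the explicit well-definedness, homomorphism, and bijectivity checks.
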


\begin{proof} Define
$$
\psi:\hat{L}(\mathfrak{g}, \Gamma_q)\to \hat{\mathfrak{g}}^q,\quad x+zC\mapsto (x,\bar{\rho}(x)+zc\Iid),\,\,\,\text{
for $ x\in \mathfrak{g}((z_q))^{\Gamma_q}$ and $z\in \mathbb{C}$}.
$$
From the definition of the bracket in $\hat{L}(\mathfrak{g}, \Gamma_q)$ and Theorem \ref{thm1.3.4}, $\psi$ is an isomorphism of Lie algebras.
\end{proof}

Combining Theorem \ref{thm1.3.4}, Definition \ref{defi1.3.5} and Lemma \ref{lem1.3.6}, we get the following.

\begin{coro}\label{coro1.3.7}
We have a homomorphism of group functors:
$$
\hat{\rho}:\hat{\mathscr{G}}^q_c\to \mathscr{G}L_{\mathscr{H}_c}
$$
such that its derivative at $R=\mathbb{C}$:
$$
\dot{\hat{\rho}} : \hat{\mathfrak{g}}^q\to \End_{\mathbb{C}}(\mathscr{H}_c)
$$
under the identification of Lemma \ref{lem1.3.6} coincides with the Lie algebra representation
$$
\bar{\rho}:\hat{L}(\mathfrak{g}, \Gamma_q)\to \End_{\mathbb{C}}(\mathscr{H}_c).
$$
Moreover, for any $\hat{\gamma}\in \hat{\mathscr{G}}^q_c(R)$ and $x\in \hat{\mathfrak{g}}^q(R)$,
\begin{equation}
\hat{\rho}_R(\hat{\gamma})\bar{\rho}_{R}(x)  \hat{\rho}_R(\hat{\gamma})^{-1} =\bar{\rho}_{R}(\scrAd(p_R(\hat{\gamma})) x),\,\,\,\text{as operators on $(\mathscr{H}_c)_R$}.  \label{thm1.3-proof-eq1}
\end{equation}
\end{coro}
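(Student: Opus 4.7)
The existence of $\hat{\rho}$ as a homomorphism of group functors is already packaged into Definition \ref{defi1.3.5} via the universal property of the fiber product $\hat{\mathscr{G}}^q_c = \mathscr{L}^q_G \times_{\mathscr{P}GL_{\mathscr{H}_c}} \mathscr{G}L_{\mathscr{H}_c}$; the map $\hat{\rho}$ is the projection to the second factor and is functorial in $R$ since $\pi_R$ and $\rho_R$ are. So the substance of the corollary lies in the two compatibility statements, and my plan is to reduce each to a bookkeeping exercise involving the constructions already carried out.

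For the derivative statement, I would take the tangent space at the identity by evaluating the fiber-product functor on dual numbers $\mathbb{C}[\varepsilon]/(\varepsilon^2)$. Because tangent spaces commute with fiber products, $T_1\hat{\mathscr{G}}^q_c$ is identified with the pullback
\[
\bigl(\mathfrak{g}\otimes\mathbb{C}((z_q))\bigr)^{\Gamma_q} \times_{\End(\mathscr{H}_c)/\mathbb{C}\Iid} \End(\mathscr{H}_c),
\]
and $\dot{\hat{\rho}}$ becomes the projection to the second coordinate. Under the identification $\psi$ of Lemma \ref{lem1.3.6}, an element $x + zC \in \hat{L}(\mathfrak{g},\Gamma_q)$ corresponds to the pair $(x, \bar{\rho}(x) + zc\Iid)$, so I would compute
\[
\dot{\hat{\rho}}\bigl(\psi(x+zC)\bigr) = \bar{\rho}(x) + zc\Iid = \bar{\rho}(x) + z\,\bar{\rho}(C) = \bar{\rho}(x+zC),
\]
where the middle equality uses that the central element $C$ acts by the scalar $c$ on $\mathscr{H}_c$ (which has central charge $c$). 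This identifies $\dot{\hat{\rho}}$ with $\bar{\rho}$ as claimed.

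For the adjoint formula, I would simply invoke the defining identity \eqref{eqn1.3.3.1} from Proposition \ref{newprop10.10}, applied to $\gamma := p_R(\hat{\gamma}) \in \mathscr{L}^q_G(R)$. The subtle point to verify here is that Proposition \ref{newprop10.10} only determines the lift up to an invertible scalar in $R$, whereas $\hat{\rho}_R(\hat{\gamma})$ is the specific lift produced by the fiber-product datum of $\hat{\gamma}$. However, since the conjugation $A \mapsto A \bar{\rho}_R(x) A^{-1}$ is unchanged upon scaling $A$ by a unit, the identity \eqref{eqn1.3.3.1} automatically holds for this specific choice of lift. This yields
\[
\hat{\rho}_R(\hat{\gamma})\,\bar{\rho}_R(x)\,\hat{\rho}_R(\hat{\gamma})^{-1} = \bar{\rho}_R\bigl(\scrAd(p_R(\hat{\gamma}))\,x\bigr),
\]
as required.

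The main (and essentially only) obstacle is conceptual rather than technical: one must be careful to distinguish the role of the scalar ambiguity in the lift produced by Proposition \ref{newprop10.10} from the rigidified lift obtained via the fiber-product construction, and to verify that the adjoint identity is insensitive to this ambiguity. Everything else is a matter of chasing the definitions of $\hat{\mathscr{G}}^q_c$ and $\psi$, together with the scalar action of $C$ on an integrable representation of central charge $c$.
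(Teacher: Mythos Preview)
Your proposal is correct and is precisely the unpacking that the paper intends: the paper offers no argument beyond the sentence ``Combining Theorem \ref{thm1.3.4}, Definition \ref{defi1.3.5} and Lemma \ref{lem1.3.6}, we get the following,'' and you have correctly traced how these three inputs yield each assertion. Your observation that the conjugation identity is insensitive to the scalar ambiguity in Proposition \ref{newprop10.10}, and your explicit check that $\dot{\hat\rho}\circ\psi=\bar\rho$ via the action of $C$ by the scalar $c$, are exactly the verifications needed.
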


\begin{theorem}
\label{Thm10.7}
(1) The central extension $p: \hat{\mathscr{G}}^q_c\to \mathscr{L}_G^q$ (as in Definition \ref{defi1.3.5}) splits over $G[[z_q]]^{\Gamma_q}$ for any $c\geq 1$ such that $0\in D_c =D_{c, q}$. Moreover, we can choose the splitting so that the corresponding tangent map is the identity via Lemma \ref{lem1.3.6}. 
\vskip1ex

(2) The above central extension splits over $\Xi$ if $c$ is a multiple of $|\Gamma|$. Moreover, we can choose the splitting so that the corresponding tangent map is the identity via Lemma \ref{lem1.3.6}.

(By Corollary \ref{newcoroweight0}, if $|\Gamma|$ divides $c$ then $0\in D_c$.)
\vskip1ex

We call the unique splitting satisfying the above property {\it canonical}. 
\end{theorem}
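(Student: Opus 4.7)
My plan is to address parts (1) and (2) in sequence, with (1) being essentially a representation-theoretic exponentiation argument and (2) requiring substantially more care.

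For part (1), I would construct a linear representation $s_1\colon G[[z_q]]^{\Gamma_q}\to GL(\mathscr{H}_c)$ integrating the Lie algebra action of $\hat L(\fg,\Gamma_q)^{\geq 0}$ on $\mathscr{H}_c$ (with the central $C$ acting by $c$); the graph of $(\mathrm{incl},s_1)$ in $\mathscr{L}^q_G\times GL(\mathscr{H}_c)$ then lies inside the fiber product $\hat{\mathscr{G}}^q_c$ and provides the desired splitting. I would decompose $G[[z_q]]^{\Gamma_q}\cong G^{\Gamma_q}\ltimes K^+$, where $K^+$ is the pro-unipotent kernel of evaluation at $z_q=0$. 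The $L_0$-grading on $\mathscr{H}_c$ has finite-dimensional graded pieces each preserved by $\fg^{\Gamma_q}$ (Proposition \ref{prop3.2.2}(a)); since $\fg^{\Gamma_q}=\Lie G^{\Gamma_q}$ is reductive and the pieces are integrable, each lifts to an algebraic representation of $G^{\Gamma_q}$. The Lie algebra of $K^+$ lies in $\hat L(\fg,\Gamma_q)^+$ and acts locally nilpotently on $\mathscr{H}_c$ (positive modes strictly decrease $L_0$-eigenvalues, which are bounded below), so it exponentiates to a pro-algebraic action of $K^+$. Combining via the semidirect product gives $s_1$, and the tangent map is the identity by construction.

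For part (2), my starting point is the canonical Lie algebra lift of $\fg[\Sigma^*]^\Gamma$ into $\hat\fg^q\cong \hat L(\fg,\Gamma_q)$ via Laurent expansion at $q$: the cocycle $\tfrac{1}{|\Gamma|}\Res_q\langle dx,y\rangle$ vanishes on globally $\Gamma$-equivariant regular sections by the Residue Theorem applied to $\bar\Sigma=\Sigma/\Gamma$. To integrate this to a group homomorphism $s_2\colon\Xi\to\hat{\mathscr{G}}^q_c$, I would use the presentation of $\Xi$ established in the proof of Theorem \ref{thm8.1.1}: by Lemma \ref{uni}, every $\gamma\in\Xi$ factors as $\Exp(x_1)\cdots\Exp(x_d)$ with $x_i\in(\fn\otimes\bc[\Sigma^*])^\Gamma\cup(\fn^-\otimes\bc[\Sigma^*])^\Gamma$. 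By Lemma \ref{lemma 1.3}, each $\bar\rho(x_i)$ acts locally nilpotently on $\mathscr{H}_c$, so $\exp(\bar\rho(x_i))\in GL(\mathscr{H}_c)$ is well-defined; I tentatively set $s_2(\gamma)=\exp(\bar\rho(x_1))\cdots\exp(\bar\rho(x_d))$.

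The principal obstacle is well-definedness: a priori the product depends on the factorization. Here Proposition \ref{newprop10.10} is decisive, since for each $\gamma\in\Xi\subset\mathscr{L}^q_G$ an $R$-linear intertwiner $\hat\rho_R(\gamma)$ on $\mathscr{H}_{c,R}$ is unique up to $R^\times$, so two factorizations of the same $\gamma$ yield candidate lifts differing only by a scalar in $\bc^\times$. To pin down the scalar canonically I would exploit the divisibility $|\Gamma|\mid c$: this forces $0\in D_c$ (Corollary \ref{newcoroweight0}), so $\mathscr{H}_c=\mathscr{H}(0)$ has a distinguished highest-weight line $\bc v_0$ killed by $\hat L(\fg,\Gamma_q)^+$ and fixed by $\fg^{\Gamma_q}$; the divisibility should moreover force the induced $Z(G)^{\Gamma_q}$-character on $v_0$ coming from the $G^{\Gamma_q}$-lift of part (1) to be trivial. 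Normalizing $s_2(\gamma)$ by requiring the $\bc v_0$-component of $s_2(\gamma)v_0$, after projection to the lowest $L_0$-graded piece, to be $1$ singles out a canonical lift; multiplicativity of this normalization should then follow from the uniqueness clause in Proposition \ref{newprop10.10} combined with the fact that the product of two normalized lifts is itself a normalized lift. Finally, for uniqueness of the canonical splitting, any two splittings with the same tangent map differ by a morphism of ind-group schemes $\chi\colon\Xi\to\bc^\times$ with $d\chi=0$, and since $\Xi$ is irreducible and hence connected (Theorem \ref{thm8.1.1}), such $\chi$ must be trivial. The subtlest point throughout is the scalar normalization in (2); this is precisely where $|\Gamma|\mid c$ enters, mediating between the purely local data at $q$ and the global sections comprising $\Xi$.
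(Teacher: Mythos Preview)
Your approach to part~(1) is a workable alternative to the paper's, though the paper's argument is slicker: rather than integrate the action graded-piece-by-piece and exponentiate the pro-unipotent radical, the paper simply observes (via Proposition~\ref{newprop10.10}) that $\rho$ restricted to $G[[z_q]]^{\Gamma_q}$ lands in the subgroup of $\mathscr{P}GL_{\mathscr{H}_c}$ preserving the highest-weight line $\bc v_+$, and then normalizes by $v_+\mapsto v_+$. Your route does work, but you should note that lifting the $\fg^{\Gamma_q}$-action on each $L_0$-graded piece to $G^{\Gamma_q}$ is not automatic (the latter need not be simply-connected); here it succeeds because for $\mathscr{H}(0)$ all $\fh^{\Gamma_q}$-weights lie in the root lattice.

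Part~(2), however, has a genuine gap at exactly the point you flag as ``subtlest.'' First, Lemma~\ref{uni} gives a factorization of $\gamma\in\Xi$ inside $G(K)^\Gamma$, so the factors $x_i$ lie in $(\fn^\pm\otimes K)^\Gamma$ and in general acquire poles \emph{outside} $\Gamma\cdot q$; you cannot take $x_i\in(\fn^\pm\otimes\bc[\Sigma^*])^\Gamma$. More seriously, your normalization scheme fails: requiring the $\bc v_0$-component (in lowest $L_0$-degree) of $s_2(\gamma)v_0$ to equal $1$ presupposes that component is nonzero, which holds for $\gamma\in G[[z_q]]^{\Gamma_q}$ but has no reason to hold for $\gamma\in\Xi$ with a pole at $q$ (the image $\gamma\cdot[v_0]$ in $X^q$ is generically far from the base point). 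So no canonical scalar is singled out, and multiplicativity is moot.

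The paper's argument is entirely different and does not attempt to normalize inside $\mathscr{H}_c$. It embeds $G(\mathbb{D}_q^\times)$ diagonally into the product $\prod_{\gamma\in\Gamma/\Gamma_q}G(\mathbb{D}_{\gamma\cdot q}^\times)$ of \emph{untwisted} loop groups and uses the tensor product $\bar{\mathscr{H}}_1^{\otimes|\Gamma/\Gamma_q|}$ of untwisted vacuum modules at charge~$1$. The resulting central extension of $G(\mathbb{D}_q^\times)$ has untwisted charge $|\Gamma/\Gamma_q|$, and its restriction to $G(\mathbb{D}_q^\times)^{\Gamma_q}$ has \emph{twisted} charge $|\Gamma|$ (the $1/|\Gamma_q|$ in the twisted cocycle accounts for the discrepancy). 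The product extension is known to split over $G(\Sigma\setminus\Gamma\cdot q)$ by the classical untwisted result (Sorger), and restricting that splitting to the $\Gamma$-invariants $\Xi$ gives the desired splitting at charge $|\Gamma|$; tensor powers handle multiples. That the tangent map is the identity follows from perfectness $[\fg(\Sigma^*),\fg(\Sigma^*)]=\fg(\Sigma^*)$, which forces any Lie-algebra splitting to be the obvious one. This reduction to the untwisted case is the missing idea in your proposal, and it is precisely where the condition $|\Gamma|\mid c$ enters.
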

\begin{proof}
We first prove part (1) of the theorem.
By Proposition  \ref{newprop10.10} (using the fact, as in Section 2, that the annihilator of $ \fg[[z_q]]^{\Gamma_q}$ in $\mathscr{H}_c$ is exactly $\mathbb{C}v_+$), the map 
\[\rho: G((z_q))^{\Gamma_q}\to {\rm PGL}_{\mathscr{H}_c} \]
restricted to $G[[z_q]]^{\Gamma_q}$ lands inside ${\rm PGL}^+_{\mathscr{H}_c}$ consisting of those (projective) automorphisms which take the highest weight vector $v_+$ of $\mathscr{H}_c$ to $\mathbb{C}^\times  v_+$. Take the subgroup ${\rm GL}^+_{\mathscr{H}_c}$ consisting of 
those automorphisms which take $v_+\mapsto v_+$. Then, the map ${\rm GL}^+_{\mathscr{H}_c}\to {\rm PGL}^+_{\mathscr{H}_c}={\rm Im} ({\rm GL}^+_{\mathscr{H}_c})$ is an isomorphism providing the splitting of ${\rm GL}_{\mathscr{H}_c}\to {\rm PGL}_{\mathscr{H}_c}$ over ${\rm PGL}^+_{\mathscr{H}_c}$. Thus, the central extension $p: \hat{\mathscr{G}}^q_c\to \mathscr{L}_G^q$ splits over $G[[z_a]]^{\Gamma_q}$. Denote this splitting by $\sigma$.  

We next prove that $\dot{\sigma}$ (via Lemma \ref{lem1.3.6}) is the identity map:  Let
\[ \dot{\sigma}(x)=x+\lambda(x)C, \quad \text{ for } x\in \fg[[z_q]]^{\Gamma_q},    \]
where $\lambda: \fg[[z_q]]^{\Gamma_q}\to \mathbb{C}$ is a $\mathbb{C}$-linear map. Thus, for any $x\in \fg[[z_q]]^{\Gamma_q} $, 
\begin{equation}
\label{eq10.7.1}
\dot{ \hat{\rho} }\circ \dot{\sigma}(x)(v_+)=x\cdot v_++ \lambda(x)cv_+=\lambda(x)cv_+.
\end{equation}
 But, since $\hat{\rho}\left({\rm Im}(\sigma)\right)\subset {\rm GL}^+_{\mathscr{H} _c}$,  
\begin{equation}
\label{eq10.7.2}
\dot{ \hat{\rho} }\circ \dot{\sigma}(x)(v_+)=0,\,\,\, \text{for all $x\in  \fg[[z_q]]^{\Gamma_q}$} .
\end{equation}
Combining (\ref{eq10.7.1}) and (\ref{eq10.7.2}), we get $\lambda\equiv 0$. This proves that $\dot{\sigma}$ is the identity map. \\

We now prove part (2) of the theorem. Consider the embedding obtained via the restriction: 
\[   i_q:  \Xi =G(\Sigma\backslash \Gamma\cdot q)^\Gamma  \hookrightarrow G(\mathbb{D}_q^*)^{\Gamma_q} . \]
Also, consider the embedding 
\[j_q=\prod j_q^\gamma: G(\mathbb{D}_q^*) \hookrightarrow  \prod_{\gamma\in  \widehat{ \Gamma/\Gamma_q}} G(\mathbb{D}_{\gamma\cdot q} ^*), \]
where $j_q^\gamma: G(\mathbb{D}_q^*) \xrightarrow{\sim} G(\mathbb{D}_{\gamma\cdot q}^*)$ is defined by 
\[ j^\gamma_q(f)(\gamma z)=\gamma\cdot f(z), \quad \text{ for } \gamma\in \widehat{\Gamma/\Gamma_q}, z\in \mathbb{D}^*_q, \text{ and } f\in G(\mathbb{D}_q^*).    \]
Here $\widehat{ \Gamma/\Gamma_q}$ denotes a (fixed) set of coset representatives of the cosets $\Gamma/\Gamma_q$.  

Let $\bar{\mathscr{H}}_1$ denote the integrable highest weight module of highest weight 0 and central charge 1 of the untwisted affine Lie algebra $\hat{L}(\fg)$ based at $q$, i.e.,  the central extension of $\fg((z_q))$, where $z_q$ is a formal parameter for $\Sigma$ at $q$.  Identifying $G(\mathbb{D}_{\gamma\cdot q}^*)$ with $G(\mathbb{D}_q^*)$ via $j^\gamma_q$, we get a projective representation $\rho$ and the following commutative diagram: 

\begin{equation}
\label{diag10.7}
\xymatrix{
\tilde{\mathscr{G}}^q  \ar[r]^{\hat{j}_q }  \ar[d]^{p_q} & {\hat{\mathscr{G}}}^{\Gamma\cdot q}  \ar[r]^<<<<<<{\hat{\rho}} \ar[d]^{p_{\Gamma\cdot q}}  &   {\rm GL}(\bar{\mathscr{H}_1 } \otimes \cdots  \otimes   \bar{\mathscr{H} }_1 )  \ar[d]^{\pi}\\
G(\mathbb{D}_q^*)  \ar[r]^<<<<{j_q} &    \prod_{\gamma\in  \widehat{ \Gamma/\Gamma_q}} G(\mathbb{D}_{\gamma\cdot q} ^*)  \ar[r]^<<<{\rho} &    {\rm  PGL}(\bar{\mathscr{H}_1 } \otimes \cdots  \otimes   \bar{\mathscr{H} }_1 ),
}
\end{equation}
where we take $|\Gamma/\Gamma_q|$ copies of $\bar{\mathscr{H}}_1$ and $\hat{\rho}$ (resp. $\hat{j}_q$) is the pull-back of $ {\rm GL}(\bar{\mathscr{H}_1 } \otimes \cdots  \otimes   \bar{\mathscr{H} }_1 )$ (resp. ${\hat{\mathscr{G}}}^{\Gamma\cdot q}$)  induced from $\rho$ (resp. $j_q$). Since $\bar{\mathscr{H}}_1$ has central charge 1, it is easy to see that $\tilde{\mathscr{G}}^q$ is the central extension of $G(\mathbb{D}_q^*)$ corresponding to the central charge $= |\Gamma/\Gamma_q|$.   

Since the $\hat{L}(\fg, \Gamma_q)$-submodule of $\bar{\mathscr{H}}_1$ generated by the highest weight vector is of central charge $|\Gamma_q|$
(cf. equation \eqref{eq1.1.1.4}), we get that the restriction $p_{\Gamma_q}$ of the central extension $p_q: \tilde{\mathscr{G}}^q \to  G(\mathbb{D}_q^*) $ to $ G(\mathbb{D}_q^*)^{\Gamma_q} $ is the central extension corresponding to the central charge $|\Gamma|$. By the same proof as of \cite[Proposition 3.3]{So2} (see also \cite[Theorem 8.2.1]{Ku2}), the central extension $p_{\Gamma\cdot q}$ splits over $G(\Sigma\backslash  \Gamma\cdot q)$. 

 Now, any splitting $\sigma$ of $p_{\Gamma\cdot q}$ over $G(\Sigma\backslash \Gamma\cdot q)$ clearly induces a splitting $\hat{\sigma}$ of the central extension $p_q:  p_q^{-1}( G(\mathbb{D}_q^*)^{\Gamma_q}  )\to G(\mathbb{D}_q^*) ^{\Gamma_q}$ over $G(\Sigma\backslash \Gamma\cdot q)^\Gamma$.

Observe next that any splitting $\sigma$ of  $p_{\Gamma\cdot q}:  {\hat{\mathscr{G}}}^{\Gamma\cdot q} \to      \prod_{\gamma\in  \widehat{ \Gamma/\Gamma_q}} G(\mathbb{D}_{\gamma\cdot q} ^*) $
over $G(\Sigma\backslash \Gamma\cdot q)$ satisfies    $\dot{\sigma} ={\rm Id}$.
This follows trivially from the fact that 
\[  [\fg(\Sigma\backslash \Gamma\cdot q),  \fg(\Sigma \backslash  \Gamma\cdot q) ]= \fg(\Sigma \backslash  \Gamma\cdot q) . \]
Now, the induced splitting $\hat{\sigma}$ of the central extension $p_q:  p_q^{-1}( G(\mathbb{D}_q^*)^{\Gamma_q}  )\to G(\mathbb{D}_q^*) ^{\Gamma_q}$ over $G(\Sigma\backslash \Gamma\cdot q)^\Gamma$ clearly satisfies  $\dot{\hat{\sigma}}={\rm Id}$.
This proves the theorem. 
\end{proof}

\begin{remark}
\label{remark10.8}
{\rm Because of the possible existence of nontrivial characters of $G^{\Gamma_q}$ (resp. $G^{\Gamma_{q'}}$ for $q'\in \Sigma\backslash \Gamma\cdot q$), the splittings of $\hat{\mathscr{G}}^q_c\to \mathscr{L}^q_G$ over $G[[z_q]]^{\Gamma_q}$ (resp. $G(\Sigma\backslash \Gamma\cdot q)^\Gamma $) may not be unique.}
\end{remark}
By a theorem of Steinberg (cf.\,\cite{St1}), the fixed subgroup $G^{\sigma}$ is connected for any finite order automorphism $\sigma$ of $G$. 
\begin{proposition}
\label{Prop10.8}
{\it Let $\sigma$ be a finite order automorphism of $\fg$ of order $m$ and let $m$ divide $\bar{s}c$, where $\bar{s}$ is defined above Corollary \ref{newcoroweight0}. For any $\lambda\in D_c$, the irreducible $\fg^\sigma$-module $V(\lambda)$ integrates to a representation of $G^\sigma$. }
\end{proposition}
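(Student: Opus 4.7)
The strategy is to promote the $\fg^\sigma$-module $V(\lambda)$ to a rational $G^\sigma$-module by embedding it inside the integrable module $\mathscr{H}(\lambda)$, on which $G^\sigma$ acts algebraically through the canonical splitting produced in Theorem \ref{Thm10.7}. Since $G^\sigma$ is connected by the Steinberg theorem cited just above, it suffices to construct a rational action of $G^\sigma$ on $V(\lambda)$ whose differential recovers the given $\fg^\sigma$-action.

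First I would apply Corollary \ref{newcoroweight0}: the hypothesis $m\mid \bar{s}c$ is exactly the condition ensuring $0\in D_c$. Choose a smooth point $q$ on a $\Gamma$-curve with stabilizer $\Gamma_q=\langle\sigma\rangle$ of order $m$, so that $\hat{L}(\fg,\Gamma_q)=\hat{L}(\fg,\sigma)$ and the subgroup of constant loops in $G[[z_q]]^{\Gamma_q}$ is precisely $G^\sigma$. Theorem \ref{Thm10.7}(1) then produces a canonical splitting
\[
\iota : G[[z_q]]^{\Gamma_q} \hookrightarrow \hat{\mathscr{G}}^q_c
\]
of the central extension $p:\hat{\mathscr{G}}^q_c\to \mathscr{L}^q_G$ whose tangent map is the identity inclusion of $\fg[[z_q]]^{\Gamma_q}$ into $\hat{L}(\fg,\sigma)^{\geq 0}$. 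The projective representation of $\mathscr{L}^q_G$ on $\mathscr{H}(\lambda)$ given by Theorem \ref{thm1.3.4} determines a further central $\mathbb{C}^*$-extension of $\mathscr{L}^q_G$ whose Lie algebra extension is again $\hat{L}(\fg,\sigma)$ at level $c$; by the standard uniqueness of such extensions this is canonically isomorphic to $\hat{\mathscr{G}}^q_c$, and composing $\iota$ with the corresponding linear action on $\mathscr{H}(\lambda)$ yields a rational action
\[
\tilde\rho : G[[z_q]]^{\Gamma_q}\longrightarrow \GL_{\mathscr{H}(\lambda)}
\]
whose derivative is the natural $\hat{L}(\fg,\sigma)^{\geq 0}$-action.

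I would then restrict $\tilde\rho$ to the subgroup of constant loops $G^\sigma \subset G[[z_q]]^{\Gamma_q}$ and verify that the subspace $V(\lambda)=\{v\in\mathscr{H}(\lambda):\hat{L}(\fg,\sigma)^+\cdot v=0\}$ is $G^\sigma$-stable. For any $g\in G^\sigma$ the adjoint $\Ad(g)$ commutes with $\sigma$, hence preserves every eigenspace $\fg_j$ of $\sigma$, and therefore stabilizes the subalgebra $\hat{L}(\fg,\sigma)^+$; the equivariance relation of Corollary \ref{coro1.3.7} then forces $y\cdot(\tilde\rho(g)v)=\tilde\rho(g)\bigl(\Ad(g^{-1})y\cdot v\bigr)=0$ for all $v\in V(\lambda)$ and $y\in \hat{L}(\fg,\sigma)^+$. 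The derivative of the resulting finite-dimensional representation $G^\sigma \to \GL_{V(\lambda)}$ is the restriction of the $\hat{L}(\fg,\sigma)^{\geq 0}$-action to the constant piece $\fg^\sigma \otimes 1$, which is exactly the original $\fg^\sigma$-action on $V(\lambda)$. This yields the required integration.

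The main delicate point I anticipate is the canonical identification of the central extension $\hat{\mathscr{G}}^q_c$, built from $\mathscr{H}_c=\mathscr{H}(0)$ in Definition \ref{defi1.3.5}, with the central extension arising analogously from $\mathscr{H}(\lambda)$ for $\lambda\ne 0$; this is what legitimately permits transporting $\iota$ across modules. One may invoke it as a uniqueness statement for central $\mathbb{C}^*$-extensions of the twisted loop group at a fixed central charge, or verify it directly by matching the defining $2$-cocycles of the two extensions, both governed by the level $c$ through the residue pairing in equation \eqref{eq1.1.1.4}.
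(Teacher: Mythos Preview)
Your route is quite different from the paper's, and while the strategy is coherent, it rests on a step you correctly identify as delicate but do not actually complete.

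The paper argues directly at the level of weight lattices. Writing $\sigma=\tau\epsilon^{\ad h}$ with $\tau$ a diagram automorphism, one has $H^\sigma=H^\tau$, so it suffices to show that $\lambda$ exponentiates to a character of $H^\tau$. The hypothesis $m\mid \bar s c$ makes every $\bar s_i c/m$ an integer, so the condition $n_{\lambda,i}\in\mathbb{Z}_{\ge0}$ from Lemma~\ref{finite_set_weight_lem} forces $\lambda(\check\alpha_i)\in\mathbb{Z}$ for each $i\in I(\fg^\tau)$; thus $\lambda$ lies in the weight lattice of $\fg^\tau$. When $G^\tau$ is simply connected this finishes the proof. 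The only exception among diagram automorphisms is $(\fg,r)=(A_{2n},2)$, where $G^\tau\simeq\mathrm{SO}(2n+1)$; there an explicit computation with $[x_o,y_o]$ and the condition $n_{\lambda,o}\in\mathbb{Z}$ shows $\lambda(\check\alpha_n)\in 2\mathbb{Z}$, so $\lambda$ in fact lies in the root lattice of $B_n$ and again exponentiates.

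Your approach instead tries to realize $V(\lambda)$ inside a genuine $G^\sigma$-module by transporting the canonical splitting $\iota$ of Theorem~\ref{Thm10.7}(1) to $\mathscr{H}(\lambda)$. The gap is exactly where you place it: $\hat{\mathscr{G}}^q_c$ is constructed from $\mathscr{H}_c=\mathscr{H}(0)$, and the paper provides no linear action of $\hat{\mathscr{G}}^q_c$ on $\mathscr{H}(\lambda)$ for $\lambda\neq 0$. The proof of Theorem~\ref{Thm10.7}(1) does not adapt directly, since it uses that the full annihilator of $\fg[[z_q]]^{\Gamma_q}$ in $\mathscr{H}_c$ is the single line $\mathbb{C}v_+$, a feature special to $\lambda=0$. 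Your proposed fix --- invoking uniqueness of central $\mathbb{G}_m$-extensions of the twisted loop group at a fixed level, or matching the two $2$-cocycles --- is plausible, but neither statement is proved in the paper, and establishing it carefully would be substantially more work than the paper's short lattice computation. So your argument is not wrong in principle, but as written it outsources the crux to an unproved classification result, whereas the paper's proof is elementary and self-contained.
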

\begin{proof}
Decompose $\sigma=\tau \epsilon^{{\rm ad} h}$ as in (\ref{eq1.1.1.0}). To prove that $V(\lambda)$ integrates to a $G^\sigma$-module,  it suffices to show that the torus $H^\sigma=H^\tau$ acts on $V(\lambda)$, where $H$ is the maximal torus of $G$ with Lie algebra $\fh$ ($\fh$ being a $\sigma$-stable  Cartan subalgebra). By Lemma \ref{finite_set_weight_lem}, since $m$ divides $\bar{s} c$ (by assumption), $\lambda(\alpha^\vee_i)\in \mathbb{Z}$ for all the simple coroots $\alpha^\vee_i$ of $\fg^\tau$, i.e., $\lambda$ belongs to the weight lattice of $\fg^\tau$. So, if $G^\tau$ is simply-connected, $\lambda$ gives rise to a character of $H^\tau$. Thus, in this case $H^\tau$ acts on $V(\lambda)$. Recall that for a diagram automorphism $\tau$, $G^\tau$ is simply-connected unless $(\fg,r)=(A_{2n}, 2)$, where $r$ is the order of $\tau$. In this case $G^\tau =$ SO$(2n+1)$ and following the notation of the identity \eqref{integer_n_mu},
\[ [x_o,y_o]=-(\alpha_1^\vee+ \cdots + \alpha_{n-1}^\vee+ \alpha^\vee_n/2)  \]
with the Bourbaki convention \cite[Planche II]{Bo}. Since $n_{\lambda,i}$ is required to lie in $\mathbb{Z}_{\geq 0}$, for all $i\in \hat{I}(\fg, \sigma)$, and $m$ divides $\bar{s} c$, we get 
\[ \lambda(\alpha^\vee_n)/2 \in \mathbb{Z}. \]
Thus, $\lambda$ belongs to the root lattice of $\fg^\tau$ and hence $\lambda$ gives rise to a character of $H^\tau$. This proves that $H^\tau$ acts on $V(\lambda)$, proving the proposition.
\end{proof}

\section{Uniformization theorem (A review)}
\label{section11}
We continue to have the same assumptions on $G, \Gamma, \Sigma$ as in the beginning of Section \ref{thetafuctions}.  We recall some results due to Heinloth \cite{He} (conjectured by Pappas-Rapoport [PR1, PR2]) only in the generality we need and in the form suitable for our purposes. In particular, we recall the uniformization theorem due to Heinloth for the  parahoric Bruhat-Tits group schemes $\mathscr{G}$ in our setting. We introduce the moduli stack $\mathscr{P}arbun_{\mathscr{G}}$ of quasi-parabolic $\mathscr{G}$-torsors over $\bar{\Sigma}$ and construct the line bundles over 
  $\mathscr{P}arbun_{\mathscr{G}}$.

\begin{definition}[Parahoric Bruhat-Tits group scheme]\label{defi11.1}
Consider the $\Gamma$-invariant Weil restriction $\mathscr{G}=\mathscr{G}(\Sigma, \Gamma, \phi)$ via $\pi:\Sigma \to \bar{\Sigma}:=\Sigma/\Gamma$ of the constant group scheme $\Sigma\times G \to \Sigma$ over $\Sigma$. More precisely, $\mathscr{G}$ is given by the following group functor over $\bar{\Sigma}$:
\[ U \leadsto  G(U\times_{\bar{\Sigma}} \Sigma )^\Gamma,    \]
for any scheme $U$ over $\bar{\Sigma}$, where $U\times_{\bar{\Sigma}} \Sigma $ is the fiber product of $U$ and $\Sigma$ over $\bar{\Sigma}$.   Then, $\mathscr{G}\to \bar{\Sigma}$ is a smooth affine group scheme over $\bar{\Sigma}$. 

 This provides a class of examples of {\it parahoric Bruhat-Tits group schemes.}
\end{definition}
  For any point $p\in \bar{\Sigma}$, the fiber $\mathscr{G}_p\simeq G$ if $p$ is an unramified point. However, if $p$ is a ramified point, the group $\mathscr{G}_p$ has unipotent radical $U_p$ and 
\begin{equation}
\label{def11.1.1}
  \mathscr{G}_p/U_p \simeq G^{\Gamma_q}, \text{ for any } q\in \pi^{-1}(p). 
\end{equation}
Take any point $q\in \pi^{-1}(p)$ and let $\mathbb{D}_p\subset \bar{\Sigma}$ (resp. $\mathbb{D}_q\subset \Sigma$) be the formal disc around $p$ in $\bar{\Sigma}$ (resp. around $q$ in $\Sigma$). Then,
\begin{equation}
\label{def11.1.2}
\mathscr{G}(\mathbb{D}_p )\simeq G(\mathbb{D}_q)^{\Gamma_q}.
\end{equation}
Similarly, for the punctured discs $\mathbb{D}^\times_p$ and $\mathbb{D}^\times_q$, 
\begin{equation}
\label{def11.1.3}
\mathscr{G}(\mathbb{D}^\times_p)\simeq G(\mathbb{D}^\times_q)^{\Gamma_q}.
\end{equation}
Thus, 
\[ \mathscr{G}(\mathbb{D}^\times_p)/\mathscr{G}(\mathbb{D}_p) \simeq X^q, \quad  (\text{cf. Definition 9.4} ). \]
In particular, it is also an {\it irreducible} (reduced) ind-projective variety (cf.\,Corollary \ref{newcoro8.1.4}).

\begin{definition}[Moduli stack of $\mathscr{G}$-torsors] 
\label{def11.2}
Consider the stack 
$ \mathscr{B}un_{\mathscr{G}}$  assigning to a commutative $\bc$-algebra  $R$  the category of $\mathscr{G}_R$-torsors over $\bar{\Sigma}_R:=\bar{\Sigma}\times {\rm Spec} R$, where $\mathscr{G}_R$ is the pull-back of $\mathscr{G}$ via the projection from $\bar{\Sigma}_R$ to $\bar{\Sigma}$.   Then, as proved by Heinloth \cite[Proposition 1]{He}, $\mathscr{B}un_{\mathscr{G}}$ is a smooth algebraic stack, which is locally of finite type.  

We need the following parabolic generalization of $\mathscr{B}un_{\mathscr{G}}$.   Let $\vec{p}=(p_1,\cdots, p_s)$ ($s\geq 1$) be a set of distinct points in $\bar{\Sigma}$. Label the points $\vec{p}$ by  parabolic subgroups $\vec{P}=(P_1,\cdots, P_s)$, where $P_i$ is a parabolic subgroup of $\mathscr{G}_{p_i}$. Via the isomorphism (\ref{def11.1.1}), we can think of $P_i$ as a parabolic subgroup $P_i^{q_i}$ of $G^{\Gamma_{q_i}}$ for any $q_i\in \pi^{-1}(p_i)$.  

A quasi-parabolic $\mathscr{G}$-torsor of type $\vec{P}$ over $(\bar{\Sigma}, \vec{p})$ is, by definition, a $\mathscr{G}$-torsor $\mathscr{E}$ over $\bar{\Sigma}$ together with points $\sigma_i$ in $\mathscr{E}_{p_i}/P_i$. This gives rise to the stack:   $R\leadsto $ the category of $\mathscr{G}_R$-torsors $\mathscr{E}_R$ over $\bar{\Sigma}_R$ together with sections $\sigma_i$ of $({\mathscr{E}_R}_{|_{\{p_i\}\times {\rm Spec} R }})/P_i\to {\rm Spec} R$.  We denote this stack by $\mathscr{P}arbun_{\mathscr{G}}=\mathscr{P}arbun_{\mathscr{G}}(\vec{P})$.
\end{definition}

We recall the following uniformization theorem.  It was proved by Heinloth \cite[Theorem 4, Proposition 4, and Theorem 5]{He} (and conjectured by Pappas-Rapoport [PR2])  for $\mathscr{B}un_{\mathscr{G}}$ (in fact, he proved a more general result). Its extension to $\mathscr{P}arbun_{\mathscr{G}}$ follows by the same proof. (Since $G$ is simply-connected, it is easy to see that so is the generic $\mathscr{G}_{\bc(\Sigma)}$.)

\begin{theorem}
\label{Thm11.3}
Take any $q_i\in \pi^{-1}(p_i)$, $q\in \Sigma\backslash \pi^{-1}\{p_1, \cdots, p_s\}$ and any parabolic type $\vec{P}$ at the points $\vec{p}$. Then, as stacks, 
\begin{equation}
\mathscr{P}arbun_{\mathscr{G}}(\vec{P})\simeq  \big [ G(\Sigma\backslash  \Gamma\cdot q)^\Gamma \big\backslash  \left(X^q \times  \prod_{i=1}^s  (G^{\Gamma_{q_i}} /P_i^{q_i} ) \right) \big],
\end{equation}
where $G(\Sigma\backslash  \Gamma\cdot q)^\Gamma$ acts on $X^q$ via its restriction to $\mathbb{D}_q^*$ and it acts on $G^{\Gamma_{q_i}} /P_i^{q_i} $ via its evaluation at $q_i$. Here   $ [ G(\Sigma \backslash  \Gamma\cdot q)^\Gamma \big \backslash  \left(X^q\times \prod_{i=1}^s  (G^{\Gamma_{q_i}} /P_i^{q_i} ) \right) ]$ denotes the quotient stack 
(cf. \cite[Example C.18(b)]{Ku2}) obtained by taking the quotient of the projective ind-variety  $X^q\times \prod_{i=1}^s  (G^{\Gamma_{q_i}} /P_i^{q_i} )$ by the ind-group $G(\Sigma\backslash  \Gamma\cdot q)^\Gamma$.

Moreover, the projection $X^q\times \prod_{i=1}^s  (G^{\Gamma_{q_i}} /P_i^{q_i} )\to   \mathscr{P}arbun_{\mathscr{G}}(\vec{P})$ is locally trivial in the smooth topology.    
\end{theorem}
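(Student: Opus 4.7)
The strategy is to reduce to Heinloth's uniformization for $\mathscr{B}un_{\mathscr{G}}$ and then enhance it with the parabolic data at the marked points, which by the choice of $q$ all lie in the affine open $\bar{\Sigma}\setminus\{\pi(q)\}$ where torsors trivialize. The foundational input, due to Heinloth, is the following: for any $\mathbb{C}$-algebra $R$ and any $\mathscr{G}_R$-torsor $\mathscr{E}$ over $\bar{\Sigma}_R$, there exists an \'etale cover $\Spec R' \to \Spec R$ such that $\mathscr{E}_{R'}$ admits a section over $(\bar{\Sigma}\setminus\{\pi(q)\}) \times \Spec R'$. This is where the hypotheses on $\mathscr{G}$ — parahoric Bruhat--Tits obtained from $\Gamma$-invariant Weil restriction of a simply connected simple group — enter essentially, in combination with Steinberg's connectedness of $G^{\Gamma_q}$, which yields the vanishing of the relevant first cohomology on the affine open $\bar{\Sigma}\setminus\{\pi(q)\}$.

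Given such a trivialization $\tau$ on $(\bar{\Sigma}\setminus\{\pi(q)\})_{R'}$, choose any trivialization $\tau_o$ of $\mathscr{E}_{R'}$ on the formal disc $\mathbb{D}_{\pi(q),R'}$ (such $\tau_o$ exists after a further \'etale base change). The transition $\tau \circ \tau_o^{-1}$ defines an element
\[ g \in \mathscr{G}(\mathbb{D}^\times_{\pi(q),R'}) \simeq G(R'((z_q)))^{\Gamma_q}, \]
where we used~\eqref{def11.1.3}. The assignment $\mathscr{E} \mapsto g\cdot \mathscr{G}(\mathbb{D}_{\pi(q),R'})$ gives a well-defined $R'$-point of $\mathscr{X}^q$ (cf.\,Definition~\ref{defi9.4}), independent of the choice of $\tau_o$. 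Altering $\tau$ amounts to the left multiplication action of $\mathscr{G}(\bar{\Sigma}\setminus\{\pi(q)\})(R') = G(\Sigma\setminus \Gamma\cdot q)^\Gamma(R')$ on $X^q(R')$. Descending along the \'etale cover via Lemma~\ref{lemB.2} yields an isomorphism of stacks
\[ \mathscr{B}un_{\mathscr{G}} \simeq \bigl[ G(\Sigma\setminus \Gamma\cdot q)^\Gamma \big\backslash X^q \bigr], \]
which is Heinloth's theorem.

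For the quasi-parabolic enhancement, the trivialization $\tau$ identifies $\mathscr{E}_{R',p_i}$ with $\mathscr{G}_{R',p_i}$. By~\eqref{def11.1.1}, and using that $P_i \subset \mathscr{G}_{p_i}$ projects isomorphically to the parabolic $P_i^{q_i} \subset G^{\Gamma_{q_i}}$, a section $\sigma_i$ of $\mathscr{E}_{R',p_i}/P_i$ becomes an $R'$-point of $G^{\Gamma_{q_i}}/P_i^{q_i}$. A change of trivialization by $g \in G(\Sigma\setminus\Gamma\cdot q)^\Gamma(R')$ translates $\sigma_i$ by the evaluation $g(q_i)$, which lies in $G^{\Gamma_{q_i}}(R')$ since $g$ is $\Gamma$-invariant and $q_i$ is $\Gamma_{q_i}$-fixed. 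Combined with the previous paragraph this gives the asserted identification
\[ \mathscr{P}arbun_{\mathscr{G}}(\vec{P}) \simeq \bigl[ G(\Sigma\setminus \Gamma\cdot q)^\Gamma \big\backslash \bigl( X^q \times \textstyle\prod_{i=1}^s G^{\Gamma_{q_i}}/P_i^{q_i} \bigr) \bigr]. \]
The smooth (indeed \'etale-) local triviality of the projection follows directly from the trivialization result of Heinloth quoted above: starting from a quasi-parabolic torsor, an \'etale cover of the base produces a trivialization $\tau$ on the punctured curve, which in turn lifts the torsor to a point of $X^q \times \prod_i G^{\Gamma_{q_i}}/P_i^{q_i}$.

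\emph{Main obstacle.} The one non-formal ingredient is Heinloth's trivialization statement; its proof proceeds by a delicate descent argument using the specific structure of $\mathscr{G}$ as a $\Gamma$-invariant Weil restriction and the connectedness of $G^{\Gamma_q}$. Once that vanishing is in hand, the remaining steps (the Beauville--Laszlo-style gluing and the tracking of the parabolic data) are formal and essentially identical to the non-parabolic case — the only new verification is that the parabolic datum at $p_i$ transforms by evaluation at $q_i$ under the change of trivialization, which is immediate from $\Gamma$-equivariance.
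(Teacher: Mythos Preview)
Your proposal is correct and matches the paper's treatment: the paper does not give an independent proof but simply cites Heinloth's uniformization for $\mathscr{B}un_{\mathscr{G}}$ and remarks that the extension to $\mathscr{P}arbun_{\mathscr{G}}$ ``follows by the same proof,'' which is exactly the reduction-plus-parabolic-enhancement you carry out. Your write-up in fact supplies more detail than the paper does on how the parabolic data transforms under change of trivialization.
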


\begin{remark}
{\rm Even though we will not use, there is also an isomorphism of stacks:
\[   \mathscr{P}arbun_{\mathscr{G}}(\vec{P})\simeq   \big [ G(\Sigma\backslash  \Gamma\cdot \vec{q})^\Gamma \big \backslash \left(\prod_{i=1}^s  ( X^{q_i}(P_i^{q_i}) ) \right) \big]  ,   \]
where $\Gamma\cdot \vec{q}:= \cup_{i=1}^s  \Gamma\cdot q_i$ and $X^{q_i}(P_i^{q_i}) $ is the partial twisted affine flag variety which is by definition $G(\mathbb{D}^\times_{q_i} )^{\Gamma_{q_i}} /\mathscr{P}_i $ and $\mathscr{P}_i $ is the inverse image of $P_i^{q_i}$ under the surjective evaluation map  $G(\mathbb{D}_{q_i} )^{\Gamma_{q_i}} \to  G^{\Gamma_{q_i}}$.}
\end{remark}
\begin{coro}
\label{coro11.5}
The ind-group scheme $G(\Sigma\backslash \Gamma\cdot q)^{\Gamma}$ is reduced. Moreover, it is irreducible by Theorem \ref{thm8.1.1}. 
\end{coro}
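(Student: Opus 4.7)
The plan is to invoke Theorem \ref{chap1-them1.2.22} applied to the canonical ind-group morphism $\iota\colon\Xi^{\red}\hookrightarrow\Xi$, where $\Xi=G(\Sigma\backslash\Gamma\cdot q)^{\Gamma}$. Since $\Xi$ is an ind-affine group scheme filtered by affine schemes of finite type, the theorem reduces reducedness to showing that the differential $(d\iota)_{e}\colon\Lie(\Xi^{\red})\to\Lie(\Xi)=\mathfrak{g}(\Sigma\backslash\Gamma\cdot q)^{\Gamma}$ is an isomorphism. As $(d\iota)_{e}$ is automatically injective, the real task is surjectivity, i.e., producing enough reduced closed ind-subgroups of $\Xi$ whose tangent spaces exhaust all of $\mathfrak{g}(\Sigma\backslash\Gamma\cdot q)^{\Gamma}$.

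To this end I would first invoke Lemma \ref{quasi-split-lemma} to fix $\Gamma$-stable Borel and Cartan subalgebras $\mathfrak{h}\subset\mathfrak{b}$, so that the unipotent radicals $U$ and $U^{-}$ of $B$ and of the opposite Borel $B^{-}$ are both $\Gamma$-stable. Since $\mathrm{char}(\mathbb{C})=0$, the exponential map furnishes $\Gamma$-equivariant functorial isomorphisms $U(R)^{\Gamma}\simeq\mathfrak{u}(R)^{\Gamma}$ and $U^{-}(R)^{\Gamma}\simeq\mathfrak{u}^{-}(R)^{\Gamma}$, so $U(\Sigma\backslash\Gamma\cdot q)^{\Gamma}$ and $U^{-}(\Sigma\backslash\Gamma\cdot q)^{\Gamma}$ acquire the structure of closed ind-subgroup schemes of $\Xi$ isomorphic to ind-affine spaces, and are in particular reduced. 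They therefore sit inside $\Xi^{\red}$, forcing $\Lie(\Xi^{\red})$ to contain both $\mathfrak{u}(\Sigma\backslash\Gamma\cdot q)^{\Gamma}$ and $\mathfrak{u}^{-}(\Sigma\backslash\Gamma\cdot q)^{\Gamma}$, and hence the entire Lie subalgebra $L$ these generate inside $\mathfrak{g}(\Sigma\backslash\Gamma\cdot q)^{\Gamma}$.

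The remaining, and principal, obstacle is to prove the Lie-algebraic analogue of Lemma \ref{uni}: $L=\mathfrak{g}(\Sigma\backslash\Gamma\cdot q)^{\Gamma}$. Writing $A=\mathscr{O}(\Sigma\backslash\Gamma\cdot q)$ and using the $\Gamma$-stable triangular decomposition $\mathfrak{g}=\mathfrak{u}\oplus\mathfrak{h}\oplus\mathfrak{u}^{-}$, we have $(\mathfrak{g}\otimes A)^{\Gamma}=(\mathfrak{u}\otimes A)^{\Gamma}\oplus(\mathfrak{h}\otimes A)^{\Gamma}\oplus(\mathfrak{u}^{-}\otimes A)^{\Gamma}$, so the task reduces to the inclusion $(\mathfrak{h}\otimes A)^{\Gamma}\subseteq[(\mathfrak{u}\otimes A)^{\Gamma},(\mathfrak{u}^{-}\otimes A)^{\Gamma}]$. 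The subtlety is that a $\Gamma$-invariant element of $(\mathfrak{u}\otimes A)\otimes_{\mathbb{C}}(\mathfrak{u}^{-}\otimes A)$ need not split as a sum of tensors of $\Gamma$-invariant factors. I plan to handle this by decomposing $\mathfrak{u}$, $\mathfrak{u}^{-}$ and $A$ into $\Gamma$-isotypic components, pairing dual characters of $\Gamma$ through the relation $[e_{\alpha},f_{\alpha}]=h_{\alpha}$ and the $\Gamma$-equivariance of the root datum, and constructing explicit $\Gamma$-invariant pairs whose brackets exhaust $(\mathfrak{h}\otimes A)^{\Gamma}$; the toy case of $\mathfrak{g}=\mathfrak{sl}_{3}$ with the order-two diagram flip already illustrates the pattern, since pairing $e_{1}\pm e_{2}$ against $f_{1}\pm f_{2}$ (tensored with $A^{\Gamma}$ and the anti-invariant part $A^{-\Gamma}$ respectively) recovers all of $(h_{1}+h_{2})\otimes A^{\Gamma}\oplus(h_{1}-h_{2})\otimes A^{-\Gamma}$. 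Once $L=\mathfrak{g}(\Sigma\backslash\Gamma\cdot q)^{\Gamma}$ is established, Theorem \ref{chap1-them1.2.22} yields $\Xi^{\red}=\Xi$, proving reducedness; irreducibility is already supplied by Theorem \ref{thm8.1.1}.
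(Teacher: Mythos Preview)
Your approach is genuinely different from the paper's, and unfortunately the key generation step fails as stated. The paper does not try to verify the hypothesis of Theorem \ref{chap1-them1.2.22} directly for $\Xi$; instead it uses the uniformization theorem (Theorem \ref{Thm11.3}): the projection $X^{q}\times\prod_{i}(G^{\Gamma_{q_i}}/P_i^{q_i})\to\mathscr{P}arbun_{\mathscr{G}}(\vec P)$ is locally trivial in the smooth topology with fibre $\Xi$, and since the total space is already known to be reduced (Definition \ref{defi9.4} and Corollary \ref{newcoro8.1.4}), so is $\Xi$. The Lie-algebra argument you propose is exactly what the paper uses in the \emph{local} situation of Definition \ref{defi9.4}, where the coefficient ring is $z_q^{-1}\mathbb{C}[z_q^{-1}]$, $\Gamma_q$ is cyclic, and the unique ramification point has been removed.

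The global generation claim $(\mathfrak{h}\otimes A)^{\Gamma}\subseteq\bigl[(\mathfrak{u}\otimes A)^{\Gamma},(\mathfrak{u}^{-}\otimes A)^{\Gamma}\bigr]$ is false whenever $\Sigma^{*}$ contains ramification points of $\pi$. Take $\mathfrak{g}=\mathfrak{sl}_{2}$, $\Gamma=\mathbb{Z}/2$ acting on $\mathfrak{g}$ by $e\mapsto -e$, $f\mapsto -f$, $h\mapsto h$ (an inner automorphism preserving $\mathfrak{b}$), and $\Sigma=\mathbb{P}^{1}$ with $\sigma(z)=-z$, $q=\infty$. Then $A=\mathbb{C}[z]$, $A^{+}=\mathbb{C}[z^{2}]$, $A^{-}=z\,\mathbb{C}[z^{2}]$, and $(\mathfrak{u}\otimes A)^{\Gamma}=e\otimes A^{-}$, $(\mathfrak{u}^{-}\otimes A)^{\Gamma}=f\otimes A^{-}$. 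Their bracket gives only $h\otimes (A^{-})^{2}=h\otimes z^{2}\mathbb{C}[z^{2}]$, which misses $h\otimes 1\in(\mathfrak{h}\otimes A)^{\Gamma}$; iterated brackets do not help since the $\mathfrak{h}$-component of the generated subalgebra is exactly $h\otimes(A^{-})^{2}$. More generally, every $f\in A^{-}$ vanishes at each ramified point of $\Sigma^{*}$, so $(A^{-})^{2}$ lies in the ideal of $A^{+}$ cut out by the branch points, and adding the constants $\mathfrak{h}^{\Gamma}\otimes 1$ still leaves a gap once $\Sigma^{*}$ contains at least two ramification points. Thus your plan cannot succeed without additional input (such as the torus at each ramification point, or equivalently the local pieces supplied by uniformization), which is precisely what the paper's argument provides.
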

\begin{proof} We follow the same argument as in [LS, $\S$5]. 
Consider the projection 
\[ \beta:   X^q\times \prod_{i=1}^s  (G^{\Gamma_{q_i}} /P_i^{q_i} )\to   \mathscr{P}arbun_{\mathscr{G}}(\vec{P}) .  \]
By Theorem \ref{Thm11.3}, there exists a neighborhood $U\to   \mathscr{P}arbun_{\mathscr{G}}(\vec{P})   $  in the smooth topology such that $\beta^*(U)\simeq U\times G(\Sigma\backslash \Gamma\cdot q)^{\Gamma} $. Since $X^q$ is reduced by Corollary \ref{newcoro8.1.4} and of course $G^{\Gamma_{q_i}} /P_i^{q_i}$ are reduced, we get that $G(\Sigma\backslash \Gamma\cdot q)^{\Gamma} $ is reduced.
\end{proof}

\begin{definition}[Line bundles on $ \mathscr{P}arbun_{\mathscr{G}}$] 
\label{def11.6}
Let $\Xi:=G(\Sigma\backslash \Gamma\cdot q)^{\Gamma} $ and assume that $0\in D_{c, q}$. Recall (cf., e.g., \cite[\S 7.1]{BL}) that, by virtue of Theorem \ref{Thm11.3}, 
\begin{equation}
\label{def11.6.1}
{\rm Pic} \left(\mathscr{P}arbun_{\mathscr{G}}(\vec{P}) \right)  \simeq  {\rm Pic}^{\Xi} \left(X^q\times \prod_{i=1}^s  (G^{\Gamma_{q_i}} /P_i^{q_i} ) \right).
\end{equation}
Moreover, 
 from the {\it see-saw principle} (also see \cite[Chap. III, Exercise 12.6]{H}) , since $X^q$ is ind-projective and $\rm Pic$ of each factor is discrete,
 $$ {\rm Pic} \left( X^q  \times   \prod_{i=1}^s  (G^{\Gamma_{q_i}} /P_i^{q_i} )\right) \simeq {\rm Pic} ( X^q) \times   \prod_{i=1}^s {\rm Pic} (G^{\Gamma_{q_i}} /P_i^{q_i} ).
 $$
 Let us consider  the following canonical homomorphism:
 $$
 {\rm Pic}^{\Xi}( X^q) \times   \prod_{i=1}^s {\rm Pic}^{\Xi}  (G^{\Gamma_{q_i}} /P_i^{q_i} ) \to {\rm Pic}^{\Xi} \left( X^q\times \prod_{i=1}^s  (G^{\Gamma_{q_i}} /P_i^{q_i} ) \right).$$
Consider the morphism 
\[ \hat{\mathscr{G} }^q_c\to \mathscr{H}_c\backslash \{0\}, \quad  g\mapsto gv_+, \]
where $v_+$ is a highest weight vector of $\mathscr{H}_c$. This factors through a morphism (via Theorem \ref{Thm10.7} (1)):
\[  X^q=\hat{\mathscr{G}^q_c }/ (G(\mathbb{D}_q )^{\Gamma_q} \times \mathbb{C}^\times) \to  \mathbb{P}(\mathscr{H}_c ) .\]

Pulling back the dual of the tautological line bundle on $\mathbb{P}(\mathscr{H}_c )$, we get a $\hat{\mathscr{G} }^q_c$-equivariant line bundle $\mathfrak{L}^q_c $ on $X^q$ given by the character 
\[   G(\mathbb{D}_q )^{\Gamma_q} \times \mathbb{C}^\times\to  \mathbb{C}^\times, \quad  (g,z)\mapsto z.   \]
Observe that the canonical splitting of $G(\mathbb{D}_q)^{\Gamma_q}$ is taken for the central extension $\hat{\mathscr{G} }^q_c$ corresponding to the central charge $c$.  

Now, if $c$ is a multiple of $|\Gamma|$, the central extension $\hat{\mathscr{G} }^q_c\to G(\mathbb{D}_q^\times)^\Gamma$ splits over $\Xi$. As in Theorem
\ref{Thm10.7} (2), take the canonical splitting. This provides a $\Xi$-equivariant structure on the line bundle $\mathfrak{L}^q_c$ over $X^q$.   

 Similarly, for any $\lambda_i\in D_{c,q_i}$, the $\fg^{\Gamma_{q_i}}$-module $V(\lambda_i)$ with highest weight $\lambda_i$ integrates to a $G^{\Gamma_{q_i}}$-module $V(\lambda_i)$ if $|\Gamma|$ divides $c$ (cf.\,Proposition \ref{Prop10.8}). Take the highest weight vector $v_+\in \mathscr{H}(\lambda_i) $ which is an (irreducible) integrable highest weight $\hat{L}(\fg, \Gamma_{q_i})=\hat{\fg}_{q_i}$-module with highest weight  $\lambda_i$ and central charge $c$. Then, $V(\lambda_i)$ is the $G^{\Gamma_{q_i}}$-submodule of $\mathscr{H}(\lambda_i)$ generated by $v_+$. Let $P_i^{q_i}$ be the parabolic subgroup of $G^{\Gamma_{q_i}}$ which stabilizes the line $\mathbb{C}v_+$. Define the $G^{\Gamma_{q_i}}$-equivariant ample line bundle 
\[  \mathscr{L}^{q_i}(\lambda_i):=G^{\Gamma_{q_i}} \times_{P_i^{q_i}}   (\mathbb{C} v_+ )^* \to  G^{\Gamma_{q_i}}/ P_i^{q_i} .    \]

Then,  $\mathscr{L}^{q_i}(\lambda_i) $ is $\Xi$-equivariant line bundle by virtue of the following evaluation map at $q_i$:
\[ e_i:   \Xi:= G(\Sigma\backslash \Gamma \cdot q) ^\Gamma\to   G^{\Gamma_{q_i}} .  \]
Thus, we obtain the $\Xi$-equivariant line bundle 
\[  \mathfrak{L}^q_c \boxtimes \mathscr{L}^{q_1}(\lambda_1) \boxtimes \cdots  \boxtimes  \mathscr{L}^{q_s}(\lambda_s)   \]
over $X^q\times  \prod_{i=1}^s   (G^{\Gamma_{q_i}}/P_i^{q_i})$, for any $c$ divisible by $|\Gamma|$ and $\lambda_i\in D_{c, q_i}$.  

Thus, under the isomorphism (\ref{def11.6.1}), we get the corresponding  line bundle $\mathfrak{L}(c; \vec{\lambda})$ over the stack $ \mathscr{P}arbun_{\mathscr{G}}(\vec{P}) $,   where $\vec{P}=(P^{q_1}_1,\cdots, P^{q_i}_s)$ and  $P_i^{q_i}$ is the stabilizer in $G^{\Gamma_{q_i}}$ of the line $\mathbb{C}\cdot v_+\subset \mathscr{H}(\lambda_i)$.
\end{definition}

\section{Identification of  twisted conformal blocks with the space of global sections of line bundles on moduli stack}
\label{section12}
In this final section, we establish the identification of twisted conformal blocks and generalized theta functions on the moduli stack $\mathscr{P}arbun_{\mathscr{G}}$.

We continue to have the same assumptions on $G, \Gamma, \Sigma$ and $\phi: \Gamma\to {\rm Aut}(\fg)$ as in the beginning of Section \ref{thetafuctions}. Let $\vec{q}=(q_1,\cdots, q_s)$  ($s \geq 1$) be marked points on $\Sigma$ with distinct $\Gamma$-orbits and let $\vec{\lambda}=(\lambda_1,\cdots, \lambda_s)$ be weights with $\lambda_i\in D_{c,q_i}$ attached to the points $q_i$. Let $P^{q_i}_i$ be the stabilizer  of the line $\mathbb{C}v_+\subset \mathscr{H}(\lambda_i)$ in $ G^{\Gamma_{q_i}}$. 

Recall the definition of the moduli stack $\mathscr{P}arbun_{\mathscr{G}}(\vec{P})$ of quasi-parabolic $\mathscr{G}$-torsors over $(\bar{\Sigma}, \vec{p})$ of type $\vec{P}=(P_1^{q_1}, \cdots, P_s^{q_s})$ from Definition \ref{def11.2}, where $\vec{p}=(\pi(q_1), \cdots, \pi(q_s))$. Also, recall from  Definition \ref{def11.6} the definition of the line bundle $\mathfrak{L}^q_c$ over $X^q$ for any $c$ such that  $0\in D_{c,q}$, and any $q\in \Sigma \backslash  \cup_{i=1}^{s}\Gamma\cdot q_i$ and the definition of the  ample homogeneous  line bundle $\mathscr{L}^{q_i}(\lambda_i)$ over the flag variety $G^{\Gamma_{q_i}}/P_i^{q_i}$. When $|\Gamma|$ divides $c$, these line bundles give rise to a line bundle $\mathfrak{L}(c;\vec{\lambda})$ over the stack $\mathscr{P}arbun_{\mathscr{G}}(\vec{P})$ (cf. Definition \ref{def11.6}).  
\vskip1ex

The following result confirms a conjecture by Pappas-Rapoport [PR2, Conjecture 3.7] in the case of the parahoric Bruhat-Tits group schemes considered in our paper. 
\begin{theorem}
\label{thm12.1}
Assume that $|\Gamma|$ divides $c$ and $\Gamma$ stabilizes a Borel subgroup of $G$. Then, there is a canonical isomorphism: 
\[  H^0( \mathscr{P}arbun_{\mathscr{G}}(\vec{P}), \mathfrak{L}(c, \vec{\lambda})   ) \simeq   \mathscr{V}_{\Sigma, \Gamma, \phi}(\vec{p}, \vec{\lambda} )^{\dagger},       \]
where $\mathscr{V}_{\Sigma, \Gamma, \phi}(\vec{p}, \vec{\lambda})^\dagger $ is the space of (twisted) vacua (cf. Identity (\ref{eq7})).  
\end{theorem}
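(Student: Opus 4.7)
The strategy is to combine Heinloth's Uniformization Theorem (Theorem \ref{Thm11.3}) with Borel--Weil type results for the twisted affine Grassmannian $X^q$ and for the finite-dimensional flag varieties $G^{\Gamma_{q_i}}/P^{q_i}_i$, and then to translate the resulting space of $\Xi$-invariants into twisted conformal blocks via the Propagation of Vacua (Corollary \ref{coro2.2.3}(b)). Concretely, I pick an auxiliary point $q\in \Sigma\setminus \bigcup_i \Gamma\cdot q_i$ with $0\in D_{c,q}$, which is guaranteed by Corollary \ref{newcoroweight0} since $|\Gamma|$ divides $c$. By Theorem \ref{Thm11.3},
\[
\mathscr{P}arbun_{\mathscr{G}}(\vec{P})\simeq \bigl[\Xi\bigbackslash\bigl(X^q\times \prod_{i=1}^s G^{\Gamma_{q_i}}/P^{q_i}_i\bigr)\bigr],\qquad \Xi=\Mor_\Gamma(\Sigma\setminus\Gamma\cdot q,G),
\]
with the projection from the total space to the stack locally trivial in the smooth topology. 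Consequently
\[
H^0\bigl(\mathscr{P}arbun_{\mathscr{G}}(\vec{P}),\mathfrak{L}(c,\vec{\lambda})\bigr)\simeq H^0\!\Bigl(X^q\times\prod_i G^{\Gamma_{q_i}}/P^{q_i}_i,\;\mathfrak{L}^q_c\boxtimes\bigboxtimes_i\mathscr{L}^{q_i}(\lambda_i)\Bigr)^{\Xi},
\]
where the $\Xi$-equivariant structure on $\mathfrak{L}^q_c$ is induced by the \emph{canonical} splitting of $\hat{\mathscr{G}}^q_c\to\mathscr{L}^q_G$ over $\Xi$ produced in Theorem \ref{Thm10.7}(2) (this is exactly the place where the hypothesis $|\Gamma|\mid c$ enters).

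Next, I compute each tensor factor of global sections. On the twisted affine Grassmannian $X^q$, the ample generator $\mathfrak{L}^q_c$ is by construction the pullback of $\mathscr{O}(1)$ under the embedding $X^q\hookrightarrow \mathbb{P}(\mathscr{H}_c)$ coming from the projective representation of $\hat{\mathscr{G}}^q_c$ from Theorem \ref{thm1.3.4}; together with the Bruhat decomposition of $X^q$ (available since $X^q$ is reduced and irreducible by Corollary \ref{newcoro8.1.4}) and the standard Borel--Weil theorem for Kac--Moody Grassmannians, this gives a $\hat{\mathscr{G}}^q_c$-equivariant identification $H^0(X^q,\mathfrak{L}^q_c)\simeq \mathscr{H}(0)^\dagger$, the algebraic dual (spanned by matrix coefficients of $v_+$). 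By classical Borel--Weil, $H^0(G^{\Gamma_{q_i}}/P^{q_i}_i,\mathscr{L}^{q_i}(\lambda_i))\simeq V(\lambda_i)^*$ as $G^{\Gamma_{q_i}}$-modules. The projectivity of the flag varieties and the ind-projectivity of $X^q$ allow a Künneth identification
\[
H^0\!\Bigl(X^q\times\prod_i G^{\Gamma_{q_i}}/P^{q_i}_i,\;\mathfrak{L}^q_c\boxtimes\bigboxtimes_i\mathscr{L}^{q_i}(\lambda_i)\Bigr)\simeq \mathscr{H}(0)^\dagger\otimes \bigotimes_i V(\lambda_i)^*.
\]

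Finally, I identify the $\Xi$-invariants with twisted conformal blocks. The $\Xi$-action on this tensor product is: on $\mathscr{H}(0)^\dagger$ via the canonical splitting composed with the Laurent expansion at $q$, and on $V(\lambda_i)^*$ via the evaluation $\mathrm{ev}_{q_i}:\Xi\to G^{\Gamma_{q_i}}$. Since $\Xi$ is irreducible and hence connected (Theorem \ref{thm8.1.1}) and since both sides are appropriate direct limits of finite-dimensional algebraic representations, $\Xi$-invariance is equivalent to $\Lie\,\Xi$-invariance; because the derivative of the canonical splitting is the identity (Theorem \ref{Thm10.7}), $\Lie\,\Xi=\fg[\Sigma\setminus\Gamma\cdot q]^\Gamma$ acts on $\mathscr{H}(0)$ through its Laurent expansion at $q$ and on $V(\lambda_i)$ through evaluation, with the cocycle condition $\sum_{q'\in \Gamma\cdot q\cup \Gamma\cdot\vec{q}}\mathrm{Res}_{q'}\langle dX,Y\rangle=0$ (residue theorem) ensuring this is a bona fide Lie algebra action. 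Hence the space of $\Xi$-invariants becomes
\[
\bigl(\mathscr{H}(0)^\dagger\otimes\bigotimes_i V(\lambda_i)^*\bigr)^{\fg[\Sigma\setminus\Gamma\cdot q]^\Gamma}\simeq \Hom_{\fg[\Sigma\setminus\Gamma\cdot q]^\Gamma}\!\bigl(\mathscr{H}(0)\otimes V(\vec{\lambda}),\mathbb{C}\bigr),
\]
and the Propagation of Vacua (Corollary \ref{coro2.2.3}(b)) identifies this with $\mathscr{V}_{\Sigma,\Gamma,\phi}(\vec{p},\vec{\lambda})^\dagger$.

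The main obstacle is the last step: the precise matching of the $\Xi$-equivariant structure on $\mathfrak{L}^q_c\boxtimes\bigboxtimes_i \mathscr{L}^{q_i}(\lambda_i)$ with the $\fg[\Sigma\setminus\Gamma\cdot q]^\Gamma$-module structure on $\mathscr{H}(0)\otimes V(\vec{\lambda})$ that defines conformal blocks. The possible non-uniqueness of splittings of the central extension over $\Xi$ (Remark \ref{remark10.8}), caused by nontrivial characters of $\Xi$, forces one to use the \emph{canonical} splitting of Theorem \ref{Thm10.7}; verifying that its derivative really equals the identity (and hence that invariance under $\Xi$ descends to invariance under the naive Lie algebra action) is the crux. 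Once this is in place, the reducedness of $\Xi$ (Corollary \ref{coro11.5}) together with the connectedness from Theorem \ref{thm8.1.1} legitimizes passing from group to Lie algebra invariants, and the diagram chase closing the argument is then essentially formal.
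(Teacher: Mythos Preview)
Your proposal is correct and follows essentially the same route as the paper: uniformization (Theorem \ref{Thm11.3}) to pass to $\Xi$-invariant sections on $X^q\times\prod_i G^{\Gamma_{q_i}}/P^{q_i}_i$, Borel--Weil for both the Kac--Moody and finite-dimensional factors, the reduction from $\Xi$-invariants to $\fg[\Sigma\setminus\Gamma\cdot q]^\Gamma$-invariants using irreducibility and reducedness of $\Xi$ (Theorem \ref{thm8.1.1}, Corollary \ref{coro11.5}) together with the canonical splitting of Theorem \ref{Thm10.7}, and finally Propagation of Vacua (Corollary \ref{coro2.2.3}(b)). The paper cites \cite[Lemma 7.2 and Proposition 7.4]{BL} for the first and third steps where you spell out the mechanism; otherwise the arguments coincide.
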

\begin{proof}
From the uniformization theorem (Theorem \ref{Thm11.3}), there is an isomorphism of stacks: 
\[  \mathscr{P}arbun_{\mathscr{G}}(\vec{P})\simeq   \big[ G(\Sigma\backslash  \Gamma\cdot q)^\Gamma \big \backslash  \left(X^q \times  \prod_{i=1}^s  (G^{\Gamma_{q_i}} /P_i^{q_i} ) \right) \big]  .\]
Moreover, by definition, the line bundle $\mathfrak{L}(c,\vec{\lambda})$ over $ \mathscr{P}arbun_{\mathscr{G}}(\vec{P})$ is the descent of the line bundle 
 \[  \mathfrak{L}^q_c \boxtimes \mathscr{L}^{q_1}(\lambda_1) \boxtimes \cdots  \boxtimes  \mathscr{L}^{q_s}(\lambda_s)   \]
over $X^q\times \prod_{i=1}^s  (G^{\Gamma_{q_i}} /P_i^{q_i} )$ (Definition \ref{def11.6}). Thus, we have the following isomorphisms:
\begin{align*}
H^0( \mathscr{P}arbun_{\mathscr{G}}(\vec{P}), \mathfrak{L}(c, \vec{\lambda})   )   &\simeq     H^0\left( X^q\times \prod_{i=1}^s  (G^{\Gamma_{q_i}} /P_i^{q_i} ),   \mathfrak{L}^q_c \boxtimes \mathscr{L}^{q_1}(\lambda_1) \boxtimes \cdots  \boxtimes  \mathscr{L}^{q_s}(\lambda_s)   \right)^{G(\Sigma\backslash \Gamma\cdot q)^\Gamma}\\
    &\simeq  \left(\mathscr{H}_c^*\otimes V(\lambda_1)^*\otimes \cdots   \otimes    V(\lambda_s)^*  \right)^{  G(\Sigma\backslash \Gamma\cdot q)^\Gamma }\\
    &\simeq   \left(\mathscr{H}_c^*\otimes V(\lambda_1)^*\otimes \cdots   \otimes    V(\lambda_s)^*  \right)^{  \fg(\Sigma\backslash \Gamma\cdot q)^\Gamma }\\
    &\simeq     \mathscr{V}_{\Sigma, \Gamma, \phi}(\vec{p}, \vec{\lambda} )^{\dagger},
\end{align*}  
where the first isomorphism follows from \cite[Lemma 7.2]{BL} (also see \cite[Proposition C.23]{Ku2});  the second isomorphism follows from the standard Borel-Weil theorem and its generalization for the Kac-Moody case due to Kumar as well as Mathieu \cite[Corollary 8.3.12]{Kbook};  the third isomorphism follows from \cite[Proposition 7.4]{BL} since $G(\Sigma\backslash \Gamma\cdot q)^\Gamma$ is reduced and irreducible (Corollary \ref{coro11.5}) and $X^q$ is reduced and irreducible by Corollary \ref{newcoro8.1.4}; and the last isomorphism follows from Propagation of Vacua (Corollary \ref{coro2.2.3} (b)). It finishes the proof of the theorem. 
\end{proof}

\begin{remark} \label{remark12.2} {\rm (a) If we drop the assumption that
$$(*)\,\,\,\,\,\,\,\text{ $\Gamma$ stabilizes a Borel subgroup of $G$,}$$
we still have the isomorphism:
\begin{equation} \label{eqn12.2.1} 
 H^0( \mathscr{P}arbun_{\mathscr{G}}(\vec{P}), \mathfrak{L}(c, \vec{\lambda})   ) \simeq   \left(\mathscr{H}_c^*\otimes V(\lambda_1)^*\otimes \cdots   \otimes    V(\lambda_s)^*  \right)^{  \fg(\Sigma\backslash \Gamma\cdot q)^\Gamma }
\end{equation}
since Theorem \ref{Thm11.3} remains valid without the assumption  ($*$). Since our Propagation Theorem (Corollary \ref{coro2.2.3}) requires the  assmption ($*$), the space on the right side of the equation \eqref{eqn12.2.1} is not known to be isomorphic with 
$ \mathscr{V}_{\Sigma, \Gamma, \phi}(\vec{p}, \vec{\lambda} )^{\dagger}$ in general.   
\vskip1ex

(b) The condition `$|\Gamma|$ divides $c$' can not, in general, be dropped since for $\lambda_i\in D_{c, q_i}$ to be  a dominant integral weight of $\mathfrak{g}^{\Gamma_{q_i}}$ imposes some divisibility condition on $c$ with respect to $\Gamma_{q_i}$ (cf. Lemma \ref{finite_set_weight_lem} and Proposition \ref{Prop10.8}). 

Also, Heinloth's example [He, Remark 19 (4)] shows that the line bundle 
$$ \mathfrak{L}^q_c \boxtimes \mathscr{L}^{q_1}(\lambda_1) \boxtimes \cdots  \boxtimes  \mathscr{L}^{q_s}(\lambda_s)  $$
does not, in general, descend to the moduli stack $\mathscr{P}arbun_{\mathscr{G}}(\vec{P})$ for an arbitrary $c$. }
\end{remark}

\end{document}